\numberwithin{equation}{section}
\titleformat*{\section}{\Large \scshape\center}
\titleformat*{\subsection}{\fontsize{14}{14} \sffamily}
\theoremstyle{plain}
\newtheorem{theorem}{Theorem}[section]
\newtheorem*{theorem*}{Theorem}
\newtheorem{introthm}{Theorem}
\newtheorem{lemma}[theorem]{Lemma}
\newtheorem{proposition}[theorem]{Proposition}
\newtheorem{corollary}[theorem]{Corollary}
\theoremstyle{definition}
\newtheorem{definition}[theorem]{Definition}
\newtheorem*{definition*}{Definition}
\newtheorem{example}[theorem]{Example}
\theoremstyle{remark}
\newtheorem*{remark}{Remark}
\newcommand{\aff}{\mathrm{Aff}}
\newcommand{\intA}[3]{\int_{\aff} #1 \, \frac{d#2 \, d#3}{#3}}
\DeclareMathOperator{\tr}{tr}
\newcommand{\Addresses}{{
  \bigskip
  \footnotesize

  \textsc{Department of Mathematical Sciences, Norwegian University of Science and Technology,\\ 7491 Trondheim, Norway.}\par\nopagebreak
  \textit{E-mail addresses}: 
  \texttt{eirik.berge@ntnu.no}, \texttt{stine.m.berge@ntnu.no}, \texttt{franz.luef@ntnu.no},
  
  \hspace{2.7cm}
  and
  \texttt{eirik.skrettingland@ntnu.no}

}}
\begin{document}
\pagenumbering{gobble}
\title{\huge{Affine Quantum Harmonic Analysis}}
\author{Eirik Berge, Stine M. Berge, Franz Luef, Eirik Skrettingland}
\date{}
\maketitle
\pagenumbering{arabic}
\begin{abstract}
We develop a quantum harmonic analysis framework for the affine group. This encapsulates several examples in the literature such as 
affine localization operators, covariant integral quantizations, and affine quadratic time-frequency representations. In the process, we develop a notion of admissibility for operators and extend well known results to the operator setting. A major theme of the paper is the interaction between operator convolutions, affine Weyl quantization, and admissibility. 
\end{abstract}

\section{Introduction}
\label{sec: Introduction}
The affine group and the Heisenberg group play prominent roles in wavelet theory and Gabor analysis, respectively. As is well-known, the representation theory of the Heisenberg group is intrinsically linked to quantization on phase space $\mathbb{R}^{2n}$. Similarly, the relation between quantization schemes on the affine group and its representation theory has received some attention and several schemes have been proposed, e.g.\ \cite{gayral2007fourier,berge2019affine,gazeau2016covariant}. However, there are still many open questions awaiting a definite answer in the case of the affine group. 

As has been shown by two of the authors in \cite{Luef2018}, the theory of \textit{quantum harmonic analysis on phase space} introduced by Werner \cite{werner1984} provides a coherent framework for many aspects of quantization and Gabor analysis associated with the Heisenberg group. Based on this connection, advances in the understanding of time-frequency analysis have been made \cite{lusk19-1,lusk20,luef2021jfa}. In this paper we aim to develop a variant of Werner's quantum harmonic analysis in \cite{werner1984} for time-scale analysis. This is based on unitary representations of the affine group in a similar way to the Schrödinger representation of the Heisenberg group being used in Werner's framework. We will refer to this theory on the affine group as \textit{affine quantum harmonic analysis}.  
 
\subsubsection*{Affine Operator Convolutions}
In Werner's quantum harmonic analysis on phase space, a crucial component is extending convolutions to operators. Recall that the affine group $\aff$ has the underlying set $\mathbb{R} \times \mathbb{R}_{+}$ and group operation modeling composition of affine transformations. A key feature of this group is that the left Haar measure $a^{-2} dx \, da$ and the right Haar measure $a^{-1} dx \, da$ are not equal, making the group \textit{non-unimodular}. Both measures play a role in affine quantum harmonic analysis, making the theory more involved than the case of the Heisenberg group. In addition to the standard function (right-)convolution on the affine group
\begin{equation*}
    f\ast_\aff g(x,a) \coloneqq \int_{\aff} f(y,b)g(\left(x,a\right)\cdot(y,b)^{-1})\, \frac{dy \, db}{b},
\end{equation*}
we introduce the following \textit{operator convolutions} for operators on $L^2(\mathbb{R}_+)\coloneqq L^{2}(\mathbb{R}_{+}, r^{-1}\, dr)$ in Section \ref{sec: Affine Operator Convolutions}: 
\begin{itemize}
    \item Let $f \in L_{r}^{1}(\aff) \coloneqq L^{1}(\aff, a^{-1}dx \, da)$ and let $S$ be a trace-class operator on $L^2(\mathbb{R}_+)$. We define the \textit{convolution} $f\star_\aff S$ between $f$ and $S$ to be the operator on $L^{2}(\mathbb{R}_{+})$ given by 
\begin{equation*}
f\star_\aff S \coloneqq \int_{\aff} f(x,a) U(-x,a)^*SU(-x,a)\, \frac{dx \, da}{a},
\end{equation*}
where $U$ is the unitary representation of $\aff$ on $L^{2}(\mathbb{R}_{+})$ given by \[U(x,a)\psi(r) \coloneqq e^{2\pi i x r}\psi(ar).\]
    \item Let $S$ be a trace-class operator and let $T$ be a bounded operator on $L^{2}(\mathbb{R}_{+})$. Then we define the \textit{convolution} $S \star_{\aff} T$ between $S$ and $T$ to be the function on $\textrm{Aff}$ given by
\begin{equation*}
    S\star_\aff T(x,a) \coloneqq \tr(SU(-x,a)^*TU(-x,a)).
\end{equation*}
\end{itemize}
The three convolutions are compatible in the following sense:
Let $f,g\in L^1_r(\aff)$ and denote by $S$ a trace-class operator and by $T$ a bounded operator, both on $L^{2}(\mathbb{R}_{+})$. Then
\begin{align*}
    (f\star_\aff S)\star_\aff T &= f*_\aff(S\star_\aff T),\\
    f\star_\aff (g\star_\aff S) &= (f*_\aff g)\star_\aff S.
\end{align*}

\subsubsection*{Interplay Between Affine Weyl Quantization and Convolutions}

Integral to the theory in this paper is the affine Wigner distribution and the associated affine Weyl quantization.
The \textit{affine (cross-)Wigner distribution} $W_{\textrm{Aff}}^{\psi,\phi}$ of $\phi, \psi \in L^{2}(\mathbb{R}_{+})$ is the function on $\mathrm{Aff}$ given by 
\begin{equation}
\label{eq: affine_wigner_introduction}
W_{\mathrm{Aff}}^{\psi,\phi}(x,a)  = \int_{-\infty}^{\infty}\psi\left(\frac{aue^u}{e^u - 1}\right)\overline{\phi\left(\frac{au}{e^{u} - 1}\right)}e^{-2\pi i x u} \, du.
\end{equation} 
Although at first glance the definition \eqref{eq: affine_wigner_introduction} might look unnatural, it can be motivated through the representation theory of the affine group as illustrated in \cite{ali2000}. We will elaborate on this viewpoint in Section \ref{sec: From the Viewpoint of Representation Theory}.
One defines the \textit{affine Weyl quantization} of $f \in L_{r}^{2}(\aff)\coloneqq L^2(\aff, a^{-1}dx\,da)$ as the operator $A_{f}$ given by 
\begin{equation*}
    \left\langle A_{f}\phi,\psi \right\rangle_{L^{2}(\mathbb{R}_{+})} = \left\langle f,W_{\mathrm{Aff}}^{\psi,\phi}\right\rangle_{L_{r}^{2}(\mathrm{Aff})}, \qquad
\text{for all }\phi, \psi \in L^{2}(\mathbb{R}_{+}). 
\end{equation*}
We will explore the intimate relation between the convolutions and the affine Weyl quantization. The following theorem, being a combination of Proposition \ref{function-operator-convolution-quantization} and Proposition \ref{prop:convolutionasweyl}, highlights this relation.
\begin{introthm}
    Let $f,g\in L^2_r(\aff)$, where $g$ is additionally in $L^1_r(\aff)$ and square integrable with respect to the left Haar measure. Then
    \begin{align*}
        g\star_\aff A_f &= A_{g*_\aff f},\\
        A_g\star_\aff A_f &= f*_\aff \check{g},
    \end{align*}
    where $\check{g}(x,a) \coloneqq g((x,a)^{-1})$.
\end{introthm}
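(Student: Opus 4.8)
The plan is to prove the two identities separately, in each case reducing the operator statement to a scalar identity obtained by pairing against vectors $\phi,\psi \in L^2(\mathbb{R}_+)$ and using the defining weak relation between $A_f$ and $W_{\mathrm{Aff}}^{\psi,\phi}$. Throughout I will use freely the three compatibility relations for the convolutions stated in the introduction, the definitions of $f \star_\aff S$, $S \star_\aff T$, and $A_f$, and — crucially — the \emph{covariance} of the affine Wigner distribution under the representation $U$, namely an identity of the shape $W_{\mathrm{Aff}}^{U(y,b)\psi, U(y,b)\phi}(x,a) = W_{\mathrm{Aff}}^{\psi,\phi}\big((y,b)^{-1}\cdot(x,a)\big)$ (or the analogous right-translation version), which should have been established in the section on the affine Wigner distribution; this is the workhorse for both parts.

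For the first identity $g \star_\aff A_f = A_{g *_\aff f}$: I would compute $\langle (g\star_\aff A_f)\phi,\psi\rangle$ by inserting the integral definition of $f\star_\aff S$ with $S = A_f$, which gives $\int_\aff g(x,a)\,\langle U(-x,a)^* A_f U(-x,a)\phi,\psi\rangle\,\tfrac{dx\,da}{a}$. The inner pairing equals $\langle A_f U(-x,a)\phi, U(-x,a)\psi\rangle = \langle f, W_{\mathrm{Aff}}^{U(-x,a)\psi,\,U(-x,a)\phi}\rangle_{L^2_r}$, and then the covariance identity turns $W_{\mathrm{Aff}}^{U(-x,a)\psi,\,U(-x,a)\phi}$ into a translate of $W_{\mathrm{Aff}}^{\psi,\phi}$. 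Unwinding the outer integral, one recognizes the resulting double integral as $\langle g *_\aff f, W_{\mathrm{Aff}}^{\psi,\phi}\rangle_{L^2_r}$ after a change of variables in the affine group and an application of Fubini, which by definition is $\langle A_{g*_\aff f}\phi,\psi\rangle$. The hypotheses on $g$ ($L^1_r$ and square integrable for both Haar measures) are exactly what is needed to justify Fubini and to make sense of the convolution $g*_\aff f$ as an $L^2_r$ function so that $A_{g*_\aff f}$ is defined.

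For the second identity $A_g \star_\aff A_f = f *_\aff \check{g}$: here the output is a function on $\aff$, so I would compute $(A_g \star_\aff A_f)(x,a) = \tr\big(A_g\, U(-x,a)^* A_f U(-x,a)\big)$ by expanding the trace in an orthonormal basis, or better, by using the identity $\tr(A_g B) = \langle g, \text{(something)}\rangle$. Concretely, write $\tr(A_g B)$ using the fact that $A_g$ is itself built from $W_{\mathrm{Aff}}$, so that $\tr(A_g B) = \langle g, Q_B\rangle_{L^2_r}$ where $Q_B(y,b)$ is the "lower symbol" $\langle B \cdot, \cdot\rangle$ paired through the Wigner transform; applied to $B = U(-x,a)^* A_f U(-x,a)$ and using covariance of $W_{\mathrm{Aff}}$ again, $Q_B(y,b)$ becomes a translate of the corresponding quantity for $A_f$, which in turn is essentially $f$ itself (since $A_f$ quantizes $f$). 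Collecting terms, $\tr(A_g U(-x,a)^*A_f U(-x,a))$ becomes an integral of $f(y,b)\,\overline{g\big(\text{translate involving }(x,a)\big)}$ against the Haar measure, which after identifying the group elements is precisely $(f *_\aff \check g)(x,a)$.

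The main obstacle I anticipate is bookkeeping with the two distinct Haar measures and the placement of inverses: the affine group is non-unimodular, so changes of variables of the form $(y,b)\mapsto (x,a)\cdot(y,b)^{-1}$ introduce modular-function factors, and one must be careful that the measure $\tfrac{dx\,da}{a}$ appearing in $\star_\aff$, the measure $\tfrac{dy\,db}{b}$ in $*_\aff$, and the left Haar measure used for $L^2_r$ versus $L^2_l$ pairings all combine correctly — this is precisely why $g$ is required to be square integrable with respect to \emph{both} Haar measures. A secondary technical point is justifying the interchange of the trace with the group integral defining $f\star_\aff A_f$ and the convergence of the trace in the second identity; I would handle this by first proving the identities for a dense class (e.g. $f,g$ with compactly supported smooth symbols, or rank-one operators) where everything is absolutely convergent, and then passing to the limit using continuity of all the maps involved in the stated topologies, invoking the relevant boundedness statements (that $f\star_\aff S$ is trace-class, that $\star_\aff$ of trace-class with bounded is in $L^\infty$ or the appropriate $L^p$, etc.) established earlier in Section~\ref{sec: Affine Operator Convolutions}.
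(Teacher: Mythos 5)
Your handling of the first identity is essentially the paper's own argument: the paper isolates your covariance step as Lemma \ref{L: Right mul of symbol} (the affine Weyl symbol of $U(-x,a)^*A_fU(-x,a)$ is $R_{(x,a)^{-1}}f$, proved precisely by the weak pairing and \eqref{E: right-inv}), and then, instead of Fubini on weak pairings, applies the unitary symbol correspondence $\mathcal{S}_2\to L^2_r(\aff)$ directly to the $\mathcal{S}_2$-valued Bochner integral defining $g\star_\aff A_f$, so no density or limiting argument is needed. Your weak-pairing version works, but note that only $g\in L^1_r(\aff)$ enters there; the left square-integrability of $g$ plays no role in the first identity --- it is needed in the second, where it is exactly the condition $\check{g}\in L^2_r(\aff)$.

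For the second identity there is a concrete slip at the final identification. Carrying out the computation you sketch gives
\begin{equation*}
\tr\bigl(A_g U(-x,a)^*A_fU(-x,a)\bigr)=\bigl\langle A_g,\,U(-x,a)^*A_f^*U(-x,a)\bigr\rangle_{\mathcal{S}_2}=\bigl\langle g,\,R_{(x,a)^{-1}}\overline{f}\bigr\rangle_{L^2_r(\aff)}=\int_{\aff} g(y,b)\,f\bigl((y,b)\cdot(x,a)^{-1}\bigr)\,\frac{dy\,db}{b},
\end{equation*}
which is $(g\ast_\aff\check{f})(x,a)=(f\ast_\aff\check{g})\bigl((x,a)^{-1}\bigr)$, not $(f\ast_\aff\check{g})(x,a)$. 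The operator convolution is not symmetric --- $S\star_\aff T(x,a)=T\star_\aff S((x,a)^{-1})$ --- and since $\aff$ is non-unimodular, $f\ast_\aff\check{g}$ and $g\ast_\aff\check{f}$ genuinely differ (by composition with inversion). The identity in the form $f\ast_\aff\check{g}$, which is the form compatible with the stated hypothesis $g\in L^2_l(\aff)$, is obtained for $A_f\star_\aff A_g$; this is exactly how Proposition \ref{prop:convolutionasweyl} states and proves it, and the introduction's ordering of the two operators should be read accordingly. If you keep $A_g\star_\aff A_f$ on the left, the correct right-hand side is $g\ast_\aff\check{f}$, whose justification would instead require $f\in L^2_l(\aff)$. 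A second, smaller point: the conjugate on $g$ in your claimed integrand $f(y,b)\overline{g(\cdots)}$ should not survive, since the adjoint $A_g^*$ has symbol $\overline{g}$ and the $L^2_r$-pairing conjugates once more, so the two conjugations cancel. Apart from these bookkeeping corrections, your mechanism --- the trace as an $\mathcal{S}_2$ pairing of symbols combined with Wigner covariance --- is exactly the paper's, and the dense-class limiting argument you propose is unnecessary because the second identity is a direct $\mathcal{S}_2$ inner-product computation.
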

We will exploit the previous theorem to define the affine Weyl quantization of tempered distributions in Section \ref{sec: Affine Quantization of Coordinate Functions}. To do this rigorously, we will utilize a Schwartz space $\mathscr{S}(\aff)$ on the affine group introduced in \cite{berge2019affine}. An important example we prove in Theorem \ref{quantization_of_coordinates} is the affine Weyl quantization of the coordinate functions:
\begin{introthm}
    Let $f_{x}(x,a) \coloneqq x$ and $f_{a}(x,a) \coloneqq a$ be the coordinate functions on $\aff$. 
    The affine Weyl quantizations $A_{f_{x}}$ and $A_{f_{a}}$ satisfy the commutation relation \[[A_{f_{x}},A_{f_{a}}] =  \frac{1}{2\pi i}A_{f_{a}}.\]
    This is, up to re-normalization, precisely the infinitesimal structure of the affine group.
\end{introthm}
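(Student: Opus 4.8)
The plan is to identify $A_{f_x}$ and $A_{f_a}$ with explicit first-order operators on a dense domain $\mathcal{D}\subset L^2(\mathbb{R}_+)$ — e.g.\ $C_c^\infty(\mathbb{R}_+)$, or whichever space of ``nice'' vectors is used to make the distributional pairing $\langle A_f\phi,\psi\rangle=\langle f,W_{\mathrm{Aff}}^{\psi,\phi}\rangle$ legitimate for the tempered distributions $f_x,f_a$ on $\aff$ — and then to verify the commutation relation by a direct operator computation on $\mathcal{D}$.

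First I would treat $A_{f_a}$. Since $f_a$ does not depend on $x$, evaluating $\langle f_a,W_{\mathrm{Aff}}^{\psi,\phi}\rangle$ amounts to integrating the affine Wigner distribution in the $x$-variable against the constant $a$; by \eqref{eq: affine_wigner_introduction} the factor $e^{-2\pi i x u}$ forces a $\delta$ at $u=0$, and at $u=0$ the two arguments $\tfrac{aue^u}{e^u-1}$ and $\tfrac{au}{e^u-1}$ of $\psi$ and $\phi$ both degenerate to $a$. Hence $\int_{\mathbb R}W_{\mathrm{Aff}}^{\psi,\phi}(x,a)\,dx$ collapses to $\psi(a)\overline{\phi(a)}$, and combining the factor $a$ with the right Haar weight $a^{-1}$ and comparing with the inner product $\langle\cdot,\cdot\rangle=\int_0^\infty\!\cdot\,\overline{\cdot}\,r^{-1}\,dr$ on $L^2(\mathbb R_+)$ identifies $A_{f_a}$ with multiplication by $r$. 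For $A_{f_x}$ the same $x$-integration now produces $\delta'(u)$ in place of $\delta(u)$, since $\int_{\mathbb R}x\,e^{-2\pi i x u}\,dx$ is a multiple of $\delta'(u)$; so $\int_{\mathbb R}x\,W_{\mathrm{Aff}}^{\psi,\phi}(x,a)\,dx$ becomes (a constant times) the $u$-derivative at $u=0$ of $\psi\!\big(\tfrac{aue^u}{e^u-1}\big)\overline{\phi\!\big(\tfrac{au}{e^u-1}\big)}$. A short Taylor expansion gives $\tfrac{d}{du}\big|_{0}\tfrac{aue^u}{e^u-1}=\tfrac a2$ and $\tfrac{d}{du}\big|_{0}\tfrac{au}{e^u-1}=-\tfrac a2$, so this derivative is proportional to $\tfrac a2\big(\psi'(a)\overline{\phi(a)}-\psi(a)\overline{\phi'(a)}\big)$; integrating against the right Haar weight and symmetrizing the antisymmetric bracket by one integration by parts in $a$ (the boundary terms vanishing on $\mathcal D$) collapses it to a single term and yields $A_{f_x}\phi(r)=\tfrac{1}{2\pi i}\,r\phi'(r)$. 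Then the one-line computation $\big[\tfrac{1}{2\pi i}r\partial_r,\,r\big]=\tfrac{1}{2\pi i}r$ finishes the proof, and the closing remark is immediate: $r\mapsto 2\pi i r$ and $r\partial_r$ are exactly the generators $dU$ of the one-parameter subgroups $x\mapsto U(x,1)$ and $t\mapsto U(0,e^t)$, so the relation is the affine Lie bracket transported through $dU$, up to the $2\pi i$ built into the Weyl correspondence.

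The Taylor expansions and the integration by parts are routine; the part that needs care is the distributional bookkeeping. One must know that $W_{\mathrm{Aff}}^{\psi,\phi}\in\mathscr S(\aff)$ for $\psi,\phi\in\mathcal D$, so that the pairings with $f_x$ and $f_a$ make sense in the framework of the distributional affine Weyl quantization developed earlier; one must justify the Fubini-type exchange that allows integrating in $x$ first before the $\delta$/$\delta'$ manipulations; and one must confirm that $\mathcal D$ is a common invariant domain for both $A_{f_x}$ and $A_{f_a}$, so that the stated identity holds as an equality of operators rather than merely of sesquilinear forms.
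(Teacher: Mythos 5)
Your proposal is correct and follows essentially the same route as the paper's proof: integrate out the $x$-variable to produce $\delta_0$ and $\delta_0'$ in $u$, use $\lambda(0)=1$, $\lambda'(0)=\tfrac12$ (your Taylor derivatives $\pm a/2$), integrate by parts to obtain $A_{f_x}\psi(r)=\tfrac{1}{2\pi i}r\psi'(r)$ and $A_{f_a}\psi(r)=r\psi(r)$, and then compute the commutator directly on $\mathscr{S}(\mathbb{R}_{+})$, which both operators preserve. The distributional bookkeeping you flag is exactly what the paper's Lemma on $W_{\mathrm{Aff}}^{\phi,\psi}\in\mathscr{S}(\mathrm{Aff})$ and the Fubini step in its proof supply, so there is no gap.
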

We define \textit{affine parity operator} $P_{\aff}$ as
\[P_{\aff} = A_{\delta_{(0,1)}},\] 
where $\delta_{(0,1)}$ denotes the Dirac distribution at the identity element $(0, 1) \in \aff$. The following result, which will be rigorously stated in Section \ref{sec: Operator Convolution for Tempered Distributions}, builds on these definitions.
\begin{introthm}
    The affine Weyl quantization $A_{g}$ of $g \in \mathscr{S}(\mathrm{Aff})$ can be written as \[A_{g} = g \star_{\mathrm{Aff}} P_{\mathrm{Aff}}.\]
    Moreover, for $\phi, \psi$ such that $\phi(e^x), \psi(e^{x}) \in \mathscr{S}(\mathbb{R})$, the affine Weyl symbol $W_{\mathrm{Aff}}^{\psi,\phi}$ of the rank-one operator $\psi \otimes \phi$ can be written as
\[W_{\mathrm{Aff}}^{\psi,\phi} = (\psi \otimes \phi) \star_{\mathrm{Aff}} P_{\mathrm{Aff}}.\]
\end{introthm}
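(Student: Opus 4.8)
The plan is to prove the two displayed identities separately, in each case reducing the statement to the definition $P_{\aff}=A_{\delta_{(0,1)}}$, the fact that the Dirac distribution at the group identity is the unit for $*_{\aff}$, and the covariance of the affine Wigner distribution under $U$. Everything must be read inside the distributional and admissible extension of operator convolution developed in Section~\ref{sec: Operator Convolution for Tempered Distributions}: the hypotheses ($g\in\mathscr{S}(\aff)$ in the first part, $\psi(e^{x}),\phi(e^{x})\in\mathscr{S}(\mathbb{R})$ in the second) are precisely what makes $A_{g}$ and $\psi\otimes\phi$ admissible operators, so that $g\star_{\aff}P_{\aff}$ and $(\psi\otimes\phi)\star_{\aff}P_{\aff}$ are meaningful despite $P_{\aff}$ not being trace-class.

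For the first identity I would start from the compatibility $g\star_{\aff}A_{f}=A_{g*_{\aff}f}$ of Proposition~\ref{function-operator-convolution-quantization}, valid for $f\in L^{2}_{r}(\aff)$, and extend it to $f\in\mathscr{S}'(\aff)$ by testing both sides against vectors with $\psi(e^{x}),\phi(e^{x})\in\mathscr{S}(\mathbb{R})$ and passing to the limit, using continuity of $f\mapsto A_{f}$ and of $h\mapsto g*_{\aff}h$ on $\mathscr{S}'(\aff)$ together with density of $L^{2}_{r}(\aff)$; this is exactly the kind of statement Section~\ref{sec: Operator Convolution for Tempered Distributions} is built to provide. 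Specializing to $f=\delta_{(0,1)}$ and using $g*_{\aff}\delta_{(0,1)}=g$ — which follows directly from the definition of $*_{\aff}$, the relevant Jacobian factor being trivial at the identity notwithstanding the non-unimodularity of $\aff$ — then gives $g\star_{\aff}P_{\aff}=g\star_{\aff}A_{\delta_{(0,1)}}=A_{g*_{\aff}\delta_{(0,1)}}=A_{g}$.

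For the second identity I would unfold the operator--operator convolution directly. By its definition, the rank-one trace identity $\tr\bigl((\psi\otimes\phi)B\bigr)=\langle B\psi,\phi\rangle_{L^{2}(\mathbb{R}_{+})}$, and unitarity of $U$, one has $(\psi\otimes\phi)\star_{\aff}P_{\aff}(x,a)=\langle P_{\aff}\,U(-x,a)\psi,\,U(-x,a)\phi\rangle_{L^{2}(\mathbb{R}_{+})}$. Substituting $P_{\aff}=A_{\delta_{(0,1)}}$ and applying the defining relation of the affine Weyl quantization with symbol $\delta_{(0,1)}$, which pairs with a function on $\aff$ by evaluation at the identity, turns this into $\overline{W_{\aff}^{U(-x,a)\phi,\,U(-x,a)\psi}(0,1)}$. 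I would finish with two short computations from \eqref{eq: affine_wigner_introduction}, both resting on the identity $\tfrac{aue^{u}}{e^{u}-1}-\tfrac{au}{e^{u}-1}=au$: the covariance $W_{\aff}^{U(y,b)\psi,\,U(y,b)\phi}(x,a)=W_{\aff}^{\psi,\phi}(x-ay,ab)$, which at the point $(0,1)$ yields $W_{\aff}^{U(-x,a)\phi,\,U(-x,a)\psi}(0,1)=W_{\aff}^{\phi,\psi}(x,a)$, and the Hermitian symmetry $\overline{W_{\aff}^{\phi,\psi}}=W_{\aff}^{\psi,\phi}$ (substitute $u\mapsto-u$). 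Together these give $(\psi\otimes\phi)\star_{\aff}P_{\aff}(x,a)=\overline{W_{\aff}^{\phi,\psi}(x,a)}=W_{\aff}^{\psi,\phi}(x,a)$, as claimed.

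The algebra above is short; the real obstacle is the analytic bookkeeping behind it. One must rigorously define $g\star_{\aff}P_{\aff}$ for the non-trace-class $P_{\aff}$ and verify that $g\star_{\aff}A_{f}=A_{g*_{\aff}f}$ genuinely survives the passage to $f\in\mathscr{S}'(\aff)$; this is where the admissibility framework and the mapping properties of the Schwartz space $\mathscr{S}(\aff)$ from \cite{berge2019affine} do the work, including the stability of $\mathscr{S}(\aff)$ under the translations occurring in the covariance formula, so that $W_{\aff}^{U(-x,a)\phi,\,U(-x,a)\psi}$ remains a function that can legitimately be paired with $\delta_{(0,1)}$. Once this groundwork is in place, the theorem is a matter of assembling the definitions with the two Wigner-distribution identities above.
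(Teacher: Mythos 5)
Your argument is, in substance, the paper's own proof: the first identity rests on the distributional convolution framework of Section~\ref{sec: Operator Convolution for Tempered Distributions} together with the fact that $\delta_{(0,1)}$ is the unit for $*_{\aff}$, and the second unfolds $(\psi\otimes\phi)\star_{\aff}P_{\aff}$ into $\langle P_{\aff}U(-x,a)\psi,U(-x,a)\phi\rangle$ and concludes by covariance --- you invoke the Wigner covariance \eqref{E: right-inv} plus Hermitian symmetry where the paper uses the equivalent Grossmann--Royer covariance \eqref{eq:affine covariance relation}, an immaterial difference. Two caveats. First, the density/limit extension of Proposition~\ref{function-operator-convolution-quantization} you sketch for the first identity is an unnecessary detour: Definition~\ref{def: convolution_tempered} is constructed precisely so that the Weyl symbol of $g\star_{\aff}S$ is the duality-defined convolution of $g$ with $f_S$, so the entire proof is the one-line computation $\langle f_{g\star_{\aff}P_{\aff}},h\rangle_{\mathscr{S}',\mathscr{S}}=\bigl\langle \delta_{(0,1)},\int_{\aff}\overline{g(x,a)}R_{(x,a)}h\,\tfrac{dx\,da}{a}\bigr\rangle_{\mathscr{S}',\mathscr{S}}=\langle g,h\rangle_{L^2_r(\aff)}$, which is exactly your ``$\delta$ is the unit'' step, with no approximation by $L^2_r$ symbols (whose weak-* convergence you would otherwise need to justify). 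Second, your claim that the hypotheses make $A_g$ and $\psi\otimes\phi$ admissible is both false in general and irrelevant: a function in $\mathscr{S}(\mathbb{R}_+)$ need not lie in $\mathrm{dom}(\mathcal{D}^{-1})$ (take $\psi(e^t)=(\cosh t)^{-1/2}$, for which $\int_0^\infty |\psi(r)|^2 r^{-2}\,dr=\infty$), so $\psi\otimes\phi$ need not be admissible; the well-definedness of both convolutions comes solely from Definition~\ref{def: convolution_tempered} and the rank-one extension of Lemma~\ref{lem: convolution_of_rank_one} stated in the remark preceding Theorem~\ref{thm:quantization_through_convolution}, not from admissibility. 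With those corrections, your computations (covariance at the identity point, $\lambda(u)-\lambda(-u)=u$, and $\overline{W_{\aff}^{\phi,\psi}}=W_{\aff}^{\psi,\phi}$) are correct and reproduce the paper's argument.
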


\subsubsection*{Operator Admissibility}

One of the key features of representations of non-unimodular groups is the concept of admissibility. Recall that the \textit{Duflo-Moore operator} $\mathcal{D}^{-1}$ corresponding to the representation $U$ is the densely defined positive operator on $L^{2}(\mathbb{R}_{+})$ given by $\mathcal{D}^{-1}\psi(r) = r^{-1/2}\psi(r)$. We will often use that $\mathcal{D}^{-1}$ has a densely defined inverse given by $\mathcal{D}\psi(r) = r^{1/2} \psi(r)$. A function $\psi$ is said to be an \textit{admissible wavelet} if $\psi\in \mathrm{dom}(\mathcal{D}^{-1})$. It is well known \cite{duflo1976} that admissible wavelets satisfy the orthogonality relation
\begin{equation}
\label{eq: addmissible function orthogonality}
    \int_\aff |\langle \phi,U(-x,a)^*\psi \rangle_{L^2(\mathbb{R}_+)}|^2 \, \frac{dx \, da}{a}=\|\phi\|^2_{L^2(\mathbb{R}_+)}  \|\mathcal{D}^{-1}\psi\|_{L^2(\mathbb{R}_+)}^{2}.
\end{equation} 
We extend the definition of admissibility to operators as follows:
 
\begin{definition*}
Let $S$ be a non-zero bounded operator on $L^{2}(\mathbb{R}_{+})$ that maps $\mathrm{dom}(\mathcal{D})$ into $\mathrm{dom}(\mathcal{D}^{-1})$. We say that $S$ is \textit{admissible} if the composition $\mathcal{D}^{-1}S\mathcal{D}^{-1}$ is bounded on  $\mathrm{dom}(\mathcal{D}^{-1})$ and extends to a trace-class operator $\mathcal{D}^{-1}S\mathcal{D}^{-1}$ on $L^{2}(\mathbb{R}_{+})$. 
\end{definition*}

Note that the rank-one operator $S=\psi\otimes \psi$ for $\psi \in L^2(\mathbb{R}_+)$ is admissible precisely when $\psi$ is an admissible wavelet. In Section \ref{sec: Laguerre Connection} we show that a large class of admissible operators can be constructed from Laguerre bases. The following result, which we prove in Corollary \ref{cor:admissible_condition}, is motivated by \cite[Lemma 3.1]{werner1984} and extends \eqref{eq: addmissible function orthogonality} to the operator setting.

\begin{introthm}
Let $S$ be an admissible operator on $L^{2}(\mathbb{R}_{+})$. For any trace-class operator $T$ on $L^{2}(\mathbb{R}_{+})$, we have that $T \star_{\aff} S \in L_{r}^{1}(\aff)$ with 
	\begin{equation*} 
		\int_\aff T\star_{\aff} S(x,a) \, \frac{dx\, da}{a} = \tr(T)\tr(\mathcal{D}^{-1}S\mathcal{D}^{-1}).
	\end{equation*}
\end{introthm}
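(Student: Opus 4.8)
The plan is to reduce the general statement to the orthogonality relation \eqref{eq: addmissible function orthogonality} by expanding $T$ in an eigenbasis. First I would observe that it suffices to prove the identity when $T = \phi \otimes \phi$ is a rank-one projection onto a unit vector $\phi \in L^2(\mathbb{R}_+)$: a general trace-class $T$ admits a singular-value decomposition $T = \sum_n \lambda_n \, \phi_n \otimes \psi_n$ with $\sum_n |\lambda_n| < \infty$, and both sides of the claimed identity are linear and continuous in $T$ (for the left-hand side one needs an $L^1_r(\aff)$-bound on $T \star_{\aff} S$ in terms of $\|T\|_{\mathrm{tr}}$, which is where admissibility of $S$ enters). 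So the crux is the rank-one case, where by definition
\[
T \star_{\aff} S(x,a) = \tr\bigl((\phi \otimes \phi)\, U(-x,a)^* S U(-x,a)\bigr) = \bigl\langle U(-x,a)^* S U(-x,a)\phi, \phi \bigr\rangle_{L^2(\mathbb{R}_+)}.
\]

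Next I would use admissibility to rewrite this as an honest wavelet coefficient. Since $S$ maps $\mathrm{dom}(\mathcal{D})$ into $\mathrm{dom}(\mathcal{D}^{-1})$ and $\mathcal{D}^{-1} S \mathcal{D}^{-1}$ extends to a trace-class operator, we can factor $S = \mathcal{D} (\mathcal{D}^{-1} S \mathcal{D}^{-1}) \mathcal{D}$ on the appropriate domain; writing $R \coloneqq \mathcal{D}^{-1} S \mathcal{D}^{-1}$ and further decomposing the positive trace-class operator's spectral expansion $R = \sum_k \mu_k \, e_k \otimes e_k$ (or, if $R$ is not self-adjoint, its SVD — but the statement's right-hand side $\tr(R)$ suggests we track the trace, so the spectral/SVD bookkeeping must be done carefully), one reduces to the case $S = \mathcal{D}(e \otimes e)\mathcal{D} = (\mathcal{D}e) \otimes (\mathcal{D}e)$, i.e.\ $S$ is the rank-one operator associated to the wavelet $\eta \coloneqq \mathcal{D}e$. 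Crucially $\eta \in \mathrm{dom}(\mathcal{D}^{-1})$ precisely because $e \in \mathrm{dom}(\mathcal{D})$ would be the wrong reading — rather, $\mathcal{D}^{-1}\eta = e \in L^2$, so $\eta$ is an admissible wavelet with $\|\mathcal{D}^{-1}\eta\|^2 = \|e\|^2$. For such $S = \eta \otimes \eta$ we get
\[
T \star_{\aff} S(x,a) = \bigl\langle U(-x,a)^* (\eta \otimes \eta) U(-x,a)\phi, \phi\bigr\rangle = \bigl|\langle \phi, U(-x,a)^* \eta\rangle_{L^2(\mathbb{R}_+)}\bigr|^2,
\]
and integrating against $a^{-1}\,dx\,da$ and invoking \eqref{eq: addmissible function orthogonality} yields $\|\phi\|^2 \|\mathcal{D}^{-1}\eta\|^2 = \|\phi\|^2 \|e\|^2 = \tr(T)\tr(R)$, which is the claim in this reduced case. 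Summing over the spectral expansion of $R$ and then over the SVD of $T$ (justified by Fubini/Tonelli thanks to positivity in the rank-one steps and absolute summability of singular values in general) gives the full identity.

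The main obstacle will be the functional-analytic bookkeeping rather than any single hard estimate: one must justify the interchange of the $\aff$-integral with the (possibly infinite) sums coming from the decompositions of $T$ and of $R = \mathcal{D}^{-1}S\mathcal{D}^{-1}$, and one must make sure the formal factorization $S = \mathcal{D} R \mathcal{D}$ is used only on vectors where it is literally valid (namely $\mathcal{D}$ applied to an eigenvector $e_k \in \mathrm{dom}(\mathcal{D})$, which holds because $R$ maps into $L^2$ and we only ever feed it such $e_k$). A clean way to organize this is: (i) prove the $L^1_r(\aff)$-membership and the bound $\|T \star_{\aff} S\|_{L^1_r} \le \|T\|_{\mathrm{tr}} \,\|\mathcal{D}^{-1}S\mathcal{D}^{-1}\|_{\mathrm{tr}}$ first, by the rank-one computation plus Tonelli, so that everything afterwards lives in $L^1_r(\aff)$; (ii) then compute the integral on the dense set of finite-rank $T$ and finite-rank $S$ (of the special form above) where all sums are finite; (iii) pass to the limit using the bound from (i) and the continuity of $\tr$. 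I would also double-check the normalization constant implicit in \eqref{eq: addmissible function orthogonality} — the representation $U$ here has $\mathcal{D}^{-1}\psi(r) = r^{-1/2}\psi(r)$ with the measure $r^{-1}\,dr$ on $L^2(\mathbb{R}_+)$, so I expect the constant to be exactly $1$ as written, but this is worth verifying against the Duflo--Moore theory to ensure no stray factor of $2\pi$ or $\log$ sneaks in.
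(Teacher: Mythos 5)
Your overall architecture (singular-value decompositions of $T$ and of $R\coloneqq\mathcal{D}^{-1}S\mathcal{D}^{-1}$, the classical orthogonality relation, then Fubini and a density/limiting step) is the same as the paper's, but your central reduction has a genuine gap. You decompose $R=\sum_k\mu_k\,e_k\otimes f_k$ and then split $S$ into the rank-one pieces $\mathcal{D}(e_k\otimes f_k)\mathcal{D}=(\mathcal{D}e_k)\otimes(\mathcal{D}f_k)$, which requires $e_k,f_k\in\mathrm{dom}(\mathcal{D})$. Nothing in the definition of admissibility guarantees this: the singular vectors of $R$ are merely some orthonormal vectors in $L^2(\mathbb{R}_+)$, and $\mathrm{dom}(\mathcal{D})$ is dense but not closed, so they can fall outside it; your parenthetical justification (``because $R$ maps into $L^2$ and we only ever feed it such $e_k$'') is not a justification --- $R$ mapping into $L^2$ says nothing about where its eigen-/singular vectors live. (For rank-one $S=\eta\otimes\eta$ the vector $\mathcal{D}^{-1}\eta$ does lie in $\mathrm{dom}(\mathcal{D})$, which is why the reduction looks harmless, but for general admissible $S$ the singular vectors of $R$ are infinite linear combinations of such vectors and may leave $\mathrm{dom}(\mathcal{D})$.) Since your $L^1_r(\aff)$ bound in step (i) is itself derived from this rank-one computation, the gap propagates through the whole plan, including step (iii), where identifying the $L^1_r$-limit of $T\star_\aff(\mathcal{D}R_N\mathcal{D})$ with $T\star_\aff S$ again needs a pointwise statement that only makes sense after the unbounded factors have been moved off $S$.

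The paper (Theorem \ref{thm:operator_orthogonality_relation} and Corollary \ref{cor:admissible_condition}) moves the unbounded factors the other way, which is exactly the repair your argument needs. First take $T=\psi\otimes\phi$ with $\psi,\phi\in\mathrm{dom}(\mathcal{D})$; then by Lemma \ref{lem: convolution_of_rank_one} and the commutation relation \eqref{eq:comrelD}, $T\star_\aff S(x,a)=\langle R\,\mathcal{D}U(-x,a)\psi,\mathcal{D}U(-x,a)\phi\rangle_{L^2(\mathbb{R}_+)}=a^{-1}\langle R\,U(-x,a)\mathcal{D}\psi,U(-x,a)\mathcal{D}\phi\rangle_{L^2(\mathbb{R}_+)}$, and now the SVD of $R$ is expanded with no domain condition on its singular vectors: in the orthogonality relation (Proposition \ref{prop:admissible}; note you need its polarized form, and Cauchy--Schwarz rather than Tonelli, since the mixed rank-one terms are not positive when $R$ is not) the admissible windows are $\mathcal{D}\psi$ and $\mathcal{D}\phi$, automatically admissible because $\mathcal{D}^{-1}\mathcal{D}\psi=\psi\in L^2(\mathbb{R}_+)$, while the factor $a^{-1}$ is absorbed by the substitution $(x,a)\mapsto(x,a)^{-1}$ exchanging left and right Haar measure. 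Two limiting steps (general $\psi,\phi\in L^2(\mathbb{R}_+)$ by density of $\mathrm{dom}(\mathcal{D})$, then general trace-class $T$ via its SVD and the $L^1_r$ bound) finish the proof, together with the observation that $S=\mathcal{D}R\mathcal{D}$ on $\mathrm{dom}(\mathcal{D})$. Your normalization worry is unfounded: with the paper's conventions the constant in the orthogonality relation is exactly $1$.
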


Determining whether an operator is admissible or not can be a daunting task. We managed in Corollary \ref{corr:admissibility_check} to find an elegant characterization in terms of operator convolutions of admissible operators that are additionally positive trace-class operators.

\begin{introthm}
Let $S$ be a non-zero, positive trace-class operator. Then $S$ is admissible if and only if $S\star_\aff S\in L_{r}^{1}(\aff)$.    
\end{introthm}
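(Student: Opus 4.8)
The plan is to treat the two implications separately. The forward implication is immediate: if $S$ is admissible and positive trace-class then, being trace-class, it may be inserted as the argument $T$ in the operator orthogonality relation proved earlier (Corollary~\ref{cor:admissible_condition}), which gives $S\star_\aff S \in L_r^1(\aff)$ at once, with $\int_\aff S\star_\aff S(x,a)\,\frac{dx\,da}{a} = \tr(S)\,\tr(\mathcal{D}^{-1}S\mathcal{D}^{-1})$. The content of the statement is therefore the reverse implication, and the idea there is to use the integrability of $S\star_\aff S$ to reconstruct the would-be operator $\mathcal{D}^{-1}S\mathcal{D}^{-1}$ from the spectral data of $S$.

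So write the spectral decomposition $S = \sum_n \lambda_n\, \psi_n\otimes\psi_n$ with $\lambda_n > 0$, $\{\psi_n\}$ orthonormal and $\sum_n \lambda_n = \tr(S) < \infty$, and fix an index $m_0$ with $\lambda_{m_0}>0$, which exists since $S\neq 0$. Expanding both copies of $S$ inside the trace and using $U(-x,a)^*(\psi_n\otimes\psi_n)U(-x,a) = (U(-x,a)^*\psi_n)\otimes(U(-x,a)^*\psi_n)$ yields the pointwise expansion
\[
S\star_\aff S(x,a) = \sum_{m,n} \lambda_m\lambda_n\,\bigl|\langle \psi_m, U(-x,a)^*\psi_n\rangle_{L^2(\mathbb{R}_+)}\bigr|^2,
\]
a sum of non-negative terms; hence $S\star_\aff S\in L_r^1(\aff)$ is equivalent to finiteness of its integral, which by Tonelli's theorem equals $\sum_{m,n}\lambda_m\lambda_n \int_\aff |\langle\psi_m,U(-x,a)^*\psi_n\rangle|^2\,\frac{dx\,da}{a}$. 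Assuming $S\star_\aff S\in L_r^1(\aff)$, every term is finite, so in particular $\int_\aff |\langle \psi_{m_0}, U(-x,a)^*\psi_n\rangle|^2\,\frac{dx\,da}{a} < \infty$ for all $n$. By the Duflo--Moore orthogonality relations \cite{duflo1976} for the square-integrable representation $U$ -- in the sharp form that this integral is infinite whenever $\psi_n\notin\mathrm{dom}(\mathcal{D}^{-1})$ -- each $\psi_n$ is an admissible wavelet and the integral equals $\|\mathcal{D}^{-1}\psi_n\|^2$ by \eqref{eq: addmissible function orthogonality}. Substituting back gives $\int_\aff S\star_\aff S\,\frac{dx\,da}{a} = \tr(S)\sum_n \lambda_n\|\mathcal{D}^{-1}\psi_n\|^2 < \infty$, whence, since $\tr(S)>0$, the crucial summability $\sum_n \lambda_n\|\mathcal{D}^{-1}\psi_n\|_{L^2(\mathbb{R}_+)}^2 < \infty$.

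It remains to turn this summability into admissibility of $S$. Set $\tilde S \coloneqq \sum_n \lambda_n\,(\mathcal{D}^{-1}\psi_n)\otimes(\mathcal{D}^{-1}\psi_n)$; each summand is positive with trace norm $\lambda_n\|\mathcal{D}^{-1}\psi_n\|^2$, so the partial sums are Cauchy in trace norm and $\tilde S$ is a positive trace-class operator. Using that $\mathcal{D}^{-1}$ is self-adjoint and closed, together with the Cauchy--Schwarz bound $\sum_n\lambda_n|\langle\phi,\psi_n\rangle|\,\|\mathcal{D}^{-1}\psi_n\| \le \|S\|^{1/2}\|\phi\|\bigl(\sum_n\lambda_n\|\mathcal{D}^{-1}\psi_n\|^2\bigr)^{1/2}$, one verifies first that $S$ maps $\mathrm{dom}(\mathcal{D})$ -- in fact all of $L^2(\mathbb{R}_+)$ -- into $\mathrm{dom}(\mathcal{D}^{-1})$, and then, applying the same closedness argument to $S\mathcal{D}^{-1}\phi$ for $\phi\in\mathrm{dom}(\mathcal{D}^{-1})$, that $\mathcal{D}^{-1}S\mathcal{D}^{-1}$ coincides with $\tilde S$ on the dense domain $\mathrm{dom}(\mathcal{D}^{-1})$. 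Thus $\mathcal{D}^{-1}S\mathcal{D}^{-1}$ is bounded there and extends to the trace-class operator $\tilde S$, which is precisely the definition of $S$ being admissible.

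The conceptual crux is the sharp Duflo--Moore dichotomy, namely that finiteness of the wavelet-type integral for one nonzero vector already forces that vector into $\mathrm{dom}(\mathcal{D}^{-1})$; this is what promotes the identity \eqref{eq: addmissible function orthogonality} to a genuine test for admissibility. The main technical obstacle I expect is the bookkeeping of the unbounded operator $\mathcal{D}^{-1}$ in the last step: one must push it through the (possibly infinite) spectral sum for $S$ using closedness rather than continuity, and check that $\mathrm{dom}(\mathcal{D}^{-1})$ really is the natural domain of the composition $\mathcal{D}^{-1}S\mathcal{D}^{-1}$ before identifying that composition with $\tilde S$.
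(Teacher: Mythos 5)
Your argument is correct and follows essentially the same route as the paper: the forward direction via Corollary \ref{cor:admissible_condition} with $T=S$, and the converse by expanding $S\star_\aff S$ in the spectral decomposition, integrating with Tonelli and the Duflo--Moore orthogonality relations to get $\sum_n\lambda_n\|\mathcal{D}^{-1}\psi_n\|^2<\infty$, and then identifying $\mathcal{D}^{-1}S\mathcal{D}^{-1}$ with the trace-class operator $\sum_n\lambda_n(\mathcal{D}^{-1}\psi_n)\otimes(\mathcal{D}^{-1}\psi_n)$ via closedness of $\mathcal{D}^{-1}$. The last step is exactly the converse half of Proposition \ref{prop:specdec}, which the paper cites rather than re-derives, and your explicit appeal to the Duflo--Moore dichotomy just makes precise a point the paper's proof of Corollary \ref{corr:admissibility_check} leaves implicit.
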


The following result is derived in Section \ref{sec: Admissibility as a Measure of Non-Unimodularity} and uses the affine Weyl quantization to show that admissibility is an operator manifestation of the non-unimodularity of the affine group.

\begin{introthm}
\hfill
\begin{itemize}
    \item  Let $f \in L^1_r(\aff)$ be such that $A_f$ is a trace-class operator on $L^2(\mathbb{R}_+)$. Then 
	 \begin{equation*}
     \tr(A_f)=\int_\aff f(x,a)\,\frac{dx\, da}{a}.
 \end{equation*}
 \item Let $g\in L_{l}^{1}(\aff) \coloneqq L^1(\aff, a^{-2}dx\,da)$ be such that $A_g$ is an admissible Hilbert-Schmidt operator. Then 
	\begin{equation*}
		\tr\left(\mathcal{D}^{-1}A_g\mathcal{D}^{-1}\right) = \int_\aff g(x,a) \, \frac{dx \, da}{a^2}.
	\end{equation*}
\end{itemize}
\end{introthm}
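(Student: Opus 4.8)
The plan is to prove the two trace formulas by reducing them to the convolution identities from Theorem A (i.e., Propositions \ref{function-operator-convolution-quantization} and \ref{prop:convolutionasweyl}) together with the integrated orthogonality relation of the admissible operator Theorem (Corollary \ref{cor:admissible_condition}). The guiding principle is that the affine parity operator $P_{\aff}$ plays the role for $\aff$ that the usual parity operator plays in Werner's theory, so both formulas should follow by evaluating suitable operator convolutions against $P_{\aff}$ and using $A_g = g \star_\aff P_{\aff}$.

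For the first bullet I would start from $A_f = f \star_{\aff} P_{\aff}$ (the third introductory theorem) and compute $\tr(A_f) = \tr(f \star_{\aff} P_{\aff})$. Expanding the function-operator convolution as an integral, $f\star_\aff P_\aff = \int_\aff f(x,a)\, U(-x,a)^\ast P_\aff U(-x,a)\,\frac{dx\,da}{a}$, and interchanging trace and integral (justified by $f \in L^1_r(\aff)$ and $P_\aff$ having the relevant mapping properties, since $A_f$ is assumed trace-class), the question reduces to computing $\tr\big(U(-x,a)^\ast P_\aff U(-x,a)\big) = \tr(P_\aff)$. So the crux is $\tr(P_\aff) = 1$. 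I would verify this either from the explicit kernel of $P_{\aff} = A_{\delta_{(0,1)}}$ coming from the affine Wigner distribution \eqref{eq: affine_wigner_introduction} specialized at $(0,1)$, or — more cleanly — by noting that $P_\aff \star_\aff T (x,a) = \tr(P_\aff U(-x,a)^\ast T U(-x,a))$ and combining with $A_g \star_\aff A_f = f \ast_\aff \check g$: taking $g = \delta_{(0,1)}$ formally gives $P_\aff \star_\aff A_f = f \ast_\aff \delta_{(0,1)}^\vee = f$, and then integrating against $\frac{dx\,da}{a}$ and using the admissible-operator trace formula closes the loop. Either route should give $\tr(A_f) = \int_\aff f(x,a)\,\frac{dx\,da}{a}$; I would present whichever is least technical given the precise statements available in Sections \ref{sec: Affine Operator Convolutions} and \ref{sec: Operator Convolution for Tempered Distributions}.

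For the second bullet, the target is a formula for $\tr(\mathcal{D}^{-1}A_g\mathcal{D}^{-1})$, which is exactly the quantity appearing in the admissible-operator Theorem (Corollary \ref{cor:admissible_condition}) with $S = A_g$. Applying that corollary with $T$ chosen so that $\tr(T) = 1$ — for instance $T$ a rank-one projection $\psi \otimes \psi$ with $\|\psi\|_{L^2(\mathbb{R}_+)} = 1$ — gives $\int_\aff T \star_\aff A_g (x,a)\,\frac{dx\,da}{a} = \tr(\mathcal{D}^{-1}A_g\mathcal{D}^{-1})$. Now I would rewrite $T \star_\aff A_g$ as a function-convolution: since $T = \psi \otimes \psi = A_{W_\aff^{\psi,\psi}}$ (by definition of the affine Weyl quantization applied to a rank-one operator), the identity $A_h \star_\aff A_g = g \ast_\aff \check h$ from Theorem A gives $T \star_\aff A_g = g \ast_\aff (W_\aff^{\psi,\psi})^\vee$. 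Integrating this against the right Haar measure and using the standard fact that $\int_\aff (h_1 \ast_\aff h_2)\,\frac{dx\,da}{a} = \big(\int_\aff h_1 \,\frac{dx\,da}{a}\big)\big(\int_\aff h_2\,\frac{dx\,da}{a}\big)$ when integrability permits, one is left with $\big(\int_\aff g(x,a)\,\mu(x,a)\big)\cdot\big(\int_\aff (W_\aff^{\psi,\psi})^\vee\,\frac{dx\,da}{a}\big)$. The point is that $\int_\aff (W_\aff^{\psi,\psi})^\vee \,\frac{dx\,da}{a}$ must be computed against the \emph{left} Haar density, not the right one, because of the Jacobian of the inversion map $(x,a)\mapsto(x,a)^{-1}$ — and this is precisely the mechanism that converts $a^{-1}dx\,da$ into $a^{-2}dx\,da$ and produces the claimed $\int_\aff g \,\frac{dx\,da}{a^2}$. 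One still needs that $\int_\aff W_\aff^{\psi,\psi}(x,a)\,(\text{appropriate measure}) = \|\psi\|^2 = 1$, which I would get from the moment/marginal properties of the affine Wigner distribution in \eqref{eq: affine_wigner_introduction} (a direct substitution $u \mapsto$ new variable and recognizing a Fourier inversion at frequency $0$).

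The main obstacle I anticipate is bookkeeping the non-unimodularity: keeping careful track of which Haar measure ($a^{-1}dx\,da$ versus $a^{-2}dx\,da$) appears at each step, how the inversion map $\check{\ }$ interacts with each, and ensuring all interchanges of trace with integration and of the various convolutions are justified under the stated hypotheses ($f \in L^1_r$ with $A_f$ trace-class in the first bullet; $g \in L^1_l$ with $A_g$ an admissible Hilbert–Schmidt operator in the second). In particular, in the second bullet the Hilbert–Schmidt hypothesis on $A_g$ is what makes $A_h \star_\aff A_g = g \ast_\aff \check h$ applicable and the resulting function integrable, while admissibility is exactly what makes $\tr(\mathcal{D}^{-1}A_g\mathcal{D}^{-1})$ meaningful and equal to the integral via Corollary \ref{cor:admissible_condition}; I would state a short lemma isolating the measure-transformation identity $\int_\aff \check h(x,a)\,\frac{dx\,da}{a} = \int_\aff h(x,a)\,\frac{dx\,da}{a^2}$ so that the two bullets become nearly symmetric. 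Everything else is a routine Fubini-type computation once $\tr(P_\aff)$ and the $W_\aff$-marginal are pinned down.
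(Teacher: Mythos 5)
Your plan for the \emph{second} bullet is in essence the paper's own proof of Proposition \ref{prop:admissibilityandweylsymbol}: pair $A_g$ with a rank-one operator $T=\psi\otimes\psi$, use Corollary \ref{cor:admissible_condition} on one side and Proposition \ref{prop:convolutionasweyl} plus Fubini on the other, and finish with the Wigner marginal. However, your bookkeeping of the inversion is flipped, and this is exactly where the $a^{-2}$ has to come from. Proposition \ref{prop:convolutionasweyl} gives $T\star_\aff A_g = W_\aff^{\psi}\ast_\aff \check g$, so integrating against the right Haar measure factorizes as $\left(\int_\aff W_\aff^{\psi}\,\frac{dx\,da}{a}\right)\left(\int_\aff g\,\frac{dx\,da}{a^2}\right)$: the inversion lands on $g$ and turns its measure into the left Haar measure, while the Wigner factor is integrated against the \emph{right} Haar measure, where \eqref{eq:first_marginal_property} yields $\|\psi\|_{L^2(\mathbb{R}_+)}^2=\tr(T)$. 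Your sketch instead has $T\star_\aff A_g = g\ast_\aff(W_\aff^{\psi})^{\vee}$; carried through, this leaves $\left(\int_\aff g\,\frac{dx\,da}{a}\right)\left(\int_\aff W_\aff^{\psi}\,\frac{dx\,da}{a^2}\right)$, and the identity you would then need, $\int_\aff W_\aff^{\psi}\,\frac{dx\,da}{a^2}=1$, is false: that integral is $\|\mathcal{D}^{-1}\psi\|_{L^2(\mathbb{R}_+)}^2$ (it need not even be finite), and the wrong measure ends up on $g$. The fix is simply to apply Proposition \ref{prop:convolutionasweyl} with the factors in the stated order; with that correction your argument coincides with the paper's.

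For the \emph{first} bullet both of your routes have genuine gaps. The affine parity operator $P_\aff$ is unbounded and densely defined; it is not trace-class, so $\tr(P_\aff)$ is undefined, each operator $U(-x,a)^*P_\aff U(-x,a)=R_\aff(x,a)$ is likewise not trace-class, and $f\star_\aff P_\aff$ is only defined distributionally via Definition \ref{def: convolution_tempered} (for $f\in\mathscr{S}(\aff)$, not for general $f\in L^1_r(\aff)$), not as a trace-class-valued Bochner integral. Hence the interchange of trace and integral that would reduce everything to $\tr(P_\aff)=1$ has no justification: trace-classness of the total operator $A_f$ does not allow you to take the trace inside an integral of non-trace-class operators. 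The alternative route is circular: even granting the formal identity $P_\aff\star_\aff A_f=f$, passing from $\int_\aff P_\aff\star_\aff A_f\,\frac{dx\,da}{a}=\int_\aff f\,\frac{dx\,da}{a}$ to $\tr(A_f)$ would require Corollary \ref{cor:admissible_condition} with the fixed operator equal to $P_\aff$, i.e.\ admissibility of $P_\aff$ ($\mathcal{D}^{-1}P_\aff\mathcal{D}^{-1}\in\mathcal{S}_1$), which is unavailable since $P_\aff$ is not even bounded; identifying that integral with $\tr(A_f)$ is essentially the statement to be proved. The paper avoids $P_\aff$ entirely here: as indicated in the remark following Proposition \ref{prop:admissibilityandweylsymbol}, the formula $\tr(A_f)=\int_\aff f\,\frac{dx\,da}{a}$ is obtained by the same pairing argument as the second bullet with the roles reversed — convolve $A_f$ against a fixed admissible rank-one operator $\varphi\otimes\varphi$, apply Corollary \ref{cor:admissible_condition} and Proposition \ref{prop:convolutionasweyl}, factor the integral as above, and cancel the constant $\tr(\mathcal{D}^{-1}(\varphi\otimes\varphi)\mathcal{D}^{-1})=\|\mathcal{D}^{-1}\varphi\|_{L^2(\mathbb{R}_+)}^2$. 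You should rework the first bullet along those lines.
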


\subsubsection*{Relationship with Fourier Transforms}
For completeness, we will also investigate how notions of Fourier transforms on the affine group fit into the theory, and use known results from abstract harmonic analysis to explore the relationship between affine Weyl quantization and affine Fourier transforms. Recall that the \textit{integrated representation} $U(f)$ of $f \in L_{l}^{1}(\aff)$ is the operator on $L^{2}(\mathbb{R}_{+})$ given by \[U(f)\psi \coloneqq \int_{\aff}f(x,a) U(x,a)\psi \, \frac{dx \, da}{a^2}, \qquad \psi \in L^{2}(\mathbb{R}_{+}).\]
We define the following operator Fourier transform in the affine setting.
\begin{definition*}
The \textit{affine Fourier-Wigner transform} is the isometry $\mathcal{F}_{W}$ sending a Hilbert-Schmidt operator on $L^{2}(\mathbb{R}_{+})$ to a function in $L_{r}^{2}(\aff)$ such that \[\mathcal{F}_W^{-1}(f) = U(\check{f}) \circ \mathcal{D}, \qquad f \in \mathrm{Im}(\mathcal{F}_{W}) \cap L_{r}^{1}(\aff).\]
\end{definition*}

The following result is proved in Proposition \ref{prop:equvalence_right} and provides a connection between the affine Fourier-Wigner transform and admissibility.

\begin{introthm}
    Let $A$ be a trace-class operator on $L^{2}(\mathbb{R}_{+})$. The following are equivalent:
    \begin{enumerate}[1)]
        \item $\mathcal{F}_W(A\mathcal{D}^{-1})\in L^2_r(\aff)$.
        \item $A\mathcal{D}^{-1}$ extends from $\mathrm{dom}(\mathcal{D}^{-1})$ to a Hilbert-Schmidt operator on $L^2(\mathbb{R}_+)$.
        \item $A^*A$ is admissible.
    \end{enumerate}
\end{introthm}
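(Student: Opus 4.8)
The plan is to prove the three-way equivalence by establishing $(2)\Leftrightarrow(3)$ through Hilbert-space bookkeeping with the Duflo--Moore operator $\mathcal{D}$, and $(1)\Leftrightarrow(2)$ through the fact that $\mathcal{F}_{W}$ is a unitary bijection from the Hilbert--Schmidt operators on $L^{2}(\mathbb{R}_{+})$ onto $L^{2}_{r}(\aff)$. Since $A$ is bounded, the operator $A\mathcal{D}^{-1}$ is defined precisely on $\mathrm{dom}(\mathcal{D}^{-1})$, and I read ``$A\mathcal{D}^{-1}$ extends to a Hilbert--Schmidt operator'' as: $A\mathcal{D}^{-1}$ is bounded on $\mathrm{dom}(\mathcal{D}^{-1})$ and its unique bounded extension $\widetilde{T}$ is Hilbert--Schmidt. (I will also assume $A\neq 0$; the zero operator is excluded from admissibility by definition, although $(1)$ and $(2)$ hold for it.)

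For $(2)\Leftrightarrow(3)$ the ingredients are the adjoint identity $(A\mathcal{D}^{-1})^{*}=\mathcal{D}^{-1}A^{*}$ (valid because $A$ is bounded and $\mathcal{D}^{-1}$ self-adjoint), the elementary fact that $\phi\in\mathrm{dom}(\mathcal{D}^{-1})$ forces $\mathcal{D}^{-1}\phi\in\mathrm{dom}(\mathcal{D})$, and the standard criterion that a bounded operator $\widetilde{T}$ is Hilbert--Schmidt if and only if $\widetilde{T}^{*}\widetilde{T}$ is trace class. Assuming $(3)$: once $S=A^{*}A$ maps $\mathrm{dom}(\mathcal{D})$ into $\mathrm{dom}(\mathcal{D}^{-1})$, one computes $\langle \mathcal{D}^{-1}A^{*}A\mathcal{D}^{-1}\phi,\phi\rangle=\|A\mathcal{D}^{-1}\phi\|^{2}$ for $\phi\in\mathrm{dom}(\mathcal{D}^{-1})$, so the boundedness clause of admissibility makes $A\mathcal{D}^{-1}$ bounded on its domain and produces the extension $\widetilde{T}$; then $\mathcal{D}^{-1}A^{*}A\mathcal{D}^{-1}$ agrees with $\widetilde{T}^{*}\widetilde{T}$ on the dense subspace $\mathrm{dom}(\mathcal{D}^{-1})$, so the trace-class clause forces $\widetilde{T}^{*}\widetilde{T}$ to be trace class, hence $\widetilde{T}$ Hilbert--Schmidt, i.e.\ $(2)$. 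Conversely, assuming $(2)$: the inclusion $\widetilde{T}^{*}\subseteq(A\mathcal{D}^{-1})^{*}=\mathcal{D}^{-1}A^{*}$ together with $\widetilde{T}^{*}$ being everywhere defined forces $\mathcal{D}^{-1}A^{*}=\widetilde{T}^{*}$, whence $A^{*}$ and so $A^{*}A$ maps $L^{2}(\mathbb{R}_{+})$ into $\mathrm{dom}(\mathcal{D}^{-1})$; then $\mathcal{D}^{-1}A^{*}A\mathcal{D}^{-1}$ equals $\widetilde{T}^{*}\widetilde{T}$ on $\mathrm{dom}(\mathcal{D}^{-1})$, hence extends to the trace-class operator $\widetilde{T}^{*}\widetilde{T}$, and since $A^{*}A\neq 0$ this says $A^{*}A$ is admissible.

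For $(1)\Leftrightarrow(2)$: the direction $(2)\Rightarrow(1)$ is immediate, as $A\mathcal{D}^{-1}$ extending to the Hilbert--Schmidt operator $\widetilde{T}$ gives $\mathcal{F}_{W}(A\mathcal{D}^{-1})=\mathcal{F}_{W}(\widetilde{T})\in L^{2}_{r}(\aff)$. For $(1)\Rightarrow(2)$, I would unwind the (weak) meaning of $\mathcal{F}_{W}(A\mathcal{D}^{-1})\in L^{2}_{r}(\aff)$: since $\mathcal{F}_{W}$ is a unitary bijection onto $L^{2}_{r}(\aff)$, this amounts to the functional $\phi\otimes\psi\mapsto\langle A\mathcal{D}^{-1}\phi,\psi\rangle$, defined on the dense subspace of Hilbert--Schmidt operators spanned by rank-one operators with $\phi\in\mathrm{dom}(\mathcal{D}^{-1})$, being bounded for the Hilbert--Schmidt norm; Hilbert--Schmidt self-duality then represents it by a Hilbert--Schmidt operator $\widetilde{T}$, and testing against rank-one operators shows $\widetilde{T}$ extends $A\mathcal{D}^{-1}$, which is $(2)$. (Alternatively, one may approximate $f:=\mathcal{F}_{W}(A\mathcal{D}^{-1})$ in $L^{2}_{r}(\aff)$ by elements of $\mathrm{Im}(\mathcal{F}_{W})\cap L^{1}_{r}(\aff)$ and use the inversion formula $\mathcal{F}_{W}^{-1}(f)=U(\check f)\circ\mathcal{D}$ to identify $\mathcal{F}_{W}^{-1}(f)$ with the extension of $A\mathcal{D}^{-1}$.)

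I expect the main obstacle to be making rigorous the meaning of $\mathcal{F}_{W}(A\mathcal{D}^{-1})$ before one knows $A\mathcal{D}^{-1}$ is Hilbert--Schmidt — that is, the weak extension of $\mathcal{F}_{W}$ and its compatibility with the unitary $\mathcal{F}_{W}$ and with the inversion formula $\mathcal{F}_{W}^{-1}(f)=U(\check f)\circ\mathcal{D}$ — followed by the domain bookkeeping in $(2)\Leftrightarrow(3)$, especially the passage from ``bounded on a dense domain'' to a genuine bounded extension and the handling of adjoints with the correct domains. The underlying Hilbert-space facts, namely that a bounded operator $T$ is Hilbert--Schmidt iff $T^{*}T$ is trace class and that bounded functionals on the Hilbert--Schmidt operators are represented by Hilbert--Schmidt operators, are standard and may simply be cited.
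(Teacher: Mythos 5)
Your equivalence $(2)\Leftrightarrow(3)$ is correct and is essentially the paper's own argument: the adjoint identity $(A\mathcal{D}^{-1})^{*}=\mathcal{D}^{-1}A^{*}$ (with equality of domains), the computation $\|A\mathcal{D}^{-1}\psi\|^{2}=\langle \mathcal{D}^{-1}A^{*}A\mathcal{D}^{-1}\psi,\psi\rangle$ on $\mathrm{dom}(\mathcal{D}^{-1})$, and the criterion that a bounded operator is Hilbert--Schmidt iff its ``square'' is trace class. No complaints there.

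The gap is in $(1)\Leftrightarrow(2)$, and it comes from your interpretation of statement 1). The paper states explicitly (in the remark preceding the proposition) that when $A\mathcal{D}^{-1}$ is not yet known to be Hilbert--Schmidt, one interprets $\mathcal{F}_{W}(A\mathcal{D}^{-1})(x,a)\coloneqq\tr(AU(x,a))$, a concrete bounded function that is defined for every trace-class $A$. Under that reading, $1)\Rightarrow 2)$ is a genuinely nontrivial statement of non-unimodular Plancherel theory: square-integrability of the matrix-coefficient-type function $(x,a)\mapsto\tr(AU(x,a))$ with respect to $\mu_{R}$ forces $A\mathcal{D}^{-1}$ to extend to a Hilbert--Schmidt operator. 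The paper gets this by citing F\"uhr's Theorem 4.15 (building on Duflo--Moore), applied to the measurable field supported on the single representation $U$. Your argument instead \emph{redefines} 1) as HS-norm boundedness of the weak functional $\psi\otimes\phi\mapsto\langle A\mathcal{D}^{-1}\phi,\psi\rangle$, at which point the equivalence with 2) is soft duality --- but this begs the question, since nothing in your sketch bridges from ``$\tr(AU(\cdot))\in L^{2}_{r}(\aff)$'' to that boundedness. The bridge cannot be soft: already for a rank-one $A=\phi\otimes\psi$ it is exactly the converse half of the Duflo--Moore theorem, namely that finiteness of $\int_{\aff}|\langle U(x,a)\phi,\psi\rangle|^{2}\,\frac{dx\,da}{a}$ forces $\psi\in\mathrm{dom}(\mathcal{D}^{-1})$; and any attempt to verify a Parseval-type pairing $\langle \tr(AU(\cdot)),\mathcal{F}_{W}(\eta\otimes\xi)\rangle_{L^{2}_{r}(\aff)}=\langle A\mathcal{D}^{-1}\xi,\eta\rangle$ by expanding $A$ in its singular value decomposition runs into the orthogonality relations of Proposition \ref{prop:admissible}, which require precisely the admissibility you are trying to establish. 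Two smaller points: $\mathcal{F}_{W}$ is unitary onto $\mathcal{Q}_{1}$, not onto all of $L^{2}_{r}(\aff)$ (the complement corresponds to the second irreducible representation on $L^{2}(\mathbb{R}_{-})$), so your alternative route of approximating $f$ inside $\mathrm{Im}(\mathcal{F}_{W})\cap L^{1}_{r}(\aff)$ presumes membership in the image that must itself be proved; and even your ``immediate'' direction $2)\Rightarrow 1)$ needs the formula $\mathcal{F}_{W}(A\mathcal{D}^{-1})(x,a)=\tr(AU(x,a))$ of Proposition \ref{prop:fourier_wigner_explicit_formula} to connect the transform of the extension to the trace function, which is again the F\"uhr/Duflo--Moore input rather than a formality.
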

Another Fourier transform of interest is the (modified) \textit{Fourier-Kirillov transform} on the affine group $\mathcal{F}_{\mathrm{KO}}$ given by
\begin{align*}
(\mathcal{F}_{\mathrm{KO}}f)(x,a)&=\sqrt{a}\int_{\mathbb{R}^{2}}f\left(\frac{v}{\lambda(-u)},e^u\right)e^{-2\pi i (xu+av)} \, \frac{du \, dv}{\sqrt{\lambda(-u)}}, \qquad f \in \mathrm{Im}(\mathcal{F}_{W}).
\end{align*}
As in quantum harmonic analysis on phase space, we have that the affine Weyl quantization is the composition of these Fourier transforms, see Proposition \ref{commutative_diagram_result}. In the affine setting we have in general that
\begin{equation*}
\mathcal{F}_W(f\star_\aff S) \neq \mathcal{F}_{KO}(f)\mathcal{F}_W(S), \qquad \mathcal{F}_{KO}(S\star_\aff T) \neq  \mathcal{F}_W(S)\mathcal{F}_W(T).
\end{equation*}
This contrasts the analogous result in Werner's original quantum harmonic analysis, see \eqref{eq: conv_fourier}. In spite of this, not all properties typically associated with the Fourier transform are lost: In Section \ref{sec: Plancherel Measure and Affine KLM conditions} we prove a quantum Bochner theorem in the affine setting. 

\subsubsection*{Main Applications}
In Section \ref{sec: Examples} we show  that affine quantum harmonic analysis provides a conceptual framework for the study of \textit{covariant integral quantizations} and a version of the \textit{Cohen class} for the affine group. In addition, we show in Section \ref{sec: Representing Localization Operators Through Affine Convolution} that if $S$ is a rank-one operator, then the study of operators $f\star_{\aff} S$ for functions $f$ on $\aff$ reduces to the study of time-scale localization operators \cite{daubechies1988localization}.

We have seen that affine Weyl quantization is given by $f\mapsto f\star_\aff P_\aff$ for $f \in \mathscr{S}(\aff)$. Inspired by this, we consider a whole class of quantization procedures: For any suitably nice operator $S$ on $L^2(\mathbb{R}_+)$ we define a quantization procedure $\Gamma_S$ for functions $f$ on $\aff$ by \[\Gamma_S(f) \coloneqq f\star_\aff S.\] This class of quantization procedures coincides with the  \textit{covariant integral quantizations} studied by Gazeau and his collaborators motivated by applications in physics, see e.g.\ \cite{gazeau2016covariant,gazeau20192d,gazeau2020signal}. Our results on affine quantum harmonic analysis are therefore also results on covariant integral quantizations. In particular, the abstract notion of admissibility of an operator $S$ implies that $\Gamma_S$ satisfies the simple property \[\Gamma_S(1)=c\cdot I_{L^2(\mathbb{R}_{+})},\] where $c$ is some constant, $I_{L^2(\mathbb{R}_+)}$ is the identity operator on $L^2(\mathbb{R}_+)$, and $1(x,a)=1$ for all $(x,a)\in \aff$.  

As the name suggests, covariant integral quantizations $\Gamma_S$ satisfy a \textit{covariance} property, namely 
\begin{equation*}
    U(-x,a)^*\Gamma_{S}(f)U(-x,a)=\Gamma_S(R_{(x,a)^{-1}}f),
\end{equation*}
where $R$ denotes right translations of functions on $\aff$. In Theorem \ref{thm:kiukas} we point out that, by a known result on covariant positive operator valued measures \cite{kiukas2006,cassinelli2003}, this covariance assumption together with other mild assumptions completely characterize the covariant integral quantizations. We have also seen that the affine cross-Wigner distribution is given for sufficiently nice $\psi, \phi$ by $W_{\mathrm{Aff}}^{\psi,\phi} = (\psi \otimes \phi) \star_{\mathrm{Aff}} P_{\mathrm{Aff}}$. Inspired by this and the description in \cite{lusk19-1} of the Cohen class of time-frequency distributions on $\mathbb{R}^{2n}$, we make the following definition. 

\begin{definition*}
A bilinear map $Q:L^2(\mathbb{R}_+)\times L^2(\mathbb{R}_+)\to L^\infty(\aff)$ belongs to the \textit{affine Cohen class} if $Q = Q_S$ for some operator $S$ on $L^2(\mathbb{R}_+)$, where
\begin{equation*}
	Q_S(\psi,\phi)(x,a) \coloneqq (\psi \otimes \phi) \star_\aff S(x,a)
	= \langle SU(-x,a)\psi,U(-x,a)\phi \rangle_{L^2(\mathbb{R}_+)}.
\end{equation*}
\end{definition*}
We will show how properties of $S$ (such as admissibility) influence properties of $Q_S$, and obtain an abstract characterization of the affine Cohen class. Readers familiar with the Cohen class on $\mathbb{R}^{2n}$ \cite{Cohen:1966} will know that it is defined in terms of convolutions with the Wigner function. In the affine setting, we have the analogous result \[Q_{A_f}(\psi,\phi)= W_{\textrm{Aff}}^{\psi, \phi} \ast_\aff \check{f}.\] 
As we explain in Proposition \ref{prop:affineclass},  the \textit{affine class of quadratic time-frequency representations} from \cite{papandreou1998quadratic} may be identified with a subclass of the affine Cohen class. 
\subsubsection*{Structure of the Paper}
In Section \ref{sec: Background Material} we recall necessary background material for completeness. In particular, Section \ref{sec: Quantum Harmonic Analysis on the Heisenberg Group} should serve as a brief reference for quantum harmonic analysis on phase space. We define affine operator convolution in Section \ref{sec: Definitions_and_Basic_Properties} and show the relationship with the affine Weyl quantization in Section \ref{sec: Relationship With Quantization}. The affine parity operator will be introduced in Section \ref{sec: Affine Grossmann-Royer Operator}, and its relationship to affine Weyl quantization will be explored in Section \ref{sec: Operator Convolution for Tempered Distributions}. We have dedicated the entirety of Section \ref{sec: Operator Admissibility} to operator admissibility. Section \ref{sec: From the Viewpoint of Representation Theory} discusses affine Weyl quantization from the viewpoint of representation theory. In particular, in Section \ref{sec: Plancherel Measure and Affine KLM conditions} we derive a Bochner type theorem for our setting. In Section \ref{sec: Representing Localization Operators Through Affine Convolution} and Section \ref{sec: Other Covariant Integral Quantizations} we relate our work to time-scale localization operators and covariant integral quantizations, respectively. Finally, in Section \ref{sec: Affine Cohen Class Distributions} we define the affine Cohen class and derive some basic properties.

\section{Preliminaries}
\label{sec: Background Material}

\textbf{Notation:} Given a Hilbert space $\mathcal{H}$ we let $\mathcal{L}(\mathcal{H})$ denote the bounded operators on $\mathcal{H}$. The notation $\mathcal{S}_{p}(\mathcal{H})$ for $1 \leq p < \infty$ will be used for the \textit{Schatten-p class operators} on $\mathcal{H}$. We remark that $\mathcal{S}_{1}(\mathcal{H})$and $\mathcal{S}_{2}(\mathcal{H})$ are respectively the trace-class operators and the Hilbert-Schmidt operators on $\mathcal{H}$. The space $\mathcal{S}_{\infty}(\mathcal{H})$ is by definition $\mathcal{L}(\mathcal{H})$ for duality reasons. When the Hilbert space in question is $\mathcal{H} = L^{2}(\mathbb{R}_{+}) \coloneqq L^{2}(\mathbb{R}_{+}, r^{-1} \, dr)$, we will simplify the notation to $\mathcal{S}_{p} \coloneqq \mathcal{S}_{p}(L^{2}(\mathbb{R}_{+}))$ for readability. We will denote by $\mathscr{S}(\mathbb{R}^{n})$ the space of Schwartz functions on $\mathbb{R}^{n}$. For a function $f$ on a group $G$, the function $\check{f}$ is defined by $\check{f}(g)=f(g^{-1})$ for all $g \in G$.

\subsection{Basic Constructions on the Affine Group}
\label{sec: Basic Constructions on the affine Group}

We begin by giving a brief introduction to the affine group and relevant constructions on it. The \textit{(reduced) affine group} $(\mathrm{Aff}, \cdot_{\mathrm{Aff}})$ is the Lie group whose underlying set is the upper half plane $\mathrm{Aff} \coloneqq \mathbb{R} \times \mathbb{R}_{+} \coloneqq \mathbb{R} \times (0, \infty)$, while the group operation is given by \[(x,a) \cdot_{\mathrm{Aff}} (y,b) \coloneqq (ay + x,ab), \qquad (x,a), (y,b) \in \mathrm{Aff}.\]
We will often neglect the subscript in the group operation to improve readability. Moreover, we use the notation $L_{(x,a)}$ and $R_{(x,a)}$ to denote respectively the \textit{left-translation} and  \textit{right-translation} by $(x,a) \in \mathrm{Aff}$, acting on a function $f:\mathrm{Aff} \to \mathbb{C}$ by
\[\left(L_{(x,a)}f\right)(y,b) \coloneqq f((x,a)^{-1} \cdot_{\mathrm{Aff}} (y,b)), \qquad \left(R_{(x,a)}f\right)(y,b) \coloneqq f((y,b) \cdot_{\mathrm{Aff}} (x,a)).\]
Recall that the \textit{translation operator} $T_{x}$ and the \textit{dilation operator} $D_{a}$ are respectively given by 
\begin{equation}
    \label{Eq:translation_operator_and_dilation_operator}
    T_{x}f(y) \coloneqq f(y - x), \qquad D_{a}f(y) \coloneqq \frac{1}{\sqrt{a}}f\left(\frac{y}{a}\right), \qquad x,y \in \mathbb{R}, \, a \in \mathbb{R}_{+}.
\end{equation}
The following computation motivates the group operation on the affine group: 
\[(T_x D_a)(T_y D_b) = T_x T_{ay} D_a D_b = T_{x + ay}D_{ab}.\] \par 
We can represent the affine group $\mathrm{Aff}$ and its Lie algebra $\mathfrak{aff}$ in matrix form 
\[\mathrm{Aff} = \left\{\begin{pmatrix} a & x \\ 0 & 1\end{pmatrix}\Big| \, a> 0, \, x \in \mathbb{R}\right\}, \quad \mathfrak{aff} = \left\{\begin{pmatrix} u & v \\ 0 & 0\end{pmatrix}\Big| \, u,v \in \mathbb{R}\right\}.\]
The Lie algebra structure of $\mathfrak{aff}$ is completely determined by 
\begin{equation}
\label{eq:affine_lie_algebra_structure}
    \left[\begin{pmatrix} 1 & 0 \\ 0 & 0\end{pmatrix}, \begin{pmatrix} 0 & 1 \\ 0 & 0\end{pmatrix}\right] = \begin{pmatrix} 0 & 1 \\ 0 & 0\end{pmatrix}.
\end{equation}
An important feature of the affine group is that it is non-unimodular; the left and right Haar measures are respectively given by
\[\mu_{L}(x,a) = \frac{dx \, da}{a^2}, \qquad \mu_{R}(x,a) = \frac{dx \, da}{a}.\]
As such, the modular function on the affine group is given by $\Delta(x,a) = a^{-1}$. The affine group is exponential, meaning that the exponential map $\exp : \mathfrak{aff} \to \mathrm{Aff}$ given by
\[\exp \begin{pmatrix} u & v \\ 0 & 0 \end{pmatrix} = \begin{pmatrix} e^{u} & \frac{v(e^{u} - 1)}{u} \\ 0 & 1\end{pmatrix}\]
is a global diffeomorphism. Hence we can write the left and right Haar measures in exponential coordinates by the formulas 
\begin{equation}
\label{eq:lambda_function}
    \mu_{L}(x,a) = \frac{du \, dv}{\lambda(u)}, \qquad \mu_{R}(x,a) = \frac{du \, dv}{\lambda(-u)}, \qquad \lambda(u) \coloneqq \frac{ue^{u}}{e^{u} - 1}.
\end{equation}
Throughout the paper, we will heavily use the spaces $L_{l}^{p}(\mathrm{Aff}) \coloneqq L^{p}(\mathrm{Aff}, \mu_{L})$ and $L_{r}^{p}(\mathrm{Aff}) \coloneqq L^{p}(\mathrm{Aff}, \mu_{R})$ for $1 \leq p \leq \infty$. 

\subsection{Quantum Harmonic Analysis on the Heisenberg Group}
\label{sec: Quantum Harmonic Analysis on the Heisenberg Group}
Before delving into quantum harmonic analysis on the affine group, it is advantageous to review the Heisenberg setting, originally introduced by Werner \cite{werner1984}. There are three primary constructions that appear: 
\begin{inparaenum}[(a)] 
    \item A quantization scheme,
    \item an integrated representation, and
    \item a way to define convolution that incorporates operators.
\end{inparaenum}
We give a brief overview of these three constructions and refer the reader to \cite{werner1984,grochenig2013foundations,Luef2018} for more details. 

\subsubsection{Weyl Quantization}
\label{sec: Weyl Quantization}
The \textit{cross-Wigner distribution} of $\phi,\psi \in L^2(\mathbb{R}^{n})$ is given by 
\begin{equation*}
    W(\phi,\psi)(x,\omega) \coloneqq \int_{\mathbb{R}^n}\phi\left(x + \frac{t}{2}\right)\overline{\psi\left(x - \frac{t}{2}\right)}e^{-2\pi i \omega t} \, dt, \quad (x,\omega) \in \mathbb{R}^{2n}.
\end{equation*}
When $\phi = \psi$ we refer to $W\phi \coloneqq W(\phi,\phi)$ as the \textit{Wigner distribution} of $\phi \in L^{2}(\mathbb{R}^{n})$. The cross-Wigner distribution satisfies the orthogonality relation
\begin{equation*}
    \langle W(\phi_1,\psi_1), W(\phi_2,\psi_2) \rangle_{L^{2}(\mathbb{R}^{2n})} = \langle \phi_1, \phi_2 \rangle_{L^{2}(\mathbb{R}^n)}\overline{\langle \psi_1, \psi_2 \rangle}_{L^{2}(\mathbb{R}^n)}, \quad \phi_1, \phi_2, \psi_1, \psi_2 \in L^{2}(\mathbb{R}^{n}).
\end{equation*}
Moreover, the Wigner distribution satisfies the marginal properties 
\begin{equation*}
    \int_{\mathbb{R}^{n}}W\phi(x,\omega) \, d\omega = |\phi(x)|^2, \qquad \int_{\mathbb{R}^{n}}W\phi(x,\omega) \, dx = |\hat{\phi}(x)|^2,
\end{equation*}
for $\phi \in \mathscr{S}(\mathbb{R}^{n})$. \par Our primary interest in the cross-Wigner distribution stems from the following connection: For each $f \in L^{2}(\mathbb{R}^{2n})$ we define the operator 
$L_{f}: L^{2}(\mathbb{R}^{n}) \to L^{2}(\mathbb{R}^{n})$ by the formula 
\begin{equation*}
    \langle L_{f}\phi,\psi \rangle_{L^{2}(\mathbb{R}^{n})} = \langle f, W(\psi,\phi) \rangle_{L^{2}(\mathbb{R}^{2n})}, \qquad \phi,\psi \in L^{2}(\mathbb{R}^{n}).
\end{equation*}
Then $L_{f}$ is the \textit{Weyl quantization} of $f$, see \cite[Ch. 14]{grochenig2013foundations} for details. It is a non-trivial fact, see \cite{pool1966mathematical}, that the Weyl quantization gives a well-defined isomorphism between $L^{2}(\mathbb{R}^{2n})$ and $\mathcal{S}_{2}(L^{2}(\mathbb{R}^{n}))$, the space of Hilbert-Schmidt operators on $L^{2}(\mathbb{R}^{n})$. 

\subsubsection{Integrated Schr\"{o}dinger Representation}
\label{sec: Integrated Schrodinger Representation}
Recall that the Heisenberg group $\mathbb{H}^{n}$ is the Lie group with underlying manifold $\mathbb{R}^{n} \times \mathbb{R}^{n} \times \mathbb{R}$ and with the group multiplication 
\begin{equation*}
    (x,\omega,t) \cdot (x',\omega',t') \coloneqq \left(x + x', \omega + \omega', t + t' + \frac{1}{2}\left(x' \omega - x \omega' \right)\right).
\end{equation*}
The Heisenberg group is omnipresent in modern mathematics and theoretical physics, see \cite{howe1980role}. For a Hilbert space $\mathcal{H}$ we let $\mathcal{U}(\mathcal{H})$ denote the unitary operators on $\mathcal{H}$. The most important representation of the Heisenberg group is the \textit{Schr\"{o}dinger representation} $\rho: \mathbb{H}^{n} \to \mathcal{U}(L^{2}(\mathbb{R}^{n}))$ given by 
\begin{equation*}
    \rho(x,\omega,t)\phi(y) \coloneqq e^{2 \pi i t}e^{-\pi i x \omega}M_{\omega}T_{x}\phi(y),
\end{equation*}
where $T_{x}$ is the $n$-dimensional analogue of the translation operator defined in \eqref{Eq:translation_operator_and_dilation_operator} and $M_{\omega}$ is the \textit{modulation operator} given by 
\begin{equation*}
    M_{\omega}\phi(y) \coloneqq e^{2 \pi i \omega y}\phi(y), \qquad \phi \in L^{2}(\mathbb{R}^{n}).
\end{equation*}
The Schr\"{o}dinger representation is both irreducible and unitary. Let us use the abbreviated notation $z \coloneqq (x, \omega) \in \mathbb{R}^{2n}$ and $\pi(z) = M_{\omega}T_{x}$. Ignoring the central variable $t$, we can consider the \textit{integrated Schr\"{o}dinger representation} $\rho: L^{1}(\mathbb{R}^{2n}) \to \mathcal{L}(L^{2}(\mathbb{R}^{n}))$ given by 
\begin{equation}
\label{schrodinger_rep_def}
    \rho(f) = \int_{\mathbb{R}^{2n}} f(z) e^{- \pi i x \omega} \pi(z) \, dz,
\end{equation}
where $\mathcal{L}(L^{2}(\mathbb{R}^{n}))$ denotes the bounded linear operators on $L^{2}(\mathbb{R}^{n})$. We remark that the integral in \eqref{schrodinger_rep_def} is defined weakly. It turns out, see \cite[Thm.~1.30]{folland1989harmonic},
that the integrated representation $\rho$ extends from $L^1(\mathbb{R}^{2n})\cap L^2(\mathbb{R}^{2n})$ to a unitary map
$\rho: L^{2}(\mathbb{R}^{2n}) \to \mathcal{S}_{2}(L^{2}(\mathbb{R}^{n}))$. 
\subsubsection{Operator Convolution}
\label{sec: Operator Convolution}

Given a function $f\in L^1(\mathbb{R}^{2n})$ and a trace-class operator $S \in \mathcal{S}_{1}(L^{2}(\mathbb{R}^{n}))$, their convolution is the trace-class operator on $L^{2}(\mathbb{R}^{n})$ defined by
\begin{equation*}
f \star S \coloneqq \int_{\mathbb{R}^{2n}} f(z) \pi(z)S\pi(z)^{*} \ dz.
\end{equation*}
The convolution $f \star S$ satisfies the estimate $\|f \star S\|_{\mathcal{S}_{1}} \leq \|f\|_{L^{1}}\|S\|_{\mathcal{S}_{1}}$. \par 
One can also define the convolution between two operators: For two trace-class operators $S, T \in \mathcal{S}_{1}(L^{2}(\mathbb{R}^{n}))$ we define their convolution to be the function on $\mathbb{R}^{2n}$ given by
\begin{equation*}
\label{operator_convolution_heisenberg_case}
S \star T(z) \coloneqq \tr(S\pi(z)PTP\pi(z)^*),
\end{equation*} 
where $P\psi(t) \coloneqq \psi(-t)$ is the \textit{parity operator}.  The convolution $S \star T$ satisfies the estimate $\|S \star T\|_{L^{1}} \leq \|S\|_{\mathcal{S}_{1}}\|T\|_{\mathcal{S}_{1}}$, and the important integral relation \cite[Lem.~3.1]{werner1984}
\begin{equation} \label{eq:wernerintegral}
    \int_{\mathbb{R}^{2n}} S\star T(z) \, dz = \tr(S)\tr(T).
\end{equation}
To see the connection with the Wigner distribution, we note that the cross-Wigner distribution of $\psi,\phi \in L^{2}(\mathbb{R}^{n})$ can be written as
\begin{equation}
\label{eq:wigner_through_parity}
    W(\psi,\phi) = \psi \otimes \phi \star P,
\end{equation}
where $\psi \otimes \phi$ denotes the rank-one operator on $L^{2}(\mathbb{R}^{n})$ given by \[(\psi \otimes \phi)(\xi) \coloneqq \langle \xi, \phi \rangle_{L^2(\mathbb{R}^n)} \psi\qquad\text{ for }\xi \in L^{2}(\mathbb{R}^{n}).\] Similarly, the Weyl quantization of $f \in L^{1}(\mathbb{R}^{2n})$ may be expressed in terms of operator convolutions: 
\begin{equation}
\label{eq:quantization_through_parity}
    L_{f} = f \star P.
\end{equation}
Hence convolution with the parity operator $P$ gives a convenient way to represent the Wigner distribution and the Weyl quantization. \par
Finally, there is a Fourier transform for operators: Given a trace-class operator $S \in \mathcal{S}_{1}(L^{2}(\mathbb{R}^{n}))$ we define the \textit{Fourier-Wigner transform} $\mathcal{F}_{W}(S)$ of $S$ to be the function on $\mathbb{R}^{2n}$ given by
\begin{equation} \label{eq:fwheisenberg}
    \mathcal{F}_W(S)(z) \coloneqq e^{i \pi x \omega}\tr(S\pi(z)^*), \qquad z \in \mathbb{R}^{2n}.
\end{equation}
The Fourier-Wigner transform extends to a unitary map $\mathcal{F}_{W}: \mathcal{S}_{2}(L^{2}(\mathbb{R}^{n})) \to L^{2}(\mathbb{R}^{2n})$, where it turns out the to be inverse of the integrated Schr\"{o}dinger representation given in \eqref{schrodinger_rep_def}. By \cite[Prop.~2.5]{folland1989harmonic} it is related to the Weyl transform by  the elegant formula
\begin{equation} \label{eq: heisenbergweylFW}
    f = \mathcal{F}_{\sigma}(\mathcal{F}_W(L_f)),
\end{equation}
where $\mathcal{F}_{\sigma}$ denotes the \textit{symplectic Fourier transform}.

\subsection{Affine Weyl Quantization}
\label{sec: Affine_Quantization}

We briefly describe affine Weyl quantization and how this gives rise to the affine Wigner distribution. There is a unitary representation $\pi$ of the affine group $\textrm{Aff}$ on $L^{2}(\mathbb{R}_{+}, r^{-1} \, dr)$ given by
\begin{equation}
\label{Fourier_side_action}
    U(x,a)\psi(r) \coloneqq e^{2\pi i x r}\psi(ar) = \frac{1}{\sqrt{a}}M_{x}D_{\frac{1}{a}}\psi(r), \qquad \psi \in L^{2}(\mathbb{R}_{+}, r^{-1}\, dr).
\end{equation}
Since $r^{-1}\, dr$ is the Haar measure on $\mathbb{R}_{+}$ we will write $L^{2}(\mathbb{R_{+}}) \coloneqq L^{2}(\mathbb{R}_{+}, r^{-1}\, dr)$. Later we also consider another measure on $\mathbb{R}_{+}$ and will be more explicit when the situation requires it. 

To define the quantization scheme we will utilize the \textit{Stratonovich-Weyl operator} on $L^{2}(\mathbb{R}_{+})$ given by
\begin{equation}
\label{eq:Stratonovich-Weyl operator}
    \Omega(x,a)\psi(r) \coloneqq a \int_{\mathbb{R}^2}e^{-2\pi i (xu + av)}U\left(\frac{ve^u}{\lambda(u)},e^{u}\right)\psi(r) \, du \, dv.
\end{equation}
The following result was shown in \cite{gayral2007fourier} and provides us with an affine analogue of Weyl quantization.

\begin{proposition}[\cite{gayral2007fourier}]
\label{quantization_isometry_proposition}
There is a norm-preserving isomorphism between $L_{r}^{2}(\mathrm{Aff})$ and the space of Hilbert-Schmidt operators on $L^{2}(\mathbb{R}_{+})$. The isomorphism sends $f \in L_{r}^{2}(\mathrm{Aff})$ to the operator $A_{f}$ on $L^{2}(\mathbb{R}_{+})$ defined weakly by 
\begin{equation*}
A_{f}\psi(r) \coloneqq \int_{-\infty}^{\infty} \int_{0}^{\infty}f(x,a)\Omega(x,a)\psi(r) \, \frac{da \, dx}{a}, \qquad \psi \in L^{2}(\mathbb{R}_{+}).
\end{equation*}
\end{proposition}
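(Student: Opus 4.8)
The plan is to recognize the operator $A_{f}$ in the proposition as the affine Weyl quantization defined through the affine Wigner distribution \eqref{eq: affine_wigner_introduction}, and then to extract the norm-preserving isomorphism from an orthogonality relation for that distribution. First I would compute the action of the Stratonovich--Weyl operator explicitly: inserting the formula for $U$ into \eqref{eq:Stratonovich-Weyl operator} and integrating in $v$ produces a Dirac mass, and after the substitution dictated by the exponential coordinates one finds, for $\phi,\psi$ ranging over the dense subspace of $L^{2}(\mathbb{R}_{+})$ consisting of those $\psi$ with $\psi(e^{\cdot})\in\mathscr{S}(\mathbb{R})$, that
\[
\langle \Omega(x,a)\phi,\psi\rangle_{L^{2}(\mathbb{R}_{+})} = \overline{W_{\mathrm{Aff}}^{\psi,\phi}(x,a)}.
\]
Pairing with $f\in L_{r}^{2}(\aff)$ then yields $\langle A_{f}\phi,\psi\rangle_{L^{2}(\mathbb{R}_{+})} = \langle f, W_{\mathrm{Aff}}^{\psi,\phi}\rangle_{L_{r}^{2}(\aff)}$, so the operator in the statement is exactly the affine Weyl quantization.

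The next step is to establish the orthogonality relation
\[
\langle W_{\mathrm{Aff}}^{\psi_{1},\phi_{1}}, W_{\mathrm{Aff}}^{\psi_{2},\phi_{2}}\rangle_{L_{r}^{2}(\aff)} = \langle \psi_{1},\psi_{2}\rangle_{L^{2}(\mathbb{R}_{+})}\,\overline{\langle \phi_{1},\phi_{2}\rangle}_{L^{2}(\mathbb{R}_{+})}.
\]
The key point is that, with $\lambda(u)=ue^{u}/(e^{u}-1)$ and hence $\lambda(-u)=e^{-u}\lambda(u)$, the definition \eqref{eq: affine_wigner_introduction} reads $W_{\mathrm{Aff}}^{\psi,\phi}(x,a)=\int_{\mathbb{R}}\psi(a\lambda(u))\overline{\phi(a\lambda(-u))}\,e^{-2\pi i x u}\,du$; that is, $W_{\mathrm{Aff}}^{\psi,\phi}(\cdot,a)$ is the one-dimensional Fourier transform of $u\mapsto\psi(a\lambda(u))\overline{\phi(a\lambda(-u))}$. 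Applying the Plancherel theorem in $x$ and then the change of variables $(u,a)\mapsto(r,s)\coloneqq(a\lambda(u),a\lambda(-u))$, which is a diffeomorphism of $\mathbb{R}\times\mathbb{R}_{+}$ onto $\mathbb{R}_{+}\times\mathbb{R}_{+}$ carrying $du\,\tfrac{da}{a}$ to $\tfrac{dr}{r}\,\tfrac{ds}{s}$, collapses $\|W_{\mathrm{Aff}}^{\psi,\phi}\|_{L_{r}^{2}(\aff)}^{2}$ to $\|\psi\|_{L^{2}(\mathbb{R}_{+})}^{2}\,\|\phi\|_{L^{2}(\mathbb{R}_{+})}^{2}$; polarization then gives the displayed identity and, incidentally, shows $W_{\mathrm{Aff}}^{\psi,\phi}$ extends boundedly to all $\psi,\phi\in L^{2}(\mathbb{R}_{+})$.

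Both halves of the proposition follow from this. Feeding the orthogonality relation back through the first step gives $A_{W_{\mathrm{Aff}}^{\psi_{0},\phi_{0}}}=\psi_{0}\otimes\phi_{0}$ with $\|A_{W_{\mathrm{Aff}}^{\psi_{0},\phi_{0}}}\|_{\mathcal{S}_{2}}=\|W_{\mathrm{Aff}}^{\psi_{0},\phi_{0}}\|_{L_{r}^{2}(\aff)}$, so on the linear span of affine Wigner distributions the map $f\mapsto A_{f}$ is an isometry into $\mathcal{S}_{2}$ whose range contains every rank-one operator, hence a dense subspace of $\mathcal{S}_{2}$. It then remains to check that the affine Wigner distributions are total in $L_{r}^{2}(\aff)$, for then $f\mapsto A_{f}$ extends to a surjective isometry. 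For this I would take $f\in L_{r}^{2}(\aff)$ orthogonal to all $W_{\mathrm{Aff}}^{\psi,\phi}$; by the first step and Fubini this says
\[
\int_{\mathbb{R}_{+}}\int_{\mathbb{R}}\overline{\psi(a\lambda(u))}\,\phi(a\lambda(-u))\left(\int_{\mathbb{R}}f(x,a)\,e^{2\pi i x u}\,dx\right)du\,\frac{da}{a}=0
\]
for all $\psi,\phi$, and transporting this through the same diffeomorphism $(u,a)\mapsto(r,s)$, together with the totality of the functions $(r,s)\mapsto\overline{\psi(r)}\phi(s)$ in $L^{2}(\mathbb{R}_{+}^{2},\tfrac{dr}{r}\tfrac{ds}{s})$, forces the partial Fourier transform of $f$ in the first variable to vanish almost everywhere, so $f\equiv0$.

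The main obstacle is bookkeeping rather than conceptual. One must make the distributional manipulation in the first step rigorous (the $v$-integral in $\Omega(x,a)$ and the ensuing substitution), justify the Fubini exchange between the Fourier integral defining $W_{\mathrm{Aff}}^{\psi,\phi}$ and the $L_{r}^{2}(\aff)$-pairing with $f$, and — most delicately — keep exact track of the Jacobians $\lambda(\pm u)$ and of the placement of complex conjugates, so that the resulting map is an honest isometry and not merely a nonzero scalar multiple of one. An alternative would be to transport the classical Weyl--Plancherel theorem on $\mathbb{R}^{2}$ along the unitary $L^{2}(\mathbb{R}_{+})\to L^{2}(\mathbb{R})$, $\psi\mapsto\psi(e^{\cdot})$, but the Fourier-analytic route above keeps the non-unimodular weights visible.
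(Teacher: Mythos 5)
Your proposal is correct in substance, but be aware that the paper gives no proof of this proposition at all: it is imported from \cite{gayral2007fourier}, and the two ingredients you reconstruct---the relation $\langle A_f\phi,\psi \rangle_{L^{2}(\mathbb{R}_{+})}=\langle f,W_{\mathrm{Aff}}^{\psi,\phi}\rangle_{L_{r}^{2}(\aff)}$ and the orthogonality relation---are themselves only quoted later as \eqref{affine_weyl_correspondence} and \eqref{affine_orthogonality_relation_new}. So what you wrote is a self-contained alternative rather than a variant of an argument in the paper, and its key computations do check out: integrating out $v$ in \eqref{eq:Stratonovich-Weyl operator} and using $e^{u}\lambda(-u)=\lambda(u)$ (after $u\mapsto -u$) indeed gives $\langle \Omega(x,a)\phi,\psi\rangle_{L^{2}(\mathbb{R}_{+})}=\overline{W_{\mathrm{Aff}}^{\psi,\phi}(x,a)}$, and the change of variables $(u,a)\mapsto (r,s)=(a\lambda(u),a\lambda(-u))$ really does carry $du\,a^{-1}da$ to $r^{-1}s^{-1}\,dr\,ds$, because $\lambda'(u)\lambda(-u)+\lambda(u)\lambda'(-u)=\lambda(u)\lambda(-u)$ (both sides equal $u^{2}e^{u}(e^{u}-1)^{-2}$), which is exactly the Jacobian identity your isometry claim hinges on. Two streamlinings: you do not need polarization, since applying Parseval in $x$ directly to $\langle W_{\mathrm{Aff}}^{\psi_1,\phi_1},W_{\mathrm{Aff}}^{\psi_2,\phi_2}\rangle_{L_{r}^{2}(\aff)}$ and then changing variables yields the mixed identity $\langle\psi_1,\psi_2\rangle_{L^{2}(\mathbb{R}_{+})}\overline{\langle\phi_1,\phi_2\rangle}_{L^{2}(\mathbb{R}_{+})}$ in one stroke, and the same computation is precisely your totality argument (orthogonality to all $W_{\mathrm{Aff}}^{\psi,\phi}$ forces the partial Fourier transform of $f$ in $x$ to vanish for a.e.\ $a$); and when extending the isometry from the span of the Wigner distributions to all of $L_{r}^{2}(\aff)$, add one line noting that the extension coincides with the weakly defined $A_f$, which follows from the a priori bound $|\langle A_f\phi,\psi\rangle_{L^{2}(\mathbb{R}_{+})}|\leq \|f\|_{L_{r}^{2}(\aff)}\|\phi\|_{L^{2}(\mathbb{R}_{+})}\|\psi\|_{L^{2}(\mathbb{R}_{+})}$ supplied by the orthogonality relation.
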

We will refer to the association $f \mapsto A_{f}$ as \textit{affine Weyl quantization}, while $f$ is called the \textit{affine (Weyl) symbol} of $A_f$. To emphasize the correspondence between a Hilbert-Schmidt operator $A$ and its affine symbol $f$ we use the notation $f_{A} \coloneqq f$. The affine Weyl symbol of an operator $A$ is explicitly given by 
\begin{equation}
\label{eq: dequantization_formula}
    f_A(x,a) = \int_{-\infty}^{\infty}A_{K}\left(a\lambda(u),a\lambda(-u)\right)e^{-2\pi i x u} \, du,
\end{equation}
where $A_{K}:\mathbb{R}_{+} \times \mathbb{R}_{+} \to \mathbb{C}$ is the integral kernel of $A$ defined by \[A\psi(r) = \int_{0}^{\infty}A_{K}(r,s)\psi(s) \, \frac{ds}{s}, \qquad \psi \in L^{2}(\mathbb{R}_{+}).\]
By taking the affine Weyl symbol of the rank-one operator $\psi \otimes \phi$ on $L^{2}(\mathbb{R}_{+})$ given by \[\psi \otimes \phi(\xi) = \langle \xi, \phi \rangle_{L^{2}(\mathbb{R}_{+})} \psi\] for $\psi, \phi, \xi \in L^{2}(\mathbb{R}_{+})$, we obtain the following definition.

\begin{definition}
For $\phi, \psi \in L^{2}(\mathbb{R}_{+})$ we define the \textit{affine (cross-)Wigner distribution} $W_{\textrm{Aff}}^{\psi,\phi}$ to be the function on $\mathrm{Aff}$ given for $(x,a) \in \mathrm{Aff}$ by 
\begin{align*}
W_{\mathrm{Aff}}^{\psi,\phi}(x,a) & \coloneqq \int_{-\infty}^{\infty}\psi(a\lambda(u))\overline{\phi(a\lambda(-u))}e^{-2\pi i x u} \, du \\ & = \int_{-\infty}^{\infty}\psi\left(\frac{aue^u}{e^u - 1}\right)\overline{\phi\left(\frac{au}{e^{u} - 1}\right)}e^{-2\pi i x u} \, du. 
\end{align*} 
\end{definition}

When $\phi = \psi$ we refer to $W_{\textrm{Aff}}^{\psi} \coloneqq W_{\textrm{Aff}}^{\psi, \psi}$ as the \textit{affine Wigner distribution} of $\psi$. The weak interpretation of the integral defining $A_f$ means that we have the relation

\begin{equation}
\label{affine_weyl_correspondence}
    \left\langle A_{f}\phi,\psi \right\rangle_{L^{2}(\mathbb{R}_{+})} = \left\langle f,W_{\mathrm{Aff}}^{\psi,\phi}\right\rangle_{L_{r}^{2}(\mathrm{Aff})},
\end{equation}
for $f \in L_{r}^{2}(\textrm{Aff})$ and $\phi, \psi \in L^{2}(\mathbb{R}_{+})$. The affine Wigner distribution satisfies the orthogonality relation
\begin{equation}
\label{affine_orthogonality_relation_new}
    \int_{-\infty}^{\infty}\int_{0}^{\infty}W_{\mathrm{Aff}}^{\psi_{1}, \psi_{2}}(x,a)\overline{W_{\mathrm{Aff}}^{\phi_{1}, \phi_{2}}(x,a)} \, \frac{da \, dx}{a} = \langle \psi_{1},\phi_{1} \rangle_{L^2(\mathbb{R}_+)}\overline{\langle \psi_{2}, \phi_{2} \rangle}_{L^2(\mathbb{R}_+)},
\end{equation}
for $\psi_{1}, \psi_{2},\phi_{1}, \phi_{2} \in L^{2}(\mathbb{R}_+)$. Moreover, the affine Wigner distribution also satisfies the marginal property 
\begin{equation}
\label{eq:first_marginal_property}
    \int_{-\infty}^{\infty} W_{\mathrm{Aff}}^{\psi}(x,a) \, dx = |\psi(a)|^2, \quad (x,a) \in \mathrm{Aff},
\end{equation} 
for all rapidly decaying smooth functions $\psi$ on $\mathbb{R}_+$. We remark that a \textit{rapidly decaying smooth function} (also called a \textit{Schwartz function}) $\psi:\mathbb{R}_{+} \to \mathbb{C}$ is by definition a smooth function such that $x \mapsto \psi(e^x)$ is a rapidly decaying function on $\mathbb{R}$. The space of all rapidly decaying smooth functions on $\mathbb{R}_+$ will be denoted by $\mathscr{S}(\mathbb{R}_+)$. We will later also need the space $\mathscr{S}'(\mathbb{R}_{+})$ of bounded, anti-linear functionals on $\mathscr{S}(\mathbb{R}_+)$ called the \textit{tempered distributions} on $\mathbb{R}_{+}$. For more information regarding the affine Wigner distribution the reader is referred to \cite{berge2019affine}.

\section{Affine Operator Convolutions}
\label{sec: Affine Operator Convolutions}

In this part we introduce operator convolutions in the affine setting. We show that this notion is intimately related to affine Weyl quantization in Section \ref{sec: Relationship With Quantization}. In Section \ref{sec: Affine Grossmann-Royer Operator} we will introduce the affine Grossmann-Royer operator, which will be essential in Section \ref{sec: Operator Convolution for Tempered Distributions} where we prove the main connection between the affine Weyl quantization and the operator convolutions in Theorem \ref{thm:quantization_through_convolution}.

\subsection{Definitions and Basic Properties}
\label{sec: Definitions_and_Basic_Properties}
We begin by defining operator convolutions in the affine setting and derive basic properties. Recall that the usual convolution on the affine group with respect to the right Haar measure is given by
\begin{equation*}
    f\ast_\aff g(x,a) \coloneqq \int_{\aff} f(y,b)g(\left(x,a\right)\cdot(y,b)^{-1})\, \frac{dy \, db}{b}.
\end{equation*}
\begin{remark}
Other sources, e.g.\ \cite{folland2016course}, use the left Haar measure and define the convolution to be \[f\ast_{\aff_L} g((x,a))\coloneqq \check{f}\ast_\aff \check{g}((x,a)^{-1}),\] where $\check{f}(x,a)\coloneqq f((x,a)^{-1})$.
We will mainly work with the right Haar measure, and our definition ensures that 
\[\|f\ast_{\aff} g\|_{L^1_r(\aff)}\le \|f\|_{L^1_r(\aff)} \|g\|_{L^1_r(\aff)}.\]
Additionally, we have that 
\begin{equation*}
R_{(x,a)}(f*_\aff g)=(R_{(x,a)}f)*_\aff g.
\end{equation*}
\end{remark}
\begin{definition}\label{def:fun_ope_convolution}
Let $f\in L^1_r(\aff)$ and let $S$ be a trace-class operator on $L^2(\mathbb{R}_+)$. We define the \textit{convolution} $f\star_\aff S$ between $f$ and $S$ to be the operator on $L^{2}(\mathbb{R}_{+})$ given by 
\begin{equation*}
f\star_\aff S \coloneqq \int_{\aff} f(x,a) U(-x,a)^*SU(-x,a)\, \frac{dx \, da}{a},
\end{equation*}
where $U$ is the unitary representation given in \eqref{Fourier_side_action}. The integral is a convergent Bochner integral in the space of trace-class operators. 
\end{definition}
\begin{remark}
\hfill
\begin{enumerate}
    \item As we will see later, using $U(-x,a)$ instead of $U(x,a)$ in Definition \ref{def:fun_ope_convolution} ensures that the convolution is compatible with the following covariance property of the affine Wigner distribution:
\begin{equation}\label{E: right-inv}
    W^{U(-x,a)\phi,U(-x,a)\psi}_{\aff}(y,b)=W_{\aff}^{\phi,\psi}((y,b)\cdot(x,a)).
\end{equation}
    \item The notation $\star$ has a different meaning in \cite{gayral2007fourier}, where it is used to denote the so-called Moyal product of two functions defined on $\aff$.
\end{enumerate}
\end{remark}

\begin{definition}
Let $S$ be a trace-class operator and let $T$ be a bounded operator on $L^{2}(\mathbb{R}_{+})$. Then we define the \textit{convolution} $S \star_{\aff} T$ between $S$ and $T$ to be the function on $\textrm{Aff}$ given by
\begin{equation*}
    S\star_\aff T(x,a) \coloneqq \tr(SU(-x,a)^*TU(-x,a)).
\end{equation*}
\end{definition}

\begin{remark}
Recently, \cite{choi2020} defined another notion of convolution of trace-class operators. Unlike our definition, this convolution produces a new trace-class operator, with the aim of interpreting the trace-class operators as an analogue of the Fourier algebra. 
\end{remark}

It is straightforward to check that if $f$ is a positive function and $S,T$ are positive operators, then $f \star_{\textrm{Aff}} S$ is a positive operator and $S \star_{\textrm{Aff}} T$ is a positive function. Moreover, we have the elementary estimate 
\begin{equation} \label{eq:conv_op_fun_bochner}
    \|f \star_{\textrm{Aff}}S\|_{\mathcal{S}_{1}} \leq \|f\|_{L_{r}^{1}(\aff)}\|S\|_{\mathcal{S}_{1}}
\end{equation}
and 
\begin{equation} \label{eq:conv_op_op_duality}
    \|S \star_{\textrm{Aff}}T\|_{L^{\infty}(\aff)} \leq \|S\|_{\mathcal{S}_{1}}\|T\|_{\mathcal{L}(L^2(\mathbb{R}_+))}.
\end{equation}
The following result is proved by a simple computation.

\begin{lemma}\label{lem: convolution_of_rank_one}
For $\psi,\phi \in L^2(\mathbb{R}_+)$ and $S\in \mathcal{L}(L^2(\mathbb{R}_+))$, we have
\begin{equation*}
    (\psi \otimes \phi) \star_\aff S(x,a)=\langle SU(-x,a)\psi,U(-x,a)\phi \rangle_{L^2(\mathbb{R}_+)}.
\end{equation*}
In particular, for $\,\eta,\,\xi\in L^2(\mathbb{R}_+)$ we have
	\begin{equation*}
		(\psi \otimes \phi)\star_\aff (\eta \otimes \xi)(x,a)=\langle \psi,U(-x,a)^*\xi \rangle_{L^{2}(\mathbb{R}_{+})} \overline{\langle \phi,U(-x,a)^*\eta \rangle_{L^{2}(\mathbb{R}_{+})}},
	\end{equation*}
	and 
		\begin{equation*}
		(\psi \otimes \psi)\star_\aff (\xi \otimes \xi)(x,a)=|\langle \psi,U(-x,a)^*\xi \rangle_{L^{2}(\mathbb{R}_{+})}|^2.
	\end{equation*}
\end{lemma}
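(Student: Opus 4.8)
The plan is to unravel the definitions directly. First I would compute $(\psi \otimes \phi)\star_\aff S(x,a)$ using the definition of the operator-operator convolution, namely $(\psi\otimes\phi)\star_\aff S(x,a) = \tr\big((\psi\otimes\phi)\,U(-x,a)^*SU(-x,a)\big)$. Here the roles are reversed from the statement of the lemma in the sense that $\psi\otimes\phi$ plays the role of the trace-class operator $S$ and $S$ plays the role of the bounded operator $T$; this is legitimate since $\psi\otimes\phi$ is rank-one, hence trace-class. The key elementary fact is that for any bounded operator $B$ and vectors $\psi,\phi$ one has $\tr\big((\psi\otimes\phi)\,B\big) = \langle B\psi,\phi\rangle$. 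Indeed $(\psi\otimes\phi)B\,\xi = \langle B\xi,\phi\rangle\psi$, and summing $\langle (\psi\otimes\phi)B\,e_n, e_n\rangle$ over an orthonormal basis recovers $\langle B\psi,\phi\rangle$ after one expands $\psi$ in that basis. Applying this with $B = U(-x,a)^*SU(-x,a)$ gives
\[
(\psi\otimes\phi)\star_\aff S(x,a) = \langle U(-x,a)^*SU(-x,a)\psi,\phi\rangle_{L^2(\mathbb{R}_+)} = \langle SU(-x,a)\psi, U(-x,a)\phi\rangle_{L^2(\mathbb{R}_+)},
\]
where the last step uses that $U(-x,a)$ is unitary so that $U(-x,a)^* = U(-x,a)^{-1}$ and the adjoint can be moved across the inner product. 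This establishes the first displayed identity.

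For the second identity I would specialize $S = \eta\otimes\xi$ in the formula just obtained. Then $SU(-x,a)\psi = (\eta\otimes\xi)U(-x,a)\psi = \langle U(-x,a)\psi,\xi\rangle\,\eta$, so
\[
(\psi\otimes\phi)\star_\aff(\eta\otimes\xi)(x,a) = \langle U(-x,a)\psi,\xi\rangle_{L^2(\mathbb{R}_+)}\,\langle \eta, U(-x,a)\phi\rangle_{L^2(\mathbb{R}_+)}.
\]
Rewriting $\langle U(-x,a)\psi,\xi\rangle = \langle \psi, U(-x,a)^*\xi\rangle$ and $\langle \eta, U(-x,a)\phi\rangle = \langle U(-x,a)^*\eta,\phi\rangle = \overline{\langle \phi, U(-x,a)^*\eta\rangle}$ yields exactly the claimed expression. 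The third identity is then the special case $\eta = \xi$, $\psi = \psi$, $\phi = \psi$, upon observing that the two conjugate factors coincide and their product is $|\langle\psi, U(-x,a)^*\xi\rangle|^2$.

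I do not anticipate a genuine obstacle here: the only point requiring a moment's care is the bookkeeping of which vector/operator plays which role in the convolution symbol $S\star_\aff T$ (trace-class on the left, bounded on the right) and keeping the adjoints and complex conjugates consistent when shuttling $U(-x,a)$ between the two slots of the inner product. Everything reduces to the identity $\tr\big((\psi\otimes\phi)B\big) = \langle B\psi,\phi\rangle$ together with unitarity of $U(-x,a)$, so the proof is a short direct computation as the text promises.
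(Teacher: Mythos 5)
Your computation is correct and is exactly the "simple computation" the paper alludes to: identify $\psi\otimes\phi$ as the trace-class factor in the convolution, use $\tr\bigl((\psi\otimes\phi)B\bigr)=\langle B\psi,\phi\rangle_{L^2(\mathbb{R}_+)}$ with $B=U(-x,a)^*SU(-x,a)$, and move the unitary across the inner product; the rank-one specializations then follow as you describe. No gaps.
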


A natural question to ask is whether the three different notions of convolution we have introduced are compatible. The following proposition gives an affirmative answer to this question.

\begin{proposition}
\label{first_associativity_proposition}
Let $f,g\in L^1_r(\aff)$, $S \in \mathcal{S}_{1}$, and let $T$ be a bounded operator on $L^{2}(\mathbb{R}_{+})$. Then we have the compatibility equations
\begin{align*}
    (f\star_\aff S)\star_\aff T&=f\ast_\aff (S\star_\aff T), \\
    f\star_\aff (g\star_\aff S)&=(f\ast_\aff g)\star_\aff S.
\end{align*}
\end{proposition}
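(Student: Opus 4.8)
The plan is to prove both compatibility identities by unwinding the definitions and applying Fubini's theorem to interchange the operator-valued (resp. scalar-valued) integral with the group integral, using the homomorphism property of the representation $U$, i.e.\ $U((x,a)\cdot(y,b)) = U(x,a)U(y,b)$, together with unitarity. The Bochner integrability hypotheses that make the manipulations rigorous are supplied by \eqref{eq:conv_op_fun_bochner} and \eqref{eq:conv_op_op_duality}: since $f \in L^1_r(\aff)$ and $S \in \mathcal{S}_1$, all inner integrands below are dominated by an $L^1_r(\aff)$ function times a fixed trace-class (resp.\ bounded) operator norm, so Fubini applies in the Bochner sense.

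\textbf{First identity.} For the relation $(f\star_\aff S)\star_\aff T = f\ast_\aff (S\star_\aff T)$, I would first expand the left-hand side at a point $(x,a)\in\aff$:
\[
(f\star_\aff S)\star_\aff T(x,a) = \tr\!\left( \Big( \int_{\aff} f(y,b)\, U(-y,b)^* S U(-y,b) \,\frac{dy\,db}{b} \Big) U(-x,a)^* T U(-x,a) \right).
\]
Pulling the trace (a continuous linear functional, hence commuting with the Bochner integral) inside and using its cyclicity, this becomes
\[
\int_{\aff} f(y,b)\, \tr\!\big( S\, U(-y,b) U(-x,a)^* T U(-x,a) U(-y,b)^* \big)\,\frac{dy\,db}{b}.
\]
The key algebraic step is to identify $U(-x,a) U(-y,b)^* = U\big((-x,a)\cdot(-y,b)^{-1}\big)$, so that the argument of the trace is exactly $S\, U(w,c)^* T U(w,c)$ with $(-w,c) = (-x,a)\cdot(-y,b)^{-1}$; a short computation in the group law shows this forces $(w,c)$ to run, as $(y,b)$ varies, over the combination appearing in $f\ast_\aff(S\star_\aff T)$. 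Matching the group-law bookkeeping with the definition of $\ast_\aff$ (and checking the Haar-measure factor $b^{-1}$ is the right one) then yields $f \ast_\aff (S\star_\aff T)(x,a)$. I expect this group-law bookkeeping — getting the inverses and the order of multiplication to line up precisely with the convention $f\ast_\aff g(x,a)=\int f(y,b) g((x,a)\cdot(y,b)^{-1}) \frac{dy\,db}{b}$ — to be the main obstacle, since sign/inverse errors are easy here; it is worth doing carefully on scratch paper first.

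\textbf{Second identity.} For $f\star_\aff (g\star_\aff S) = (f\ast_\aff g)\star_\aff S$, I would test against arbitrary $\phi,\psi \in L^2(\mathbb{R}_+)$ (or equivalently work directly with the operator-valued Bochner integrals). Expanding the left side,
\[
f\star_\aff (g\star_\aff S) = \int_{\aff}\int_{\aff} f(y,b)\, g(z,c)\, U(-y,b)^* U(-z,c)^* S\, U(-z,c) U(-y,b)\,\frac{dz\,dc}{c}\,\frac{dy\,db}{b},
\]
and again using $U(-z,c)U(-y,b) = U\big((-z,c)\cdot(-y,b)\big)$ I substitute, for fixed $(y,b)$, the new variable $(w,d)$ with $U(-w,d) = U(-z,c)U(-y,b)$; the Jacobian of this substitution is trivial because right translation preserves the right Haar measure (the remark after the definition of $\ast_\aff$ records $R_{(x,a)}(f\ast_\aff g) = (R_{(x,a)}f)\ast_\aff g$, and more basically $\mu_R$ is right-invariant). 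After the substitution the inner integral over $(w,d)$ produces precisely $\int_{\aff} (f\ast_\aff g)(w,d)\, U(-w,d)^* S\, U(-w,d)\,\frac{dw\,dd}{d}$, which is $(f\ast_\aff g)\star_\aff S$. The one point requiring a line of justification is that the resulting iterated integral equals the double integral and that $f\ast_\aff g \in L^1_r(\aff)$ (so the right-hand side makes sense), both of which follow from $\|f\ast_\aff g\|_{L^1_r(\aff)}\le \|f\|_{L^1_r(\aff)}\|g\|_{L^1_r(\aff)}$ recorded in the excerpt.
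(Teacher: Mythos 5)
Your proposal is correct and follows essentially the same route as the paper's proof: unwind the definitions, commute the trace with the Bochner integral and use cyclicity together with the representation property of $U$, and for the second identity change variables via right-invariance of $\mu_R$ and apply Fubini (the paper merely runs each computation in the opposite direction, starting from the $\ast_\aff$ side). The group-law bookkeeping you defer does check out: $(-x,a)\cdot(-y,b)^{-1}=(ay/b-x,a/b)$ gives exactly $(S\star_\aff T)\big((x,a)\cdot(y,b)^{-1}\big)$, matching the paper.
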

\begin{proof}
The first equality follows from the computation
\begin{align*}
    \left(f\ast_\aff(S\star_\aff T)\right)(x,a)&=  \int_{\aff} f(y,b)\tr(SU(-y,b)U(-x,a)^*TU(-x,a)U(-y,b)^*)  \,\frac{dy\, db}{b} \\
    &=\int_{\aff} f(y,b)\tr(U(-y,b)^*SU(-y,b)U(-x,a)^*TU(-x,a))  \,\frac{dy\, db}{b} \\
    &= \tr \left(\int_\aff f(y,b) U(-y,b)^*SU(-y,b)\, \frac{dy\, db}{b} U(-x,a)^* T U(-x,a) \right) \\
    &= \left(\left(f\star_\aff S\right)\star_\aff T\right)(x, a).
\end{align*}
We are allowed to take the trace outside the integral since the second to last line is essentially the duality action of the bounded operator $U(-x,a)^*TU(-x,a)$ on a convergent Bochner integral in the space of trace-class operators.

For the second equality, we use change of variables and obtain
\begin{align*}
    (f\ast_\aff g)\star_\aff S&= \int_\aff \int_\aff f(x,a)g((z,c)\cdot (x,a)^{-1}) U(-z,c)^*SU(-z,c)\, \frac{dx\,da}{a} \, \frac{dz\, dc}{c} \\
    &=  \int_\aff \int_\aff f(x,a) g(y,b) U(-x,a)^*U(-y,b)^* SU(-y,b)U(-x,a) \,\frac{dy\,db}{b} \,\frac{dx\,da}{a} \\
    &=\int_\aff f(x,a) U(-x,a)^* \int_\aff  g(y,b) U(-y,b)^* SU(-y,b)\, \frac{dy\,db}{b} U(-x,a) \,\frac{dx\,da}{a} \\
    &= f\star_\aff (g\star_\aff S).
\end{align*}
Changing the order of integration above is allowed by Fubini's theorem for Bochner integrals \cite[Prop.~1.2.7]{hytonen2016book}. Fubini's theorem is applicable since 
\begin{equation*}
    \int_\aff \int_\aff |f(x,a)|\cdot |g((z,c)\cdot (x,a)^{-1})|\cdot \|U(-z,c)^*SU(-z,c)\|_{\mathcal{S}_1}\, \frac{dx\,da}{a} \, \frac{dz\, dc}{c}
\end{equation*}
is bounded from above by
\begin{equation*}
    \|S\|_{\mathcal{S}_1} \int_\aff  |f(x,a)|\, \frac{dx\,da}{a} \int_\aff |g(z,c)|\, \frac{dz\, dc}{c}<\infty. \qedhere
\end{equation*}
\end{proof}

\subsection{Relationship With Affine Weyl Quantization}
\label{sec: Relationship With Quantization}
The goal of this section is to connect the affine Weyl quantization described in Section \ref{sec: Affine_Quantization} with the convolutions defined in Section \ref{sec: Definitions_and_Basic_Properties}. We first establish a preliminary result describing how right multiplication on the affine group affects the affine Weyl quantization.

\begin{lemma}\label{L: Right mul of symbol}
Let $A_f \in \mathcal{S}_{2}$ with affine Weyl symbol $f \in L_{r}^{2}(\aff)$. For $(x,a)\in \aff$, the affine Weyl symbol of $U(-x,a)^*A_fU(-x,a)$ is $R_{(x,a)^{-1}}f$.
\end{lemma}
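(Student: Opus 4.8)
The cleanest route is to use the defining weak relation \eqref{affine_weyl_correspondence} together with the covariance property \eqref{E: right-inv} of the affine cross-Wigner distribution. Concretely, I want to show that for all $\phi,\psi\in L^2(\mathbb{R}_+)$,
\[
\langle U(-x,a)^*A_fU(-x,a)\phi,\psi\rangle_{L^2(\mathbb{R}_+)} = \langle R_{(x,a)^{-1}}f, W^{\psi,\phi}_{\aff}\rangle_{L^2_r(\aff)},
\]
since by \eqref{affine_weyl_correspondence} this identifies $R_{(x,a)^{-1}}f$ as the affine Weyl symbol of $U(-x,a)^*A_fU(-x,a)$, which lies in $L^2_r(\aff)$ because right translation is an isometry of $L^2_r(\aff)$ (the right Haar measure is right-invariant) and hence $U(-x,a)^*A_fU(-x,a)$ is again Hilbert--Schmidt, consistent with Proposition \ref{quantization_isometry_proposition}.

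First I would unitarity-shift: $\langle U(-x,a)^*A_fU(-x,a)\phi,\psi\rangle = \langle A_f\,U(-x,a)\phi,\,U(-x,a)\psi\rangle$. Applying \eqref{affine_weyl_correspondence} to the pair $U(-x,a)\phi,\,U(-x,a)\psi$ gives $\langle f, W^{U(-x,a)\psi,\,U(-x,a)\phi}_{\aff}\rangle_{L^2_r(\aff)}$. Now invoke \eqref{E: right-inv}, which says $W^{U(-x,a)\psi,\,U(-x,a)\phi}_{\aff}(y,b) = W^{\psi,\phi}_{\aff}((y,b)\cdot(x,a))$; that is, $W^{U(-x,a)\psi,\,U(-x,a)\phi}_{\aff} = R_{(x,a)}W^{\psi,\phi}_{\aff}$. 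So the inner product becomes $\langle f,\, R_{(x,a)}W^{\psi,\phi}_{\aff}\rangle_{L^2_r(\aff)}$. The final step is to move the right translation onto $f$: since the right Haar measure $\mu_R$ is invariant under right translation, $R_{(x,a)}$ is unitary on $L^2_r(\aff)$ with adjoint $R_{(x,a)^{-1}} = R_{(x,a)}^{-1}$, hence $\langle f, R_{(x,a)}W^{\psi,\phi}_{\aff}\rangle = \langle R_{(x,a)^{-1}}f,\, W^{\psi,\phi}_{\aff}\rangle$. This is exactly the desired identity.

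The only genuine point requiring care — and the main obstacle, such as it is — is the covariance identity \eqref{E: right-inv}; it is asserted in the excerpt but I would want to be sure of the exact form (in particular the direction $(y,b)\cdot(x,a)$ versus its inverse, and whether it is $R_{(x,a)}$ or $R_{(x,a)^{-1}}$ that appears), since an off-by-an-inverse error there propagates directly into the statement. If \eqref{E: right-inv} is not being taken as given, it can be verified directly from the integral formula for $W^{\psi,\phi}_{\aff}$ by substituting $U(-x,a)\psi(r) = e^{2\pi i xr}\psi(ar)$, expanding the $\lambda$-arguments, and matching against the group law $(y,b)\cdot(x,a) = (by+x, ba)$ after a change of the integration variable $u$; this is a routine but slightly fiddly computation with the $\lambda(u) = ue^u/(e^u-1)$ factors. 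Everything else — the unitary shifts and the isometry property of right translation on $L^2_r(\aff)$ — is immediate.
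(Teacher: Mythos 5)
Your proposal is correct and follows essentially the same route as the paper's proof: a unitary shift, the defining weak relation \eqref{affine_weyl_correspondence} applied to the translated pair, the covariance identity \eqref{E: right-inv} to rewrite the Wigner distribution as $R_{(x,a)}W_{\aff}^{\psi,\phi}$, and the right-invariance of $\mu_R$ to move the translation onto $f$ as $R_{(x,a)^{-1}}f$. Your cautionary remark about verifying the precise form of \eqref{E: right-inv} is reasonable, but the paper simply takes that identity as given, exactly as you do in your main argument.
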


\begin{proof}
	The result follows from \eqref{affine_weyl_correspondence} and the computation
	\begin{align*}
		\langle U(-x,a)^*A_fU(-x,a) \psi,\phi \rangle_{L^2(\mathbb{R}_+)} &= \langle A_fU(-x,a)\psi,U(-x,a)\phi \rangle_{L^2(\mathbb{R}_+)} \\
		&=  \langle f,W_\aff^{U(-x,a)\phi,U(-x,a)\psi} \rangle_{L^2_r(\aff)} \\
		&= \langle f , R_{(x,a)}W_\aff^{\phi,\psi} \rangle_{L^2_r(\aff)} \\
		&= \langle R_{(x,a)^{-1}}f, W_\aff^{\phi,\psi} \rangle_{L^2_r(\aff)}.\qedhere
	\end{align*}
\end{proof}
We are now ready to prove the first result showing the connection between convolution and affine Weyl quantization.
\begin{proposition}
\label{function-operator-convolution-quantization}
Assume that $A_{f}\in\mathcal{S}_2$ with affine Weyl symbol $f \in L_{r}^{2}(\mathrm{Aff})$, and let $g \in L_{r}^{1}(\mathrm{Aff})$. Then the affine Weyl symbol of $g \star_\aff A_f$ is $g *_{\mathrm{Aff}} f$, that is, \[g \star_\aff A_f = A_{g *_{\mathrm{Aff}} f}.\]
\end{proposition}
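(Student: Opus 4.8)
The plan is to verify the claimed identity $g \star_{\aff} A_f = A_{g *_{\aff} f}$ by testing both sides weakly against rank-one operators, i.e.\ by computing $\langle (g \star_{\aff} A_f)\phi, \psi \rangle_{L^2(\mathbb{R}_+)}$ and $\langle A_{g *_{\aff} f}\phi, \psi \rangle_{L^2(\mathbb{R}_+)}$ for $\phi, \psi \in L^2(\mathbb{R}_+)$ and showing they agree. Since $g \in L^1_r(\aff)$ and $A_f \in \mathcal{S}_1$ is not assumed, I should note $A_f \in \mathcal{S}_2$ so $g \star_{\aff} A_f$ is a priori only defined if the convolution makes sense for Hilbert-Schmidt operators; however the relevant bound \eqref{eq:conv_op_fun_bochner} together with the fact that the Bochner integral defining $f \star_{\aff} S$ extends by continuity (or by the same weak definition) covers this, and in any case the pairing with a rank-one operator is well-defined because $U(-x,a)^* A_f U(-x,a)$ is uniformly bounded in Hilbert-Schmidt norm.

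First I would unwind the left-hand side using the definition of $g \star_{\aff} A_f$ and pull the scalar pairing inside the Bochner integral:
\begin{align*}
    \langle (g \star_{\aff} A_f)\phi, \psi \rangle_{L^2(\mathbb{R}_+)}
    &= \int_{\aff} g(x,a)\, \langle U(-x,a)^* A_f U(-x,a)\phi, \psi \rangle_{L^2(\mathbb{R}_+)} \, \frac{dx\,da}{a}.
\end{align*}
Then I would apply Lemma~\ref{L: Right mul of symbol}, which identifies the affine Weyl symbol of $U(-x,a)^* A_f U(-x,a)$ as $R_{(x,a)^{-1}}f$, and use the affine Weyl correspondence \eqref{affine_weyl_correspondence} to rewrite each inner pairing as $\langle R_{(x,a)^{-1}}f,\, W_{\aff}^{\psi,\phi} \rangle_{L^2_r(\aff)}$. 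Substituting this back gives
\begin{align*}
    \langle (g \star_{\aff} A_f)\phi, \psi \rangle_{L^2(\mathbb{R}_+)}
    &= \int_{\aff} g(x,a) \left( \int_{\aff} (R_{(x,a)^{-1}}f)(y,b)\, \overline{W_{\aff}^{\psi,\phi}(y,b)} \, \frac{dy\,db}{b} \right) \frac{dx\,da}{a}.
\end{align*}

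The heart of the argument is then a Fubini interchange followed by recognizing the inner double integral as the function convolution $g *_{\aff} f$ paired against $W_{\aff}^{\psi,\phi}$. Concretely, after swapping the order of integration I would fix $(y,b)$ and compute $\int_{\aff} g(x,a)\, (R_{(x,a)^{-1}}f)(y,b)\, \frac{dx\,da}{a}$; unwinding $R_{(x,a)^{-1}}f(y,b) = f((y,b)\cdot(x,a)^{-1})$, this integral is exactly $(g *_{\aff} f)(y,b)$ by the definition of $*_{\aff}$. Hence the whole expression collapses to $\langle g *_{\aff} f,\, W_{\aff}^{\psi,\phi} \rangle_{L^2_r(\aff)} = \langle A_{g *_{\aff} f}\phi, \psi \rangle_{L^2(\mathbb{R}_+)}$, again via \eqref{affine_weyl_correspondence}, which is the desired conclusion. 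I would also remark that $g *_{\aff} f \in L^2_r(\aff)$ — this follows from $g \in L^1_r$, $f \in L^2_r$, and Young-type inequalities for the right convolution on $\aff$ — so that $A_{g *_{\aff} f}$ is a legitimate Hilbert-Schmidt operator and the final pairing is justified.

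The main obstacle is justifying the Fubini interchange and the passage of the scalar functional inside the Bochner integral. For the latter, the argument is the same as in the proof of Proposition~\ref{first_associativity_proposition}: evaluating $\langle \cdot\, \phi, \psi\rangle$ is a bounded linear (indeed rank-one-induced) functional, which commutes with Bochner integration. For the Fubini step, I need absolute integrability of $(x,a,y,b) \mapsto |g(x,a)|\, |f((y,b)\cdot(x,a)^{-1})|\, |W_{\aff}^{\psi,\phi}(y,b)|$ against $\frac{dx\,da}{a}\frac{dy\,db}{b}$; integrating out $(y,b)$ first via Cauchy-Schwarz bounds the inner integral by $\|R_{(x,a)^{-1}}f\|_{L^2_r(\aff)} \|W_{\aff}^{\psi,\phi}\|_{L^2_r(\aff)} = \|f\|_{L^2_r(\aff)} \|W_{\aff}^{\psi,\phi}\|_{L^2_r(\aff)}$ (using right-invariance of $\mu_R$), and the remaining factor $\int_{\aff} |g(x,a)| \frac{dx\,da}{a} = \|g\|_{L^1_r(\aff)}$ is finite, so Fubini applies. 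This also simultaneously shows $g \star_{\aff} A_f$ extends to a bounded operator, confirming all expressions are well-defined.
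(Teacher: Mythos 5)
Your proof is correct. It rests on the same key ingredient as the paper's proof — Lemma \ref{L: Right mul of symbol}, identifying the affine Weyl symbol of $U(-x,a)^*A_fU(-x,a)$ with $R_{(x,a)^{-1}}f$ — but the execution is different: the paper regards $g\star_\aff A_f$ as an $\mathcal{S}_2$-convergent Bochner integral and simply pushes the unitary symbol map $\mathfrak{W}:\mathcal{S}_2\to L^2_r(\aff)$ through it (bounded operators commute with Bochner integrals), after which the resulting $L^2_r(\aff)$-valued integral of $g(x,a)R_{(x,a)^{-1}}f$ is recognized as $g\ast_\aff f$; you instead dualize via \eqref{affine_weyl_correspondence}, test against rank-one operators, and justify the interchange by an explicit Tonelli/Fubini estimate (Cauchy--Schwarz in $(y,b)$ plus right-invariance of $\mu_R$, then the $L^1_r$ norm of $g$). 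Your route is more hands-on: it avoids invoking the commutation of bounded maps with vector-valued integrals as a black box, and it yields the well-definedness of $g\star_\aff A_f$ and the membership $g\ast_\aff f\in L^2_r(\aff)$ (via the Young-type bound, which the paper's argument also needs implicitly) as byproducts of the same computation. The price is length; the paper's argument is essentially three lines once the symbol map is known to be unitary. Your handling of the definitional subtlety that Definition \ref{def:fun_ope_convolution} is stated for trace-class $S$ while $A_f$ is only Hilbert--Schmidt is a bit loose (the clean statement is that $\|U(-x,a)^*A_fU(-x,a)\|_{\mathcal{S}_2}=\|A_f\|_{\mathcal{S}_2}$ makes the defining integral Bochner-convergent in $\mathcal{S}_2$, exactly as the paper says), but your weak interpretation covers it, so this is a matter of presentation rather than a gap.
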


\begin{proof}
The operator $g\star_\aff A_f$ is defined as the $\mathcal{S}_2$-convergent Bochner integral \[g\star_\aff A_f = \int_\aff g(x,a) U(-x,a)^*A_fU(-x,a) \, \frac{dx\, da}{a}. \]
 By Proposition \ref{quantization_isometry_proposition}, the map $\mathfrak{W}:\mathcal{S}_2\to L^2_r(\aff)$ given by $\mathfrak{W}(A_f)=f$ is unitary. Since bounded operators commute with convergent Bochner integrals, we have using Lemma \ref{L: Right mul of symbol} that
\begin{align*}
    \mathfrak{W}\left( g\star_\aff A_f \right)&=\int_\aff g(x,a) \mathfrak{W}\left(U(-x,a)^*A_fU(-x,a)\right) \, \frac{dx\, da}{a} \\
    &= \int_\aff g(x,a) R_{(x,a)^{-1}}\mathfrak{W}\left(A_f\right) \, \frac{dx\, da}{a} \\
    &= g\ast_\aff f.\qedhere
\end{align*}
\end{proof}

We can also express the convolution of two operators in terms of their affine Weyl symbols. 

\begin{proposition} \label{prop:convolutionasweyl}
	Let $A_{f},A_{g} \in \mathcal{S}_{2}$ with affine Weyl symbols $f,g \in L_{r}^{2}(\aff)$. If additionally $g\in L_{l}^2(\aff)$, then we have
	\[
A_{f}\star_\aff A_{g} = f\ast_\aff \check{g},
	\]
	where $\check{g}(x,a) = g((x,a)^{-1})$ for $(x,a) \in \mathrm{Aff}$.
\end{proposition}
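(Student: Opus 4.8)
The strategy is to reduce the operator convolution to a trace pairing of affine Weyl symbols, using Lemma~\ref{L: Right mul of symbol}, the unitarity of affine Weyl quantization, and the identification of the adjoint symbol. First note that, by Lemma~\ref{L: Right mul of symbol}, $U(-x,a)^*A_gU(-x,a)=A_{R_{(x,a)^{-1}}g}$, which is again Hilbert--Schmidt since $R_{(x,a)^{-1}}g\in L^2_r(\aff)$ by right-invariance of $\mu_R$. Hence the product $A_f\cdot U(-x,a)^*A_gU(-x,a)$ is trace-class and
\[
A_f\star_\aff A_g(x,a)=\tr\bigl(A_f\,U(-x,a)^*A_gU(-x,a)\bigr)=\tr\bigl(A_f A_{R_{(x,a)^{-1}}g}\bigr)
\]
is well defined pointwise, with $|A_f\star_\aff A_g(x,a)|\le\|f\|_{L^2_r(\aff)}\|g\|_{L^2_r(\aff)}$; this also reinterprets the symbol $\star_\aff$ when both factors are only Hilbert--Schmidt.

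\textbf{Two auxiliary facts.} (i) Since $h\mapsto A_h$ is norm-preserving and onto by Proposition~\ref{quantization_isometry_proposition}, it is unitary, so $\langle A_h,A_k\rangle_{\mathcal{S}_2}=\langle h,k\rangle_{L^2_r(\aff)}$. (ii) $A_h^*=A_{\overline{h}}$: taking adjoints in \eqref{affine_weyl_correspondence} gives $\langle A_h^*\phi,\psi\rangle=\overline{\langle A_h\psi,\phi\rangle}=\overline{\langle h,W_\aff^{\phi,\psi}\rangle}=\langle\overline{h},\overline{W_\aff^{\phi,\psi}}\rangle$, and $\overline{W_\aff^{\phi,\psi}}=W_\aff^{\psi,\phi}$ follows from the explicit formula for the affine Wigner distribution after the substitution $u\mapsto-u$, which interchanges $\lambda(u)$ and $\lambda(-u)$. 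Combining (i) and (ii), for $h,k\in L^2_r(\aff)$,
\[
\tr(A_h A_k)=\langle A_h,A_k^*\rangle_{\mathcal{S}_2}=\langle A_h,A_{\overline{k}}\rangle_{\mathcal{S}_2}=\langle h,\overline{k}\rangle_{L^2_r(\aff)}=\int_\aff h(y,b)\,k(y,b)\,\frac{dy\,db}{b}.
\]

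\textbf{Conclusion.} Applying this with $h=f$ and $k=R_{(x,a)^{-1}}g$ yields
\[
A_f\star_\aff A_g(x,a)=\int_\aff f(y,b)\,g\bigl((y,b)\cdot(x,a)^{-1}\bigr)\,\frac{dy\,db}{b}.
\]
Since $\bigl((x,a)\cdot(y,b)^{-1}\bigr)^{-1}=(y,b)\cdot(x,a)^{-1}$, the integrand equals $f(y,b)\,\check{g}\bigl((x,a)\cdot(y,b)^{-1}\bigr)$, so the right-hand side is exactly $f*_\aff\check{g}(x,a)$. The hypothesis $g\in L^2_l(\aff)$ ensures $\check{g}\in L^2_r(\aff)$ because inversion interchanges $\mu_L$ and $\mu_R$; then $f*_\aff\check{g}\in L^\infty(\aff)$ by Cauchy--Schwarz (right-invariance makes $(y,b)\mapsto\check{g}((x,a)\cdot(y,b)^{-1})$ an $L^2_r$-function of norm $\|g\|_{L^2_r(\aff)}$), which matches the bound from the first step.

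\textbf{Main obstacle.} The only genuinely delicate points are the adjoint identity $A_h^*=A_{\overline{h}}$ (via the symmetry $\overline{W_\aff^{\phi,\psi}}=W_\aff^{\psi,\phi}$) and the bookkeeping of which $L^p$-space each object inhabits --- in particular that $\check{g}\in L^2_r(\aff)$ forces the assumption $g\in L^2_l(\aff)$, and that $A_f\star_\aff A_g$ must be re-read since the operator-operator convolution was originally defined with a trace-class factor. The remaining steps are a direct assembly of Lemma~\ref{L: Right mul of symbol}, Proposition~\ref{quantization_isometry_proposition}, and elementary group computations.
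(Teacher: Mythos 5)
Your proposal is correct and follows essentially the same route as the paper: both reduce $\tr(A_fU(-x,a)^*A_gU(-x,a))$ to the $\mathcal{S}_2$-pairing of symbols via Proposition \ref{quantization_isometry_proposition} and Lemma \ref{L: Right mul of symbol}, with the adjoint identity $A_h^*=A_{\overline{h}}$ (which the paper uses implicitly and you justify via $\overline{W_\aff^{\phi,\psi}}=W_\aff^{\psi,\phi}$), and then conclude by the group identity $((x,a)\cdot(y,b)^{-1})^{-1}=(y,b)\cdot(x,a)^{-1}$ and the observation that $g\in L^2_l(\aff)$ is exactly what makes $\check{g}\in L^2_r(\aff)$.
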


\begin{proof}
	Using Proposition \ref{quantization_isometry_proposition} and Lemma \ref{L: Right mul of symbol} we compute that 
	\begin{align*}
		(A_f\star_\aff A_g)(x,a)&=\tr(A_fU(-x,a)^*A_gU(-x,a)) \\
		&= \langle A_f,U(-x,a)^*A_g^*U(-x,a) \rangle_{\mathcal{S}_2} \\
		&= \langle f,R_{(x,a)^{-1}}\overline{g}  \rangle_{L^2_r(\aff)} \\
		&= \int_\aff f(y,b)g((y,b)\cdot (x,a)^{-1}) \, \frac{dy \, db}{b} \\
		&=  \int_\aff f(y,b)\check{g}((x,a)\cdot(y,b)^{-1}) \, \frac{dy \, db}{b} \\
		&= f\ast_\aff \check{g}(x,a).
	\end{align*}
	The result follows as $\check{g}\in L_{r}^2(\aff)$ if and only if $g \in L_{l}^2(\aff)$.
\end{proof}

\subsection{Affine Weyl Quantization of Coordinate Functions}
\label{sec: Affine Quantization of Coordinate Functions}

Of particular interest is the affine Weyl quantization of the coordinate functions $f_{x}(x,a) \coloneqq x$ and $f_{a}(x,a) \coloneqq a$ for $(x,a) \in \mathrm{Aff}$. Due to the fact that the coordinate functions are not in $L^2_r(\aff)$, we first need to interpret the quantizations $A_{f_{x}}$ and $A_{f_{a}}$ in a rigorous manner. We begin this task by defining rapidly decaying smooth function and tempered distributions on the affine group.
\begin{definition}
Let $\mathscr{S}(\mathrm{Aff})$ denote the smooth functions $f:\mathrm{Aff} \to \mathbb{C}$ such that
\[(x,\omega) \longmapsto f(x,e^{\omega}) \in \mathscr{S}(\mathbb{R}^2).\] We refer to $\mathscr{S}(\mathrm{Aff})$ as the space of \textit{rapidly decaying smooth functions} (or \textit{Schwartz functions}) on the affine group.
\end{definition}
There is a natural topology on $\mathscr{S}(\aff)$ induced by the semi-norms
\begin{equation}
\label{eq:semi-norms-on-affine}
    \|f\|_{\alpha, \beta} \coloneqq \sup_{x,\, \omega\in \mathbb{R}}|x|^{\alpha_1}|\omega|^{\alpha_2}\left|\partial_x^{\beta_1}\partial_{\omega}^{\beta_2}f(x,e^{\omega})\right|,
\end{equation}
for $\alpha = (\alpha_1, \alpha_2)$ and $\beta = (\beta_1, \beta_2)$ in $\mathbb{N}_{0} \times \mathbb{N}_{0}$. With these semi-norms, the space $\mathscr{S}(\aff)$ becomes a Fréchet space. The space of bounded, anti-linear functionals on $\mathscr{S}(\mathrm{Aff})$ is denoted by $\mathscr{S}'(\mathrm{Aff})$ and called the space of \textit{tempered distributions} on $\aff$. 

\begin{lemma}
\label{lemma_swartz}
For any $f \in \mathscr{S}'(\mathrm{Aff})$ we can define $A_{f}$ as the map $A_{f}: \mathscr{S}(\mathbb{R}_{+}) \to \mathscr{S}'(\mathbb{R}_{+})$ defined by the relation 
\[\langle A_{f}\psi, \phi \rangle_{\mathscr{S}',\mathscr{S}} = \left\langle f, W_{\mathrm{Aff}}^{\phi, \psi} \right\rangle_{\mathscr{S}',\mathscr{S}}, \qquad \psi, \phi \in \mathscr{S}(\mathbb{R}_{+}).\]
Additionally, the map $f\mapsto A_f$ is injective.
\end{lemma}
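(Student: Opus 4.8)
The statement contains two assertions: that the displayed relation genuinely defines a linear map $A_f\colon\mathscr{S}(\mathbb{R}_+)\to\mathscr{S}'(\mathbb{R}_+)$, and that $f\mapsto A_f$ is injective. Both hinge on one technical fact, which the plan is to isolate first: the sesquilinear map $(\phi,\psi)\mapsto W^{\phi,\psi}_{\mathrm{Aff}}$ sends $\mathscr{S}(\mathbb{R}_+)\times\mathscr{S}(\mathbb{R}_+)$ continuously into $\mathscr{S}(\mathrm{Aff})$, and its range has dense linear span there. Granting this, well-definedness is immediate: for $f\in\mathscr{S}'(\mathrm{Aff})$ and fixed $\psi\in\mathscr{S}(\mathbb{R}_+)$, the assignment $\phi\mapsto\langle f,W^{\phi,\psi}_{\mathrm{Aff}}\rangle_{\mathscr{S}',\mathscr{S}}$ is the composition of the continuous linear map $\phi\mapsto W^{\phi,\psi}_{\mathrm{Aff}}$ with the continuous antilinear functional $f$, hence lies in $\mathscr{S}'(\mathbb{R}_+)$; call it $A_f\psi$. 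Linearity (indeed continuity) of $\psi\mapsto A_f\psi$ then follows from the linearity of $W^{\phi,\psi}_{\mathrm{Aff}}$ in $\psi$ together with the continuity above. Consistency with the Hilbert-space definition \eqref{affine_weyl_correspondence} when $f\in L^2_r(\mathrm{Aff})$ follows because $\mathscr{S}(\mathrm{Aff})$ is dense in $L^2_r(\mathrm{Aff})$ and on it the distributional pairing restricts to the $L^2_r(\mathrm{Aff})$-inner product.

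The technical fact is proved by reducing $W^{\psi,\phi}_{\mathrm{Aff}}$ to a model form. Write $\widetilde\psi(s):=\psi(e^s)$; by definition of $\mathscr{S}(\mathbb{R}_+)$, the assignment $\psi\mapsto\widetilde\psi$ is a topological isomorphism onto $\mathscr{S}(\mathbb{R})$. From $\lambda(u)=ue^u/(e^u-1)$ and $\lambda(-u)=u/(e^u-1)$ one reads off $\lambda(u)/\lambda(-u)=e^u$, i.e.\ $\log\lambda(u)-\log\lambda(-u)=u$. Writing $a=e^\omega$ and substituting in the definition of the affine Wigner distribution gives
\[W^{\psi,\phi}_{\mathrm{Aff}}(x,e^\omega)=\int_{\mathbb{R}}\widetilde\psi\bigl(\omega+\log\lambda(u)\bigr)\,\overline{\widetilde\phi\bigl(\omega+\log\lambda(-u)\bigr)}\,e^{-2\pi ixu}\,du,\]
which is the Fourier transform in the variable $u$ of $\Phi_{\psi,\phi}(\omega,u):=\widetilde\psi(\omega+\log\lambda(u))\,\overline{\widetilde\phi(\omega+\log\lambda(-u))}$. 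Now $\Theta\colon(\omega,u)\mapsto(\omega+\log\lambda(u),\,\omega+\log\lambda(-u))$ is a diffeomorphism of $\mathbb{R}^2$: surjectivity follows from the identity above (solve $u=p-q$, $\omega=p-\log\lambda(u)$), the same identity gives $|\det D\Theta|\equiv1$, and since $\log\lambda$ is smooth on $\mathbb{R}$ with derivatives of every order $\ge1$ bounded (check the asymptotics of $\lambda$ as $u\to\pm\infty$), both $\Theta$ and $\Theta^{-1}$ have derivatives of all orders that are polynomially bounded. Hence $g\mapsto g\circ\Theta$ is a topological automorphism of $\mathscr{S}(\mathbb{R}^2)$, so $\Phi_{\psi,\phi}=(\widetilde\psi\otimes\overline{\widetilde\phi})\circ\Theta\in\mathscr{S}(\mathbb{R}^2)$ with continuous (sesquilinear) dependence on $(\psi,\phi)$; as the partial Fourier transform in $u$ is likewise an automorphism of $\mathscr{S}(\mathbb{R}^2)$, the function $(x,\omega)\mapsto W^{\psi,\phi}_{\mathrm{Aff}}(x,e^\omega)$ lies in $\mathscr{S}(\mathbb{R}^2)$ with continuous dependence, i.e.\ $W^{\psi,\phi}_{\mathrm{Aff}}\in\mathscr{S}(\mathrm{Aff})$; the same holds with the roles of $\psi,\phi$ interchanged.

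For injectivity, suppose $A_f=0$, i.e.\ $\langle f,W^{\phi,\psi}_{\mathrm{Aff}}\rangle_{\mathscr{S}',\mathscr{S}}=0$ for all $\phi,\psi\in\mathscr{S}(\mathbb{R}_+)$. It suffices to show that $\mathrm{span}\{W^{\phi,\psi}_{\mathrm{Aff}}:\phi,\psi\in\mathscr{S}(\mathbb{R}_+)\}$ is dense in $\mathscr{S}(\mathrm{Aff})$, for then $f$ annihilates a dense subspace and hence $f=0$ by continuity. Running the identifications of the previous paragraph backwards through the two automorphisms ``partial Fourier transform in $u$'' and ``$\,\cdot\circ\Theta\,$'' of $\mathscr{S}(\mathbb{R}^2)$, this density is equivalent to the density of $\{\widetilde\psi\otimes\overline{\widetilde\phi}:\widetilde\psi,\widetilde\phi\in\mathscr{S}(\mathbb{R})\}$, that is, of the algebraic tensor product $\mathscr{S}(\mathbb{R})\otimes\mathscr{S}(\mathbb{R})$, in $\mathscr{S}(\mathbb{R}^2)$; this is classical (truncate the expansion in a product Hermite basis, or invoke nuclearity of $\mathscr{S}$). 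This yields injectivity.

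I expect the only nonroutine point to be the diffeomorphism claim for $\Theta$---concretely, verifying that $\log\lambda$ has bounded derivatives of all orders so that $\Theta$ and $\Theta^{-1}$ are compatible with Schwartz space---together with pinning down the precise meaning of the $\mathscr{S}'(\mathrm{Aff})$--$\mathscr{S}(\mathrm{Aff})$ pairing and its compatibility with the $L^2_r(\mathrm{Aff})$-pairing. The separate continuity of the distributional pairing and the density of tensor products in Schwartz space are standard.
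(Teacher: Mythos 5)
Your proof is correct, and it reaches the goal by a more self-contained route than the paper. The paper handles the two technical inputs by citation: membership $W_{\mathrm{Aff}}^{\phi,\psi}\in\mathscr{S}(\mathrm{Aff})$ is quoted from \cite[Cor.~6.6]{berge2019affine}, and injectivity is reduced, via the Mellin-transform identity of \cite[Lem.~6.4]{berge2019affine}, to the Heisenberg cross-Wigner distribution, where density of $\mathrm{span}\{W(f,g)\}$ in $\mathscr{S}(\mathbb{R}^2)$ is obtained from the topological isomorphism $f\otimes g\mapsto W(f,g)$ together with Hermite tensor products. You instead derive the needed structure directly: in exponential coordinates $W_{\mathrm{Aff}}^{\psi,\phi}$ is the partial Fourier transform of $(\widetilde\psi\otimes\overline{\widetilde\phi})\circ\Theta$ with $\Theta(\omega,u)=(\omega+\log\lambda(u),\omega+\log\lambda(-u))$, and since $\log\lambda(u)-\log\lambda(-u)=u$ makes $\Theta$ a tempered diffeomorphism with unimodular Jacobian, both the Schwartz mapping property (hence well-definedness and continuity of $A_f\psi$, which the paper leaves implicit) and the density statement collapse to the classical density of $\mathscr{S}(\mathbb{R})\otimes\mathscr{S}(\mathbb{R})$ in $\mathscr{S}(\mathbb{R}^2)$. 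What your route buys is independence from the external Mellin/Heisenberg machinery at the cost of the routine but necessary verification that $\log\lambda$ grows at most linearly and has bounded derivatives of all orders $\ge 1$ (so that $g\mapsto g\circ\Theta$ is an automorphism of $\mathscr{S}(\mathbb{R}^2)$), which you correctly flag as the one nonroutine check; the paper's route buys brevity by leaning on its earlier results. One cosmetic point: $W_{\mathrm{Aff}}^{\phi,\psi}$ is conjugate-linear in $\psi$, so linearity of $\psi\mapsto A_f\psi$ comes from composing this with the anti-linear functional $f$ (with the paper's convention that elements of $\mathscr{S}'$ are anti-linear); your phrasing glosses over this, but it does not affect the argument.
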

\begin{proof}
It was shown in \cite[Cor.~6.6]{berge2019affine} that for any $\phi, \psi \in \mathscr{S}(\mathbb{R}_{+})$ then $W_{\mathrm{Aff}}^{\phi,\psi} \in \mathscr{S}(\mathrm{Aff})$. 
Hence the pairing $\left\langle f, W_{\mathrm{Aff}}^{\phi, \psi} \right\rangle_{\mathscr{S}',\mathscr{S}}$ is well defined. 

For the injectivity it suffices to show that $A_{f} = 0$ implies that $f = 0$. Let us first reformulate this slightly: If $A_{f} = 0$, then we have that 
\[\langle A_{f}\psi, \phi \rangle_{\mathscr{S}',\mathscr{S}} = \left\langle f, W_{\mathrm{Aff}}^{\phi, \psi} \right\rangle_{\mathscr{S}',\mathscr{S}} = 0\]
for all $\psi, \phi \in \mathscr{S}(\mathbb{R}_{+})$. We could conclude that $f = 0$ if we knew that any $g \in \mathscr{S}(\aff)$ could be approximated (in the Fréchet topology) by linear combinations of elements on the form $ W_{\mathrm{Aff}}^{\phi, \psi}$ for $\psi, \phi \in \mathscr{S}(\mathbb{R}_{+})$.
To see that this is the case, we translate the problem to the  Heisenberg setting.\par
The \textit{Mellin transform} $\mathcal{M}$ is given by
\[\mathcal{M}(\phi)(x) = \mathcal{M}_{r}(\phi)(x) \coloneqq \int_{0}^{\infty}\phi(r) r^{- 2 \pi i x} \, \frac{dr}{r}.\] 
Define the functions $\Psi$ and $\Phi$ to be $\Psi(x) \coloneqq \psi(e^{x})$ and $\Phi(x) \coloneqq \phi(e^{x})$ for $\psi,\phi\in L^2(\mathbb{R}_+)$. 
A reformulation of \cite[Lem.~6.4]{berge2019affine} shows that we have the relation
\begin{equation*}
       W_{\mathrm{Aff}}^{\psi,\phi}(x,a) = \mathcal{M}_{y}^{-1} \otimes \mathcal{M}_{b}\left[\left(\frac{\sqrt{b}\log(b)}{b - 1}\right)^{2\pi i y}  \mathcal{F}_{\sigma}W(\Psi,\Phi)\big(\log(b),y\big)\right](x,a),
   \end{equation*}
where $W$ is the cross-Wigner distribution. The correspondence preserves Schwartz functions, due to the term \[\left(\frac{\sqrt{b}\log(b)}{b - 1}\right)^{2\pi i y}\] being smooth with polynomially bounded derivatives. This gives a bijective correspondence between $W_{\mathrm{Aff}}^{\psi,\phi} \in \mathscr{S}(\aff)$ and $W(\Psi,\Phi) \in \mathscr{S}(\mathbb{R}^{2})$.  As such, the injectivity question is reduced to asking whether the linear span of elements on the form $W(f,g)$ for $f,g \in \mathscr{S}(\mathbb{R})$ is dense in $\mathscr{S}(\mathbb{R}^{2})$. One way to verify this well-known fact is to note that the map $f\otimes g\mapsto W(f,g)$, where $f\otimes g(x,y)=f(x)g(y)$, extends to a topological isomorphism on $\mathscr{S}(\mathbb{R}^2)$, see for instance \cite[(14.21)]{grochenig2013foundations} for the formula of this isomorphism. The density of elements on the form $W(f,g)$ for $f,g \in \mathscr{S}(\mathbb{R})$ therefore follows as the functions $h_m\otimes h_n$, where $\{h_n\}_{n = 0}^{\infty}$ are the Hermite functions, span a dense subspace of $\mathscr{S}(\mathbb{R}^{2})$ by \cite[Thm.~V.13]{Reed:1980}.
\end{proof}

\begin{example}
Consider the constant function on the affine group given by $1(x,a) = 1$ for all $(x,a) \in \mathrm{Aff}$. Then the quantization $A_{1}$ is the identity operator since for $\psi,\phi \in \mathscr{S}(\mathbb{R}_+)$
\begin{align*}
    \langle A_{1} \psi, \phi \rangle_{\mathscr{S}',\mathscr{S}} & = \langle 1, W_{\mathrm{Aff}}^{\phi,\psi} \rangle_{\mathscr{S}',\mathscr{S}}
    \\ & = 
    \int_{\mathrm{Aff}} \overline{W_{\mathrm{Aff}}^{\phi,\psi}(x,a)} \, \frac{da \, dx}{a} 
    \\ & =
    \int_{0}^{\infty} \psi(a) \overline{\phi(a)} \, \frac{da}{a} 
    \\ & = 
   \langle \psi, \phi \rangle_{L^2(\mathbb{R}_+)}. 
\end{align*}
Notice that we used a straightforward generalization of the marginal property of the affine Wigner distribution given in \eqref{eq:first_marginal_property}, see the proof of \cite[Prop.~3.4]{berge2019affine} for details.
\end{example}

To motivate the next result, consider the coordinate functions $\sigma_{x}(x,\omega) \coloneqq x$ and $\sigma_{\omega}(x,\omega) \coloneqq \omega$ for $(x,\omega) \in \mathbb{R}^{2n}$. 
The Weyl quantizations $L_{\sigma_{x}}$ and $L_{\sigma_{\omega}}$ are the well-known \textit{position operator} and \textit{momentum operator} in quantum mechanics. In particular, the commutator \[\left[L_{\sigma_{x}}, L_{\sigma_{\omega}}\right] \coloneqq L_{\sigma_{x}} \circ L_{\sigma_{\omega}} - L_{\sigma_{\omega}} \circ L_{\sigma_{x}}\] is a constant times the identity by \cite[Prop.~3.8]{hall2013quantum}. This is precisely the relation for the Lie algebra of the Heisenberg group.
In light of this, the following proposition shows that the affine Weyl quantization has the expected expression for the coordinate functions.
\begin{theorem}
\label{quantization_of_coordinates}
    Let $f_{x}$ and $f_{a}$ be the coordinate functions on the affine group. 
    The affine Weyl quantizations $A_{f_{x}}$ and $A_{f_{a}}$ are well-defined as maps from $\mathscr{S}(\mathbb{R}_{+})$ to $\mathscr{S}'(\mathbb{R}_{+})$ and are explicitly given by \[A_{f_{x}}\psi(r) = \frac{1}{2\pi i}r \psi'(r), \qquad A_{f_{a}}\psi(r) = r\psi(r), \qquad \psi \in \mathscr{S}(\mathbb{R}_{+}).\] In particular, we have the commutation relation \[[A_{f_{x}},A_{f_{a}}] =  \frac{1}{2\pi i}A_{f_{a}}.\]
    This is, up to re-normalization, precisely the Lie algebra structure of $\mathfrak{aff}$ given in \eqref{eq:affine_lie_algebra_structure}.
\end{theorem}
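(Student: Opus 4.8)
The plan is to compute $A_{f_x}$ and $A_{f_a}$ directly from the defining relation $\langle A_f \psi, \phi\rangle = \langle f, W_{\aff}^{\phi,\psi}\rangle$, using the explicit integral formula for the affine Wigner distribution. First I would check well-definedness: since $\phi,\psi \in \mathscr{S}(\mathbb{R}_+)$ gives $W_{\aff}^{\phi,\psi} \in \mathscr{S}(\aff)$ by \cite[Cor.~6.6]{berge2019affine}, and the coordinate functions $f_x, f_a$ are polynomially bounded (in the appropriate coordinates $f_a(x,e^\omega) = e^\omega$ is not polynomially bounded, so some care is needed — one should instead observe directly that $W_{\aff}^{\phi,\psi}$ decays rapidly enough in $a$ because $\psi(a\lambda(u)), \phi(a\lambda(-u))$ inherit decay, making $\int_{\aff} a \cdot \overline{W_{\aff}^{\phi,\psi}}\, a^{-1}\,dx\,da$ absolutely convergent), the pairings $\langle f_x, W_{\aff}^{\phi,\psi}\rangle$ and $\langle f_a, W_{\aff}^{\phi,\psi}\rangle$ make sense and define elements of $\mathscr{S}'(\mathbb{R}_+)$.

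For $A_{f_a}$: I would write
\begin{equation*}
\langle A_{f_a}\psi,\phi\rangle = \int_{0}^{\infty}\int_{-\infty}^{\infty} a\,\overline{W_{\aff}^{\phi,\psi}(x,a)}\,\frac{da\,dx}{a} = \int_0^\infty \overline{\left(\int_{-\infty}^{\infty} W_{\aff}^{\phi,\psi}(x,a)\,dx\right)} \, da .
\end{equation*}
By the marginal property \eqref{eq:first_marginal_property} (in its cross form, as in the proof of \cite[Prop.~3.4]{berge2019affine}), $\int_{-\infty}^\infty W_{\aff}^{\phi,\psi}(x,a)\,dx = \phi(a)\overline{\psi(a)}$, so the expression becomes $\int_0^\infty \overline{\phi(a)}\,\psi(a)\,da = \int_0^\infty \big(r\psi(r)\big)\overline{\phi(r)}\,\frac{dr}{r} = \langle r\psi(r), \phi\rangle_{L^2(\mathbb{R}_+)}$, giving $A_{f_a}\psi(r) = r\psi(r)$.

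For $A_{f_x}$: I would use $x e^{-2\pi i x u} = \frac{1}{-2\pi i}\partial_u e^{-2\pi i x u}$ inside the $u$-integral defining $W_{\aff}^{\phi,\psi}$, integrate by parts in $u$ (boundary terms vanish by rapid decay), and then again integrate $x$ out against the resulting expression to recover a marginal-type identity; alternatively, and more cleanly, I would differentiate the parametrized marginal. Concretely, $\langle A_{f_x}\psi,\phi\rangle = \int_{\aff} x\,\overline{W_{\aff}^{\phi,\psi}(x,a)}\,\frac{da\,dx}{a}$, and using that $x$ acts as $\frac{1}{2\pi i}\partial_u$ on $\overline{e^{-2\pi i xu}} = e^{2\pi i xu}$ together with the definition of $W_{\aff}^{\phi,\psi}$, one integrates by parts in $u$ and then performs the $x$-integration (which produces $\delta(u)$), leaving a boundary evaluation at $u=0$ of $\partial_u\big[\psi(a\lambda(u))\overline{\phi(a\lambda(-u))}\big]$. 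Since $\lambda(0)=1$ and $\lambda'(0) = \tfrac12$, the chain rule yields a combination of $\psi'(a)\overline{\phi(a)}$ and $\psi(a)\overline{\phi'(a)}$ terms; after simplification and an integration by parts in $a$ (using $\overline{\phi(a)} = $ conjugate and the $\tfrac{da}{a}$ measure) the two contributions combine to $\langle \tfrac{1}{2\pi i} r\psi'(r), \phi\rangle_{L^2(\mathbb{R}_+)}$. This identifies $A_{f_x}\psi(r) = \frac{1}{2\pi i} r\psi'(r)$.

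Finally, the commutation relation follows from a direct computation: for $\psi \in \mathscr{S}(\mathbb{R}_+)$,
\begin{equation*}
A_{f_x}A_{f_a}\psi(r) = \frac{1}{2\pi i} r\,(r\psi(r))' = \frac{1}{2\pi i}\big(r\psi(r) + r^2\psi'(r)\big), \qquad A_{f_a}A_{f_x}\psi(r) = \frac{1}{2\pi i} r^2\psi'(r),
\end{equation*}
so $[A_{f_x},A_{f_a}]\psi(r) = \frac{1}{2\pi i} r\psi(r) = \frac{1}{2\pi i}A_{f_a}\psi(r)$. Comparing with \eqref{eq:affine_lie_algebra_structure} identifies this, up to the factor $\frac{1}{2\pi i}$, with the bracket of $\mathfrak{aff}$.

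The main obstacle I anticipate is the $A_{f_x}$ computation: tracking the chain-rule factors from $\lambda(\pm u)$ at $u=0$, correctly handling the integration by parts in $u$ (justifying vanishing boundary terms via Schwartz decay of $u\mapsto \psi(a\lambda(u))\overline{\phi(a\lambda(-u))}$ on all of $\mathbb{R}$, including as $u\to\pm\infty$ where $\lambda(u)\to 0$ or $\infty$), and then the second integration by parts in $a$ to collapse the two derivative terms into the single expression $\frac{1}{2\pi i}r\psi'(r)$. One must also be careful that all manipulations are legitimate at the level of tempered-distribution pairings rather than requiring pointwise convergence. The $A_{f_a}$ case and the commutator are routine once the marginal property is in hand.
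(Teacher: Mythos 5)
Your proposal is correct and follows essentially the same route as the paper: pair against $W_{\mathrm{Aff}}^{\phi,\psi}$, carry out the $x$-integration to produce $\delta_0$ (equivalently the marginal property) for $A_{f_a}$ and $\tfrac{1}{2\pi i}\delta_0'$ for $A_{f_x}$, use $\lambda(0)=1$, $\lambda'(0)=\tfrac12$ in the chain rule, combine the two derivative terms by an integration by parts in $a$, and verify the commutator by direct computation. The paper treats the well-definedness and Fubini steps just as informally as you do, so there is no substantive difference between the two arguments.
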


\begin{proof}
Let us begin by computing $A_{f_{x}}$. We can change the order of integrating by Fubini's theorem and obtain for $\psi, \phi \in \mathscr{S}(\mathbb{R}_{+})$ that
\begin{align*}
    \langle A_{f_{x}}\psi, \phi \rangle_{\mathscr{S}',\mathscr{S}} & = \left\langle f_{x}, W_{\mathrm{Aff}}^{\phi, \psi} \right\rangle_{\mathscr{S}',\mathscr{S}}
    \\ & = 
    \int_{-\infty}^{\infty} \int_{0}^{\infty} x \, \overline{\int_{-\infty}^{\infty}\phi(a \lambda(u)) \overline{\psi(a\lambda(-u))} e^{-2\pi i x u} \, du} \, \frac{da \, dx}{a}
    \\ & = 
    \int_{-\infty}^{\infty} \int_{0}^{\infty} \left(\int_{-\infty}^{\infty} x e^{2\pi i x u} \, dx \right) \, \psi(a\lambda(u)) \overline{\phi(a \lambda(-u))} \, \frac{da \, du}{a}.
\end{align*}
Notice that the inner integral is equal to 
\[\int_{-\infty}^{\infty} x e^{2\pi i x u} \, dx = \frac{1}{2\pi i}\delta_{0}^{'}(u),\]
where \[\int_{-\infty}^{\infty}\delta_{0}^{'}(u)\psi(u) \, du = \psi'(0).\] Hence we have the relation 
\[\langle A_{f_{x}}\psi, \phi \rangle_{\mathscr{S}',\mathscr{S}} = \frac{1}{2 \pi i} \int_{0}^{\infty} \frac{\partial}{\partial u}\left(\psi(a\lambda(u)) \overline{\phi(a \lambda(-u))}\right)\Big|_{u = 0} \, \, \frac{da}{a}.\]
By using the formulas $\lambda(0) = 1$ and $\lambda'(0) = 1/2$ we can simplify and obtain
\[\langle A_{f_{x}}\psi, \phi \rangle_{\mathscr{S}',\mathscr{S}} = \frac{1}{4 \pi i} \int_{0}^{\infty} a \cdot \left(\psi'(a)\overline{\phi(a)} - \psi(a)\overline{\phi'(a)} \right)  \, \frac{da}{a}.\]
Using integration by parts we obtain the claim since
\[\langle A_{f_{x}}\psi, \phi \rangle_{\mathscr{S}',\mathscr{S}} = \int_{0}^{\infty} \left[\frac{1}{2\pi i}a \psi'(a)\right]\overline{\phi(a)}  \, \frac{da}{a}.\]

For $A_{f_{a}}$ we have by similar calculations as above that 
\begin{align*}
    \langle A_{f_{a}}\psi, \phi \rangle_{\mathscr{S}',\mathscr{S}} 
    & =
    \int_{-\infty}^{\infty}\int_{0}^{\infty}\left(\int_{-\infty}^{\infty} 1 \cdot e^{2\pi i x u} \, dx \right) \, a \cdot \psi(a\lambda(u)) \overline{\phi(a \lambda(-u))} \, \frac{da \, du}{a}
    \\ & =
    \int_{-\infty}^{\infty}\int_{0}^{\infty}\delta_{0}(u) \left( a \cdot \psi(a\lambda(u)) \overline{\phi(a \lambda(-u))}\right) \, \frac{da \, du}{a}
    \\ & = 
    \int_{0}^{\infty} a \psi(a) \overline{\phi(a)} \, \frac{da}{a}.
\end{align*}
The commutation relation follows from straightforward computation.
\end{proof}

\subsection{The Affine Grossmann-Royer Operator}
\label{sec: Affine Grossmann-Royer Operator}

In this section we introduce the affine Grossmann-Royer operator with the aim of obtaining an affine parity operator analogous to the (Heisenberg) parity operator $P$ in Section \ref{sec: Operator Convolution}. The main reason for this is to obtain affine version of the formulas \eqref{eq:wigner_through_parity} and \eqref{eq:quantization_through_parity} so that we can describe the affine Weyl quantization through convolution. Recall that the \textit{(Heisenberg) Grossmann-Royer operator} $R(x,\omega)$ for $(x,\omega) \in \mathbb{R}^{2n}$ is defined by the relation \[W(f,g)(x,\omega) = \left\langle R(x,\omega)f, g \right\rangle_{L^{2}(\mathbb{R}^n)}, \qquad f,g \in L^{2}(\mathbb{R}^n).\] Analogously, we have the following definition.
\begin{definition}
We define the \textit{affine Grossmann-Royer operator} $R_{\mathrm{Aff}}(x,a)$ for $(x,a) \in \mathrm{Aff}$ by the relation \[W_{\mathrm{Aff}}^{\psi,\phi}(x,a) = \left\langle R_{\mathrm{Aff}}(x,a)\psi, \phi \right\rangle_{\mathscr{S}',\mathscr{S}}, \qquad \psi,\phi \in \mathscr{S}(\mathbb{R}_+).\]
\end{definition}
We restrict our attention to Schwartz functions for convenience since then $W_{\mathrm{Aff}}^{\psi,\phi} \in \mathscr{S}(\mathrm{Aff})$ by \cite[Cor.~6.6]{berge2019affine}, and hence have well-defined point values. The Grossmann-Royer operator $R_{\mathrm{Aff}}(x,a)$ is precisely the affine Weyl quantization of the point mass $\delta_{\mathrm{Aff}}(x,a) \in \mathscr{S}'(\textrm{Aff})$ for $(x,a) \in \aff$ defined by \[\langle\delta_{\mathrm{Aff}}(x,a),f\rangle_{\mathscr{S}',\mathscr{S}}\coloneqq \overline{f(x,a)}, \qquad f \in \mathscr{S}(\mathrm{Aff}).\] Since this is also true for the Stratonovich-Weyl operator $\Omega(x,a)$ given in \eqref{eq:Stratonovich-Weyl operator}, it follows that $R_{\mathrm{Aff}}(x,a) = \Omega(x,a)$ for all $(x,a) \in \mathrm{Aff}$. From \cite[p. 12]{gayral2007fourier} it follows that we have the \textit{affine covariance relation}
\begin{equation}
\label{eq:affine covariance relation}
    U(-x,a)^*R_{\mathrm{Aff}}(0,1)U(-x,a) = R_{\mathrm{Aff}}(x,a).
\end{equation}
The following result, which is a straightforward computation, shows that $R_{\mathrm{Aff}}(x,a)$ is an unbounded and densely defined operator on $L^{2}(\mathbb{R}_{+})$.
\begin{lemma}
\label{Grossman_royer_formula}
Fix $\psi \in \mathscr{S}(\mathbb{R}_+)$ and $(x,a) \in \mathrm{Aff}$. The affine Grossmann-Royer operator $R_{\mathrm{Aff}}(x,a)$ has the explicit form \[R_{\mathrm{Aff}}(x,a)\psi(r) = \frac{e^{2 \pi i x \lambda^{-1}\left(\frac{r}{a}\right)}\lambda^{-1}\left(\frac{r}{a}\right)\left(1 - e^{\lambda^{-1}\left(\frac{r}{a}\right)}\right)}{1 + \lambda^{-1}\left(\frac{r}{a}\right) - e^{\lambda^{-1}\left(\frac{r}{a}\right)}} \cdot \psi\left(r e^{-\lambda^{-1}\left(\frac{r}{a}\right)}\right),\]
where $\lambda$ is the function given in \eqref{eq:lambda_function}.
\end{lemma}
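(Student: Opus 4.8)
The plan is to reduce the computation to the known dequantization formula \eqref{eq: dequantization_formula} specialized to a rank-one operator. Since $R_{\mathrm{Aff}}(x,a) = \Omega(x,a)$ and, by Lemma \ref{L: Right mul of symbol} (or directly by the affine covariance relation \eqref{eq:affine covariance relation}), it suffices to first pin down $R_{\mathrm{Aff}}(0,1)$ and then conjugate by $U(-x,a)$. For the base point $(0,1)$, I would use that $R_{\mathrm{Aff}}(0,1)$ is the affine Weyl quantization of the Dirac mass $\delta_{\mathrm{Aff}}(0,1)$, equivalently that its integral kernel $K(r,s)$ is characterized by \eqref{eq: dequantization_formula}: the affine symbol of an operator $A$ with kernel $A_K$ is $f_A(x,a) = \int_{-\infty}^{\infty} A_K(a\lambda(u), a\lambda(-u)) e^{-2\pi i x u}\,du$. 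Demanding that this equal $\delta_{\mathrm{Aff}}(0,1)$ — i.e.\ that $\langle f_A, g\rangle = \overline{g(0,1)}$ for all $g\in\mathscr{S}(\aff)$ — forces a relation that, after the change of variables $(x,a)\mapsto(u,v)$ via $r = a\lambda(u)$, $s = a\lambda(-u)$ (so that $a$ and $u$ are recovered from $(r,s)$), determines $K(r,s)$ as a distribution supported on the curve $\{(r,s): r/s = \lambda(u)/\lambda(-u) = e^u\}$, i.e.\ collapsing one of the two kernel variables onto the other via $s = r e^{-u}$ with $u = \lambda^{-1}(r/a)$ — but since we are at $a$ arbitrary this is exactly the substitution appearing in the claim.

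Concretely, the cleanest route is: compute $\langle R_{\mathrm{Aff}}(0,1)\psi,\phi\rangle = W_{\mathrm{Aff}}^{\psi,\phi}(0,1) = \int_{-\infty}^{\infty}\psi(\lambda(u))\overline{\phi(\lambda(-u))}\,du$ straight from the definition of the affine Wigner distribution, then perform the substitution $t = \lambda(u)$ (equivalently $u = \lambda^{-1}(t)$) to rewrite this single integral as $\int_0^\infty (\cdots)\,\psi(t)\,\overline{\phi(t e^{-u(t)})}\,\frac{dt}{t}$ for an explicit Jacobian-type weight $(\cdots)$, where $u(t) = \lambda^{-1}(t)$. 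Comparing with $\langle R_{\mathrm{Aff}}(0,1)\psi,\phi\rangle = \int_0^\infty R_{\mathrm{Aff}}(0,1)\psi(t)\,\overline{\phi(t)}\,\frac{dt}{t}$... this last step needs care because the $\phi$ argument is $t e^{-u(t)}$, not $t$; so I would instead write $\langle R_{\mathrm{Aff}}(0,1)\psi,\phi\rangle$ with $\phi$ evaluated at the shifted point and recognize the operator as a weighted composition operator $\psi \mapsto w(r)\,\psi(r e^{-\lambda^{-1}(r)})$. The weight $w(r)$ is read off from the change-of-variables Jacobian $du/dt$ together with the relation between the Haar-measure densities $t^{-1}\,dt$ (on the $L^2(\mathbb{R}_+)$ side) and $du$ (on the Wigner side); a short computation with $\lambda(u) = ue^u/(e^u-1)$ gives $\lambda'(u) = \lambda(u)\big(\tfrac{1}{u} + 1 - \tfrac{1}{1-e^{-u}}\big)$, and assembling these factors produces exactly the quoted expression $\dfrac{\lambda^{-1}(r)\,(1 - e^{\lambda^{-1}(r)})}{1 + \lambda^{-1}(r) - e^{\lambda^{-1}(r)}}$ at $(x,a)=(0,1)$.

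Finally, I would apply \eqref{eq:affine covariance relation}: $R_{\mathrm{Aff}}(x,a) = U(-x,a)^* R_{\mathrm{Aff}}(0,1) U(-x,a)$, and since $U(-x,a)\psi(r) = e^{-2\pi i x r}\psi(ar)$ (conjugate-transpose of $U(x,a)\psi(r)=e^{2\pi i xr}\psi(ar)$), conjugation replaces the argument $r$ by $r/a$ everywhere inside $w$ and inside the composition, and produces the modulation factor $e^{2\pi i x \lambda^{-1}(r/a)}$ from combining the two exponentials picked up at the outer and inner applications of $U(-x,a)$. This yields the stated formula for general $(x,a)$. The main obstacle is purely bookkeeping: getting the weight $w$ right requires carefully tracking (i) the Jacobian of $u \mapsto \lambda(u)$, (ii) the ratio of the two measures $r^{-1}dr$ versus $du$, and (iii) the fact that in $\langle R_{\mathrm{Aff}}(0,1)\psi,\phi\rangle$ the testing against $\phi$ happens against $\overline{\phi(t)}\,\frac{dt}{t}$ while the substitution naturally presents $\overline{\phi(te^{-u})}$ — one must change variables so that $\phi$'s argument becomes the free variable, then invert. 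No genuinely deep input is needed beyond \eqref{eq: dequantization_formula}, the definition of $W_{\mathrm{Aff}}^{\psi,\phi}$, and \eqref{eq:affine covariance relation}, all of which are available in the excerpt; the identification $R_{\mathrm{Aff}}(x,a)=\Omega(x,a)$ gives an independent cross-check via \eqref{eq:Stratonovich-Weyl operator} if desired.
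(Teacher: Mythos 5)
Your proposal is correct and is exactly the ``straightforward computation'' the paper alludes to (no written proof is given there): pair $R_{\mathrm{Aff}}(x,a)\psi$ against $\phi$, substitute so that the argument $a\lambda(-u)$ of $\phi$ becomes the free variable $r$ (so $u=-\lambda^{-1}(r/a)$ and $a\lambda(u)=re^{-\lambda^{-1}(r/a)}$), and read off the weight $\lambda(s)/\lambda'(s)=s(1-e^{s})/(1+s-e^{s})$ from the Jacobian against the measure $r^{-1}dr$; whether one does this directly at $(x,a)$ or at $(0,1)$ followed by the covariance relation is immaterial. Two small cleanups: $U(-x,a)$ is not the adjoint of $U(x,a)$ (though the formula $U(-x,a)\psi(r)=e^{-2\pi i xr}\psi(ar)$ you use is correct by definition), and in the covariance route the two exponentials combine into $e^{2\pi i x\lambda^{-1}(r/a)}$ only via the identity $\lambda(s)\left(1-e^{-s}\right)=s$, which should be stated explicitly.
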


We will be particularly interested in the \textit{affine parity operator} $P_{\mathrm{Aff}}$ given by the affine Grossmann-Royer operator at the identity element, that is,  \[P_{\mathrm{Aff}}(\psi)(r) : = R_{\textrm{Aff}}(0,1)\psi(r)=\frac{\lambda^{-1}(r)(1-e^{\lambda^{-1}(r)})}{1+\lambda^{-1}(r)-e^{\lambda^{-1}(r)}}\psi\left(re^{-\lambda^{-1}(r)}\right),\]
for $\psi \in \mathscr{S}(\mathbb{R}_{+})$. The affine parity operator $P_{\mathrm{Aff}}$ is symmetric as an unbounded operator on $L^{2}(\mathbb{R}_{+})$. Moreover, we see from the relation
\[e^{\lambda^{-1}(r)} - 1 = \frac{\lambda^{-1}(r)e^{\lambda^{-1}(r)}}{r}\]
that we have the alternative formula
\begin{equation}
\label{parity_first_formula}
    P_{\mathrm{Aff}}(\psi)(r) = \frac{\lambda^{-1}(r)}{1-re^{-\lambda^{-1}(r)}}\psi\left(re^{-\lambda^{-1}(r)}\right).
\end{equation}

An important commutation relation for the (Heisenberg) Grossman-Royer operator $R(x,\omega)$ for $(x,\omega) \in \mathbb{R}^{2n}$ is given by 
\begin{equation}
\label{classical_Grossman_identity}
    P \circ R(x, \omega) = R(-x, -\omega) \circ P.
\end{equation}
The following proposition shows that the analogue of \eqref{classical_Grossman_identity} breaks down in the affine setting due to $\aff$ being non-unimodular. As the proof is a straightforward computation, we leave the details to the reader.

\begin{proposition}
The commutation relation \[P_{\mathrm{Aff}} \circ R_{\mathrm{Aff}}(x,a) = R_{\mathrm{Aff}}\left((x,a)^{-1}\right) \circ P_{\mathrm{Aff}}\]
holds precisely for those $(x,a) \in \mathrm{Aff}$ such that $\Delta(x, a) = \frac{1}{a} = 1$.
\end{proposition}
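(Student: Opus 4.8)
The plan is to verify the claimed identity directly from the explicit formula for the affine Grossmann-Royer operator in Lemma \ref{Grossman_royer_formula} together with the formula \eqref{parity_first_formula} for $P_{\mathrm{Aff}}$. Write $s = s(r) \coloneqq \lambda^{-1}(r)$, so that $P_{\mathrm{Aff}}$ acts by multiplication by a function of $r$ composed with the substitution $r \mapsto r e^{-s(r)}$, and similarly $R_{\mathrm{Aff}}(x,a)$ acts by multiplication by a function of $r/a$ composed with $r \mapsto r e^{-s(r/a)}$. The key structural observation I would use is that $\lambda$ satisfies $\lambda(-u) = \lambda(u) e^{-u}$, which is exactly the relation encoding $\mu_L$ versus $\mu_R$ in \eqref{eq:lambda_function}; this translates into an involution-type identity for $\lambda^{-1}$ under the map $r \mapsto r e^{-s(r)}$, namely that applying the substitution twice returns $r$ precisely when the modular factor is trivial.

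The concrete steps I would carry out are: first, compute $P_{\mathrm{Aff}} \circ R_{\mathrm{Aff}}(x,a)$ applied to a test function $\psi \in \mathscr{S}(\mathbb{R}_+)$, tracking both the accumulated scalar multiplier and the composite argument of $\psi$; second, compute $R_{\mathrm{Aff}}\big((x,a)^{-1}\big) \circ P_{\mathrm{Aff}}$ applied to $\psi$, using that $(x,a)^{-1} = (-x/a, 1/a)$ so that the relevant substitution variable becomes $\lambda^{-1}(ar)$; third, equate the two composite arguments of $\psi$ and extract the condition under which they agree for all $r$. The argument-matching will force a relation between $s(r/a)$ and $s(ar)$ which, via $\lambda(-u) = e^{-u}\lambda(u)$, reduces to the requirement $a = 1/a$, i.e. $a = 1$, equivalently $\Delta(x,a) = a^{-1} = 1$; then one checks that the scalar multipliers automatically coincide under this condition (using the alternative form \eqref{parity_first_formula} keeps the bookkeeping light), and conversely that when $a \neq 1$ the two composite arguments genuinely differ on a set of positive measure so the operators cannot be equal.

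The main obstacle will be the substitution bookkeeping: each of $P_{\mathrm{Aff}}$ and $R_{\mathrm{Aff}}$ changes the argument of $\psi$ by a factor $e^{-s(\cdot)}$ evaluated at a point that itself depends on earlier substitutions, so the composite argument is a nested expression, and one must be careful that $\lambda^{-1}$ is being evaluated at the right rescaled point at each stage. I expect the cleanest route is to introduce the change of variables $r = \lambda(u) \cdot a$ (respectively $r = \lambda(u)$) at the outset, so that the operators act more transparently in the $u$-variable where $\lambda(u) \mapsto \lambda(-u)$ is the governing symmetry; in those coordinates the identity \eqref{classical_Grossman_identity} from the Heisenberg case has an evident analogue, and the modular obstruction $\Delta(x,a) = 1$ emerges as the discrepancy between $\lambda(-u)$ and $e^{u}\lambda(-u)$. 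Since the paper explicitly leaves this to the reader as a routine computation, I would not belabor the final algebraic simplification of the scalar prefactors beyond noting that they match once the argument condition holds.
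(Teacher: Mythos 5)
The paper gives no proof of this proposition (it is explicitly left to the reader as a straightforward computation), and your plan is precisely that computation: apply the explicit formulas of Lemma \ref{Grossman_royer_formula} and \eqref{parity_first_formula}, use $(x,a)^{-1}=(-x/a,1/a)$ and the identity $\lambda(-u)=e^{-u}\lambda(u)$ (so that the substitution $r\mapsto re^{-\lambda^{-1}(r)}$ is the involution $\lambda(u)\mapsto\lambda(-u)$), match the nested substitution arguments, and then verify that the scalar multipliers, including the phase $e^{-2\pi i x u}$, coincide when $a=1$. The one place to be careful is the necessity direction: the argument-matching condition is the nested identity $a\,F\!\left(F(r)/a\right)=F\!\left(F(ar)/a\right)$ with $F(r)=re^{-\lambda^{-1}(r)}$, not literally ``$a=1/a$'', and its failure for every $a\neq 1$ deserves an explicit check (for instance, as $r\to 0^{+}$ the ratio of the two sides tends to $a^{1-1/a}\neq 1$), after which your observation that differing substitution points combined with nonvanishing multipliers preclude equality of the operators completes the proof.
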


We will now show that both the function $\lambda$ in \eqref{eq:lambda_function} and the affine parity operator $P_{\mathrm{Aff}}$ are related to the Lambert $W$ function. Recall that the \textit{(real) Lambert $W$ function} is the multivalued function defined to be the inverse relation of the function $f(x) = xe^x$ for $x \in \mathbb{R}$. The function $f(x)$ for $x < 0$ is not injective. There exist for each $y \in (-1/e, 0)$ precisely two values $x_1,x_2 \in (-\infty, 0)$ such that \[x_{1}e^{x_1} = x_{2}e^{x_{2}} = y.\] As the solutions appear in pairs, we can define $\sigma$ to be the function that permutes these solutions, that is, $\sigma(x_{1}) = x_{2}$ and $\sigma(x_{2}) = x_1$. For $y = -1/e$ there is only one solution to the equation $xe^{x} = y$, namely $x = -1$. Hence we define $\sigma(-1) = -1$. We can represent the function $\sigma$ as 
\[
\sigma(x) = 
\begin{cases}
W_0(xe^x),& x < -1 \\
-1,& x = -1 \\
W_{-1}(xe^x),& -1 < x < 0 \\
\end{cases},
\]
where $W_{0}, W_{-1}$ are the two branches of the Lambert $W$ function satisfying
\[W_0(xe^x)=x, \quad \text{ for } x\ge -1\]
and
\[W_{-1}(xe^x)=x, \quad \text{ for } x\le -1.\]

\begin{lemma}
\label{lambda_inverse}
    The inverse of $\lambda$ is given by 
    \[\lambda^{-1}(r) = \log\left(\frac{-r}{\sigma(-r)}\right) = \sigma(-r) + r, \qquad r > 0.\]
\end{lemma}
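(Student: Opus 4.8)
The plan is to start from the defining relation $r=\lambda(u)=\frac{ue^u}{e^u-1}$ and massage it into a Lambert-$W$-type equation, exploiting the connection between $\lambda$ and $\sigma$ flagged just before the statement. Clearing the denominator gives $e^u(r-u)=r$, and multiplying through by $e^{-r}$ rewrites this as $(u-r)e^{u-r}=(-r)e^{-r}$. Thus $v:=u-r$ solves $ve^v=(-r)e^{-r}$, so $v$ must be one of the (at most two) values of the Lambert $W$ function at $(-r)e^{-r}$; since $(-r)e^{-r}<0$ forces every solution to lie in $(-\infty,0)$, the very definition of the permutation $\sigma$ tells us these solutions are exactly $-r$ and $\sigma(-r)$ (with the two coinciding at $-1$ precisely when $r=1$).

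Next I would pin down which of these two values $v$ equals. The key elementary identity is $\lambda(u)-u=\frac{u}{e^u-1}$, whose right-hand side is strictly positive for every $u\in\mathbb{R}$ (with limiting value $1$ at $u=0$); hence $v=u-r=u-\lambda(u)<0$ always, so indeed $v\in\{-r,\sigma(-r)\}$. If $r\neq 1$, then $v=-r$ would force $u=0$, contradicting $\lambda(0)=1\neq r$, so $v=\sigma(-r)$; and if $r=1$ the only solution is $v=-1$, which matches $r+\sigma(-r)=1+\sigma(-1)=0=u$ since $\sigma(-1)=-1$. In all cases $u=r+\sigma(-r)$, which is the second asserted formula.

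Finally, for the logarithmic form I would return to $e^u(r-u)=r$: since $r-u=-\sigma(-r)$, this reads $e^u=\frac{r}{-\sigma(-r)}=\frac{-r}{\sigma(-r)}$, and as both $-r$ and $\sigma(-r)$ are negative the quotient is positive, so taking logarithms gives $u=\log\bigl(\tfrac{-r}{\sigma(-r)}\bigr)$. I would also remark that the computation simultaneously shows $\lambda$ is injective (the value $r$ determines $u$ uniquely), which together with $\lambda>0$, $\lambda(u)\to 0$ as $u\to-\infty$ and $\lambda(u)\to\infty$ as $u\to\infty$ confirms that $\lambda:\mathbb{R}\to\mathbb{R}_+$ is a bijection, so $\lambda^{-1}$ is genuinely well defined. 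The only real subtlety — the ``main obstacle,'' such as it is — is the branch bookkeeping, i.e.\ showing that $u-r$ is the \emph{other} solution $\sigma(-r)$ rather than the obvious solution $-r$; this is dispatched cleanly by the sign estimate $\lambda(u)-u=\frac{u}{e^u-1}>0$ together with the single exceptional value $\lambda(0)=1$.
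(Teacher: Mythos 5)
Your proof is correct and follows essentially the same route as the paper: reduce $\lambda(u)=r$ to a Lambert-$W$-type equation $xe^x=(-r)e^{-r}$ (your variable $u-r$ equals the paper's $-re^{-u}$), discard the trivial solution, and identify the other root with $\sigma(-r)$. If anything, your branch bookkeeping via $\lambda(u)-u=\tfrac{u}{e^u-1}>0$ and the explicit treatment of $r=1$ is more careful than the paper's terse dismissal of the trivial solution, and you obtain the additive form first and then the logarithmic one, whereas the paper goes the other way; these are cosmetic differences.
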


\begin{proof}
   
    To find the inverse of $\lambda$ we solve the equation \[r = \lambda(u) = \frac{ue^u}{e^u - 1} = \frac{-u}{e^{-u} - 1}.\] A simple computation shows that $-r = -u - re^{-u}$. Making the substitution $v=e^{-u}$ together with straightforward manipulations shows that
    \begin{equation}
    \label{intermediate_equation}
        -re^{-r}=-rve^{-rv}.
    \end{equation}
    The trivial solution to \eqref{intermediate_equation} is given by solving the equation $-r=-rv$. Checking with the original equation, this can not give the inverse of $\lambda$. We get the first equality from the definition of $\sigma$ together with recalling that $u = -\log(v)$.
     The final equality follows from  \[\log\left(\frac{-r}{\sigma(-r)}\right) = \log\left(\frac{-r}{\sigma(-r)}\frac{\sigma(-r)e^{\sigma(-r)}}{-re^{-r}}\right)=\sigma(-r)+r.\qedhere\]
\end{proof}

\begin{remark}
A minor variation of the function $\sigma$ appeared in \cite[Section 3]{gayral2007fourier} where it was defined by the relation in Lemma \ref{lambda_inverse}. The advantage of understanding the connection to the Lambert $W$ function is that properties such as $\sigma(\sigma(x)) = x$ for every $x < 0$ become trivial in this description. 
\end{remark}

\begin{corollary}
The affine parity operator $P_{\mathrm{Aff}}$ can be written as
\[P_{\mathrm{Aff}}(\psi)(r) = \frac{\sigma(-r)+r}{\sigma(-r)+1}\psi(-\sigma(-r)), \qquad \psi \in \mathscr{S}(\mathbb{R}_{+}).\]
In particular, we have $P_{\mathrm{Aff}}(\psi)(1) = 2\psi(1)$.
\end{corollary}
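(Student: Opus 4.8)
The plan is to substitute the two formulas for $\lambda^{-1}$ from Lemma \ref{lambda_inverse} directly into the expression \eqref{parity_first_formula} for $P_{\mathrm{Aff}}$. First I would record that the identity $\lambda^{-1}(r) = \log\!\bigl(\tfrac{-r}{\sigma(-r)}\bigr)$ gives $e^{-\lambda^{-1}(r)} = \tfrac{\sigma(-r)}{-r}$, hence $r\,e^{-\lambda^{-1}(r)} = -\sigma(-r)$; note also that $\sigma$ maps $(-\infty,0)$ into itself, so $-\sigma(-r) > 0$ and $\psi(-\sigma(-r))$ is well defined for $\psi \in \mathscr{S}(\mathbb{R}_+)$. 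Plugging this together with $\lambda^{-1}(r) = \sigma(-r) + r$ into \eqref{parity_first_formula}, the numerator becomes $\sigma(-r) + r$, the denominator $1 - r\,e^{-\lambda^{-1}(r)}$ becomes $1 + \sigma(-r)$, and the argument of $\psi$ becomes $-\sigma(-r)$, which is exactly the asserted formula.

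For the value at $r = 1$, the point is that $\sigma(-1) = -1$ (the unique root of $xe^x = -1/e$), so the prefactor $\tfrac{\sigma(-r)+r}{\sigma(-r)+1}$ is of the form $0/0$ at $r=1$ and the identity $P_{\mathrm{Aff}}(\psi)(1) = 2\psi(1)$ must be read as the (existing) limit as $r \to 1$, using that $P_{\mathrm{Aff}}\psi$ is continuous on $\mathbb{R}_+$. I would evaluate this limit through the substitution $t := \lambda^{-1}(r)$, which tends to $0$ as $r \to 1$ since $\lambda(0) = 1$. Using $r = \lambda(t) = \tfrac{te^t}{e^t-1}$, a short manipulation turns the prefactor $\tfrac{\lambda^{-1}(r)}{1 - r e^{-\lambda^{-1}(r)}}$ of \eqref{parity_first_formula} into $\tfrac{t(e^t-1)}{e^t-1-t}$ and the argument $r\,e^{-\lambda^{-1}(r)}$ into $\tfrac{t}{e^t-1}$; Taylor expanding ($e^t - 1 = t + \tfrac{t^2}{2} + O(t^3)$, $e^t - 1 - t = \tfrac{t^2}{2} + O(t^3)$) then shows the prefactor tends to $2$ and the argument tends to $1$, so $P_{\mathrm{Aff}}(\psi)(1) = 2\psi(1)$.

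The only genuine obstacle is the cancellation of the $0/0$ indeterminacy at $r = 1$; away from that point everything is a mechanical substitution. An alternative to the $t$-substitution is to analyze $s := \sigma(-r)$ near $-1$ using the square-root branch-point behaviour of the Lambert $W$ function at $-1/e$: writing $r = 1 + h$ and matching $se^s = -r e^{-r}$ to second order gives $\sigma(-r) = -1 + h + O(h^2)$, so $\sigma(-r) + 1 = (r-1) + O((r-1)^2)$ and $\sigma(-r) + r = 2(r-1) + O((r-1)^2)$, whence the prefactor again tends to $2$ and $-\sigma(-r) \to 1$. I would relegate this computation to a remark and keep the $t$-substitution in the main argument for brevity.
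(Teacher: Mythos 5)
Your proposal is correct and follows essentially the same route as the paper: the formula is obtained by substituting the two expressions for $\lambda^{-1}$ from Lemma \ref{lambda_inverse} into \eqref{parity_first_formula}, and the value at $r=1$ is the $0/0$ limit of the prefactor. The only difference is that the paper evaluates this limit by L'Hôpital's rule applied to $\lambda^{-1}(r)/(\lambda^{-1}(r)+1-r)$ using $(\lambda^{-1})'(1)=2$, while you substitute $t=\lambda^{-1}(r)$ and Taylor expand — a trivial variation of the same computation.
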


\begin{proof}
The formula for $P_{\mathrm{Aff}}(\psi)$ is obtained from Lemma \ref{lambda_inverse} together with \eqref{parity_first_formula}. To find the value $P_{\mathrm{Aff}}(\psi)(1)$, we use \eqref{parity_first_formula} and the fact that \[\psi\left(re^{-\lambda^{-1}(r)}\right)\Big|_{r = 1} = \psi(1).\]
Hence the claim follows from L'Hopital's rule since \[ \lim_{r \to 1}\frac{\lambda^{-1}(r)}{\lambda^{-1}(r) + 1 - r} = \frac{(\lambda^{-1})'(1)}{(\lambda^{-1})'(1) - 1} = 2. \qedhere\]
\end{proof}

\subsection{Operator Convolution for Tempered Distributions}
\label{sec: Operator Convolution for Tempered Distributions}

This section is all about expressing the affine Weyl quantization of a function $f \in \mathscr{S}(\aff)$ by using affine convolution. To be able to do this, we will first define what it means for $A_{f}$ to be a Schwartz operator. 

\begin{definition}
We say that a Hilbert-Schmidt operator $A:L^{2}(\mathbb{R}_{+}) \to L^{2}(\mathbb{R}_{+})$ is a \textit{ Schwartz operator} if the integral kernel $A_{K}$ of $A$ satisfies $A_{K} \in \mathscr{S}(\mathbb{R}_{+} \times \mathbb{R}_{+})$, that is, if 
    \[(x,\omega) \longmapsto A_{K}(e^{x}, e^{\omega}) \in \mathscr{S}(\mathbb{R}^{2}).\]
\end{definition}

\begin{proposition}
\label{prop: equallity of spaces}
A Hilbert-Schmidt operator $A\in \mathcal{S}_2$ is a Schwartz operator if and only if $A=A_f$ for some $f\in \mathscr{S}(\aff)$.
\end{proposition}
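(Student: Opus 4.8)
The plan is to transfer the question through the dequantization formula \eqref{eq: dequantization_formula}, which expresses the affine Weyl symbol $f_A$ of a Hilbert-Schmidt operator $A$ in terms of its integral kernel $A_K$ via
\[
f_A(x,a) = \int_{-\infty}^{\infty} A_K\bigl(a\lambda(u), a\lambda(-u)\bigr) e^{-2\pi i x u}\, du,
\]
and to show that this correspondence restricts to a bijection between Schwartz kernels and Schwartz symbols. Since $f\mapsto A_f$ is already known to be a norm-preserving isomorphism from $L^2_r(\aff)$ onto $\mathcal{S}_2$ by Proposition \ref{quantization_isometry_proposition}, it suffices to check that $A_K \in \mathscr{S}(\mathbb{R}_+\times\mathbb{R}_+)$ if and only if $f_A \in \mathscr{S}(\aff)$; the two implications are essentially the same computation run in opposite directions, using that the map is invertible. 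I would organize the argument around the change of variables that makes everything a genuine Fourier transform on $\mathbb{R}^2$.

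First I would pass to exponential (logarithmic) coordinates throughout: write $r = e^{s}$, $s' = e^{t}$ for the kernel variables and $a = e^{\omega}$ for the dilation variable, so that membership in $\mathscr{S}(\mathbb{R}_+\times\mathbb{R}_+)$ or $\mathscr{S}(\aff)$ becomes membership in $\mathscr{S}(\mathbb{R}^2)$ for the corresponding functions. In these coordinates $a\lambda(u) = e^{\omega}\lambda(u)$ and $a\lambda(-u) = e^{\omega}\lambda(-u)$, and using $\lambda(u) = \tfrac{ue^u}{e^u-1}$ one checks the key elementary identities $\log\lambda(u) + \log\lambda(-u)$ and $\log\lambda(u) - \log\lambda(-u) = u$; in fact $\log\lambda(u) = \omega$-independent smooth functions of $u$ that, together with the factor $e^{\omega}$, turn the pair $(\log(a\lambda(u)), \log(a\lambda(-u)))$ into a smooth change of variables in $(\omega,u)$ with smooth, globally defined inverse (this is exactly the content of Lemma \ref{lambda_inverse} and the $\sigma$-function discussion, so $u\mapsto \log\lambda(\pm u)$ and its inverse have the needed regularity). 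Thus, setting $g(\omega,u) \coloneqq A_K(e^{\omega}\lambda(u), e^{\omega}\lambda(-u))$, the dequantization formula says $f_A(x,e^{\omega})$ is the inverse Fourier transform in $u$ of $g(\omega,u)$, evaluated at $x$.

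The core of the proof is then twofold. For the forward direction, if $A$ is a Schwartz operator, the composition $(\omega,u)\mapsto A_K(e^{\omega}\lambda(u), e^{\omega}\lambda(-u))$ is the composition of a Schwartz function on $\mathbb{R}^2$ with a smooth map; I would verify it is again Schwartz by checking that the coordinate change $(\omega,u)\mapsto(\log(e^{\omega}\lambda(u)), \log(e^{\omega}\lambda(-u)))$ is a polynomially bounded diffeomorphism with polynomially bounded inverse and all derivatives (the only delicate points are $u\to\pm\infty$, where $\log\lambda(u)$ grows like $|u|$, and $u\to 0$, where $\lambda$ is smooth and nonvanishing — so no singularity appears). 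Then $g(\omega,\cdot)\in\mathscr{S}(\mathbb{R}^2)$ and its partial Fourier transform in $u$ is again Schwartz, giving $f_A\in\mathscr{S}(\aff)$. Conversely, given $f\in\mathscr{S}(\aff)$, the operator $A_f\in\mathcal{S}_2$ has kernel recovered by inverting the above: taking the Fourier transform of $f_A(x,e^{\omega})$ in $x$ recovers $g(\omega,u)$, which is Schwartz, and then pulling back through the inverse coordinate change (again smooth with controlled growth) shows $A_K$ is Schwartz in the required sense. Because Proposition \ref{quantization_isometry_proposition} already guarantees $A_f$ is a well-defined Hilbert-Schmidt operator and the symbol map is a bijection, no separate density or uniqueness argument is needed.

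The main obstacle, and the step deserving real care, is the regularity and growth control of the change of variables $(\omega,u)\leftrightarrow(\log(a\lambda(u)),\log(a\lambda(-u)))$ near $u\to 0$ and near $u\to\pm\infty$: one must confirm that composition with this map preserves the Schwartz class in both directions, i.e.\ that it and its inverse are smooth with all derivatives bounded by polynomials. This is where the Lambert $W$/$\sigma$-function description from Lemma \ref{lambda_inverse} does the heavy lifting, since it gives an explicit real-analytic formula for the inverse on all of $\mathbb{R}_+$, making the estimates on derivatives routine (if tedious). Everything else — Fubini to justify interchanging the Fourier transform with the coordinate change, and the fact that partial Fourier transforms preserve $\mathscr{S}(\mathbb{R}^2)$ — is standard.
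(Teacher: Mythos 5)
Your proposal is correct and follows essentially the same route as the paper: both transfer the question through the kernel--symbol correspondence (your dequantization formula \eqref{eq: dequantization_formula} is just the inverse of the kernel formula the paper quotes from \cite{gayral2007fourier}), reduce it to a partial Fourier transform composed with the change of variables $r=a\lambda(u)$, $s=a\lambda(-u)$ in logarithmic coordinates, and check that this change of variables has the polynomial growth needed to preserve the Schwartz class. The only cosmetic remark is that your appeal to Lemma \ref{lambda_inverse} and the Lambert $W$ function is unnecessary, since the identity $\log\lambda(u)-\log\lambda(-u)=u$ already gives the inverse of the two-dimensional coordinate change explicitly.
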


\begin{proof}
Assume that $A$ is a Schwartz operator. In \cite[Equation (4.8)]{gayral2007fourier} it is shown that the integral kernel $A_{K}$ of $A$ is related to the affine Weyl symbol $f_{A}$ of $A$ by the formula 
\[A_{K}(r,s) = \int_{-\infty}^{\infty}f_{A}\left(x, \frac{r - s}{\log(r/s)}\right) e^{2 \pi i x \log(r/s)} \, dx.\]
Since the inverse-Fourier transform preserves Schwartz functions, together with the definition of $\mathscr{S}(\mathbb{R}_{+} \times \mathbb{R}_{+})$, we have that \[(r,s) \longmapsto f_{A}\left(\log(r/s), \frac{r - s}{\log(r/s)}\right) \in \mathscr{S}(\mathbb{R}_{+} \times \mathbb{R}_{+}).\]
By performing the change of variable $x = \log(r/s)$ and $s = e^{\omega}$ for $\omega \in \mathbb{R}$ we obtain \[(x,\omega) \longmapsto f_{A}\left(x, e^{\omega}\frac{e^{x} - 1}{x}\right) \in \mathscr{S}(\mathbb{R}^{2}).\]
Finally, by letting $u = \log((e^{x} - 1)/x) + \omega$ we see that 
\[(x,u) \longmapsto f_{A}\left(x, e^{u}\right) \in \mathscr{S}(\mathbb{R}^{2}),\]
due to the fact that $x \mapsto \log((e^{x} - 1)/x)$ has polynomial growth. \par 
Conversely, assume that $A=A_{f}$ for $f \in \mathscr{S}(\aff)$. The integral kernel $A_{K}$ is then given by \[A_{K}(r,s) = \mathcal{F}_{1}^{-1}(f)\left(\log(r/s), \frac{r - s}{\log(r/s)}\right).\]
By using that the inverse-Fourier transform $\mathcal{F}_{1}^{-1}$ in the first component preserves $\mathscr{S}(\aff)$ together with similar substitutions as previously, we have that $A_{K} \in \mathscr{S}(\mathbb{R}_{+} \times \mathbb{R}_{+})$.
\end{proof}

We will use the notation $\mathscr{S}(L^{2}(\mathbb{R}_{+}))$ for all Schwartz operators on $L^{2}(\mathbb{R}_{+})$.
There is a natural topology on $\mathscr{S}(L^{2}(\mathbb{R}_{+}))$ induced by the semi-norms $\|A_{f}\|_{\alpha, \beta} \coloneqq \|f\|_{\alpha, \beta}$ where $\|\cdot\|_{\alpha, \beta}$ are the semi-norms on $\mathscr{S}(\aff)$ given in \eqref{eq:semi-norms-on-affine}.

\begin{proposition}
\label{Schwartz-result}
    The affine convolution gives a well-defined map \[\mathscr{S}(\aff) \star_{\aff} \mathscr{S}(L^{2}(\mathbb{R}_{+})) \to \mathscr{S}(L^{2}(\mathbb{R}_{+})).\] Moreover, for fixed $A \in \mathscr{S}(L^{2}(\mathbb{R}_{+}))$ the map \[\mathscr{S}(\aff) \ni f \longmapsto f \star_{\aff} A \in \mathscr{S}(L^{2}(\mathbb{R}_{+}))\] is continuous.
\end{proposition}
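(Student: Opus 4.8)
The plan is to establish the mapping property in two stages, first reducing the convolution of a Schwartz function with a Schwartz operator to a statement about the affine Weyl symbols, and then proving that the relevant bilinear operation preserves $\mathscr{S}(\aff)$. By Proposition \ref{prop: equallity of spaces}, a Schwartz operator $A$ is precisely $A_g$ for some $g \in \mathscr{S}(\aff)$; in particular such $A$ is Hilbert-Schmidt, and since $g\in\mathscr{S}(\aff)$ one checks it lies in both $L_r^1(\aff)\cap L_r^2(\aff)$ and $L_l^2(\aff)$, so the hypotheses of Proposition \ref{function-operator-convolution-quantization} are met. That proposition gives $f\star_\aff A_g = A_{f*_\aff g}$ for $f\in\mathscr{S}(\aff)\subset L^1_r(\aff)$. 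Thus the whole statement reduces to showing: (i) if $f,g\in\mathscr{S}(\aff)$ then $f*_\aff g\in\mathscr{S}(\aff)$, so that $A_{f*_\aff g}$ is again a Schwartz operator by Proposition \ref{prop: equallity of spaces}; and (ii) the map $f\mapsto f*_\aff g$ is continuous $\mathscr{S}(\aff)\to\mathscr{S}(\aff)$ for fixed $g$, which by the definition of the topology on $\mathscr{S}(L^2(\mathbb{R}_+))$ is exactly the asserted continuity of $f\mapsto f\star_\aff A$.

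For step (i), I would transport the problem to $\mathbb{R}^2$ via the diffeomorphism $(x,\omega)\mapsto(x,e^\omega)$ that defines $\mathscr{S}(\aff)$. Under this change of variables, $f*_\aff g$ becomes an integral over $\aff$ with the right Haar measure $b^{-1}\,dy\,db = dy\,d\omega'$ in the coordinates $b=e^{\omega'}$, and the group operation $(x,a)\cdot(y,b)^{-1} = ((x-y b^{-1}\cdot\text{(something)}),\ldots)$ — more precisely $(x,a)\cdot(y,b)^{-1}=((x-a y/b)\,,\,a/b)$ — is a smooth map with all derivatives of polynomial growth in the logarithmic coordinates. So $F(x,\omega) := f*_\aff g(x,e^\omega)$ is, up to smooth factors of polynomial growth, a convolution-type integral of two Schwartz functions on $\mathbb{R}^2$ against a smooth kernel with polynomially bounded derivatives; differentiating under the integral sign and using that $\mathscr{S}(\mathbb{R}^2)$ is closed under such operations gives $F\in\mathscr{S}(\mathbb{R}^2)$, i.e. $f*_\aff g\in\mathscr{S}(\aff)$. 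The estimates producing the seminorm bounds come from the same computation: each seminorm $\|f*_\aff g\|_{\alpha,\beta}$ is dominated by a finite sum of products $\|f\|_{\alpha',\beta'}\,C_{g}$, where $C_g$ is a finite combination of seminorms of $g$, the polynomial weights being absorbed by the rapid decay after applying Peetre's inequality to handle the shift $(x,\omega)\mapsto (x,\omega)\cdot(y,\omega')^{-1}$.

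For step (ii), continuity is then immediate from those seminorm estimates: they show $\|f*_\aff g\|_{\alpha,\beta}\le C(\alpha,\beta,g)\sum_{\alpha',\beta'}\|f\|_{\alpha',\beta'}$ over a finite index set depending only on $(\alpha,\beta)$, which is precisely continuity of the linear map $f\mapsto f*_\aff g$ between Fréchet spaces. Combining with the identification $f\star_\aff A_g = A_{f*_\aff g}$ and the definition $\|A_h\|_{\alpha,\beta}=\|h\|_{\alpha,\beta}$ finishes the argument. I expect the main obstacle to be purely bookkeeping: verifying that the coordinate-changed group law and the Jacobian factor (a power of $e^{\omega'}$ absorbed into the right Haar measure) genuinely have polynomially bounded logarithmic derivatives, and organizing the Leibniz/Faà di Bruno expansion so that the polynomial weights $|x|^{\alpha_1}|\omega|^{\alpha_2}$ are controlled after the translation by $(y,\omega')$ — this is where Peetre's inequality $(1+|x|)^{s}\le 2^{|s|}(1+|x-y|)^{|s|}(1+|y|)^{s}$ and the rapid decay of $g$ in the $(y,\omega')$ variables do the real work. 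None of this is conceptually hard, but it is the only place where care is required; everything else is a direct appeal to results already established in the excerpt.
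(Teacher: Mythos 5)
Your reduction is the same as the paper's first step: use Proposition \ref{prop: equallity of spaces} to write $A=A_g$ with $g\in\mathscr{S}(\aff)$ and Proposition \ref{function-operator-convolution-quantization} to get $f\star_\aff A_g=A_{f*_\aff g}$, so that everything hinges on proving $f*_\aff g\in\mathscr{S}(\aff)$ together with seminorm bounds, which then give continuity through the definition $\|A_h\|_{\alpha,\beta}=\|h\|_{\alpha,\beta}$. (Two side remarks: Proposition \ref{function-operator-convolution-quantization} only requires $g\in L^2_r(\aff)$ and $f\in L^1_r(\aff)$, and your auxiliary claim $\mathscr{S}(\aff)\subset L^2_l(\aff)$ is actually false --- in the coordinate $u=\log a$ the left Haar weight becomes $e^{-u}\,dx\,du$, which grows exponentially as $u\to-\infty$, while Schwartz decay in $u$ is only superpolynomial --- but fortunately that inclusion is not needed.)

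The genuine gap is in the step you dismiss as ``purely bookkeeping.'' In logarithmic coordinates the convolution is \eqref{convolution_after_variable_change}, and the shift in the first argument of $g$ is $x\mapsto x-ye^{u-z}$: the group law is not a Euclidean translation, and its dependence on $u-z$ is exponential, not of polynomial growth as you assert. Peetre's inequality applied to this shift produces a factor $(1+|y|e^{u-z})^{k}$, which for $y$ of order one and $u-z$ large is at least $c\,e^{k(u-z)}$, and this cannot be absorbed by the rapid decay of $g$: membership in $\mathscr{S}(\aff)$ only gives decay faster than any polynomial in the logarithmic variable $u-z$ (equivalently, faster than any power of $|\log a|$, not of $a$), so an exponential factor in $u-z$ survives and the claimed seminorm bound does not follow. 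This is precisely where the paper's proof concentrates its effort: instead of a translation/Peetre argument it establishes the weighted estimate \eqref{estimate_on_g} by a three-way case analysis, splitting according to whether $2|z|\gtrless|u|$ and $2|y|\gtrless|x|$, and in the dangerous regime ($x$ large, $y>0$) trading the growth of $|x|^k|u|^l$ against the decay of $g$ forced by the constraint $u\sim\log x$ needed to keep the first argument of $g$ bounded. As written, your mechanism does not deliver the key estimate, so neither the mapping property nor the continuity is actually proved.
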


\begin{proof}
Let $f \in \mathscr{S}(\aff)$ and $A \in \mathscr{S}(L^{2}(\mathbb{R}_{+}))$. Then $A = A_{g}$ for some $g \in \mathscr{S}(\aff)$ and we have by Proposition \ref{function-operator-convolution-quantization} that
\begin{equation}
\label{eq: convolution_equation_in_rapid_decay}
    f \star_{\aff} A = f \star_{\aff} A_{g} = A_{f *_{\aff} g}.
\end{equation}
Hence the first statement reduces to showing that the usual affine group convolution is a well-defined map \[\mathscr{S}(\aff) *_{\aff} \mathscr{S}(\aff) \to \mathscr{S}(\aff).\]
After a change of variables, the question becomes whether the map
\begin{equation}
\label{convolution_after_variable_change}
    (x, u) \longmapsto (f *_{\aff} g)(x,e^{u}) = \int_{\mathbb{R}^{2}}f(y,e^{z})g(x - ye^{u - z}, e^{u - z}) \, dy \, dz
\end{equation}
is an element in $\mathscr{S}(\mathbb{R}^{2})$. It is straightforward to check that \eqref{convolution_after_variable_change} is a smooth function. Moreover, since $f$ and $g$ are both in $\mathscr{S}(\aff)$, it suffices to show that \eqref{convolution_after_variable_change} decays faster than any polynomial towards infinity; we can then iterate the argument to obtain the required decay statements for the derivatives.\par
We claim that
\begin{equation}
\label{estimate_on_g}
    \sup_{x,u}|x|^{k}|u|^{l}|g(x-ye^{u - z}, e^{u-z})| \leq A_{k,l}^{g}\left(1 + |y|\right)^{k}\left(1 + |z|\right)^{l},
\end{equation}
where $A_{k,l}^{g}$ is a constant that depends only on the indices $k,l \in \mathbb{N}_{0}$ and $g \in \mathscr{S}(\aff)$. To show this, we need to individually consider three cases:
\begin{itemize}
    \item Assume that we only take the supremum over $x$ and $u$ satisfying $2|z| \geq |u|$ and $2|y| \geq |x|$. Then clearly \eqref{estimate_on_g} is satisfied with $A_{k,l}^{g} = 2^{k + l}\max|g|$.
    \item Assume that we only take the supremum over $u$ satisfying $2|z| \leq |u|$ and let $x \in \mathbb{R}$ be arbitrary. Then $e^{u - z}$ is outside the interval $[e^{-|u|/2}, e^{|u|/2}]$. Since $g \in \mathscr{S}(\aff)$ the left-hand side of \eqref{estimate_on_g} will eventually decrease when increasing $u$. When $y \leq 0$ the left hand-side of \eqref{estimate_on_g} will also obviously eventually decrease by increasing $x$. When $y > 0$ then any increase of $x$ would necessitate an increase of $u$ on the scale of $u \sim \ln(x)$ to compensate so that the first coordinate in $g$ does not blow up. However, this again forces the second coordinate to grow on the scale of $x$ and we would again, due to $g \in \mathscr{S}(\aff)$, have that the left hand-side of \eqref{estimate_on_g} would eventually decrease.
    \item Finally, we can consider taking the supremum over $x$ and $u$ satisfying $2|z| \geq |u|$ and $2|y| \leq |x|$. As this case uses similar arguments as above, we leave the straightforward verification to the reader.
\end{itemize}
Using \eqref{estimate_on_g} we have that 
\begin{equation}
\label{eq: bound_rapid_decay}
    \sup_{x,u}|x^{k}u^{l}(f *_{\aff}g)(x, e^{u})| \leq A_{k,l}^{g}\int_{\mathbb{R}^{2}}|f(y, e^{z})|\left(1 + |y|\right)^{k}\left(1 + |z|\right)^{l} \, dy \, dz < \infty,
\end{equation}
where the last inequality follows from that $f \in \mathscr{S}(\aff)$. Finally, the continuity of the map $f \mapsto f \star_\aff A$ follows from \eqref{eq: convolution_equation_in_rapid_decay} and \eqref{eq: bound_rapid_decay}.
\end{proof}

\begin{remark}
Notice that the proof of Proposition \ref{Schwartz-result} shows that affine convolution between $f, g \in \mathscr{S}(\aff)$ satisfies $f *_{\aff} g \in \mathscr{S}(\aff)$. This fact, together with Proposition \ref{prop: equallity of spaces}, strengthens the claim that $\mathscr{S}(\aff)$ is the correct definition for Schwartz functions on the group $\aff$.
\end{remark}
The main result in this section is Theorem \ref{thm:quantization_through_convolution} presented below. To state the result rigorously, we first need to make sense of the convolution between Schwartz functions $g \in \mathscr{S}(\aff)$ and the affine parity operator $P_{\aff}$. As motivation for our definition we will use the following computation: Let $S,T \in \mathcal{S}_{2}$ with affine Weyl symbols $f_S, f_{T} \in L^2_r(\aff)$. Fix $g\in \mathscr{S}(\aff)$ and consider the affine Weyl symbol $f_{g\star_\aff S}$ corresponding to the convolution $g\star_\aff S$. Then 
\begin{align*}
	\langle f_{g\star_\aff S},f_T \rangle_{L_{r}^{2}(\aff)} & = \langle g\star_\aff S,T \rangle_{\mathcal{S}_{2}} 
	\\ & = 
	\left\langle S,\int_\aff \overline{g(x,a)} U(-x,a)TU(-x,a)^* \, \frac{dx \, da}{a} \right\rangle_{\mathcal{S}_{2}} 
	\\ & = 
	\left\langle f_S,\int_\aff \overline{g(x,a)} R_{(x,a)}f_T \, \frac{dx \, da}{a} \right\rangle_{L_{r}^{2}(\aff)}.
\end{align*}
With this motivation in mind we get the following definition.
\begin{definition}
\label{def: convolution_tempered}
Let $S:\mathscr{S}(\mathbb{R}_+)\to \mathscr{S}'(\mathbb{R}_+)$ be the operator with affine Weyl symbol $f_S \in \mathscr{S}'(\aff)$ and let $g\in \mathscr{S}(\aff)$. Then $g\star_\aff S$ is defined by its Weyl symbol $f_{g\star_\aff S}\in \mathscr{S}'(\aff)$ satisfying
\begin{equation*}
	\langle f_{g\star_\aff S},h \rangle_{\mathscr{S}',\mathscr{S}} \coloneqq \left\langle f_S, \int_\aff \overline{g(x,a)}R_{(x,a)}h \, \frac{dx \, da}{a} \right\rangle_{\mathscr{S}',\mathscr{S}},
\end{equation*}
for all $h\in \mathscr{S}(\aff)$.
\end{definition}
Recall that the injectivity in Lemma \ref{lemma_swartz} ensures that the operator $S$ in Definition \ref{def: convolution_tempered} is well-defined. The argument to show $f_{g\star_\aff S}\in \mathscr{S}'(\aff)$ is similar to the one presented in Proposition \ref{Schwartz-result}. Hence $g \star_{\aff} S$ is well-defined.

\begin{remark}
We could similarly have defined $S\star_\aff A_f$ for $S\in \mathscr{S}(L^{2}(\mathbb{R}_{+}))$ and $f\in \mathscr{S}'(\aff)$ by using Proposition \ref{prop:convolutionasweyl}. For brevity, we restrict ourselves in the next theorem to the case where $S=\phi\otimes \psi$ for $\psi,\phi\in \mathscr{S}(\aff)$. In this case, we can extend Lemma \ref{lem: convolution_of_rank_one} and define \[(\phi \otimes \psi)\star_\aff A_f \coloneqq  \langle A_fU(-x,a)\psi,U(-x,a)\phi \rangle_{\mathscr{S}',\mathscr{S}}.\]
\end{remark}

We can now finally state the main theorem in this section.

\begin{theorem}
\label{thm:quantization_through_convolution}
The affine Weyl quantization $A_{g}$ of $g \in \mathscr{S}(\mathrm{Aff})$ can be written as 
\[A_{g} = g \star_{\mathrm{Aff}} P_{\mathrm{Aff}},\]
where $P_{\mathrm{Aff}}$ is the affine parity operator.
Moreover, for $\psi, \phi \in \mathscr{S}(\mathbb{R}_{+})$ we have that the affine Weyl symbol $W_{\mathrm{Aff}}^{\psi,\phi}$ of the rank-one operator $\psi \otimes \phi$ can be written as
\[W_{\mathrm{Aff}}^{\psi,\phi} = (\psi \otimes \phi) \star_{\mathrm{Aff}} P_{\mathrm{Aff}}.\]
\end{theorem}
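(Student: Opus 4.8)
The plan is to establish both identities by matching weak pairings, exploiting the fact (Lemma~\ref{lemma_swartz}) that the affine Weyl quantization $f \mapsto A_f$ is injective on $\mathscr{S}'(\aff)$, so it suffices to verify that the symbols agree as tempered distributions. For the first identity, I will unwind Definition~\ref{def: convolution_tempered}: the symbol $f_{g \star_\aff P_\aff}$ is characterized by
\[
\langle f_{g\star_\aff P_\aff}, h \rangle_{\mathscr{S}',\mathscr{S}} = \left\langle f_{P_\aff}, \int_\aff \overline{g(x,a)} R_{(x,a)} h \, \frac{dx\,da}{a} \right\rangle_{\mathscr{S}',\mathscr{S}},
\]
for all $h \in \mathscr{S}(\aff)$. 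Now $f_{P_\aff} = \delta_{\aff}(0,1)$ by definition of the affine parity operator, so pairing against $\delta_{\aff}(0,1)$ simply evaluates (up to conjugation) the inner integrand at the identity $(0,1)$; since $R_{(x,a)}h(0,1) = h((0,1)\cdot(x,a)) = h(x,a)$, the right-hand side collapses to $\int_\aff \overline{g(x,a)} \overline{h(x,a)} \, \frac{dx\,da}{a}$... wait, I need to be careful with the anti-linear pairing conventions, but morally the right-hand side becomes $\langle g, h \rangle$ in the appropriate pairing. This identifies $f_{g\star_\aff P_\aff} = g$, hence $g \star_\aff P_\aff = A_g$.

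For the second identity, I will use the extended rank-one formula noted in the remark preceding the theorem together with the Grossmann--Royer operator. By the remark, $(\psi\otimes\phi)\star_\aff A_f$ is defined via $\langle A_f U(-x,a)\psi, U(-x,a)\phi\rangle_{\mathscr{S}',\mathscr{S}}$; specializing $A_f = P_\aff = R_\aff(0,1)$ and using the affine covariance relation \eqref{eq:affine covariance relation}, namely $U(-x,a)^* R_\aff(0,1) U(-x,a) = R_\aff(x,a)$, I compute
\[
(\psi\otimes\phi)\star_\aff P_\aff(x,a) = \langle R_\aff(0,1) U(-x,a)\psi, U(-x,a)\phi \rangle = \langle R_\aff(x,a)\psi, \phi \rangle = W_\aff^{\psi,\phi}(x,a),
\]
where the last equality is the defining relation of the affine Grossmann--Royer operator. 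This is essentially a one-line computation once the covariance relation is in hand.

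The main obstacle I anticipate is purely bookkeeping: making rigorous sense of the pairings and the interchange of the $\aff$-integral with the distributional pairing in the first identity, since $f_{P_\aff} = \delta_{\aff}(0,1)$ is a genuine distribution and one must check that $h \mapsto \int_\aff \overline{g(x,a)} R_{(x,a)} h \, \frac{dx\,da}{a}$ lands in $\mathscr{S}(\aff)$ with continuous dependence — but this is exactly the point already flagged after Definition~\ref{def: convolution_tempered} as being "similar to the one presented in Proposition~\ref{Schwartz-result}," so I would invoke that. A secondary subtlety is keeping the anti-linear convention for $\langle \cdot, \cdot\rangle_{\mathscr{S}',\mathscr{S}}$ and the conjugations in $\delta_{\aff}$ and in Definition~\ref{def: convolution_tempered} consistent; I would handle this by testing against $h = W_\aff^{\eta,\xi}$ for $\eta,\xi \in \mathscr{S}(\mathbb{R}_+)$, reducing the first identity to the already-established weak correspondence \eqref{affine_weyl_correspondence} and the second identity, thereby closing the loop without ever leaving the comfort of pairings that have already been justified.
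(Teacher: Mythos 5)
Your proposal is correct and follows essentially the same route as the paper: the first identity is obtained by unwinding Definition \ref{def: convolution_tempered} with $f_{P_{\aff}} = \delta_{(0,1)}$, noting $R_{(x,a)}h(0,1) = h(x,a)$ so the pairing collapses to $\langle g, h \rangle_{L^2_r(\aff)}$ (the conjugation you flag is absorbed exactly by the conjugate in the definition of $\delta_{\aff}$), and then invoking density of $\mathscr{S}(\aff)$ in $L^2_r(\aff)$ together with injectivity of $f \mapsto A_f$; the second identity is the same one-line computation via the covariance relation \eqref{eq:affine covariance relation} and the defining property of the Grossmann--Royer operator. No substantive differences from the paper's argument.
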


\begin{proof}
Recall that the affine parity operator $P_{\mathrm{Aff}}$ is the affine Weyl quantization of the point measure $\delta_{(0,1)} \in \mathscr{S}'(\aff)$. As such, the convolution $g \star_{\aff} P_{\mathrm{Aff}}$ is well-defined with the interpretation given in Definition \ref{def: convolution_tempered}. The affine Weyl symbol $f_{g \star_{\aff} P_{\mathrm{Aff}}}$ of $g \star_{\aff} P_{\mathrm{Aff}}$ is acting on $h \in \mathscr{S}(\aff)$ by 
		\begin{align*}
			\langle f_{g \star_{\aff} P_{\mathrm{Aff}}},h \rangle_{\mathscr{S}',\mathscr{S}} &\coloneqq \left\langle \delta_{(0,1)}, \int_\aff \overline{g(x,a)}R_{(x,a)}h \, \frac{dx \, da}{a} \right\rangle_{\mathscr{S}',\mathscr{S}} \\
			&= \overline{\int_\aff \overline{g(x,a)}h((0,1)\cdot (x,a)) \, \frac{dx \, da}{a}} \\
			&=  \int_\aff g(x,a)\overline{h(x,a)} \, \frac{dx \, da}{a} \\
			&= \langle g,h \rangle_{L^2_r(\aff)}.
	\end{align*}
	Since $\mathscr{S}(\aff) \subset L_{r}^{2}(\aff)$ is dense, we can conclude that $f_{g \star_{\aff} P_{\mathrm{Aff}}} = g$ and thus $A_{g} = g \star_{\mathrm{Aff}} P_{\mathrm{Aff}}$. \par
	For the second statement, we get that 
\begin{align*}
    \left((\psi \otimes \phi) \star_\aff P_{\mathrm{Aff}}\right)(x,a)&=\langle P_{\mathrm{Aff}}U(-x,a)\psi,U(-x,a)\phi \rangle_{\mathscr{S}',\mathscr{S}} \\
    &= \langle R_{\mathrm{Aff}}(x,a)\psi,\phi \rangle_{\mathscr{S}',\mathscr{S}} \\
    &= W^{\psi,\phi}_\aff(x,a). \qedhere
\end{align*}
\end{proof}

\section{Operator Admissibility}
\label{sec: Operator Admissibility}

For operator convolutions on the Heisenberg group, we have from \eqref{eq:wernerintegral} the important integral relation \[\int_{\mathbb{R}^{2n}}S\star T(z) \, dz=\tr(S)\tr(T).\] A similar formula for the integral of operator convolutions will not hold generally in the affine setting. We therefore search for a class of operators where such a relation does hold: the \textit{admissible operators}. As a first step, we recall the notion of \textit{admissible functions}. 

\begin{definition} \label{def:admissiblefunction}
We say that $\psi \in L^{2}(\mathbb{R}_{+})$ is \textit{admissible} if \[\int_{0}^{\infty}\frac{|\psi(r)|^{2}}{r} \, \frac{dr}{r} < \infty.\]
\end{definition}

This definition of admissibility is motivated by the theorem of Duflo and Moore \cite{duflo1976}, see also \cite{grossmann1985general}. The \textit{Duflo-Moore operator} $\mathcal{D}^{-1}$ in our setting is formally given by
\[\mathcal{D}^{-1}\psi(r) \coloneqq \frac{\psi(r)}{\sqrt{r}}.\]
It is clear that the Duflo-Moore operator $\mathcal{D}^{-1}$ is a densely defined, self-adjoint positive operator on $L^{2}(\mathbb{R}_{+})$ with a densely defined inverse, namely \[\mathcal{D}\psi(r) \coloneqq \sqrt{r} \psi(r).\]
Hence a function $\psi\in L^2(\mathbb{R}_+)$ is admissible if and only if $\mathcal{D}^{-1}\psi\in L^2(\mathbb{R}_+).$
We will on several occasions use the commutation relations
\begin{equation} \label{eq:comrelD}
    \mathcal{D}U(x,a)=\sqrt{\frac{1}{a}} U(x,a)\mathcal{D},\quad     U(x,a)^*\mathcal{D}^{-1}=\sqrt{a}\mathcal{D}^{-1}U(x,a)^*,\quad (x,a)\in \aff.
\end{equation}

The following orthogonality relation is a trivial reformulation of the classic orthogonality relations for wavelets, see for instance \cite{grossmann1986examples}.
\begin{proposition} \label{prop:admissible}
Let $\phi,\psi,\xi, \eta \in L^2(\mathbb{R}_+)$ and assume that $\psi$ and $\eta$ are admissible. Then
\begin{equation*}
    \int_\aff \langle \phi,U(-x,a)^*\psi \rangle_{L^2(\mathbb{R}_+)} \overline{\langle \xi,U(-x,a)^* \eta \rangle_{L^2(\mathbb{R}_+)}} \, \frac{dx\, da}{a}=\langle \phi,\xi \rangle_{L^2(\mathbb{R}_+)} \langle \mathcal{D}^{-1}\eta  , \mathcal{D}^{-1} \psi \rangle_{L^2(\mathbb{R}_+)}.
\end{equation*}
In particular, we have
\begin{equation*}
    \int_\aff \langle \phi,U(-x,a)^*\psi \rangle_{L^2(\mathbb{R}_+)} \overline{\langle \xi,U(-x,a)^* \psi \rangle_{L^2(\mathbb{R}_+)}} \, \frac{dx \, da}{a}=\langle \phi,\xi \rangle_{L^2(\mathbb{R}_+)}  \|\mathcal{D}^{-1}\psi\|_{L^2(\mathbb{R}_+)}^{2}.
\end{equation*}
\end{proposition}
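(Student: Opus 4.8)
The plan is to reduce the identity to Plancherel's theorem on $\mathbb{R}$ by recognizing each inner product as a Euclidean Fourier transform. Since $U$ is a unitary representation, $U(-x,a)^{\ast} = U\big((-x,a)^{-1}\big) = U(x/a,1/a)$, so $U(-x,a)^{\ast}\psi(r) = e^{2\pi i (x/a) r}\psi(r/a)$. Performing the substitution $s = r/a$ in the Haar integral defining the inner product gives
\[
\langle \phi, U(-x,a)^{\ast}\psi\rangle_{L^2(\mathbb{R}_+)} = \int_{0}^{\infty}\frac{\phi(as)\overline{\psi(s)}}{s}\,e^{-2\pi i x s}\,ds = \widehat{g_a}(x),
\]
where $g_a(s) \coloneqq s^{-1}\phi(as)\overline{\psi(s)}\,\mathbf{1}_{(0,\infty)}(s)$ and hats denote the Fourier transform on $\mathbb{R}$. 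Writing $h_a(s) \coloneqq s^{-1}\xi(as)\overline{\eta(s)}\,\mathbf{1}_{(0,\infty)}(s)$ in the same way, the left-hand side of the claimed identity becomes $\int_{\aff}\widehat{g_a}(x)\,\overline{\widehat{h_a}(x)}\,\tfrac{dx\,da}{a}$.

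Next I would verify the integrability needed to apply Plancherel and Fubini. Tonelli's theorem together with the substitution $u = as$ gives
\[
\int_{0}^{\infty}\Big(\int_{0}^{\infty}|g_a(s)|^{2}\,ds\Big)\frac{da}{a} = \|\phi\|_{L^2(\mathbb{R}_+)}^{2}\,\|\mathcal{D}^{-1}\psi\|_{L^2(\mathbb{R}_+)}^{2},
\]
which is finite precisely because $\psi$ is admissible, and likewise $\int_{0}^{\infty}\|h_a\|_{L^2(\mathbb{R})}^{2}\,\tfrac{da}{a} = \|\xi\|_{L^2(\mathbb{R}_+)}^2\|\mathcal{D}^{-1}\eta\|_{L^2(\mathbb{R}_+)}^2 < \infty$. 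In particular $g_a,h_a \in L^2(\mathbb{R})$ for almost every $a$, so for such $a$ Plancherel's identity yields
\[
\int_{\mathbb{R}}\widehat{g_a}(x)\,\overline{\widehat{h_a}(x)}\,dx = \langle g_a, h_a\rangle_{L^2(\mathbb{R})} = \int_{0}^{\infty}\phi(as)\,\overline{\xi(as)}\,\overline{\psi(s)}\,\eta(s)\,\frac{ds}{s^{2}}.
\]
Integrating in $a$ against $da/a$ is legitimate since $\int_{0}^{\infty}\|g_a\|_{L^2(\mathbb{R})}\|h_a\|_{L^2(\mathbb{R})}\,\tfrac{da}{a} \le \big(\int\|g_a\|^2\tfrac{da}{a}\big)^{1/2}\big(\int\|h_a\|^2\tfrac{da}{a}\big)^{1/2} < \infty$; the same bound, applied to $\widehat{g_a},\widehat{h_a}$ via Cauchy--Schwarz in $x$, shows the original double integral converges absolutely. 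A final application of Fubini swaps the $a$- and $s$-integrals, and the inner integral $\int_{0}^{\infty}\phi(as)\,\overline{\xi(as)}\,\tfrac{da}{a} = \langle \phi,\xi\rangle_{L^2(\mathbb{R}_+)}$ (again by $u = as$) factors out, leaving
\[
\langle \phi,\xi\rangle_{L^2(\mathbb{R}_+)}\int_{0}^{\infty}\frac{\eta(s)\overline{\psi(s)}}{s}\,\frac{ds}{s} = \langle \phi,\xi\rangle_{L^2(\mathbb{R}_+)}\,\langle \mathcal{D}^{-1}\eta,\mathcal{D}^{-1}\psi\rangle_{L^2(\mathbb{R}_+)},
\]
which is the asserted formula. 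The special case follows by setting $\xi = \phi$ and $\eta = \psi$.

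The computation itself is routine; the only points requiring care are the bookkeeping of complex conjugates (which factor carries them through the Plancherel step) and the justification of the two Fubini applications — both of which are controlled by the single $L^2$-bound that \emph{is} the admissibility hypothesis. Alternatively, one may bypass the computation entirely: the map $(x,a)\mapsto \langle \phi, U(-x,a)^{\ast}\psi\rangle$ is, after the identification $U(-x,a)^{\ast} = U(x/a,1/a)$, the continuous wavelet transform associated to $U$, and the identity is exactly the Duflo--Moore orthogonality relation \cite{duflo1976} with Duflo--Moore operator $\mathcal{D}^{-1}$; polarizing in both arguments recovers the four-vector version stated here.
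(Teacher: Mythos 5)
Your argument is correct, and every step checks out: the identification $U(-x,a)^{\ast}=U(x/a,1/a)$, the substitution reducing each bracket to a Euclidean Fourier transform $\widehat{g_a}(x)$, the computation $\int_0^\infty\|g_a\|_{L^2(\mathbb{R})}^2\,\tfrac{da}{a}=\|\phi\|^2\|\mathcal{D}^{-1}\psi\|^2$ (which is exactly where admissibility of $\psi$ and $\eta$ — and only of those two — enters), the Plancherel step for a.e.\ $a$, and the two Fubini applications justified by Cauchy--Schwarz; the final inner integral $\int_0^\infty\phi(as)\overline{\xi(as)}\,\tfrac{da}{a}=\langle\phi,\xi\rangle$ and the identification of $\int_0^\infty \eta(s)\overline{\psi(s)}s^{-2}\,ds$ with $\langle\mathcal{D}^{-1}\eta,\mathcal{D}^{-1}\psi\rangle$ are also right. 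The difference from the paper is one of presentation rather than substance: the paper gives no computation at all and simply declares the proposition a reformulation of the classical wavelet orthogonality relations, citing \cite{grossmann1986examples} (equivalently the Duflo--Moore theorem \cite{duflo1976}), which is precisely the shortcut you mention in your closing remark. Your self-contained Plancherel--Fubini computation is the standard proof of those classical relations transplanted to this concrete realization of the representation, so it buys a reader independence from the wavelet literature at the cost of a page of routine bookkeeping, while the paper's route buys brevity by outsourcing exactly this calculation.
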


\begin{remark}
By Proposition \ref{prop:admissible},
admissibility of $\psi \in L^{2}(\mathbb{R}_{+})$ is equivalent to the condition
\[
	\int_\aff |\langle \psi,U(-x,a)^* \psi \rangle_{L^2(\mathbb{R}_+)}|^2 \, \frac{dx \, da}{a}<\infty.
\]
\end{remark}

\subsection{Admissibility for Operators}
\label{sec: Admissibility for operators}

Our goal is now to extend the notion of admissibility to bounded operators on $L^2(\mathbb{R}_+)$, with the aim of obtaining a class of operators where a formula for the integral of operator convolutions similar to \eqref{eq:wernerintegral} holds. We will often use that any compact operator $S$ on $L^{2}(\mathbb{R}_{+})$ has a \textit{singular value decomposition}
\begin{equation}\label{eq:singular_value}
	S=\sum_{n=1}^N s_n\xi_n \otimes \eta_n, \qquad N\in \mathbb{N}\cup \{\infty\},
\end{equation}
where $\{\xi_n\}_{n=1}^N$ and $\{\eta_n\}_{n=1}^N$ are orthonormal sets in $L^2(\mathbb{R}_+)$. The \textit{singular values} $\{s_n\}_{n=1}^N$ with $s_n>0$ will converge to zero when $N = \infty$. If $S$ is a trace-class operator we have $\{s_n\}_{n=1}^N \in \ell^1(\mathbb{N})$ with $\|S\|_{\mathcal{S}_1}=\|s_n\|_{\ell^1}$.
Since the admissible functions in $L^2(\mathbb{R}_+)$ form a dense subspace, we can always find an orthonormal basis consisting of admissible functions.

The next result concerns bounded operators $\mathcal{D}S\mathcal{D}$ for a trace-class operator $S$. To be precise, this means that we assume that $S$ maps $\mathrm{dom}(\mathcal{D}^{-1})$ into $\mathrm{dom}(\mathcal{D})$, and that the operator $\mathcal{D}S\mathcal{D}$ defined on $\mathrm{dom}(\mathcal{D})$  extends to a bounded operator.

\begin{theorem} \label{thm:operator_orthogonality_relation}
	Let $S\in \mathcal{S}_1$ satisfy that $\mathcal{D}S\mathcal{D}\in \mathcal{L}(L^2(\mathbb{R}_+))$. For any $T\in \mathcal{S}_1$ we have that $T\star_\aff \mathcal{D}S\mathcal{D}\in L^1_r(\aff)$ with
	\begin{equation*}
		\|T\star_\aff \mathcal{D}S\mathcal{D}\|_{L^1_r(\aff)} \leq \|S\|_{\mathcal{S}_1} \|T\|_{\mathcal{S}_1},
	\end{equation*}
	and
	\begin{equation} \label{eq:orthogonalityfinal}
		\int_\aff T\star_{\aff} \mathcal{D}S\mathcal{D}(x,a) \, \frac{dx\, da}{a} = \tr(T)\tr(S).
	\end{equation}
\end{theorem}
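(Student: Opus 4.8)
The plan is to reduce everything to the rank-one case via singular value decompositions, using the orthogonality relation in Proposition~\ref{prop:admissible} as the engine. First I would write $T = \sum_m t_m \phi_m \otimes \chi_m$ and $S = \sum_n s_n \xi_n \otimes \eta_n$ as singular value decompositions, where—by the remark preceding the theorem that admissible functions are dense and form orthonormal bases—I can arrange that all the $\xi_n, \eta_n$ are admissible functions, so that $\mathcal{D}^{-1}\xi_n, \mathcal{D}^{-1}\eta_n \in L^2(\mathbb{R}_+)$ and hence $\mathcal{D}S\mathcal{D}$ has the formal expansion $\sum_n s_n (\mathcal{D}\xi_n)\otimes(\mathcal{D}\eta_n)$. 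The point is that even though $\mathcal{D}$ is unbounded, on the admissible vectors it is fine, and the hypothesis $\mathcal{D}S\mathcal{D}\in\mathcal{L}(L^2(\mathbb{R}_+))$ guarantees this operator genuinely extends boundedly.

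Next I would use Lemma~\ref{lem: convolution_of_rank_one} to expand $T\star_\aff \mathcal{D}S\mathcal{D}$ pointwise as a double sum:
\[
T\star_\aff \mathcal{D}S\mathcal{D}(x,a) = \sum_{m,n} t_m \overline{s_n} \langle \mathcal{D}\xi_n, U(-x,a)^*\chi_m\rangle \overline{\langle \mathcal{D}\eta_n, U(-x,a)^*\phi_m\rangle},
\]
taking care with conjugates since $(\mathcal{D}S\mathcal{D})^* = \mathcal{D}\overline{S}\mathcal{D}$ in the appropriate sense. Then I would integrate term by term against $\mu_R$ and apply the first identity of Proposition~\ref{prop:admissible} with $\psi = \mathcal{D}\xi_n$... wait, this needs care: the proposition requires $\psi,\eta$ \emph{admissible}, i.e. in $\mathrm{dom}(\mathcal{D}^{-1})$, and $\mathcal{D}\xi_n$ need not be. The correct move is to instead use the commutation relations \eqref{eq:comrelD} to rewrite $\langle \mathcal{D}\xi_n, U(-x,a)^*\chi_m\rangle = \langle \xi_n, \mathcal{D}U(-x,a)^*\chi_m\rangle = a^{-1/2}\langle \xi_n, U(-x,a)^*\mathcal{D}\chi_m\rangle$, absorbing the Jacobian-type factor $a^{-1}$ into the measure; this is exactly why the left Haar measure $a^{-2}dx\,da$ appears and why the statement is natural. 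After this manipulation the integrand becomes $\langle \xi_n, U(-x,a)^*\mathcal{D}\chi_m\rangle\overline{\langle \eta_n, U(-x,a)^*\mathcal{D}\phi_m\rangle}$ integrated against $a^{-2}dx\,da$—but this still needs $\xi_n,\eta_n$ admissible to apply Proposition~\ref{prop:admissible}, which holds by the reduction above—giving $\langle \mathcal{D}\chi_m, \mathcal{D}\phi_m\rangle\langle \mathcal{D}^{-1}\eta_n, \mathcal{D}^{-1}\xi_n\rangle$; summing over $m$ collapses (since $\chi_m,\phi_m$ are orthonormal and $t_m$ are the singular values) to give $\tr(T)\cdot\tr(\mathcal{D}S\mathcal{D}\text{-type sum})$, and summing over $n$ gives $\tr(S)$.

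For the $L^1_r$ bound and the justification of term-by-term integration (Fubini/Tonelli), I would first establish the inequality directly: by the triangle inequality and the Cauchy–Schwarz-plus-orthogonality estimate $\int_\aff |\langle \phi, U(-x,a)^*\psi\rangle|^2 \tfrac{dx\,da}{a} = \|\phi\|^2\|\mathcal{D}^{-1}\psi\|^2$ for admissible $\psi$, applied to each term, I get $\|T\star_\aff \mathcal{D}S\mathcal{D}\|_{L^1_r} \le \sum_{m,n} |t_m||s_n|\,\|\phi_m\|\|\chi_m\|\,\|\mathcal{D}^{-1}\xi_n\|\|\mathcal{D}^{-1}\eta_n\|$; the $m$-sum gives $\|T\|_{\mathcal{S}_1}$, and I need the $n$-sum $\sum_n |s_n|\|\mathcal{D}^{-1}\xi_n\|\|\mathcal{D}^{-1}\eta_n\|$ to be controlled by $\|S\|_{\mathcal{S}_1}$, which follows because $\mathcal{D}S\mathcal{D}$ bounded forces the relevant trace-class bound on $S$ relative to the $\mathcal{D}^{-1}$-weighted geometry—here I would invoke that $\mathcal{D}^{-1}\xi_n, \mathcal{D}^{-1}\eta_n$ are bounded by $\|\mathcal{D}S\mathcal{D}\|^{1/2}$-type quantities via an interpolation/polar-decomposition argument, or more simply by choosing the SVD compatibly. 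Once this absolute bound is in hand, Fubini applies and the term-by-term computation above is rigorous. The main obstacle I anticipate is precisely this interplay between the unbounded $\mathcal{D}$ and the trace-class/$L^1$ summability: one must be careful that the formal expansion of $\mathcal{D}S\mathcal{D}$ converges in the right sense and that swapping the (infinite) sums with the integral is legitimate, and the cleanest route is probably to prove the norm inequality first by a density/truncation argument (finite-rank $S$, then limit) before extracting the exact trace identity.
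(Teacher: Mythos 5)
There is a genuine gap, and it sits exactly where you flag ``the main obstacle.'' Your plan expands $\mathcal{D}S\mathcal{D}$ through the singular value decomposition of $S$, claiming you can ``arrange'' the singular vectors $\xi_n,\eta_n$ to be admissible (and in fact your expansion $\sum_n s_n(\mathcal{D}\xi_n)\otimes(\mathcal{D}\eta_n)$ needs them in $\mathrm{dom}(\mathcal{D})$, not $\mathrm{dom}(\mathcal{D}^{-1})$ --- you conflate the two). But the SVD of a given operator is essentially unique: the density remark in the paper lets you choose an orthonormal basis of $L^2(\mathbb{R}_+)$ consisting of admissible functions, not the singular vectors of a prescribed $S$, and the hypothesis that $\mathcal{D}S\mathcal{D}$ merely extends to a \emph{bounded} operator does not place the individual $\xi_n,\eta_n$ in $\mathrm{dom}(\mathcal{D})$, let alone give the summability your Fubini/term-by-term step needs. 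Indeed, absolute convergence of your double sum requires in effect $\sum_n s_n\|\mathcal{D}\xi_n\|\,\|\mathcal{D}\eta_n\|<\infty$, which (already for positive $S$) amounts to $\mathcal{D}S\mathcal{D}$ being trace class --- strictly stronger than the stated hypothesis --- and is certainly not bounded by $\|S\|_{\mathcal{S}_1}$; the inequality $\sum_n s_n\|\mathcal{D}^{\pm 1}\xi_n\|\,\|\mathcal{D}^{\pm 1}\eta_n\|\lesssim\|S\|_{\mathcal{S}_1}$ that your $L^1_r$ estimate relies on is simply false, since unit vectors can have arbitrarily large or infinite $\|\mathcal{D}^{\pm 1}\xi\|$. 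Neither the proposed ``interpolation/polar-decomposition argument'' nor ``choosing the SVD compatibly'' can repair this, so the core of the proof is missing. There are also bookkeeping errors downstream: after your manipulations the $m$-sum produces factors $\langle\mathcal{D}\chi_m,\mathcal{D}\phi_m\rangle$, which is not $\langle\chi_m,\phi_m\rangle$, so the displayed quantities do not assemble into $\tr(T)\tr(S)$; and the slots of Proposition \ref{prop:admissible} that must be admissible are the wavelet slots (which, after the commutation move, are $\mathcal{D}\chi_m,\mathcal{D}\phi_m$), not $\xi_n,\eta_n$.

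The paper avoids all of this by never decomposing $\mathcal{D}S\mathcal{D}$. It first takes $T=\psi\otimes\phi$ with $\psi,\phi\in\mathrm{dom}(\mathcal{D})$ and uses Lemma \ref{lem: convolution_of_rank_one} together with \eqref{eq:comrelD} to write $T\star_\aff\mathcal{D}S\mathcal{D}(x,a)=a^{-1}\langle SU(-x,a)\mathcal{D}\psi,U(-x,a)\mathcal{D}\phi\rangle_{L^2(\mathbb{R}_+)}$; only \emph{then} is $S$ expanded, and the wavelet slots are $\mathcal{D}\psi,\mathcal{D}\phi$, which are automatically admissible because $\mathcal{D}^{-1}\mathcal{D}\psi=\psi$, so each term contributes $s_n\|\psi\|\,\|\phi\|$ (the factor $a^{-1}$ is exactly what turns the right Haar integral into one where Proposition \ref{prop:admissible} applies after inverting the group variable), giving the bound $\|S\|_{\mathcal{S}_1}\|T\|_{\mathcal{S}_1}$ and the identity with $\langle\psi,\phi\rangle\tr(S)$. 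Arbitrary $T$ is then reached in two further steps your proposal does not supply: general rank-one $T$ by approximating $\psi,\phi$ from $\mathrm{dom}(\mathcal{D})$ (the already-proved bound shows the approximants are Cauchy in $L^1_r(\aff)$ while converging uniformly, which identifies the limit), and finally the SVD of $T$, summed in $L^1_r(\aff)$. Your truncation suggestion is on the $S$ side, where it does not help; if you want to salvage your strategy you would have to either prove the missing trace-class-type summability for $\{\mathcal{D}\xi_n\},\{\mathcal{D}\eta_n\}$ (unavailable under the hypotheses) or restructure the argument along the lines above, putting the approximation on the vectors of $T$ and keeping $\mathcal{D}S\mathcal{D}$ intact.
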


\begin{proof}
We divide the proof into three steps.\newline
\textbf{Step 1:} We first assume that $T=\psi\otimes \phi$ for $\psi,\phi\in \mathrm{dom}(\mathcal{D})$. Recall that $S$ can be written in the form \eqref{eq:singular_value}.
 From Lemma \ref{lem: convolution_of_rank_one} and \eqref{eq:comrelD} we find that 
	\begin{align*}
		T\star_\aff \mathcal{D}S\mathcal{D}(x,a)  &= \langle S\mathcal{D} U(-x,a)\psi,\mathcal{D}U(-x,a)\phi \rangle_{L^2(\mathbb{R}_+)} \\
		&= \frac{1}{a} \langle SU(-x,a)\mathcal{D} \psi, U(-x,a)\mathcal{D} \phi \rangle_{L^2(\mathbb{R}_+)} \\
		&=  \sum_{n=1}^N s_n \frac{1}{a} \langle U(-x,a)\mathcal{D}\psi,\eta_n \rangle_{L^2(\mathbb{R}_+)} \langle \xi_n,U(-x,a)\mathcal{D}\phi \rangle_{L^2(\mathbb{R}_+)}.
	\end{align*}
Integrating with respect to the right Haar measure and using that $(x,a)\mapsto (x,a)^{-1}$ interchanges left and right Haar measure, we get
\begin{align*}
	&\int_\aff |\langle U(-x,a)\mathcal{D}\psi,\eta_n \rangle_{L^2(\mathbb{R}_+)} \langle \xi_n,U(-x,a)\mathcal{D}\phi \rangle_{L^2(\mathbb{R}_+)}| \frac{1}{a} \, \frac{dx\, da}{a} \\
	&= \int_\aff |\langle U(-x,a)^*\mathcal{D}\psi,\eta_n \rangle_{L^2(\mathbb{R}_+)} \langle \xi_n,U(-x,a)^*\mathcal{D}\phi \rangle_{L^2(\mathbb{R}_+)}| \, \frac{dx\, da}{a} \\
	&\leq \left(\int_\aff |\langle U(-x,a)^*\mathcal{D}\psi,\eta_n \rangle_{L^2(\mathbb{R}_+)} |^2 \, \frac{dx\, da}{a}  \right)^{1/2} \left(\int_\aff |\langle \xi_n,U(-x,a)^*\mathcal{D}\phi \rangle_{L^2(\mathbb{R}_+)}|^2 \, \frac{dx\, da}{a} \right)^{1/2} \\
	&= \|\psi\|_{L^2(\mathbb{R}_+)} \|\phi\|_{L^2(\mathbb{R}_+)},
\end{align*}
where the last line uses Proposition \ref{prop:admissible}.
It follows that the sum in the expression for $T\star_\aff \mathcal{D}S\mathcal{D}(x,a)$ converges absolutely in $L^1_r(\aff)$ with
\begin{equation*}
	\|T\star_\aff \mathcal{D}S\mathcal{D}\|_{L^1_r(\aff)}\leq \left(\sum_{n=1}^N s_n \right)\|\psi\|_{L^2(\mathbb{R}_+)} \|\phi\|_{L^2(\mathbb{R}_+)} = \|S\|_{\mathcal{S}_1}\|T\|_{\mathcal{S}_1}.
\end{equation*}
Equation \eqref{eq:orthogonalityfinal} follows in a similar way by integrating the sum expressing $T\star_\aff \mathcal{D}S\mathcal{D}$ and using Proposition \ref{prop:admissible}.
\newline
\textbf{Step 2:} We now assume that $T=\psi\otimes \phi$ for arbitrary $\psi,\phi\in L^2(\mathbb{R}_+)$.
Pick sequences $\{\psi_n\}_{n=1}^\infty,\{\phi_n\}_{n=1}^\infty$ in $\mathrm{dom}(\mathcal{D})$ converging to $\psi$ and $\phi$, respectively, and let $T_n=\psi_n \otimes \phi_n$. It is straightforward to check that $T_n$ converges to $T$ in $\mathcal{S}_1$. By \eqref{eq:conv_op_op_duality} this implies that $T_n \star_\aff \mathcal{D}S\mathcal{D}$ converges uniformly to $T \star_\aff \mathcal{D}S\mathcal{D}$. On the other hand, $T_n \star_\aff \mathcal{D}S\mathcal{D}$ is a Cauchy sequence in $L^1_r(\aff)$: for $m,n \in \mathbb{N}$ we find by Step 1 that
\begin{align*}
	\| T_n \star_\aff \mathcal{D}S\mathcal{D}-T_m \star_\aff \mathcal{D}S\mathcal{D} \|_{L^1_r(\aff)} &\leq \| \psi_n\otimes \phi_n \star_\aff \mathcal{D}S\mathcal{D}-\psi_m\otimes \phi_n \star_\aff \mathcal{D}S\mathcal{D} \|_{L^1_r(\aff)} \\
	&\quad+ \| \psi_m\otimes \phi_n \star_\aff \mathcal{D}S\mathcal{D}-\psi_m\otimes \phi_m \star_\aff \mathcal{D}S\mathcal{D} \|_{L^1_r(\aff)} \\
	&=\| (\psi_n-\psi_m)\otimes \phi_n \star_\aff \mathcal{D}S\mathcal{D} \|_{L^1_r(\aff)} \\
	&\quad+ \| \psi_m\otimes (\phi_n-\phi_m) \star_\aff \mathcal{D}S\mathcal{D}\|_{L^1_r(\aff)}\\
	&\leq \|S\|_{\mathcal{S}_1}  \|\psi_n-\psi_m\|_{L^2(\mathbb{R}_+)} \|\phi_n\|_{L^2(\mathbb{R}_+)} \\
	&\quad+ \|S\|_{\mathcal{S}_1} \|\psi_m\|_{L^2(\mathbb{R}_+)} \|\phi_m-\phi_n\|_{L^2(\mathbb{R}_+)} 
\end{align*}
which clearly goes to zero as $m,n \to \infty$. This means that $T_n \star_\aff \mathcal{D}S\mathcal{D}$ converges in $L^1_r(\aff)$, and the limit must be $T\star_\aff \mathcal{D}S\mathcal{D}$ as we already know that $T_n \star_\aff \mathcal{D}S\mathcal{D}$ converges uniformly to this function. In particular, this implies 
\begin{align*}
	\|T\star_\aff \mathcal{D}S\mathcal{D}\|_{L^1_r(\aff)} &= \lim_{n\to \infty} \|T_n\star_\aff \mathcal{D}S\mathcal{D}\|_{L^1_r(\aff)} \\
	&\leq  \lim_{n\to \infty} \|\psi_n\|_{L^2(\mathbb{R}_+)} \|\phi_n\|_{L^2(\mathbb{R}_+)}  \|S\|_{\mathcal{S}_1} \\
	&= \|\psi\|_{L^2(\mathbb{R}_+)} \|\phi\|_{L^2(\mathbb{R}_+)} \|S\|_{\mathcal{S}_1}. 
\end{align*}
Equation \eqref{eq:orthogonalityfinal} also follows by taking the limit of $\int_\aff T_n \star_\aff \mathcal{D}S\mathcal{D}(x,a)\, \frac{dx\, da}{a}$.
\newline
\textbf{Step 3:} We now assume that $T\in \mathcal{S}_1$.
Consider the singular value decomposition of $T$ given by
\begin{equation*}
	T=\sum_{m=1}^M t_m \psi_m \otimes \phi_m
\end{equation*}
for $M\in \mathbb{N}\cup \{\infty\}$. By \eqref{eq:conv_op_op_duality} we have, with uniform convergence of the sum, that
\begin{equation} \label{eq:orthoproof}
	T\star_\aff \mathcal{D}S\mathcal{D}=\sum_{m=1}^M t_m \psi_m \otimes \phi_m \star_\aff \mathcal{D}S\mathcal{D}.
\end{equation}
Notice that Step 2 implies that the convergence is also in $L^1_r(\aff)$, since 
\begin{align*}
	\sum_{m=1}^M t_m \|\psi_m \otimes \phi_m \star_\aff \mathcal{D}S\mathcal{D}\|_{L^1_r(\aff)}&\leq \sum_{m=1}^M t_m \|\psi_m\|_{L^2(\mathbb{R}_+)} \|\phi_m\|_{L^2(\mathbb{R}_+)} \|S\|_{\mathcal{S}_1} =\|T\|_{\mathcal{S}_1} \|S\|_{\mathcal{S}_1}.
\end{align*}
In particular, $T\star_\aff \mathcal{D}S\mathcal{D}\in L^1_r(\aff)$. Finally, \eqref{eq:orthogonalityfinal} follows by integrating \eqref{eq:orthoproof} and using that the sum converges in $L^1_r(\aff)$ and Step 2.
\end{proof}

The integral relation \eqref{eq:orthogonalityfinal} is somewhat artificial in the sense that it introduces $\mathcal{D}$ in the integrand. We will typically be interested in the integral of $T\star_\aff S$, not of $T\star_\aff \mathcal{D}S\mathcal{D}$. This motivates the following definition.

\begin{definition}\label{def:admissibility}
Let $S$ be a non-zero bounded operator on $L^{2}(\mathbb{R}_{+})$ that maps $\mathrm{dom}(\mathcal{D})$ into $\mathrm{dom}(\mathcal{D}^{-1})$. We say that $S$ is \textit{admissible} if the composition $\mathcal{D}^{-1}S\mathcal{D}^{-1}$ is bounded on  $\mathrm{dom}(\mathcal{D}^{-1})$ and extends to a trace-class operator $\mathcal{D}^{-1}S\mathcal{D}^{-1} \in \mathcal{S}_{1}$. 
\end{definition}

Assume now that $S$ is admissible, and define $R\coloneqq \mathcal{D}^{-1}S\mathcal{D}^{-1}$. Clearly $R$ maps $\mathrm{dom}(\mathcal{D}^{-1})$ into $\mathrm{dom}(\mathcal{D})$ as we assume that $S$ maps $\mathrm{dom}(\mathcal{D})$ into $\mathrm{dom}(\mathcal{D}^{-1})$. The following corollary is therefore immediate from Theorem \ref{thm:operator_orthogonality_relation}. We also note that it extends \cite[Cor.~1]{kiukas2006} to non-positive, non-compact operators.

\begin{corollary} \label{cor:admissible_condition}
	Let $S\in \mathcal{L}(L^2(\mathbb{R}_+))$ be an admissible operator. For any $T\in \mathcal{S}_1$ we have that $T\star_\aff S\in L^1_r(\aff)$ with
	\begin{equation*}
		\|T\star_\aff S\|_{L^1_r(\aff)} \leq \|\mathcal{D}^{-1}S\mathcal{D}^{-1}\|_{\mathcal{S}_1} \|T\|_{\mathcal{S}_1},
	\end{equation*}
	and
	\begin{equation*} 
		\int_\aff T\star_{\aff} S(x,a) \, \frac{dx\, da}{a} = \tr(T)\tr(\mathcal{D}^{-1}S\mathcal{D}^{-1}).
	\end{equation*}
\end{corollary}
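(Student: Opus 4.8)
The plan is to derive this corollary directly from Theorem \ref{thm:operator_orthogonality_relation} by the substitution already hinted at in the remark preceding the definition of admissibility. Given an admissible operator $S$, set $R \coloneqq \mathcal{D}^{-1} S \mathcal{D}^{-1}$, which by Definition \ref{def:admissibility} is a trace-class operator on $L^2(\mathbb{R}_+)$. The key algebraic identity to verify is that $S = \mathcal{D} R \mathcal{D}$ in the appropriate sense, i.e. that $R$ plays the role of ``$S$'' in Theorem \ref{thm:operator_orthogonality_relation} and $S$ plays the role of ``$\mathcal{D}S\mathcal{D}$'' there. Concretely, $R$ maps $\mathrm{dom}(\mathcal{D}^{-1})$ into $\mathrm{dom}(\mathcal{D})$ because $\mathcal{D}^{-1}$ maps into $\mathrm{dom}(\mathcal{D})$ trivially, $S$ maps $\mathrm{dom}(\mathcal{D})$ into $\mathrm{dom}(\mathcal{D}^{-1})$ by hypothesis, and $\mathcal{D}^{-1}$ on $\mathrm{dom}(\mathcal{D}^{-1})$ lands back in $\mathrm{dom}(\mathcal{D})$; moreover $\mathcal{D} R \mathcal{D} = \mathcal{D} \mathcal{D}^{-1} S \mathcal{D}^{-1} \mathcal{D} = S$ on $\mathrm{dom}(\mathcal{D})$ since $\mathcal{D}^{-1}$ and $\mathcal{D}$ are inverse to each other there. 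Thus $\mathcal{D} R \mathcal{D} \in \mathcal{L}(L^2(\mathbb{R}_+))$ (it equals $S$), so the hypotheses of Theorem \ref{thm:operator_orthogonality_relation} are met with the trace-class operator $R$.

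Next I would apply Theorem \ref{thm:operator_orthogonality_relation} verbatim: for any $T \in \mathcal{S}_1$ it gives $T \star_\aff \mathcal{D}R\mathcal{D} \in L^1_r(\aff)$ with $\|T \star_\aff \mathcal{D}R\mathcal{D}\|_{L^1_r(\aff)} \le \|R\|_{\mathcal{S}_1}\|T\|_{\mathcal{S}_1}$ and $\int_\aff T \star_\aff \mathcal{D}R\mathcal{D}(x,a)\, \frac{dx\,da}{a} = \tr(T)\tr(R)$. Substituting $\mathcal{D}R\mathcal{D} = S$ and $R = \mathcal{D}^{-1}S\mathcal{D}^{-1}$ yields exactly the two displayed assertions of the corollary. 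The only subtlety worth a sentence is making sure the equality $\mathcal{D}R\mathcal{D} = S$ as bounded operators is legitimate: both sides agree on the dense domain $\mathrm{dom}(\mathcal{D})$, and $S$ is bounded by assumption while $\mathcal{D}R\mathcal{D}$ extends to a bounded operator by Definition \ref{def:admissibility} applied in reverse (or simply because it agrees with the bounded operator $S$ on a dense set), so the two bounded extensions coincide.

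The main obstacle, such as it is, is purely bookkeeping with the unbounded operators $\mathcal{D}, \mathcal{D}^{-1}$ and their domains — one must be careful that the compositions are interpreted on the correct domains and that ``extends to a trace-class operator'' is used consistently in both directions (from $S$ admissible one gets $R = \mathcal{D}^{-1}S\mathcal{D}^{-1} \in \mathcal{S}_1$; conversely one needs $\mathcal{D}R\mathcal{D}$ to be the bounded operator $S$). Since $\mathcal{D}$ and $\mathcal{D}^{-1}$ are the explicit multiplication operators $r^{\pm 1/2}$, all of this is transparent, and no genuinely new estimate is needed beyond Theorem \ref{thm:operator_orthogonality_relation}. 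The remark about extending \cite[Cor.~1]{kiukas2006} requires no proof here. Hence the proof is a short two-step argument: (1) identify $R = \mathcal{D}^{-1}S\mathcal{D}^{-1}$ as the trace-class operator to feed into Theorem \ref{thm:operator_orthogonality_relation}, verifying $\mathcal{D}R\mathcal{D} = S$; (2) read off the conclusion.
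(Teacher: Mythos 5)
Your proposal is correct and follows the paper's own route exactly: the paper also sets $R=\mathcal{D}^{-1}S\mathcal{D}^{-1}$, notes that $R$ maps $\mathrm{dom}(\mathcal{D}^{-1})$ into $\mathrm{dom}(\mathcal{D})$ with $\mathcal{D}R\mathcal{D}=S$ bounded, and declares the corollary immediate from Theorem \ref{thm:operator_orthogonality_relation}. Your extra domain bookkeeping is fine and merely spells out what the paper leaves implicit.
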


\begin{example}
	A rank-one operator $S=\eta \otimes \xi$ for non-zero $\eta,\xi$ is an admissible operator if and only if $\eta,\xi \in L^{2}(\mathbb{R}_{+})$ are admissible functions. Requiring that $S$ maps $\mathrm{dom}(\mathcal{D})$ into $\mathrm{dom}(\mathcal{D}^{-1})$ clearly implies that $\eta \in \mathrm{dom}(\mathcal{D}^{-1})$, i.e.\ $\eta$ is admissible. For $\mathcal{D}^{-1}S\mathcal{D}^{-1}$ to be trace-class, the map 
	\[\psi \mapsto \|\mathcal{D}^{-1}S\mathcal{D}^{-1}\psi\|_{L^2{(\mathbb{R}_+)}}=|\langle \mathcal{D}^{-1}\psi,\xi  \rangle_{L^2(\mathbb{R}_+)}|\cdot  \|\mathcal{D}^{-1}\eta\|_{L^2(\mathbb{R}_+)}, \qquad \psi \in \mathrm{dom}(\mathcal{D}^{-1}), \]
	must at least be bounded for $\|\psi\|_{L^2(\mathbb{R}_+)}\leq 1$. This is bounded if and only if 
	\[\psi \mapsto \langle \mathcal{D}^{-1}\psi,\xi  \rangle_{L^2(\mathbb{R}_+)}\]
	is bounded, which is precisely the condition that $\xi \in \mathrm{dom}\left(\left(\mathcal{D}^{-1}\right)^*\right)=\mathrm{dom}(\mathcal{D}^{-1})$.
	Hence our notion of admissibility for operators naturally extends the classical function admissibility.
	In the case of rank-one operators, it follows from Lemma \ref{lem: convolution_of_rank_one} and the computation 
	\begin{equation*}	\tr(\mathcal{D}^{-1}(\eta \otimes \xi)\mathcal{D}^{-1})=\langle \mathcal{D}^{-1}\eta,\mathcal{D}^{-1}\xi \rangle_{L^2(\mathbb{R}_+)}
	\end{equation*} 
	that Corollary  \ref{cor:admissible_condition} reduces to Proposition \ref{prop:admissible}.
		\end{example}
When both $S$ and $T$ are admissible trace-class operators, their convolution $T\star_\aff S$ behaves well with respect to both the left and right Haar measures.
\begin{corollary}
Let $S$ and $T$ be admissible trace-class operators on $L^{2}(\mathbb{R}_{+})$. Then the convolution $T\star_\aff S$ satisfies $T\star_\aff S \in L^1_r(\aff)\cap L^1_l(\aff)$ and 
\begin{align*}
    \int_\aff T\star_\aff S(x,a)\, \frac{dx\, da}{a}=\tr(T) \tr(\mathcal{D}^{-1}S\mathcal{D}^{-1}), \\
    \int_\aff T\star_\aff S(x,a)\, \frac{dx\, da}{a^2}=\tr(S) \tr(\mathcal{D}^{-1}T\mathcal{D}^{-1}).
\end{align*}
\end{corollary}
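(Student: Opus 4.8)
The plan is to derive both statements from Corollary~\ref{cor:admissible_condition} by exploiting a reflection symmetry of the operator convolution, so that the left-Haar assertion becomes the right-Haar assertion read through the inversion map. First, since $S$ is admissible and $T\in\mathcal{S}_1$, Corollary~\ref{cor:admissible_condition} applies verbatim and already gives $T\star_\aff S\in L^1_r(\aff)$ together with the first displayed identity $\int_\aff T\star_\aff S\,\frac{dx\,da}{a}=\tr(T)\tr(\mathcal{D}^{-1}S\mathcal{D}^{-1})$. So only the left-Haar part requires work.

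The key observation is the pointwise reflection identity $T\star_\aff S(x,a)=S\star_\aff T\big((x,a)^{-1}\big)$ for all $(x,a)\in\aff$; equivalently $T\star_\aff S=\check{h}$ where $h\coloneqq S\star_\aff T$. I would prove it using cyclicity of the trace together with the fact that $U$ is a \emph{unitary} representation, so that $U(-x,a)^*=U\big((-x,a)^{-1}\big)$. Setting $(y,b)\coloneqq(x,a)^{-1}=(-x/a,1/a)$, a one-line computation gives $(-y,b)^{-1}=(-x,a)$, hence $U(-x,a)=U(-y,b)^*$ and therefore
\[
T\star_\aff S(x,a)=\tr\!\big(S\,U(-x,a)\,T\,U(-x,a)^*\big)=\tr\!\big(S\,U(-y,b)^*\,T\,U(-y,b)\big)=S\star_\aff T(y,b),
\]
which is exactly $S\star_\aff T\big((x,a)^{-1}\big)$.

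To finish, I would apply Corollary~\ref{cor:admissible_condition} once more with the roles of $S$ and $T$ interchanged --- legitimate because $T$ is admissible and $S\in\mathcal{S}_1$ --- obtaining $h=S\star_\aff T\in L^1_r(\aff)$ with $\int_\aff h\,\frac{dx\,da}{a}=\tr(S)\tr(\mathcal{D}^{-1}T\mathcal{D}^{-1})$. Since inversion $(x,a)\mapsto(x,a)^{-1}$ interchanges $\mu_L$ and $\mu_R$ (as already used in the proof of Theorem~\ref{thm:operator_orthogonality_relation}), the identity $T\star_\aff S=\check{h}$ immediately yields $T\star_\aff S\in L^1_l(\aff)$ and
\[
\int_\aff T\star_\aff S(x,a)\,\frac{dx\,da}{a^2}=\int_\aff \check{h}(x,a)\,d\mu_L(x,a)=\int_\aff h(x,a)\,d\mu_R(x,a)=\tr(S)\tr(\mathcal{D}^{-1}T\mathcal{D}^{-1}),
\]
which together with the first step gives $T\star_\aff S\in L^1_r(\aff)\cap L^1_l(\aff)$ and both formulas. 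The argument has no genuine obstacle; the only point requiring care is the bookkeeping in the reflection identity --- keeping the point $(-x,a)$ distinct from the group inverse $(x,a)^{-1}$ --- and citing cleanly that group inversion swaps the two Haar measures. Alternatively one could simply rerun the three-step approximation argument of Theorem~\ref{thm:operator_orthogonality_relation} with the left Haar measure and $\mathcal{D}^{-1}T\mathcal{D}^{-1}$ in place of $T$, but the reflection shortcut is shorter.
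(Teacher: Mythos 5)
Your proposal is correct and follows essentially the same route as the paper: the right-Haar part is quoted from Corollary \ref{cor:admissible_condition}, and the left-Haar part is reduced to it via the reflection identity $T\star_\aff S(x,a)=S\star_\aff T\bigl((x,a)^{-1}\bigr)$ (which the paper states without the cyclicity/inversion bookkeeping you spell out) together with the fact that inversion interchanges the two Haar measures. Your verification of the identity and the measure swap is accurate, so nothing further is needed.
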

\begin{proof}
The first equation and the claim that $T\star_\aff S \in  L^1_r(\aff)$ is Corollary \ref{cor:admissible_condition}. The second equation and the claim that $T\star_\aff S \in  L^1_l(\aff)$ follows since
\begin{equation*}
    T\star_\aff S(x,a) = S\star_\aff T((x,a)^{-1}).\qedhere
\end{equation*} 
\end{proof}

We now turn to the case where $S$ is a positive compact operator. We first note that admissibility in this case becomes a statement about the eigenvectors and eigenvalues of $S$.

\begin{proposition} \label{prop:specdec}
	Let $S$ be a non-zero positive compact operator with spectral decomposition 
	\begin{equation*}
		S=\sum_{n=1}^N s_n \xi_n \otimes \xi_n
	\end{equation*}
	for $N\in \mathbb{N}\cup \{\infty\}$. Then $S$ is admissible if and only each $\xi_n$ is admissible and
	\[\sum_{n=1}^N s_n\|\mathcal{D}^{-1}\xi_n\|_{L^2(\mathbb{R}_+)}^2<\infty.\]
\end{proposition}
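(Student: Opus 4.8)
The plan is to prove both implications by making rigorous the heuristic identity $\mathcal{D}^{-1}S\mathcal{D}^{-1}=\sum_{n=1}^{N}s_n\,(\mathcal{D}^{-1}\xi_n)\otimes(\mathcal{D}^{-1}\xi_n)$, whose trace, being a sum of positive rank-one operators, equals exactly $\sum_{n}s_n\|\mathcal{D}^{-1}\xi_n\|_{L^2(\mathbb{R}_+)}^2$. Throughout I will use that $\mathcal{D}$ and $\mathcal{D}^{-1}$ are self-adjoint, that $\mathcal{D}\mathcal{D}^{-1}$ and $\mathcal{D}^{-1}\mathcal{D}$ are the identity on the appropriate domains, that $\mathrm{Ran}(\mathcal{D})=\mathrm{dom}(\mathcal{D}^{-1})$ and, symmetrically, $\mathrm{Ran}(\mathcal{D}^{-1})=\mathrm{dom}(\mathcal{D})$ (so $\mathcal{D}^{-1}$ maps $\mathrm{dom}(\mathcal{D}^{-1})$ into $\mathrm{dom}(\mathcal{D})$ and $\mathcal{D}$ maps $\mathrm{dom}(\mathcal{D})$ into $\mathrm{dom}(\mathcal{D}^{-1})$), and that the admissible functions are dense in $L^2(\mathbb{R}_+)$, so there is an orthonormal basis $\{e_k\}$ of admissible functions.

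For the ``if'' direction, assume each $\xi_n$ is admissible and $\sum_n s_n\|\mathcal{D}^{-1}\xi_n\|_{L^2(\mathbb{R}_+)}^2<\infty$. Set $R:=\sum_{n=1}^{N}s_n\,(\mathcal{D}^{-1}\xi_n)\otimes(\mathcal{D}^{-1}\xi_n)$; since $\|(\mathcal{D}^{-1}\xi_n)\otimes(\mathcal{D}^{-1}\xi_n)\|_{\mathcal{S}_1}=\|\mathcal{D}^{-1}\xi_n\|_{L^2(\mathbb{R}_+)}^2$, this series converges absolutely in $\mathcal{S}_1$, so $R\in\mathcal{S}_1$. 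To see that $S$ maps $\mathrm{dom}(\mathcal{D})$ into $\mathrm{dom}(\mathcal{D}^{-1})$, fix $\psi\in\mathrm{dom}(\mathcal{D})$; by Cauchy--Schwarz and $s_n\le\|S\|$ we get $\sum_n s_n|\langle\psi,\xi_n\rangle|\,\|\mathcal{D}^{-1}\xi_n\|_{L^2(\mathbb{R}_+)}\le\|\psi\|_{L^2(\mathbb{R}_+)}\big(\|S\|\sum_n s_n\|\mathcal{D}^{-1}\xi_n\|_{L^2(\mathbb{R}_+)}^2\big)^{1/2}<\infty$, so $\sum_n s_n\langle\psi,\xi_n\rangle\mathcal{D}^{-1}\xi_n$ converges in $L^2(\mathbb{R}_+)$; since $\sum_{n\le K}s_n\langle\psi,\xi_n\rangle\xi_n\to S\psi$ and $\mathcal{D}^{-1}$ is closed, $S\psi\in\mathrm{dom}(\mathcal{D}^{-1})$ with $\mathcal{D}^{-1}S\psi=\sum_n s_n\langle\psi,\xi_n\rangle\mathcal{D}^{-1}\xi_n$. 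Feeding $\mathcal{D}^{-1}\phi\in\mathrm{dom}(\mathcal{D})$ into this for $\phi\in\mathrm{dom}(\mathcal{D}^{-1})$ and using self-adjointness to rewrite $\langle\mathcal{D}^{-1}\phi,\xi_n\rangle=\langle\phi,\mathcal{D}^{-1}\xi_n\rangle$ shows $\mathcal{D}^{-1}S\mathcal{D}^{-1}$ is defined on $\mathrm{dom}(\mathcal{D}^{-1})$ and coincides there with $R\in\mathcal{S}_1$; hence $S$ is admissible.

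For the ``only if'' direction, assume $S$ is admissible and put $R:=\mathcal{D}^{-1}S\mathcal{D}^{-1}\in\mathcal{S}_1$, which is positive. The main step is to show every $\xi_n$ is admissible, i.e.\ $\xi_n\in\mathrm{dom}(\mathcal{D}^{-1})$; the obstacle is that $\xi_n$ need not lie in $\mathrm{dom}(\mathcal{D})$, so $S$ cannot be evaluated at $\xi_n$ directly, and one argues by duality instead. For $\psi\in\mathrm{dom}(\mathcal{D})$ one has $\mathcal{D}\psi\in\mathrm{Ran}(\mathcal{D})=\mathrm{dom}(\mathcal{D}^{-1})$, and using $\mathcal{D}^{-1}\mathcal{D}\psi=\psi$, that $S$ maps $\mathrm{dom}(\mathcal{D})$ into $\mathrm{dom}(\mathcal{D}^{-1})$, and self-adjointness of $\mathcal{D}^{-1}$, one computes $\langle S\psi,\psi\rangle=\langle R\mathcal{D}\psi,\mathcal{D}\psi\rangle\le\|R\|\,\|\mathcal{D}\psi\|_{L^2(\mathbb{R}_+)}^2$. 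Since $S\ge s_n\,\xi_n\otimes\xi_n$ as positive operators, this gives $s_n|\langle\psi,\xi_n\rangle|^2\le\|R\|\,\|\mathcal{D}\psi\|_{L^2(\mathbb{R}_+)}^2$ for all $\psi\in\mathrm{dom}(\mathcal{D})$; substituting $\phi=\mathcal{D}\psi$, which ranges over all of $\mathrm{dom}(\mathcal{D}^{-1})$, this says the functional $\phi\mapsto\langle\mathcal{D}^{-1}\phi,\xi_n\rangle$ on the dense subspace $\mathrm{dom}(\mathcal{D}^{-1})$ is bounded, so $\xi_n\in\mathrm{dom}((\mathcal{D}^{-1})^{*})=\mathrm{dom}(\mathcal{D}^{-1})$. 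Once every $\xi_n$ is admissible, the same manipulation yields, for the admissible orthonormal basis $\{e_k\}$, the identity $\langle Re_k,e_k\rangle=\langle S\mathcal{D}^{-1}e_k,\mathcal{D}^{-1}e_k\rangle=\sum_n s_n|\langle e_k,\mathcal{D}^{-1}\xi_n\rangle|^2$, and summing over $k$ (Tonelli, then Parseval) gives $\tr(R)=\sum_n s_n\|\mathcal{D}^{-1}\xi_n\|_{L^2(\mathbb{R}_+)}^2$, which is finite since $R\in\mathcal{S}_1$.

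I expect the duality step in the ``only if'' direction — upgrading the estimate $s_n|\langle\psi,\xi_n\rangle|^2\le\|R\|\,\|\mathcal{D}\psi\|_{L^2(\mathbb{R}_+)}^2$ into the membership $\xi_n\in\mathrm{dom}(\mathcal{D}^{-1})$ — to be the real content: it is precisely here that one cannot feed the eigenvectors of $S$ into $S$ naively, and the surjectivity identity $\mathrm{Ran}(\mathcal{D})=\mathrm{dom}(\mathcal{D}^{-1})$ is what makes the functional bounded on all of $\mathrm{dom}(\mathcal{D}^{-1})$ rather than merely on a subspace. Everything else — absolute convergence of the rank-one series, the interchange of summations, the closedness arguments — is routine once the domains are tracked carefully, with all convergence ultimately controlled by $\sup_n s_n<\infty$ together with $\sum_n s_n\|\mathcal{D}^{-1}\xi_n\|_{L^2(\mathbb{R}_+)}^2<\infty$.
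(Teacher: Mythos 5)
Your proof is correct, and while your converse direction is essentially the paper's argument (absolute $\mathcal{S}_1$-convergence of $\sum_n s_n(\mathcal{D}^{-1}\xi_n)\otimes(\mathcal{D}^{-1}\xi_n)$, closedness of $\mathcal{D}^{-1}$ to pass it through the spectral series of $S$, and agreement with $\mathcal{D}^{-1}S\mathcal{D}^{-1}$ on the dense subspace $\mathrm{dom}(\mathcal{D}^{-1})$), your forward direction takes a genuinely different route. The paper stays inside the convolution framework: it computes $\xi\otimes\xi\star_{\aff}S(x,a)=\sum_n s_n|\langle \xi,U(-x,a)^*\xi_n\rangle|^2$ via Lemma \ref{lem: convolution_of_rank_one}, integrates using monotone convergence and Proposition \ref{prop:admissible}, and invokes Corollary \ref{cor:admissible_condition} to identify the (finite) integral with $\tr(T)\tr(\mathcal{D}^{-1}S\mathcal{D}^{-1})$, so finiteness of each nonnegative term forces each $\xi_n$ to be admissible and the sum to converge. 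You instead argue purely operator-theoretically: the identity $\langle S\psi,\psi\rangle=\langle R\mathcal{D}\psi,\mathcal{D}\psi\rangle$ for $\psi\in\mathrm{dom}(\mathcal{D})$, the inequality $S\geq s_n\,\xi_n\otimes\xi_n$, and the surjectivity $\mathrm{Ran}(\mathcal{D})=\mathrm{dom}(\mathcal{D}^{-1})$ make the functional $\phi\mapsto\langle\mathcal{D}^{-1}\phi,\xi_n\rangle$ bounded on $\mathrm{dom}(\mathcal{D}^{-1})$, whence $\xi_n\in\mathrm{dom}\left(\left(\mathcal{D}^{-1}\right)^*\right)=\mathrm{dom}(\mathcal{D}^{-1})$ — the same duality trick the paper deploys only in the rank-one example following Corollary \ref{cor:admissible_condition} — and the value of the sum then follows by computing $\tr(R)$ over an orthonormal basis of admissible functions with Tonelli and Parseval. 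Your route is more self-contained: it needs no wavelet orthogonality relations and no integral formula for operator convolutions, only careful domain bookkeeping for the multiplication operators $\mathcal{D}^{\pm1}$, all of which is valid here (in particular $\mathrm{Ran}(\mathcal{D})=\mathrm{dom}(\mathcal{D}^{-1})$ and the existence of an admissible orthonormal basis, e.g.\ the Laguerre basis). What the paper's route buys is brevity given the machinery of Section \ref{sec: Admissibility for operators} and a direct display of the equivalence as an instance of integrability of $T\star_{\aff}S$, which is the organizing theme of that section.
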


\begin{proof}
We first assume that $S$ is admissible. By linearity and Lemma \ref{lem: convolution_of_rank_one} we get for $\xi \in L^{2}(\mathbb{R}_{+})$ with $\|\xi\|_{L^{2}(\mathbb{R}_{+})} = 1$ that
\begin{equation}\label{eq:convolution_rank_one_in_proof}
    \xi \otimes \xi \star_\aff S(x,a) = \sum_{n=1}^N s_n  |\langle\xi,U(-x,a)^*\xi_n \rangle_{L^{2}(\mathbb{R}_{+})}|^2.
\end{equation}
Integrating \eqref{eq:convolution_rank_one_in_proof} using the monotone convergence theorem and Proposition \ref{prop:admissible}, we obtain
\begin{equation*}
    \int_\aff  \xi \otimes \xi \star_\aff S(x,a)\, \frac{dx\, da}{a} = \sum_{n=1}^N s_n \|\mathcal{D}^{-1}\xi_n\|_{L^2(\mathbb{R}_+)}^2.
\end{equation*}
The claim now follows from Corollary \ref{cor:admissible_condition}.
	
	For the converse, it is clear by the assumption that the operator 
	\begin{equation} \label{eq:singularvalueadmissibleproof}
	    \sum_{n=1}^N s_n (\mathcal{D}^{-1}\xi_n) \otimes (\mathcal{D}^{-1}\xi_n)
	\end{equation}
	is a trace-class operator. It only remains to show that $S$ maps $\mathrm{dom}(\mathcal{D})$ into $\mathrm{dom}(\mathcal{D}^{-1})$ and that $\mathcal{D}^{-1}S\mathcal{D}^{-1}$ is given by \eqref{eq:singularvalueadmissibleproof}. This is easily shown when $N$ is finite, so we do the proof for $N=\infty$. \par 
	The partial sums for $\psi \in L^2(\mathbb{R}_+)$ are denoted by
	\begin{equation*}
		(S\psi)_M\coloneqq \sum_{n=1}^M s_n \langle \psi,\xi_n \rangle_{L^2(\mathbb{R}_+)} \xi_n,
	\end{equation*}
	  and converge in the sense that $(S\psi)_M\to S\psi$ as $M\to \infty$. Furthermore, it is clear that $(S\psi)_M$ is in the domain of $\mathcal{D}^{-1}$ for each $M$ as each $\xi_n$ is admissible. We also have that 
	\begin{equation*}
		\mathcal{D}^{-1}(S\psi)_M=\sum_{n=1}^M s_n \langle \psi,\xi_n \rangle_{L^2(\mathbb{R}_+)} \mathcal{D}^{-1}\xi_n.
	\end{equation*}
	The sequence of partial sums $\mathcal{D}^{-1}(S\psi)_M$ also converges in $L^{2}(\mathbb{R}_{+})$, since by using H\"older's inequality and Bessel's inequality we obtain
	\begin{align*}
		\sum_{n=1}^\infty s_n |\langle \psi,\xi_n \rangle_{L^2(\mathbb{R}_+)}| \|\mathcal{D}^{-1}\xi_n\|_{L^2(\mathbb{R}_+)} &\leq \left(\sum_{n=1}^\infty |\langle \psi,\xi_n \rangle_{L^2(\mathbb{R}_+)}|^2\right)^{1/2} \left(\sum_{n=1}^\infty s_n^2 \|\mathcal{D}^{-1}\xi_n\|_{L^2(\mathbb{R}_+)}^2\right)^{1/2} \\
		&\lesssim \|\psi\|_{L^2(\mathbb{R}_+)} \left(\sum_{n=1}^\infty s_n \|\mathcal{D}^{-1}\xi_n\|_{L^2(\mathbb{R}_+)}^2\right)^{1/2}.
	\end{align*}
	Since $\mathcal{D}^{-1}$ is a closed operator, we get that $S\psi$ belongs to the domain of $\mathcal{D}^{-1}$ and \ \[\mathcal{D}^{-1}S\psi=\sum_{n=1}^\infty s_n \langle \psi,\xi_n \rangle_{L^2(\mathbb{R}_+)} \mathcal{D}^{-1}\xi_n.\] \par 
	For any $\phi \in \mathrm{dom}(\mathcal{D}^{-1})$, we have that
	\begin{equation*}
		\mathcal{D}^{-1}S\mathcal{D}^{-1}\phi = \sum_{n=1}^\infty s_n \langle \mathcal{D}^{-1} \phi,\xi_n \rangle_{L^2(\mathbb{R}_+)} \mathcal{D}^{-1}\xi_n 
		=\sum_{n=1}^\infty s_n \langle \phi, \mathcal{D}^{-1}\xi_n \rangle_{L^2(\mathbb{R}_+)} \mathcal{D}^{-1}\xi_n,
	\end{equation*}
	so $\mathcal{D}^{-1}S\mathcal{D}^{-1}$ agrees with \eqref{eq:singularvalueadmissibleproof} on this dense subspace. In fact, they agree on all of $L^{2}(\mathbb{R}_{+})$ since
	\begin{equation*}
		\|\mathcal{D}^{-1}S\mathcal{D}^{-1}\phi\|_{L^2(\mathbb{R}_+)}\leq \|\phi\|_{L^2(\mathbb{R}_+)} \sum_{n=1}^\infty s_n \|\mathcal{D}^{-1}\xi_n\|_{L^2(\mathbb{R}_+)}^2,
	\end{equation*}
	shows that $\mathcal{D}^{-1}S\mathcal{D}^{-1}$ extends to a bounded operator.
\end{proof}

As a consequence of Proposition \ref{prop:specdec}, we obtain a compact reformulation of admissibility for positive trace-class operators.
\begin{corollary}\label{corr:admissibility_check}
	Let $T$ be a non-zero positive trace-class operator on $L^{2}(\mathbb{R}_{+})$, and let $S$ be a non-zero positive compact operator. If 
	\begin{equation*}
		\int_\aff T\star_\aff S(x,a) \, \frac{dx \, da}{a}<\infty,
	\end{equation*}
	then $S$ is admissible with
	\begin{equation*}
	    \tr(\mathcal{D}^{-1}S\mathcal{D}^{-1})=\frac{1}{\tr(T)}	\int_\aff T\star_\aff S(x,a) \, \frac{dx \, da}{a}.
	\end{equation*}
	In particular, if $S$ is a non-zero, positive trace-class operator, then $S$ is admissible if and only if $S\star_\aff S\in L_{r}^{1}(\aff)$.
\end{corollary}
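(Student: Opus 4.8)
The plan is to prove the displayed identity by diagonalising both operators and reducing the integral to the wavelet orthogonality relation, and then to read off the final equivalence by invoking Corollary~\ref{cor:admissible_condition} in both directions with $T=S$.

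First I would fix spectral decompositions $T=\sum_{m=1}^{M}t_m\,\psi_m\otimes\psi_m$ and $S=\sum_{n=1}^{N}s_n\,\xi_n\otimes\xi_n$, with $t_m,s_n>0$ and $\{\psi_m\}$, $\{\xi_n\}$ orthonormal. Using Lemma~\ref{lem: convolution_of_rank_one} together with the spectral decomposition of $S$ and trace-norm convergence of the sum for $T$ (exactly as in the proof of Proposition~\ref{prop:specdec}), one obtains the pointwise identity
\[
T\star_\aff S(x,a)=\sum_{m,n}t_m s_n\,\bigl|\langle\psi_m,U(-x,a)^{*}\xi_n\rangle_{L^2(\mathbb{R}_+)}\bigr|^{2}\geq 0.
\]
Since every term is nonnegative, Tonelli's theorem permits term-by-term integration:
\[
\int_\aff T\star_\aff S(x,a)\,\frac{dx\,da}{a}=\sum_{m,n}t_m s_n\int_\aff\bigl|\langle\psi_m,U(-x,a)^{*}\xi_n\rangle\bigr|^{2}\,\frac{dx\,da}{a}.
\]

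The heart of the matter is the orthogonality identity: for all $\phi,\xi\in L^2(\mathbb{R}_+)$,
\[
\int_\aff\bigl|\langle\phi,U(-x,a)^{*}\xi\rangle_{L^2(\mathbb{R}_+)}\bigr|^{2}\,\frac{dx\,da}{a}=\|\phi\|_{L^2(\mathbb{R}_+)}^{2}\,\|\mathcal{D}^{-1}\xi\|_{L^2(\mathbb{R}_+)}^{2},
\]
with both sides allowed to equal $+\infty$. Proposition~\ref{prop:admissible} is the special case in which $\xi$ is admissible, but here I need this unconditional version precisely so that finiteness of the left-hand side can be used to \emph{deduce} admissibility. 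To prove it I would set $g_a(r)\coloneqq r^{-1}\phi(ar)\overline{\xi(r)}$ for $r>0$ and $g_a\coloneqq 0$ otherwise; then $\langle\phi,U(-x,a)^{*}\xi\rangle=\widehat{g_a}(x)$, and Cauchy--Schwarz in $L^2(\mathbb{R}_+)$ shows $g_a\in L^1(\mathbb{R})$. Plancherel's theorem (valid in $[0,\infty]$ for $L^1$ functions), followed by Tonelli's theorem and the dilation-invariance of $dr/r$, then gives
\[
\int_\aff|\widehat{g_a}(x)|^{2}\,\frac{dx\,da}{a}=\int_0^{\infty}|\xi(r)|^{2}r^{-2}\Bigl(\int_0^{\infty}|\phi(ar)|^{2}\,\frac{da}{a}\Bigr)dr=\|\phi\|_{L^2(\mathbb{R}_+)}^{2}\,\|\mathcal{D}^{-1}\xi\|_{L^2(\mathbb{R}_+)}^{2}.
\]
Applying this with $\phi=\psi_m$ (so $\|\phi\|=1$) makes each inner integral in the previous display equal to $\|\mathcal{D}^{-1}\xi_n\|_{L^2(\mathbb{R}_+)}^{2}$, independently of $m$, so $\int_\aff T\star_\aff S\,\frac{dx\,da}{a}=\tr(T)\sum_n s_n\|\mathcal{D}^{-1}\xi_n\|^{2}$. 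As this is finite and $\tr(T)>0$, I conclude $\sum_n s_n\|\mathcal{D}^{-1}\xi_n\|^{2}<\infty$; in particular each $\xi_n$ is admissible. Proposition~\ref{prop:specdec} then shows that $S$ is admissible, and Corollary~\ref{cor:admissible_condition} applied to $(T,S)$ yields $\int_\aff T\star_\aff S\,\frac{dx\,da}{a}=\tr(T)\tr(\mathcal{D}^{-1}S\mathcal{D}^{-1})$, which rearranges to the claimed formula after dividing by $\tr(T)$.

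For the final equivalence: if the non-zero positive trace-class operator $S$ is admissible, then since $S\in\mathcal{S}_{1}$, Corollary~\ref{cor:admissible_condition} with $T=S$ gives $S\star_\aff S\in L^1_r(\aff)$. Conversely, if $S\star_\aff S\in L^1_r(\aff)$, then $S\star_\aff S$ is a nonnegative function (because $S\geq 0$), hence $\int_\aff S\star_\aff S(x,a)\,\frac{dx\,da}{a}=\|S\star_\aff S\|_{L^1_r(\aff)}<\infty$, and applying the first part of the corollary with $T=S$ (a non-zero positive trace-class, hence compact, operator) shows that $S$ is admissible. The only genuinely delicate step is the $[0,\infty]$-valued orthogonality identity: Proposition~\ref{prop:admissible} presupposes admissibility of its argument, so one must first upgrade it to an identity holding with no admissibility hypothesis, after which everything reduces to bookkeeping with Tonelli's theorem, the spectral theorem, Proposition~\ref{prop:specdec} and Corollary~\ref{cor:admissible_condition}.
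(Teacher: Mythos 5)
Your proof is correct and follows essentially the same route as the paper's: take the spectral decomposition of $S$ (and $T$), integrate term by term to obtain $\int_\aff T\star_\aff S(x,a)\,\frac{dx\,da}{a}=\tr(T)\sum_n s_n\|\mathcal{D}^{-1}\xi_n\|_{L^2(\mathbb{R}_+)}^2$, and conclude via Proposition~\ref{prop:specdec} and Corollary~\ref{cor:admissible_condition}. Your explicit Plancherel argument for the unconditional, $[0,\infty]$-valued orthogonality identity is a useful addition, since the paper's appeal to Proposition~\ref{prop:admissible} leaves that point implicit.
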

\begin{proof}
Let \[S=\sum_{n=1}^N s_n \xi_n \otimes \xi_n\] be the spectral decomposition of $S$. An argument similar to the one giving in the proof of Proposition \ref{prop:specdec} shows that \[\int_\aff T\star_\aff S(x,a) \, \frac{dx \, da}{a}= \tr(T) \sum_{n=1}^N s_n \|\mathcal{D}^{-1}\xi_n\|_{L^2(\mathbb{R}_+)}^2.\] The claims now follow immediately from Proposition \ref{prop:specdec}.
\end{proof}

\subsection{Admissible Operators from Laguerre Functions}
\label{sec: Laguerre Connection}

Although we derived several basic properties of admissible operators in Section \ref{sec: Admissibility for operators}, we have not given any way to construct such operators in practice. Our construction is based on the following observation: From Proposition \ref{prop:specdec} we know that if \[S=\sum_{n=1}^\infty s_n \varphi_n \otimes \varphi_n\] is a non-zero positive compact operator with \[\sum_{n=1}^\infty s_n \|\mathcal{D}^{-1}\varphi_n\|_{L^2(\mathbb{R}_+)}^2<\infty,\] then $S$ is admissible. So if we can find an orthonormal basis $\{\varphi_n\}_{n=1}
^\infty$ of admissible functions such that we can control the terms $\|\mathcal{D}^{-1}\varphi_n\|_{L^2(\mathbb{R}_+)}$, then we can construct admissible operators as infinite linear combinations of rank-one operators. It turns out that the Laguerre basis works extremely well in this regard. 

\begin{definition}
\label{def: laguerre}
For fixed $\alpha\in \mathbb{R}_+$ we define the \textit{Laguerre basis} $\left\{\mathcal{L}_{n}^{(\alpha)}\right\}_{n=0}^{\infty}$ for $L^2(\mathbb{R}_+)$ by
\begin{equation*}
    \mathcal{L}_{n}^{(\alpha)}(r)  \coloneqq\sqrt{\frac{n!}{\Gamma(n+\alpha+1)}}r^{\frac{\alpha + 1}{2}}e^{-\frac{r}{2}}L_{n}^{(\alpha)}(r), \qquad n \in \mathbb{N}_{0}, \, r \in \mathbb{R}_{+},
\end{equation*}
where $\Gamma$ denotes the gamma function and $L_{n}^{(\alpha)}$ denotes the \textit{generalized Laguerre polynomials} given by \[L_{n}^{(\alpha)}(r)  \coloneqq\frac{r^{-\alpha}e^{r}}{n!}\frac{d^n}{dr^n}\left(e^{-r}r^{n + \alpha}\right) = \sum_{k = 0}^{n}(-1)^{k}\binom{n + \alpha}{n-k}\frac{r^k}{k!}.\]
\end{definition}
The classical orthogonality relation

\begin{equation}
\label{eq:orthogonality_Laguerre}
    \int_{0}^{\infty}x^{\alpha}e^{-x}L_{n}^{(\alpha)}(x)L_{m}^{(\alpha)}(x) \, dx = \frac{\Gamma(n + \alpha + 1)}{n!}\delta_{n,m},
\end{equation}
for the generalized Laguerre polynomials ensures that the Laguerre bases are orthonormal bases for $L^2(\mathbb{R}_+)$ for any fixed $\alpha \in \mathbb{R}_+$. The following result shows that the Laguerre basis is especially compatible with the Duflo-Moore operator $\mathcal{D}^{-1}$.

\begin{proposition}
    For any $\alpha \in \mathbb{R}_+$ and $n\in \mathbb{N}_0$ we have
    \begin{equation}\label{eq:laguerre_norm}
        \left\|\mathcal{D}^{-1}\mathcal{L}_n^{(\alpha)}\right\|^2_{L^2(\mathbb{R}_+)} = \frac{n!}{\Gamma(n+\alpha+1)}\int_0^\infty e^{-r} r^{\alpha-1} \left(L_n^{(\alpha)}(r)\right)^2 \, dr = \frac{1}{\alpha}.
    \end{equation}
\end{proposition}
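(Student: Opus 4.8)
The plan is to evaluate the integral in the middle of \eqref{eq:laguerre_norm} by reducing it, via a telescoping identity for Laguerre polynomials, to the orthogonality relation \eqref{eq:orthogonality_Laguerre} with a shifted parameter. The first equality in \eqref{eq:laguerre_norm} is immediate: since the inner product on $L^2(\mathbb{R}_+)$ uses the measure $r^{-1}\,dr$ and $\mathcal{D}^{-1}\psi(r)=\psi(r)/\sqrt{r}$, one has $\bigl\|\mathcal{D}^{-1}\mathcal{L}_n^{(\alpha)}\bigr\|^2_{L^2(\mathbb{R}_+)}=\int_0^\infty |\mathcal{L}_n^{(\alpha)}(r)|^2\,r^{-2}\,dr$, and substituting the formula for $\mathcal{L}_n^{(\alpha)}$ from Definition \ref{def: laguerre} turns $r^{\alpha+1}/r^2$ into $r^{\alpha-1}$ and produces the prefactor $n!/\Gamma(n+\alpha+1)$.

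For the second equality I would use the standard contiguous-parameter identity $L_n^{(\alpha-1)}(r)=L_n^{(\alpha)}(r)-L_{n-1}^{(\alpha)}(r)$ (with $L_0^{(\alpha)}\equiv 1$), which telescopes to $L_n^{(\alpha)}(r)=\sum_{k=0}^n L_k^{(\alpha-1)}(r)$. Expanding the square, interchanging the finite sum with the integral, and applying \eqref{eq:orthogonality_Laguerre} \emph{with parameter $\alpha-1$} — which is legitimate precisely because $\alpha\in\mathbb{R}_+$ forces $\alpha-1>-1$ — makes all off-diagonal terms vanish and leaves
\[
\int_0^\infty e^{-r}r^{\alpha-1}\bigl(L_n^{(\alpha)}(r)\bigr)^2\,dr=\sum_{k=0}^n \frac{\Gamma(k+\alpha)}{k!}.
\]

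It then remains to evaluate this finite sum. Writing $\Gamma(k+\alpha)/k!=\Gamma(\alpha)\binom{k+\alpha-1}{k}$ and invoking the generalized hockey-stick identity $\sum_{k=0}^n\binom{k+\alpha-1}{k}=\binom{n+\alpha}{n}$ — equivalently, a one-line induction on $n$ using $\Gamma(n+\alpha+2)=(n+\alpha+1)\Gamma(n+\alpha+1)$ — gives $\sum_{k=0}^n \Gamma(k+\alpha)/k!=\Gamma(n+\alpha+1)/(\alpha\,n!)$. Multiplying by the prefactor $n!/\Gamma(n+\alpha+1)$ from the first equality collapses everything to $1/\alpha$.

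There is no genuine obstacle here; the only points needing a word of justification are the telescoping identity for $L_n^{(\alpha)}$ and the observation that $\alpha-1>-1$ keeps \eqref{eq:orthogonality_Laguerre} valid at the shifted index, and the rest is elementary bookkeeping with the gamma function. If one prefers not to cite the contiguous-parameter identity, an alternative is to expand $L_n^{(\alpha)}$ in its explicit power series from Definition \ref{def: laguerre} and integrate term by term using $\int_0^\infty e^{-r}r^{\alpha+j-1}\,dr=\Gamma(\alpha+j)$, but this leads to a double hypergeometric-type sum that is messier to resum than the telescoping route above.
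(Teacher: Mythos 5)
Your proposal is correct and follows essentially the same route as the paper: the identity $L_n^{(\alpha)}(r)=\sum_{j=0}^n L_j^{(\alpha-1)}(r)$, expansion of the square, the orthogonality relation \eqref{eq:orthogonality_Laguerre} at parameter $\alpha-1$, and the evaluation $\sum_{i=0}^n \Gamma(i+\alpha)/i! = \Gamma(n+\alpha+1)/(\alpha\,n!)$ by induction. Your explicit remark that $\alpha-1>-1$ keeps the orthogonality relation valid is a small welcome addition, but the argument is the same.
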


\begin{proof}
The first equality in \eqref{eq:laguerre_norm} follows from unwinding the definitions. For the second equality in \eqref{eq:laguerre_norm}, we will use the well-known identity 
\begin{equation*}
    L_{n}^{(\alpha)}(r)=\sum_{j=0}^n L^{(\alpha - 1)}_j(r)
\end{equation*}
together with the orthogonality relation \eqref{eq:orthogonality_Laguerre}. This gives 
\begin{align*}
    \int_0^\infty e^{-r} r^{\alpha-1} \left(L_n^{(\alpha)}(r)\right)^2 \, dr & = \sum_{i,j = 0}^{n}\int_0^\infty e^{-r} r^{\alpha-1} L_{i}^{(\alpha - 1)}(r)L_{j}^{(\alpha - 1)}(r) \, dr \\ & = \sum_{i = 0}^{n} \frac{\Gamma(i + \alpha)}{i!} \\ & = \frac{1}{\alpha}\frac{\Gamma(n + \alpha + 1)}{n!},
\end{align*}
where the last equality follows from a straightforward induction argument.
\end{proof}
The following consequence from Proposition \ref{prop:specdec} shows that we can explicitly construct admissible operators by using the Laguerre basis.
\begin{corollary}
Let $\{s_n\}_{n=0}^\infty \in \ell^{1}(\mathbb{N})$ be a sequence of non-negative numbers and let $\alpha\in \mathbb{R}_+$. Then
\begin{equation*}
    S\coloneqq\sum_{n=0}^\infty s_n \mathcal{L}_{n}^{(\alpha)} \otimes \mathcal{L}_{n}^{(\alpha)}
\end{equation*}
is an admissible operator with \[\tr(\mathcal{D}^{-1}S\mathcal{D}^{-1})=\frac{1}{\alpha}\sum_{n=0}^\infty s_n.\]
\end{corollary}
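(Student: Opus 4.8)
The plan is to deduce the corollary directly from Proposition \ref{prop:specdec} together with the norm computation \eqref{eq:laguerre_norm}. First I would observe that, since $\{s_n\}_{n=0}^\infty$ is a summable sequence of non-negative numbers (not identically zero, so that $S \neq 0$ as required by Definition \ref{def:admissibility}), the series $\sum_{n=0}^\infty s_n \mathcal{L}_n^{(\alpha)} \otimes \mathcal{L}_n^{(\alpha)}$ converges in $\mathcal{S}_1$ to a non-zero positive trace-class operator $S$. Because $\{\mathcal{L}_n^{(\alpha)}\}_{n=0}^\infty$ is an orthonormal basis of $L^2(\mathbb{R}_+)$, this expansion is — after discarding the terms with $s_n = 0$ and reordering — exactly a spectral decomposition of $S$ in the sense of Proposition \ref{prop:specdec}.

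Next I would check the two hypotheses of Proposition \ref{prop:specdec}. Each eigenvector $\mathcal{L}_n^{(\alpha)}$ lies in $\mathrm{dom}(\mathcal{D}^{-1})$, i.e.\ is an admissible function, since \eqref{eq:laguerre_norm} gives $\|\mathcal{D}^{-1}\mathcal{L}_n^{(\alpha)}\|_{L^2(\mathbb{R}_+)}^2 = 1/\alpha < \infty$. The summability condition is then immediate:
\[
\sum_{n=0}^\infty s_n \left\|\mathcal{D}^{-1}\mathcal{L}_n^{(\alpha)}\right\|_{L^2(\mathbb{R}_+)}^2 = \frac{1}{\alpha}\sum_{n=0}^\infty s_n < \infty,
\]
using $\{s_n\}_{n=0}^\infty \in \ell^1(\mathbb{N})$. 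Proposition \ref{prop:specdec} therefore gives that $S$ is admissible.

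Finally, for the value of $\tr(\mathcal{D}^{-1}S\mathcal{D}^{-1})$, I would invoke the identity established inside the proof of Proposition \ref{prop:specdec}, namely that $\mathcal{D}^{-1}S\mathcal{D}^{-1} = \sum_{n=0}^\infty s_n (\mathcal{D}^{-1}\mathcal{L}_n^{(\alpha)}) \otimes (\mathcal{D}^{-1}\mathcal{L}_n^{(\alpha)})$ with convergence in $\mathcal{S}_1$. Taking the trace termwise and using $\tr\big((\mathcal{D}^{-1}\mathcal{L}_n^{(\alpha)}) \otimes (\mathcal{D}^{-1}\mathcal{L}_n^{(\alpha)})\big) = \|\mathcal{D}^{-1}\mathcal{L}_n^{(\alpha)}\|_{L^2(\mathbb{R}_+)}^2 = 1/\alpha$ yields $\tr(\mathcal{D}^{-1}S\mathcal{D}^{-1}) = \frac{1}{\alpha}\sum_{n=0}^\infty s_n$.

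There is essentially no serious obstacle: the corollary is a bookkeeping consequence of the two results immediately preceding it. The only points deserving a word of care are that the given expansion really is a spectral decomposition (which follows from orthonormality of the Laguerre basis), the implicit nonvanishing of $\{s_n\}$ needed so that $S$ is non-zero, and the $\mathcal{S}_1$-convergence of the series for $\mathcal{D}^{-1}S\mathcal{D}^{-1}$ — all already handled by Proposition \ref{prop:specdec} and \eqref{eq:laguerre_norm}.
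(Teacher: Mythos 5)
Your proposal is correct and follows exactly the route the paper intends: the corollary is stated there as a direct consequence of Proposition \ref{prop:specdec} combined with the norm identity \eqref{eq:laguerre_norm}, which is precisely your argument (including reading off the trace from the series representation of $\mathcal{D}^{-1}S\mathcal{D}^{-1}$ established inside the proof of Proposition \ref{prop:specdec}). Your remark about needing $\{s_n\}$ not identically zero so that $S\neq 0$ is a reasonable small caveat the paper leaves implicit.
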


\begin{remark}
The corollary may be considered a reformulation with slightly different proof of the calculations in  \cite[Section 3.3]{gazeau2016covariant}, where a resolution of the identity operator is constructed from thermal states that are diagonal in the Laguerre basis. We will return to resolutions of the identity operator and the relation to admissibility in Section \ref{sec: Other Covariant Integral Quantizations}.
\end{remark}

\subsection{Connection with Convolutions and Quantizations}
\label{sec: Connection with the Affine Weyl Symbol}

We will now see how admissibility relates to the convolution of a function with an operator. The following result shows that we can use convolutions to generate new admissible operators from a given admissible operator.

\begin{proposition}\label{prop:convolution_with_admissible}
    Let $f\in L_{l}^1(\aff)\cap L_{r}^{1}(\aff)$ be a non-zero positive function. If $S$ is a positive, admissible trace-class operator on $L^{2}(\mathbb{R}_{+})$, then so is $f\star_\aff S$ with \begin{equation*}
        \tr\left(\mathcal{D}^{-1}(f\star_\aff S) \mathcal{D}^{-1}\right)= \int_\aff f(x,a) \, \frac{dx \, da}{a^2}  \tr(\mathcal{D}^{-1} S \mathcal{D}^{-1}).
    \end{equation*}
\end{proposition}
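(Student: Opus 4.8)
The plan is to reduce the statement to the orthogonality relation in Proposition~\ref{prop:admissible}, exactly as in the proof of Proposition~\ref{prop:specdec}. First I would record that $f \star_\aff S$ is a positive trace-class operator: positivity is immediate from the remark following the definition of the convolutions (a positive function convolved with a positive operator gives a positive operator), and $\|f\star_\aff S\|_{\mathcal{S}_1}\le \|f\|_{L^1_r(\aff)}\|S\|_{\mathcal{S}_1}$ by \eqref{eq:conv_op_fun_bochner}. Since $f\star_\aff S$ is positive and trace-class, by Corollary~\ref{corr:admissibility_check} it is admissible if and only if $(f\star_\aff S)\star_\aff (f\star_\aff S)\in L^1_r(\aff)$; alternatively, and more directly, I would test admissibility by computing $\int_\aff T\star_\aff(f\star_\aff S)\,\tfrac{dx\,da}{a}$ for a fixed positive trace-class $T$ with $\tr(T)=1$ (for instance a rank-one projection onto an admissible function), and invoke Corollary~\ref{corr:admissibility_check} to conclude both admissibility and the value of $\tr(\mathcal{D}^{-1}(f\star_\aff S)\mathcal{D}^{-1})$.

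The key computation uses the compatibility relation from Proposition~\ref{first_associativity_proposition}: for $T\in\mathcal{S}_1$ we have $T\star_\aff(f\star_\aff S)$, but the useful identity is in the other order. Write $S=\sum_n s_n\,\xi_n\otimes\xi_n$ for the spectral decomposition, with each $\xi_n$ admissible and $\sum_n s_n\|\mathcal{D}^{-1}\xi_n\|^2<\infty$ (Proposition~\ref{prop:specdec}). Then
\[
f\star_\aff S = \sum_{n} s_n \int_\aff f(y,b)\,\bigl(U(-y,b)^*\xi_n\bigr)\otimes\bigl(U(-y,b)^*\xi_n\bigr)\,\frac{dy\,db}{b},
\]
and pairing against a rank-one $T=\eta\otimes\eta$ with $\|\eta\|=1$ gives, via Lemma~\ref{lem: convolution_of_rank_one},
\[
T\star_\aff(f\star_\aff S)(x,a) = \sum_n s_n\int_\aff f(y,b)\,\bigl|\langle \eta, U(-x,a)^*U(-y,b)^*\xi_n\rangle\bigr|^2\,\frac{dy\,db}{b}.
\]
Integrating in $(x,a)$ against the right Haar measure, interchanging sums and integrals by monotone convergence (everything is non-negative), and applying Proposition~\ref{prop:admissible} to the inner integral over $(x,a)$ — after the substitution shifting $U(-y,b)^*\xi_n$ to a new admissible vector, which preserves $\|\mathcal{D}^{-1}\cdot\|^2$ by the commutation relations \eqref{eq:comrelD} up to the factor $b$ — yields
\[
\int_\aff T\star_\aff(f\star_\aff S)(x,a)\,\frac{dx\,da}{a} = \left(\int_\aff f(y,b)\,\frac{dy\,db}{b^2}\right)\sum_n s_n\|\mathcal{D}^{-1}\xi_n\|_{L^2(\mathbb{R}_+)}^2,
\]
where the extra power of $b$ produced by \eqref{eq:comrelD} converts the right Haar measure $\tfrac{dy\,db}{b}$ into the left Haar measure $\tfrac{dy\,db}{b^2}$. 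Since $f\in L^1_l(\aff)$ this is finite, so $f\star_\aff S$ is admissible, and since $\sum_n s_n\|\mathcal{D}^{-1}\xi_n\|^2 = \tr(\mathcal{D}^{-1}S\mathcal{D}^{-1})$ and $\tr(T)=1$, Corollary~\ref{corr:admissibility_check} gives exactly the claimed formula
\[
\tr\bigl(\mathcal{D}^{-1}(f\star_\aff S)\mathcal{D}^{-1}\bigr) = \int_\aff f(x,a)\,\frac{dx\,da}{a^2}\;\tr(\mathcal{D}^{-1}S\mathcal{D}^{-1}).
\]

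The main obstacle I anticipate is bookkeeping the measure-theoretic change of variables carefully: tracking how $U(-y,b)^*$ acts on $\xi_n$ inside the norm $\|\mathcal{D}^{-1}(\cdot)\|^2$, correctly producing the factor $b$ (so that $\tfrac{dy\,db}{b}\rightsquigarrow\tfrac{dy\,db}{b^2}$), and justifying the interchange of the sum over $n$, the integral over $(y,b)$, and the integral over $(x,a)$ — but since all integrands are non-negative this is just Tonelli, and the argument is essentially the one already used in Proposition~\ref{prop:specdec} and Theorem~\ref{thm:operator_orthogonality_relation}. One should also double-check that $f\star_\aff S$ indeed maps $\mathrm{dom}(\mathcal{D})$ into $\mathrm{dom}(\mathcal{D}^{-1})$, but this follows from the fact that it is positive compact with admissible eigenvectors, exactly as in the converse direction of Proposition~\ref{prop:specdec}, once we know the eigenvalue sum against $\|\mathcal{D}^{-1}(\cdot)\|^2$ converges.
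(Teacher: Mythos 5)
Your argument is correct, and it follows the paper's overall architecture — reduce everything to Corollary~\ref{corr:admissibility_check} by computing $\int_\aff T\star_\aff(f\star_\aff S)\,\frac{dx\,da}{a}$ for a fixed positive trace-class $T$ — but the computation of that integral takes a genuinely different route. The paper first rewrites $T\star_\aff(f\star_\aff S)(y,b)=\int_\aff f(x,a)\,T\star_\aff S((x,a)\cdot(y,b))\,\frac{dx\,da}{a}$, then applies Fubini and the behaviour of the right Haar measure under left translations, which produces the modular factor $\Delta(x,a)=a^{-1}$, and finally invokes the operator-level orthogonality relation of Corollary~\ref{cor:admissible_condition} to evaluate $\int_\aff T\star_\aff S$. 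You instead expand $S$ in its spectral decomposition, reduce via Lemma~\ref{lem: convolution_of_rank_one} and Tonelli to the scalar wavelet orthogonality relation of Proposition~\ref{prop:admissible}, and extract the non-unimodularity factor $b^{-1}$ from the Duflo--Moore commutation relation \eqref{eq:comrelD} applied to $\|\mathcal{D}^{-1}U(-y,b)^*\xi_n\|^2=\tfrac{1}{b}\|\mathcal{D}^{-1}\xi_n\|^2$; in effect you re-derive the instance of Corollary~\ref{cor:admissible_condition} that the paper quotes, with the left Haar measure emerging from the commutation relation rather than from the translation behaviour of the Haar measure. Both routes are valid: the paper's is shorter because it reuses the already-established operator orthogonality relation wholesale, whereas yours is more self-contained and makes the source of the factor $a^{-2}$ completely explicit at the level of vectors. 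Two small housekeeping points: to invoke Corollary~\ref{corr:admissibility_check} you should note that $f\star_\aff S\neq 0$ (e.g.\ $\tr(f\star_\aff S)=\|f\|_{L^1_r(\aff)}\tr(S)>0$), and the admissibility of your test vector $\eta$ is not actually needed — Proposition~\ref{prop:admissible} is applied with the admissible vector $U(-y,b)^*\xi_n$ in the second slot, so any normalized $\eta$ works.
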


\begin{proof}
It is clear from \eqref{eq:conv_op_fun_bochner} that $f \star_{\aff} S$ is a trace-class operator, and positivity follows from the definition of the convolution $f \star_{\aff} S$. Let $T$ be a non-zero positive trace-class operator on $L^{2}(\mathbb{R}_{+})$. It suffices by Corollary \ref{corr:admissibility_check} to show that \begin{equation*}
    \int_\aff T\star_\aff (f\star_\aff S)(y, b) \, \frac{dy \, db}{b}=\tr(T)  \int_\aff f(x,a) \, \frac{dx \, da}{a^2}  \tr(\mathcal{D}^{-1}S\mathcal{D}^{-1}).
\end{equation*}
We have that 
\begin{align*}
    T\star_\aff (f\star_\aff S)(y,b)&= \tr\left( TU(-y,b)^* \int_\aff f(x,a)U(-x,a)^* S U(-x,a) \, \frac{dx \, da}{a} U(-y,b) \right) \\
    &= \int_\aff f(x,a)\tr(TU((-x,a)\cdot (-y,b))^* S U((-x,a)\cdot(-y,b))  \, \frac{dx \, da}{a} \\
    &= \int_\aff f(x,a) T\star_\aff S((x,a)\cdot (y,b)) \, \frac{dx \, da}{a}.
\end{align*}
We may then use Fubini's theorem, which applies by our assumptions on $f$ and $S$, to show that 
\begin{align*}
    \int_{\aff} T\star_\aff (f\star_\aff S)(y,b) \, \frac{dy \, db}{b}&=\int_\aff f(x,a) \int_{\aff}  T\star_\aff S((x,a)\cdot (y,b)) \, \frac{dy \, db}{b}  \, \frac{dx \, da}{a} \\
    &= \int_\aff f(x,a) \, \frac{dx \, da}{a} \Delta(x,a) \int_{\aff}  T\star_\aff S(y,b) \, \frac{dy \, db}{b}  \\
    &=  \int_\aff f(x,a) \, \frac{dx \, da}{a^2}  \tr(T) \tr\left(\mathcal{D}^{-1}S\mathcal{D}^{-1}\right),
\end{align*}
where we used the admissibility of $S$ and Theorem \ref{cor:admissible_condition} in the last line.
\end{proof}

\begin{remark}
We can give a simple heuristic argument for Proposition \ref{prop:convolution_with_admissible} by ignoring that $\mathcal{D}^{-1}$ is unbounded as follows: We have by using \eqref{eq:comrelD} that \begin{align*}
    \mathcal{D}^{-1} (f\star_\aff S)\mathcal{D}^{-1}
    & =
    \int_\aff f(x,a) \mathcal{D}^{-1} U(-x,a)^*SU(-x,a)\mathcal{D}^{-1} \, \frac{dx \, da}{a}
    \\ & = 
    \int_\aff f(x,a)  U(-x,a)^* \mathcal{D}^{-1} S \mathcal{D}^{-1} U(-x,a) \, \frac{dx \, da}{a^2}.
\end{align*}
Since $\mathcal{D}^{-1}S\mathcal{D}^{-1}$ is a trace-class operator, the integral above is a convergent Bochner integral and we obtain the desired equality.
\end{remark}

\subsection{Admissibility as a Measure of Non-Unimodularity}
\label{sec: Admissibility as a Measure of Non-Unimodularity}
In this section we will delve more into how the non-unimodularity of the affine group affects the affine Weyl quantization. As we will see, both the left and right Haar measures take on an active role in this picture.

\begin{proposition} \label{prop:admissibilityandweylsymbol}
	Let $S$ be an admissible Hilbert-Schmidt operator on $L^{2}(\mathbb{R}_{+})$ such that its affine Weyl symbol $f_S$ satisfies $f_S\in L_{l}^{1}(\aff)$. Then 
	\begin{equation*}
		\tr\left(\mathcal{D}^{-1}S\mathcal{D}^{-1}\right) = \int_\aff f_S(x,a) \, \frac{dx \, da}{a^2}.
	\end{equation*}
\end{proposition}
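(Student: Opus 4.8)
The plan is to extract the identity by testing the operator convolution $T\star_\aff S$ against a rank-one trace-class operator and comparing two evaluations of $\int_\aff T\star_\aff S\,\frac{dx\,da}{a}$. Concretely, fix a nonzero $\psi\in\mathscr{S}(\mathbb{R}_+)$ with $\|\psi\|_{L^2(\mathbb{R}_+)}=1$ and put $T:=\psi\otimes\psi\in\mathcal{S}_1$, so that $\tr(T)=1$. Since $S$ is admissible, Corollary~\ref{cor:admissible_condition} applies and gives $T\star_\aff S\in L^1_r(\aff)$ together with
\[
\int_\aff T\star_\aff S(x,a)\,\frac{dx\,da}{a}=\tr(T)\tr(\mathcal{D}^{-1}S\mathcal{D}^{-1})=\tr(\mathcal{D}^{-1}S\mathcal{D}^{-1}).
\]
The remaining task is to compute the left-hand side directly and recognize it as $\int_\aff f_S(x,a)\,\frac{dx\,da}{a^2}$.

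For the direct computation, use Lemma~\ref{lem: convolution_of_rank_one} to write $T\star_\aff S(x,a)=\langle U(-x,a)^*SU(-x,a)\psi,\psi\rangle_{L^2(\mathbb{R}_+)}$. Because $S$ is Hilbert-Schmidt we have $S=A_{f_S}$ with $f_S\in L^2_r(\aff)$, so Lemma~\ref{L: Right mul of symbol} identifies the affine Weyl symbol of $U(-x,a)^*SU(-x,a)$ as $R_{(x,a)^{-1}}f_S$, and \eqref{affine_weyl_correspondence} yields
\[
T\star_\aff S(x,a)=\langle R_{(x,a)^{-1}}f_S,\,W_\aff^{\psi,\psi}\rangle_{L^2_r(\aff)}=\int_\aff f_S\big((y,b)\cdot(x,a)^{-1}\big)\overline{W_\aff^{\psi,\psi}(y,b)}\,\frac{dy\,db}{b}.
\]
Integrating this against the right Haar measure in $(x,a)$ and applying Fubini, the inner integral is $\int_\aff f_S\big((y,b)\cdot(x,a)^{-1}\big)\,\frac{dx\,da}{a}$, which is independent of $(y,b)$: the change of variables $(x,a)\mapsto(y,b)\cdot(x,a)^{-1}$ sends the right Haar measure to the left Haar measure (inversion swaps the two Haar measures, and left translation preserves the left Haar measure), so this inner integral equals $\int_\aff f_S(z,c)\,\frac{dz\,dc}{c^2}$. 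Hence
\[
\int_\aff T\star_\aff S(x,a)\,\frac{dx\,da}{a}=\left(\int_\aff f_S(z,c)\,\frac{dz\,dc}{c^2}\right)\int_\aff\overline{W_\aff^{\psi,\psi}(y,b)}\,\frac{dy\,db}{b}.
\]
Finally, $\int_\aff\overline{W_\aff^{\psi,\psi}(y,b)}\,\frac{dy\,db}{b}=\|\psi\|_{L^2(\mathbb{R}_+)}^2=1$, which is exactly the computation carried out in the Example following Lemma~\ref{lemma_swartz} (equivalently, a consequence of the marginal property \eqref{eq:first_marginal_property}). Comparing the two displays for $\int_\aff T\star_\aff S\,\frac{dx\,da}{a}$ gives $\tr(\mathcal{D}^{-1}S\mathcal{D}^{-1})=\int_\aff f_S(x,a)\,\frac{dx\,da}{a^2}$.

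The only step needing genuine care is the application of Fubini's theorem. One checks absolute integrability of $(x,a,y,b)\mapsto f_S\big((y,b)\cdot(x,a)^{-1}\big)\overline{W_\aff^{\psi,\psi}(y,b)}$ with respect to $\frac{dx\,da}{a}\,\frac{dy\,db}{b}$: integrating first in $(x,a)$ produces $\|f_S\|_{L^1_l(\aff)}$, finite by hypothesis, for every $(y,b)$, while $W_\aff^{\psi,\psi}\in\mathscr{S}(\aff)\subset L^1_r(\aff)$ by \cite[Cor.~6.6]{berge2019affine}; the double integral is then bounded by $\|f_S\|_{L^1_l(\aff)}\|W_\aff^{\psi,\psi}\|_{L^1_r(\aff)}<\infty$. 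Everything else is bookkeeping with the two Haar measures. I note that this rank-one testing route is cleaner than passing through $A_h\star_\aff A_{f_S}=h*_\aff\check{f_S}$ via Proposition~\ref{prop:convolutionasweyl}, which would additionally require $f_S\in L^2_l(\aff)$; here we only use the Hilbert-Schmidt assumption $f_S\in L^2_r(\aff)$ (for Lemma~\ref{L: Right mul of symbol}) together with $f_S\in L^1_l(\aff)$.
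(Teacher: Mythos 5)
Your proof is correct and follows essentially the same route as the paper: test against a rank-one operator $\varphi\otimes\varphi$ with a Schwartz window, evaluate $\int_\aff T\star_\aff S(x,a)\,\frac{dx\,da}{a}$ once via Corollary \ref{cor:admissible_condition} and once via Fubini, the change of variables interchanging the two Haar measures, and the marginal property \eqref{eq:first_marginal_property}. The only (harmless) deviation is that you derive the pointwise integral formula for $T\star_\aff S$ directly from Lemma \ref{L: Right mul of symbol} and \eqref{affine_weyl_correspondence} rather than citing Proposition \ref{prop:convolutionasweyl}, which sidesteps that proposition's extra $L^2_l(\aff)$ hypothesis and makes your justification of Fubini slightly more explicit than the paper's.
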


\begin{proof}
	Let $T=\varphi\otimes \varphi$ for some non-zero $\varphi \in \mathscr{S}(\mathbb{R}_+)$. Then the affine Weyl symbol of $T$ is $f_T=W^\varphi_\aff\in \mathscr{S}(\aff)$. We know by Corollary \ref{cor:admissible_condition} that 
	\begin{equation*}
		\int_{\aff}T\star_\aff S(x,a) \, \frac{dx \, da}{a}=\tr(T)\tr\left(\mathcal{D}^{-1}S\mathcal{D}^{-1}\right).
	\end{equation*}
	On the other hand, Fubini's theorem together with Proposition \ref{prop:convolutionasweyl} allows us to calculate that 
	\begin{align*}
		\int_{\aff}T\star_\aff S(x,a) \, \frac{dx \, da}{a}&=\int_{\aff} f_T\ast_\aff \check{f_S}(x,a) \, \frac{dx \, da}{a} \\
		&= \int_{\aff}f_T(y,b) \int_{\aff} f_S((y,b)(x,a)^{-1}) \, \frac{dx \, da}{a} \, \frac{dy \, db}{b} \\
		&=  \int_{\aff} f_T(y,b) \, \frac{dy \, db}{b} \int_{\aff}  f_S(x,a) \, \frac{dx \, da}{a^2}.
	\end{align*}
	The marginal properties of the affine Wigner distribution \eqref{eq:first_marginal_property} show that
\begin{align*}
	\intA{f_{T}(y,b)}{y}{b} = \|\varphi\|_{L^{2}(\mathbb{R}_{+})}^{2}=\tr(T).
\end{align*}
The claim now follows from combining the calculations we have done.
\end{proof}
\begin{remark}
 Assuming that $T$ is a trace-class operator we have that 
 \begin{equation*}
     \tr(T)=\int_\aff f_T(x,a)\,\frac{dx\, da}{a},
 \end{equation*}
 which follows from a similar proof to the one in Proposition \ref{prop:admissibilityandweylsymbol}. This gives the interesting heuristic interpretation that taking $\mathcal{D}^{-1}T\mathcal{D}^{-1}$ of an operator $T$ coincides with multiplying $f_T$ by $\frac{1}{a}$.
\end{remark}
The following result shows that the affine Wigner distribution satisfies both left and right integrability when more is assumed of the input. This should be compared with the Heisenberg case where the Heisenberg group $\mathbb{H}^{n}$ is unimodular.

\begin{theorem}\label{Wigner_right_left}
   Assume that $\phi,\psi,\mathcal{D}\phi, \mathcal{D}\psi\in L^2(\mathbb{R}_{+})$. Then the affine Wigner distribution satisfies \[W_\aff^{\phi,\psi}\in L^2_{r}(\aff)\cap L^2_{l}(\aff).\]
\end{theorem}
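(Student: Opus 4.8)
The inclusion $W_\aff^{\phi,\psi}\in L^2_r(\aff)$ requires nothing beyond $\phi,\psi\in L^2(\mathbb{R}_+)$: the rank-one operator $\psi\otimes\phi$ is then Hilbert--Schmidt and $W_\aff^{\psi,\phi}$ is precisely its affine Weyl symbol, so Proposition~\ref{quantization_isometry_proposition} gives $W_\aff^{\psi,\phi}\in L^2_r(\aff)$ with $\|W_\aff^{\psi,\phi}\|_{L^2_r(\aff)}=\|\psi\|_{L^2(\mathbb{R}_+)}\|\phi\|_{L^2(\mathbb{R}_+)}$; equivalently this is the orthogonality relation \eqref{affine_orthogonality_relation_new}. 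The substance is therefore the membership in $L^2_l(\aff)$, and the plan is to compute $\|W_\aff^{\psi,\phi}\|_{L^2_l(\aff)}^2$ by hand and estimate it.

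First I would apply Plancherel in the $x$-variable: for almost every $a$ the function $u\mapsto\psi(a\lambda(u))\overline{\phi(a\lambda(-u))}$ lies in $L^2(\mathbb{R},du)$ (a consequence of the $L^2_r$-statement above), and $W_\aff^{\psi,\phi}(\cdot,a)$ is its Fourier transform, so writing $\mu_L=a^{-1}\mu_R$ we get
\begin{equation*}
\|W_\aff^{\psi,\phi}\|_{L^2_l(\aff)}^2=\int_0^\infty\int_{-\infty}^\infty|\psi(a\lambda(u))|^2\,|\phi(a\lambda(-u))|^2\,\frac{du\,da}{a^2}.
\end{equation*}
Next comes the change of variables $(a,u)\mapsto(r,s)=(a\lambda(u),a\lambda(-u))$, a diffeomorphism of $(0,\infty)\times\mathbb{R}$ onto $(0,\infty)^2$. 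Comparing the orthogonality relation \eqref{affine_orthogonality_relation_new} with the Plancherel identity above shows that this map pushes $a^{-1}\,du\,da$ forward to $(rs)^{-1}\,dr\,ds$; since moreover $r/s=e^u$ and $r-s=a(\lambda(u)-\lambda(-u))=au$, i.e.\ $a=(r-s)/\log(r/s)$, this upgrades to $\tfrac{du\,da}{a^2}=\tfrac{\log(r/s)}{r-s}\cdot\tfrac{dr\,ds}{rs}$, so that
\begin{equation*}
\|W_\aff^{\psi,\phi}\|_{L^2_l(\aff)}^2=\int_0^\infty\int_0^\infty|\psi(r)|^2\,|\phi(s)|^2\,\frac{\log(r/s)}{r-s}\,\frac{dr\,ds}{rs}.
\end{equation*}

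The last ingredient is the classical inequality between the logarithmic and geometric means, $\dfrac{\log(r/s)}{r-s}\le(rs)^{-1/2}$ for all $r,s>0$ --- equivalently $\sqrt{\lambda(u)\lambda(-u)}\le1$, which holds since $\lambda(u)\lambda(-u)=\big(\tfrac{u}{2\sinh(u/2)}\big)^2$. This decouples the double integral:
\begin{equation*}
\|W_\aff^{\psi,\phi}\|_{L^2_l(\aff)}^2\le\left(\int_0^\infty\frac{|\psi(r)|^2}{\sqrt r}\,\frac{dr}{r}\right)\left(\int_0^\infty\frac{|\phi(s)|^2}{\sqrt s}\,\frac{ds}{s}\right),
\end{equation*}
and since $r^{-1/2}\le\tfrac12(1+r^{-1})$, each factor is bounded by $\tfrac12\big(\|\psi\|_{L^2(\mathbb{R}_+)}^2+\|\mathcal{D}^{-1}\psi\|_{L^2(\mathbb{R}_+)}^2\big)$, which is finite by the hypotheses on $\psi$ and $\phi$. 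Hence $W_\aff^{\psi,\phi}\in L^2_l(\aff)$.

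I expect the main obstacle to be the bookkeeping in the change of variables --- pinning down the identity $\tfrac{du\,da}{a^2}=\tfrac{\log(r/s)}{r-s}\tfrac{dr\,ds}{rs}$ precisely --- after which the estimate is soft. A variant that avoids coordinates altogether is to split $a^{-2}\le a^{-1}+a^{-3}$: the $a^{-1}$-weighted piece is exactly $\|W_\aff^{\psi,\phi}\|_{L^2_r(\aff)}^2$, while, after Plancherel and the same substitution, the $a^{-3}$-weighted piece equals $\int_0^\infty\int_{-\infty}^\infty|\mathcal{D}^{-1}\psi(a\lambda(u))|^2\,|\mathcal{D}^{-1}\phi(a\lambda(-u))|^2\,\lambda(u)\lambda(-u)\,\tfrac{du\,da}{a}$, which by $\lambda(u)\lambda(-u)\le1$ and Proposition~\ref{prop:admissible} is dominated by $\|\mathcal{D}^{-1}\psi\|_{L^2(\mathbb{R}_+)}^2\|\mathcal{D}^{-1}\phi\|_{L^2(\mathbb{R}_+)}^2$.
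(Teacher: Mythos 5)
Your reduction is sound up to the last line, and in fact your change-of-variables identity is the correct one: $\tfrac{du\,da}{a^{2}}=\tfrac{\log(r/s)}{r-s}\,\tfrac{dr\,ds}{rs}$ follows from the Jacobian identity $\lambda(u)\lambda'(-u)+\lambda(-u)\lambda'(u)=\lambda(u)\lambda(-u)$, and it is the only weight consistent with the orthogonality relation \eqref{affine_orthogonality_relation_new}. The genuine gap is the final finiteness claim. Your estimate reduces everything to $\int_{0}^{\infty}|\psi(r)|^{2}r^{-1/2}\,\tfrac{dr}{r}<\infty$ (and the same for $\phi$), which you bound by $\tfrac12\bigl(\|\psi\|_{L^{2}(\mathbb{R}_{+})}^{2}+\|\mathcal{D}^{-1}\psi\|_{L^{2}(\mathbb{R}_{+})}^{2}\bigr)$ and declare ``finite by the hypotheses''. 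But Theorem~\ref{Wigner_right_left} assumes $\mathcal{D}\psi,\mathcal{D}\phi\in L^{2}(\mathbb{R}_{+})$, a decay condition at $r\to\infty$, not the admissibility condition $\mathcal{D}^{-1}\psi,\mathcal{D}^{-1}\phi\in L^{2}(\mathbb{R}_{+})$, which controls $r\to 0$. Both your main route and your $a^{-2}\le a^{-1}+a^{-3}$ variant (which invokes Proposition~\ref{prop:admissible} for $\mathcal{D}^{-1}\psi,\mathcal{D}^{-1}\phi$) genuinely need the latter, so as a proof of the statement as printed the argument does not close.

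You should also be aware that this mismatch cannot be patched while keeping the stated hypotheses, and that your computation diverges from the paper's at the substitution step: the paper arrives at the reciprocal weight $\tfrac{v-w}{\log(v/w)}\tfrac{dv\,dw}{vw}$, whose pointwise bound $\tfrac{v-w}{vw\log(v/w)}\le 2\max\{1,1/v,1/w,1/(vw)\}$ is exactly what makes $\psi,\mathcal{D}\psi\in L^{2}$ suffice; the cross-check against \eqref{affine_orthogonality_relation_new} (pushing forward $a^{-1}du\,da$ to $(vw)^{-1}dv\,dw$) supports your weight, not that one. And under the literal hypotheses the conclusion in fact fails: take $|\psi(r)|^{2}=|\phi(r)|^{2}=r^{1/4}\chi_{(0,1)}(r)$, so $\psi,\mathcal{D}\psi\in L^{2}(\mathbb{R}_{+})$; Plancherel in $x$ gives $\int_{\mathbb{R}}|W_{\aff}^{\psi}(x,a)|^{2}\,dx=a^{1/2}\int_{\{\lambda(|u|)<1/a\}}(\lambda(u)\lambda(-u))^{1/4}\,du\gtrsim a^{1/2}$ for small $a$, and $\int_{0}a^{1/2}a^{-2}\,da=\infty$, so $W_{\aff}^{\psi}\notin L^{2}_{l}(\aff)$. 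What your proposal actually proves, correctly and completely, is the variant of the theorem in which the hypotheses read $\phi,\psi,\mathcal{D}^{-1}\phi,\mathcal{D}^{-1}\psi\in L^{2}(\mathbb{R}_{+})$ (i.e.\ $\phi,\psi$ admissible); relative to the statement given, however, the last step is a gap, and no argument can remove it.
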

\begin{proof}
We already know that $W^{\phi,\psi}_\aff$ is in $L^{2}_{r}(\aff)$ by the orthogonality relations \eqref{affine_orthogonality_relation_new}. Using the definition of the affine Wigner distribution and Plancherel's theorem, we have that
\begin{align*}
    \|W_{\aff}^{\phi,\psi}\|_{L_{l}^{2}(\aff)} & = \int_\aff \left|\phi(a\lambda(x))|^{2}|\psi(a\lambda(-x))\right|^2 \, \frac{dx \, da}{a^2}
    \\ & = \int_0^\infty\int_0^\infty |\phi(v)|^2|\psi(w)|^2\frac{v-w}{\log(v/w)}\frac{dw\,dv}{vw},
\end{align*}
where we used the change of variables $v=a\lambda(x)$ and $w=a\lambda(-x)$ in the last line. By our assumptions on $\phi$ and $\psi$, it will suffice to show that for all $v,w \in \mathbb{R}_{+}$ we have the upper bound \[\frac{v-w}{vw\log(v/w)} \leq 2 \cdot \max\left\{1, \frac{1}{v}, \frac{1}{w}, \frac{1}{vw}\right\}.\]\par
It will be enough by symmetry to consider $\Lambda = \{(v,w) \in \mathbb{R}_{+} \times \mathbb{R}_{+} \, : \, v > w \}$. We have the decomposition $\Lambda = \mathcal{C}_{1} \cup \mathcal{C}_{2} \cup \mathcal{C}_{3}$, where
\begin{align*}
    \mathcal{C}_{1} & \coloneqq \bigg\{(v,w) \in \Lambda \, : \, w \leq -2\sigma(-v/2) \bigg\}, 
    \\
    \mathcal{C}_{2} & \coloneqq \bigg\{(v,w) \in \Lambda \, : \, w \geq \frac{-1}{\sigma(-1/v)} \bigg\},
    \\
    \mathcal{C}_{3} & \coloneqq \bigg\{(v,w) \in \Lambda \, : \, -2\sigma(-v/2) \leq w \leq \frac{-1}{\sigma(-1/v)}\bigg\},
\end{align*}
where $\sigma$ is the function appearing in Lemma \ref{lambda_inverse}. 
\begin{figure}[ht!]
    \centering
    \includegraphics[width=10cm]{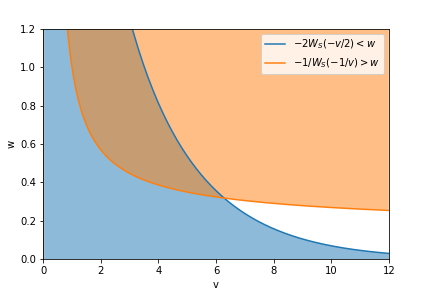}
    \caption{A drawing marking the beginning and end of the different domains.}
    \label{fig:my_label2}
\end{figure}
\begin{itemize}
\item The level surface $g(v,w) = (v-w)/\log(v/w) = C$ for $C > 0$ is given by the equation 
\begin{equation}\label{eq:level_surface}
    w = -C\sigma\left(-\frac{v}{C}\right).
\end{equation} On $\mathcal{C}_{1}$ we are below the level surface \eqref{eq:level_surface} with $C = 2$. Notice that $(1,0.5) \in \mathcal{C}_{1}$ with $g(1,0.5) = \log(\sqrt{2}) < 2$. The continuity of $g$ forces the inequality $g(v,w) \leq 2$ for all $(v,w) \in \mathcal{C}_{1}$. Hence \[\frac{v-w}{vw\log(v/w)} \leq \frac{2}{vw}.\]

\item Notice that \[\frac{v-w}{vw\log(v/w)}=\frac{\frac{1}{v}-\frac{1}{w}}{\log((1/v)/(1/w))}.\] Hence the case of $\mathcal{C}_{2}$ follows from the previous the argument for $\mathcal{C}_{1}$ by considering the level surface of \[g(1/v,1/w) = 1.\]

\item It is straightforward to verify that $v > 2$ and $w < 1$ when  $(v,w) \in \mathcal{C}_{3}$. Hence we obtain for any $(v,w) \in \mathcal{C}_{3}$ that \[\frac{v-w}{wv\log(v/w)}\le \frac{v}{wv\log(2)} \leq 2/w. \qedhere\]
\end{itemize}
\end{proof}
\begin{remark}
The connection from this result to admissibility is that the assumptions boil down to $S=\mathcal{D}\psi\otimes \mathcal{D}{\phi}$ being an admissible operator.
\end{remark}

\begin{remark}
Let $A$ be a Hilbert-Schmidt operator on $L^{2}(\mathbb{R}_{+})$ with integral kernel $A_{K}$. Then one can gauge from the proof of Theorem \ref{Wigner_right_left} that the affine Weyl symbol $f_{A}$ satisfies $f_{A} \in L^2_{r}(\aff)\cap L^2_{l}(\aff)$ if and only if the integral kernel $A_{K}$ satisfies \[A_{K} \in L^2\left(\mathbb{R}_+ \times \mathbb{R}_{+},\frac{s-t}{st\log(s/t)}dt \, ds\right)\cap L^2\left(\mathbb{R}_+ \times \mathbb{R}_{+},\frac{1}{st}dt \, ds\right).\]
\end{remark}

\subsection{Extending the Setting}
\label{sec: Extending The Setting}

Except for Section \ref{sec: Operator Convolution for Tempered Distributions}, we have so far considered convolutions between rather well-behaved functions and operators and obtained norm estimates for the norms of $L^1_r(\aff)$, $L^\infty(\aff)$, $\mathcal{S}_1$ and $\mathcal{L}(L^2(\mathbb{R}_+))$. We have seen that 
\begin{align*}
    \|f\star_\aff S\|_{\mathcal{S}_1}&\leq \|f\|_{L^1_r(\aff)} \|S\|_{\mathcal{S}_1}, \\
    \|T\star_\aff S\|_{L^\infty(\aff)} &\leq \|T\|_{\mathcal{L}(L^2(\mathbb{R}_+))} \|S\|_{\mathcal{S}_1}.
\end{align*}
This generalizes these inequalities to other Schatten classes and $L^p$ spaces.
\begin{proposition} \label{prop:interpolationbasic}
 Let $1\leq p \leq \infty$ and let $q$ be its conjugate exponent given by $p^{-1} + q^{-1} = 1$. If $S\in  \mathcal{S}_p,T\in \mathcal{S}_q$, and $f\in L^1_r(\aff)$, then the following hold:
    
    \begin{enumerate}
        \item $f\star_\aff S\in \mathcal{S}_p$ with $\|f\star_\aff S\|_{\mathcal{S}_p}\leq \|f\|_{L_{r}^{1}(\aff)}\|S\|_{\mathcal{S}_p}$.
        \item $T\star_\aff S\in L^\infty(\aff)$ with $\|T\star_\aff S\|_{L^{\infty}(\aff)}\leq \|S\|_{\mathcal{S}_p} \|T\|_{\mathcal{S}_q}$.
    \end{enumerate}
\end{proposition}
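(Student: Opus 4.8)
The plan is to reduce both estimates to two standard facts: the unitary invariance $\|U(-x,a)^{*}RU(-x,a)\|_{\mathcal{S}_{p}}=\|R\|_{\mathcal{S}_{p}}$, valid for every $(x,a)\in\aff$ and every $1\le p\le\infty$, and Hölder's inequality for Schatten classes, $|\tr(AB)|\le\|A\|_{\mathcal{S}_{q}}\|B\|_{\mathcal{S}_{p}}$. The one preliminary point is that for $S$ lying merely in $\mathcal{S}_{p}$ rather than in $\mathcal{S}_{1}$ one must first say what $f\star_{\aff}S$ means: for $1\le p<\infty$ we read the defining integral as a Bochner integral in $\mathcal{S}_{p}$, while for $p=\infty$, where $\mathcal{S}_{\infty}=\mathcal{L}(L^{2}(\mathbb{R}_{+}))$ is non-separable, we read it weakly, declaring $f\star_{\aff}S$ to be the bounded operator determined by $\langle(f\star_{\aff}S)\phi,\psi\rangle_{L^{2}(\mathbb{R}_{+})}=\intA{f(x,a)\langle U(-x,a)^{*}SU(-x,a)\phi,\psi\rangle_{L^{2}(\mathbb{R}_{+})}}{x}{a}$. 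These two interpretations agree on $\mathcal{S}_{1}=\mathcal{S}_{1}\cap\mathcal{S}_{\infty}$, since for trace-class $S$ the pairing against $\phi\otimes\psi$ is continuous on the space of trace-class operators.

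For part (1), fix $S\in\mathcal{S}_{p}$ with $1\le p<\infty$. Strong continuity of $U$ makes $(x,a)\mapsto U(-x,a)^{*}SU(-x,a)$ continuous into $\mathcal{S}_{p}$, so $(x,a)\mapsto f(x,a)U(-x,a)^{*}SU(-x,a)$ is strongly measurable; since $\intA{|f(x,a)|\,\|U(-x,a)^{*}SU(-x,a)\|_{\mathcal{S}_{p}}}{x}{a}=\|f\|_{L^{1}_{r}(\aff)}\|S\|_{\mathcal{S}_{p}}<\infty$, the integral defining $f\star_{\aff}S$ converges as a Bochner integral in $\mathcal{S}_{p}$ and the norm bound follows from $\|\int\,\cdot\,\|\le\int\|\,\cdot\,\|$, cf.\ \cite{hytonen2016book}. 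The case $p=\infty$ is immediate from the displayed sesquilinear form, which is bounded by $\|f\|_{L^{1}_{r}(\aff)}\|S\|_{\mathcal{L}(L^{2}(\mathbb{R}_{+}))}\|\phi\|_{L^{2}(\mathbb{R}_{+})}\|\psi\|_{L^{2}(\mathbb{R}_{+})}$. Alternatively, once the endpoints $p=1$ (which is \eqref{eq:conv_op_fun_bochner}) and $p=\infty$ are in hand, one gets all intermediate $p$ simultaneously by complex interpolation of the linear map $S\mapsto f\star_{\aff}S$, using the classical identity $[\mathcal{S}_{1},\mathcal{S}_{\infty}]_{\theta}=\mathcal{S}_{p}$ with $1/p=1-\theta$ together with the consistency just noted.

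For part (2), take $T\in\mathcal{S}_{q}$ and $S\in\mathcal{S}_{p}$ with $p^{-1}+q^{-1}=1$. By Hölder's inequality $TU(-x,a)^{*}SU(-x,a)\in\mathcal{S}_{1}$ for every $(x,a)$, so $T\star_{\aff}S(x,a)=\tr(TU(-x,a)^{*}SU(-x,a))$ is well defined and $|T\star_{\aff}S(x,a)|\le\|T\|_{\mathcal{S}_{q}}\|U(-x,a)^{*}SU(-x,a)\|_{\mathcal{S}_{p}}=\|T\|_{\mathcal{S}_{q}}\|S\|_{\mathcal{S}_{p}}$; measurability of $(x,a)\mapsto T\star_{\aff}S(x,a)$ follows from the strong continuity of $U$ and the continuity of the trace pairing, and taking the supremum over $(x,a)\in\aff$ gives the claim, which recovers \eqref{eq:conv_op_op_duality} at $p=\infty$. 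I do not expect any genuine obstacle: the only point that requires care is the definitional one in the first paragraph, namely interpreting $f\star_{\aff}S$ weakly at the non-separable endpoint $p=\infty$, after which everything reduces to unitary invariance of the Schatten norms and Hölder's inequality.
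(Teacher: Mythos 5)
Your proposal is correct and follows essentially the same route as the paper: a Bochner integral in $\mathcal{S}_p$ for $p<\infty$ (with the norm bound from unitary invariance and the standard Bochner estimate, cf.\ \cite[Prop.~1.2.2]{hytonen2016book}), a weak interpretation at the non-separable endpoint $p=\infty$, and H\"older's inequality for Schatten classes for part (2). The additional remark about complex interpolation between the endpoints is a harmless alternative but not needed.
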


\begin{proof}
    For $p<\infty$, we can clearly interpret the definition of $f\star_\aff S$ as a convergent Bochner integral in $\mathcal{S}_p$. Hence the first inequality follows from \cite[Prop.~1.2.2]{hytonen2016book}. For $p=\infty$, we avoid the unpleasantness of Bochner integration in non-separable Banach spaces by interpreting $f\star_\aff S$ weakly by 
    \begin{equation*}
        \langle f\star_\aff S \psi,\phi \rangle_{L^2(\mathbb{R}_+)}=\int_\aff f(x,a) \langle S U(-x,a)\psi,U(-x,a)\phi \rangle_{L^2(\mathbb{R}_+)} \, \frac{dx\, da}{a},
    \end{equation*}
    for $\psi, \phi \in L^{2}(\mathbb{R}_{+})$.
    A standard argument shows that $f\star_\aff S$ is a bounded operator with \[\|f\star_\aff S\|_{\mathcal{L}(L^2(\mathbb{R}_+))}\leq \|f\|_{L^1_r(\aff)} \|S\|_{\mathcal{L}(L^2(\mathbb{R}_+))}.\]
    Inequality 2.\ follows from the H\"older type inequality \cite[Thm.~2.8]{simon2005traceideals}.
\end{proof}

We have already seen in Section \ref{sec: Admissibility for operators} that we can say more about operator convolutions when one of the operators is admissible. As the next lemma shows, admissibility is also the correct condition to ensure that $f\star_\aff S$ defines a bounded operator for all $f\in L^\infty(\aff)$.

\begin{lemma} \label{lem:boundedquantization}
    Let $S\in \mathcal{S}_1$ and $f\in L^\infty(\aff)$. Define the operator $f\star_\aff \mathcal{D}S\mathcal{D}$ weakly for $\psi,\phi\in \mathrm{Dom}(\mathcal{D})$ by
    \begin{equation} \label{eq:weakboundeddef}
    \langle f\star_\aff \mathcal{D}S\mathcal{D}\psi,\phi \rangle_{L^{2}(\mathbb{R}_{+})}= \int_\aff f(x,a) \langle S\mathcal{D} U(-x,a) \psi, \mathcal{D}U(-x,a) \phi \rangle_{L^{2}(\mathbb{R}_{+})} \, \frac{dx\, da}{a}.
\end{equation}
    Then $f\star_\aff \mathcal{D}S\mathcal{D}$ uniquely extends to a bounded linear operator on $L^2(\mathbb{R}_+)$ satisfying \[\|f\star_\aff \mathcal{D}S\mathcal{D}\|_{\mathcal{L}(L^2(\mathbb{R}_+))}\leq \|f\|_{L^{\infty}(\aff)} \|S\|_{\mathcal{S}_1}.\]
    In particular, if $R$ is an admissible operator, then $f\star_\aff R\in \mathcal{L}(L^2(\mathbb{R}_+))$ with 
    \begin{equation*}
        \|f\star_\aff R\|_{\mathcal{L}(L^2(\mathbb{R}_+))}\leq \|f\|_{L^{\infty}(\aff)} \|\mathcal{D}^{-1}R\mathcal{D}^{-1}\|_{\mathcal{S}_1}.
    \end{equation*}
\end{lemma}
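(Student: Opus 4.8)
The plan is to reduce the statement to the rank-one estimate that already occurs inside the proof of Theorem~\ref{thm:operator_orthogonality_relation}. Fix $\psi,\phi\in\mathrm{Dom}(\mathcal{D})$. The function integrated against $f$ on the right-hand side of \eqref{eq:weakboundeddef} is $(x,a)\mapsto\langle S\mathcal{D}U(-x,a)\psi,\mathcal{D}U(-x,a)\phi\rangle_{L^2(\mathbb{R}_+)}$, which is exactly the function written $(\psi\otimes\phi)\star_\aff\mathcal{D}S\mathcal{D}$ in Step~1 of that proof. I would first observe that Step~1 of the proof of Theorem~\ref{thm:operator_orthogonality_relation} establishes $(\psi\otimes\phi)\star_\aff\mathcal{D}S\mathcal{D}\in L^1_r(\aff)$ with
\[
\|(\psi\otimes\phi)\star_\aff\mathcal{D}S\mathcal{D}\|_{L^1_r(\aff)}\le\|S\|_{\mathcal{S}_1}\,\|\psi\|_{L^2(\mathbb{R}_+)}\,\|\phi\|_{L^2(\mathbb{R}_+)},
\]
and that this part of the argument uses only $S\in\mathcal{S}_1$ and $\psi,\phi\in\mathrm{Dom}(\mathcal{D})$, not the standing hypothesis $\mathcal{D}S\mathcal{D}\in\mathcal{L}(L^2(\mathbb{R}_+))$ of that theorem. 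If one prefers a self-contained argument, this is a short computation: expand $S=\sum_n s_n\,\xi_n\otimes\eta_n$ in a singular value decomposition, move the factors $\mathcal{D}$ onto the vectors using the commutation relations \eqref{eq:comrelD}, and apply Cauchy--Schwarz together with the orthogonality relation of Proposition~\ref{prop:admissible} applied to the templates $\mathcal{D}\psi$ and $\mathcal{D}\phi$, which are admissible precisely because $\psi,\phi\in\mathrm{Dom}(\mathcal{D})$.

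Granting this, the first assertion is immediate. By the $L^\infty$--$L^1$ Hölder inequality the integral in \eqref{eq:weakboundeddef} converges absolutely and $|\langle f\star_\aff\mathcal{D}S\mathcal{D}\,\psi,\phi\rangle_{L^2(\mathbb{R}_+)}|\le\|f\|_{L^\infty(\aff)}\,\|(\psi\otimes\phi)\star_\aff\mathcal{D}S\mathcal{D}\|_{L^1_r(\aff)}$, which by the bound above is at most $\|f\|_{L^\infty(\aff)}\,\|S\|_{\mathcal{S}_1}\,\|\psi\|_{L^2(\mathbb{R}_+)}\|\phi\|_{L^2(\mathbb{R}_+)}$ for all $\psi,\phi\in\mathrm{Dom}(\mathcal{D})$. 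Since $\mathrm{Dom}(\mathcal{D})$ is dense in $L^2(\mathbb{R}_+)$, the sesquilinear form $(\psi,\phi)\mapsto\langle f\star_\aff\mathcal{D}S\mathcal{D}\,\psi,\phi\rangle$ extends uniquely to a bounded form on $L^2(\mathbb{R}_+)$, so $f\star_\aff\mathcal{D}S\mathcal{D}$ extends uniquely to a bounded operator with $\|f\star_\aff\mathcal{D}S\mathcal{D}\|_{\mathcal{L}(L^2(\mathbb{R}_+))}\le\|f\|_{L^\infty(\aff)}\|S\|_{\mathcal{S}_1}$.

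For the final ``in particular'' I would set $S:=\mathcal{D}^{-1}R\mathcal{D}^{-1}$, which is trace-class by admissibility of $R$, and check that $R$ and $\mathcal{D}S\mathcal{D}$ agree on $\mathrm{Dom}(\mathcal{D})$: for $\psi\in\mathrm{Dom}(\mathcal{D})$ one has $\mathcal{D}\psi\in\mathrm{Dom}(\mathcal{D}^{-1})$ with $\mathcal{D}^{-1}\mathcal{D}\psi=\psi$, and $R\psi\in\mathrm{Dom}(\mathcal{D}^{-1})$ since $R$ is admissible, so $\mathcal{D}S\mathcal{D}\psi=\mathcal{D}\mathcal{D}^{-1}R\psi=R\psi$. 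Hence for $\psi,\phi\in\mathrm{Dom}(\mathcal{D})$ the integral defining $f\star_\aff R$ coincides with the right-hand side of \eqref{eq:weakboundeddef} for this $S$, and the first part yields $\|f\star_\aff R\|_{\mathcal{L}(L^2(\mathbb{R}_+))}\le\|f\|_{L^\infty(\aff)}\|\mathcal{D}^{-1}R\mathcal{D}^{-1}\|_{\mathcal{S}_1}$. Alternatively, one can avoid the identification $R=\mathcal{D}S\mathcal{D}$ entirely and instead apply Corollary~\ref{cor:admissible_condition} with $T=\psi\otimes\phi$ to get $(\psi\otimes\phi)\star_\aff R\in L^1_r(\aff)$ with norm at most $\|\mathcal{D}^{-1}R\mathcal{D}^{-1}\|_{\mathcal{S}_1}\|\psi\|_{L^2(\mathbb{R}_+)}\|\phi\|_{L^2(\mathbb{R}_+)}$, and then repeat the same Hölder step.

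I expect the main obstacle to be that Theorem~\ref{thm:operator_orthogonality_relation} cannot be quoted verbatim, because its hypothesis $\mathcal{D}S\mathcal{D}\in\mathcal{L}(L^2(\mathbb{R}_+))$ is stronger than what is available here; one must either inspect its proof to confirm that the rank-one case never uses that hypothesis, or reproduce that short computation directly. The subtlety in the latter option is that $\mathcal{D}S\mathcal{D}$ need not be densely defined for an arbitrary $S\in\mathcal{S}_1$, so the symbol ``$(\psi\otimes\phi)\star_\aff\mathcal{D}S\mathcal{D}$'' must be read as the bilinear expression $\langle S\mathcal{D}U(-x,a)\psi,\mathcal{D}U(-x,a)\phi\rangle$ with the operators $\mathcal{D}$ acting on the vectors, rather than as $\langle\mathcal{D}S\mathcal{D}U(-x,a)\psi,U(-x,a)\phi\rangle$. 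A secondary, entirely routine point is the domain bookkeeping in the previous paragraph --- verifying $\mathcal{D}\mathcal{D}^{-1}=\mathrm{id}$ on $\mathrm{Dom}(\mathcal{D}^{-1})$, $\mathcal{D}^{-1}\mathcal{D}=\mathrm{id}$ on $\mathrm{Dom}(\mathcal{D})$, and that $\mathcal{D}^{-1}R\mathcal{D}^{-1}$ maps $\mathrm{Dom}(\mathcal{D}^{-1})$ into $\mathrm{Dom}(\mathcal{D})$ --- so that $R$ and $\mathcal{D}S\mathcal{D}$ genuinely coincide on the dense domain on which the form \eqref{eq:weakboundeddef} lives.
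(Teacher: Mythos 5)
Your proof is correct and follows essentially the same route as the paper: reduce the weak definition \eqref{eq:weakboundeddef} to a rank-one $L^1_r(\aff)$ estimate coming from the orthogonality relations, then apply the $L^\infty$--$L^1$ pairing and the density of $\mathrm{dom}(\mathcal{D})$. The paper only packages the rank-one estimate slightly differently---it uses \eqref{eq:comrelD} to rewrite the integrand as $\check f(x,a)\,\bigl(S\star_\aff(\mathcal{D}\psi\otimes\mathcal{D}\phi)\bigr)(x,a)$ and cites Corollary \ref{cor:admissible_condition} with $\mathcal{D}\psi\otimes\mathcal{D}\phi$ as the admissible operator, which is the same bound you extract from Step 1 of Theorem \ref{thm:operator_orthogonality_relation}---and your domain bookkeeping for the ``in particular'' claim (left implicit in the paper) is also correct.
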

\begin{proof}
By using \eqref{eq:comrelD} we get that 
\begin{align*} 
    \langle f\star_\aff \mathcal{D}S\mathcal{D}\psi,\phi \rangle_{L^{2}(\mathbb{R}_{+})}&= \int_\aff f(x,a) \langle  S U(-x,a)\mathcal{D} \psi, U(-x,a)\mathcal{D}\phi \rangle_{L^{2}(\mathbb{R}_{+})} \, \frac{dx\, da}{a^2} \\
    &= \int_\aff \check{f}(x,a) \langle  S U(-x,a)^*\mathcal{D} \psi, U(-x,a)^*\mathcal{D}\phi \rangle_{L^{2}(\mathbb{R}_{+})} \, \frac{dx\, da}{a} \\
    &= \int_\aff \check{f}(x,a) (S\star_\aff (\mathcal{D}\psi\otimes \mathcal{D}\phi))(x,a) \, \frac{dx \, da}{a}.
\end{align*}
Clearly $\mathcal{D}\psi \otimes \mathcal{D}\phi$ is an admissible operator with \[|\tr(\mathcal{D}^{-1}(\mathcal{D}\psi \otimes \mathcal{D}\phi)\mathcal{D}^{-1})|=|\langle \psi,\phi \rangle|_{L^2(\mathbb{R}_+)}\leq \|\psi\|_{L^2(\mathbb{R}_+)} \|\phi\|_{L^2(\mathbb{R}_+)}.\]
By Corollary \ref{cor:admissible_condition} we therefore get
\begin{equation*}
    \left|\langle f\star_\aff \mathcal{D}S\mathcal{D}\psi,\phi \rangle_{L^{2}(\mathbb{R}_{+})} \right|\leq \|f\|_{L^{\infty}(\aff)} \|S\|_{\mathcal{S}_1} \|\psi\|_{L^{2}(\mathbb{R}_{+})} \|\phi\|_{L^{2}(\mathbb{R}_{+})}.
\end{equation*}
The density of $\mathrm{dom}(\mathcal{D})$ implies that $f\star_\aff \mathcal{D}S\mathcal{D}$ extends to a bounded operator on $L^{2}(\mathbb{R}_{+})$. 
\end{proof}

Armed with Lemma \ref{lem:boundedquantization} and Corollary \ref{cor:admissible_condition}, we prove the following result describing $L^p$ and $\mathcal{S}_p$ properties of convolutions with admissible operators. The proof is essentially an application of complex interpolation: we refer to \cite[Thm.~2.10]{simon2005traceideals} and \cite[Thm.~5.1.1]{bergh1976interpolation} for the interpolation theory of $\mathcal{S}_p$ and $L^p_r(\aff)$.

\begin{proposition} \label{prop:interpolation}
    Let $1\leq p \leq \infty$ and let $q$ be its conjugate exponent given by $p^{-1} + q^{-1} = 1$. If $R\in \mathcal{S}_p$, $g\in L^p_r(\aff)$, and $S$ is an admissible trace-class operator, then: 
    \begin{enumerate}
        \item $g\star_\aff S\in \mathcal{S}_p$ with $\|g\star_\aff S\|_{\mathcal{S}_p}\leq \|S\|_{\mathcal{S}_1}^{1/p} \|\mathcal{D}^{-1}S\mathcal{D}^{-1}\|_{\mathcal{S}_1}^{1/q} \|g\|_{L_{r}^{p}(\aff)}$.
        \item $R\star_\aff S \in L^p_r(\aff)$ with $\|R\star_\aff S\|_{L_{r}^p(\aff)}\leq \|S\|_{\mathcal{S}_1}^{1/q} \|\mathcal{D}^{-1}S\mathcal{D}^{-1}\|_{\mathcal{S}_1}^{1/p} \|R\|_{\mathcal{S}_p}$.
    \end{enumerate}
\end{proposition}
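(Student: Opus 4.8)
The plan is to derive both assertions from complex interpolation between the endpoint exponents $p=1$ and $p=\infty$, where the required bounds are already available in the preceding sections. Throughout, fix the admissible trace-class operator $S$ and abbreviate $S_0 \coloneqq \mathcal{D}^{-1}S\mathcal{D}^{-1} \in \mathcal{S}_1$, so that $S=\mathcal{D}S_0\mathcal{D}$ in the sense of Lemma \ref{lem:boundedquantization}. I will use the complex interpolation identities $[L^1_r(\aff),L^\infty(\aff)]_\theta = L^p_r(\aff)$ and $[\mathcal{S}_1,\mathcal{S}_\infty]_\theta = \mathcal{S}_p$ for $\tfrac1p = 1-\theta$, together with the fact that a linear operator which is bounded between the two endpoints of a compatible couple with norms $M_0,M_1$ is bounded on the $\theta$-interpolation space with norm at most $M_0^{1-\theta}M_1^\theta$; see \cite[Thm.~4.1.2, Thm.~5.1.1]{bergh1976interpolation} and \cite[Thm.~2.10]{simon2005traceideals}. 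Note that $1-\theta = \tfrac1p$ forces $\theta = \tfrac1q$.

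For the first assertion I would consider the linear map $\Phi_S\colon g \mapsto g\star_\aff S$, initially on the dense subspace $L^1_r(\aff)\cap L^\infty(\aff)$. On $L^1_r(\aff)$ it is the Bochner integral of Definition \ref{def:fun_ope_convolution}, and \eqref{eq:conv_op_fun_bochner} gives $\|\Phi_S g\|_{\mathcal{S}_1}\le \|S\|_{\mathcal{S}_1}\|g\|_{L^1_r(\aff)}$. On $L^\infty(\aff)$, Lemma \ref{lem:boundedquantization} applied to the admissible operator $S$ shows that $\Phi_S g$ extends to a bounded operator with $\|\Phi_S g\|_{\mathcal{L}(L^2(\mathbb{R}_+))}\le \|S_0\|_{\mathcal{S}_1}\|g\|_{L^\infty(\aff)}$; moreover the weak definition \eqref{eq:weakboundeddef} agrees with the Bochner integral on the intersection, so $\Phi_S$ is one and the same linear map on $L^1_r(\aff)+L^\infty(\aff)$. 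Interpolating the two bounds yields $\|\Phi_S g\|_{\mathcal{S}_p} \le \|S\|_{\mathcal{S}_1}^{1-\theta}\|S_0\|_{\mathcal{S}_1}^{\theta}\|g\|_{L^p_r(\aff)} = \|S\|_{\mathcal{S}_1}^{1/p}\|\mathcal{D}^{-1}S\mathcal{D}^{-1}\|_{\mathcal{S}_1}^{1/q}\|g\|_{L^p_r(\aff)}$, which is statement~1 (for $p=1$ and $p=\infty$ this is simply the input bounds).

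For the second assertion I would set $\Psi_S\colon R \mapsto R\star_\aff S$, which makes sense for every bounded $R$ because $U(-x,a)^*SU(-x,a)\in\mathcal{S}_1$; the elementary estimate $|\tr(RU(-x,a)^*SU(-x,a))| \le \|R\|_{\mathcal{L}(L^2(\mathbb{R}_+))}\|S\|_{\mathcal{S}_1}$ gives $\|\Psi_S R\|_{L^\infty(\aff)}\le \|S\|_{\mathcal{S}_1}\|R\|_{\mathcal{L}(L^2(\mathbb{R}_+))}$, i.e.\ $\Psi_S\colon \mathcal{S}_\infty \to L^\infty(\aff)$ with norm at most $\|S\|_{\mathcal{S}_1}$ (this is \eqref{eq:conv_op_op_duality} up to the order of the two arguments). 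At the other endpoint, Corollary \ref{cor:admissible_condition} gives $\Psi_S\colon \mathcal{S}_1 \to L^1_r(\aff)$ with norm at most $\|S_0\|_{\mathcal{S}_1}$, and the two descriptions of $R\star_\aff S$ coincide on $\mathcal{S}_1\subset\mathcal{S}_\infty$. Interpolating gives $\|\Psi_S R\|_{L^p_r(\aff)} \le \|S_0\|_{\mathcal{S}_1}^{1-\theta}\|S\|_{\mathcal{S}_1}^{\theta}\|R\|_{\mathcal{S}_p} = \|\mathcal{D}^{-1}S\mathcal{D}^{-1}\|_{\mathcal{S}_1}^{1/p}\|S\|_{\mathcal{S}_1}^{1/q}\|R\|_{\mathcal{S}_p}$, which is statement~2.

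The only genuinely delicate point, and the one I would treat carefully, is the hypothesis of the abstract interpolation theorem that $\Phi_S$ (respectively $\Psi_S$) be a single linear operator defined on the sum of the endpoint Banach spaces: this reduces to checking that the weakly defined operator from Lemma \ref{lem:boundedquantization} agrees with the Bochner-integral definition of $f\star_\aff S$ on the dense subspace $L^1_r(\aff)\cap L^\infty(\aff)$ (respectively that the trace formula defining $R\star_\aff S$ is the same whether $R$ is read as trace-class or merely bounded). Both are short approximation and cyclicity arguments, but they are what legitimizes the interpolation; the remaining content is the bookkeeping $\theta = 1/q$, $1-\theta = 1/p$.
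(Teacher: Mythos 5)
Your proof is correct and follows essentially the same route as the paper: complex interpolation between the endpoints $p=1$ and $p=\infty$, using \eqref{eq:conv_op_fun_bochner} and Lemma \ref{lem:boundedquantization} for statement 1, and Corollary \ref{cor:admissible_condition} together with \eqref{eq:conv_op_op_duality} for statement 2, with the same exponent bookkeeping. Your extra attention to the compatibility of the Bochner-integral and weak definitions on the intersection is a point the paper passes over silently, but it does not change the argument.
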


\begin{proof}
For $g\in L^1_r(\aff)\cap L^\infty(\aff)$, we have for $p=\infty$ that Lemma \ref{lem:boundedquantization} gives \[\|g\star_\aff S\|_{\mathcal{L}(L^{2}(\mathbb{R}_{+}))}\leq \|\mathcal{D}^{-1}S\mathcal{D}^{-1}\|_{\mathcal{S}_1} \|g\|_{L^{\infty}(\aff)}.\] Since we also have $\|g\star_\aff S\|_{\mathcal{S}_1}\leq \|g\|_{L_{r}^{1}(\aff)}\|S\|_{\mathcal{S}_1}$, the first result follows by complex interpolation.
For the second claim, if $R\in \mathcal{S}_1$ we know from Corollary \ref{cor:admissible_condition} that
  \[\|R\star_\aff S\|_{L_{r}^{1}(\aff)}\leq \|\mathcal{D}^{-1}S\mathcal{D}^{-1}\|_{\mathcal{S}_1} \|R\|_{\mathcal{S}_1}.\] The result follows by complex interpolation since \[\|R\star_\aff S\|_{L^{\infty}(\aff)}\leq \|S\|_{\mathcal{S}_1} \|R\|_{\mathcal{L}(L
^2(\mathbb{R}_{+}))}. \qedhere\] 

\end{proof}

\section{From the Viewpoint of Representation Theory}
\label{sec: From the Viewpoint of Representation Theory}
 
We will for completeness investigate how various notions of affine Fourier transforms fit into our framework. As we will see, known results from abstract wavelet analysis give connections between affine Weyl quantization, affine Fourier transforms, and admissibility for operators.
 
\subsection{Affine Fourier Transforms}
\label{sec: Affine Fourier Transforms}

\begin{definition}
For $f \in L_{l}^{1}(\aff)$ we define the \textit{(left) integrated representation} $U(f)$ to be the operator on $L^{2}(\mathbb{R}_{+})$ given by \[U(f)\psi \coloneqq \int_{\aff}f(x,a) U(x,a)\psi \, \frac{dx \, da}{a^2}, \qquad \psi \in L^{2}(\mathbb{R}_{+}).\] The \textit{inverse affine Fourier-Wigner transform} $\mathcal{F}_W^{-1}(f)$ of $f \in L_{r}^{1}(\aff)$ is given by 
\[\mathcal{F}_W^{-1}(f) \coloneqq U(\check{f}) \circ \mathcal{D}, \qquad \check{f}(x,a) \coloneqq f((x,a)^{-1}).\]
\end{definition}

The inverse affine Fourier-Wigner transform $\mathcal{F}_W^{-1}(f)$ of $f \in L_{r}^{1}(\aff)$ is explicitly given by 
\begin{equation*}
\mathcal{F}_W^{-1}(f)\psi(s)
=\int_{0}^{\infty} \sqrt{r}\mathcal{F}_{1}(f)(r,s/r)\psi(r) \, \frac{dr}{r},
\end{equation*}
where $\mathcal{F}_{1}$ denotes the Fourier transform in the first coordinate and $\psi\in L^2(\mathbb{R}^+)$. Hence the integral kernel of $\mathcal{F}_W^{-1}(f)$ is given by
\begin{equation}
\label{eq:integral_kernel_fourier_wigner}
    K_{f}(s,r)=\sqrt{r}(\mathcal{F}_1f)(r,s/r), \qquad s,r \in \mathbb{R}_{+}.
\end{equation} 
It is straightforward to verify that we have the estimate \[\|\mathcal{F}_W^{-1}(f)\|_{\mathcal{S}_{2}} \leq \|f\|_{L_{r}^{2}(\aff)},\]
for every $f \in L_{r}^{1}(\aff) \cap L_{r}^{2}(\aff)$. Hence we can extend $\mathcal{F}_W^{-1}$ to be defined on $L_{r}^{2}(\aff)$ and we have that $\mathcal{F}_W^{-1}(f) \in \mathcal{S}_{2}$ for any $f \in L_{r}^{2}(\aff)$.

\begin{proposition}
\label{proposition_correspondence_fourier_wigner}
The inverse affine Fourier-Wigner transform is a unitary transformation $\mathcal{F}_W^{-1}:\mathcal{Q}_{1} \to \mathcal{S}_{2}$, where
\[\mathcal{Q}_{1} \coloneqq \{f\in L^2_r(\aff)\mid \mathrm{ess\, supp}(\mathcal{F}_1(f))\subset \mathbb{R}_{+}\times \mathbb{R}_{+}\}.\]
\end{proposition}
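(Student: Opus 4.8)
The plan is to pass to the integral kernel formula \eqref{eq:integral_kernel_fourier_wigner} and reduce everything to Plancherel's theorem in the first variable. Recall that an integral operator $A$ on $L^2(\mathbb{R}_+,r^{-1}\,dr)$ with kernel $A_K$, acting by $A\psi(s)=\int_0^\infty A_K(s,r)\psi(r)\,\tfrac{dr}{r}$, is Hilbert-Schmidt precisely when $A_K\in L^2(\mathbb{R}_+\times\mathbb{R}_+,\tfrac{dr}{r}\tfrac{ds}{s})$, with $\|A\|_{\mathcal{S}_2}^2=\int_0^\infty\int_0^\infty|A_K(s,r)|^2\,\tfrac{dr}{r}\,\tfrac{ds}{s}$, and that this sets up a unitary identification of $\mathcal{S}_2$ with that $L^2$-space of kernels. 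First I would compute, for $f\in L^1_r(\aff)\cap L^2_r(\aff)$ and hence $K_f(s,r)=\sqrt{r}\,(\mathcal{F}_1 f)(r,s/r)$,
\[
\|\mathcal{F}_W^{-1}(f)\|_{\mathcal{S}_2}^2=\int_0^\infty\int_0^\infty r\,|(\mathcal{F}_1 f)(r,s/r)|^2\,\frac{dr}{r}\,\frac{ds}{s}=\int_0^\infty\Big(\int_0^\infty|(\mathcal{F}_1 f)(r,a)|^2\,dr\Big)\frac{da}{a},
\]
where the last equality uses Fubini and, for each fixed $r$, the substitution $a=s/r$ (so $\tfrac{ds}{s}=\tfrac{da}{a}$). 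By Plancherel in the first coordinate, $\int_{\mathbb{R}}|(\mathcal{F}_1 f)(r,a)|^2\,dr=\int_{\mathbb{R}}|f(x,a)|^2\,dx$; hence the last expression equals $\|f\|_{L^2_r(\aff)}^2$ exactly when $(\mathcal{F}_1 f)(r,a)=0$ for a.e.\ $r\le 0$, i.e.\ precisely when $f\in\mathcal{Q}_{1}$. This proves that $\mathcal{F}_W^{-1}$ is isometric on $\mathcal{Q}_{1}\cap L^1_r(\aff)$, and since this subspace is dense in $\mathcal{Q}_{1}$ and $\mathcal{F}_W^{-1}$ is continuous on $L^2_r(\aff)$ (as recorded before the statement), the isometry extends to all of $\mathcal{Q}_{1}$. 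Note that $\mathcal{Q}_{1}$ is a closed subspace of $L^2_r(\aff)$, being the preimage under the (Plancherel-)unitary $\mathcal{F}_1\colon L^2_r(\aff)\to L^2(\mathbb{R}\times\mathbb{R}_+,\tfrac{dr\,da}{a})$ of the closed subspace of functions vanishing a.e.\ on $(-\infty,0]\times\mathbb{R}_+$; in particular $\mathcal{Q}_{1}$ is itself a Hilbert space.

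For surjectivity I would reverse-engineer a preimage of an arbitrary $A\in\mathcal{S}_2$ from its kernel $A_K\in L^2(\mathbb{R}_+\times\mathbb{R}_+,\tfrac{ds}{s}\tfrac{dr}{r})$. Define $g(r,a)\coloneqq r^{-1/2}A_K(ra,r)$ for $r,a>0$ and $g(r,a)\coloneqq 0$ for $r\le 0$; the substitution $s=ra$ gives $\int_0^\infty\int_0^\infty|g(r,a)|^2\,dr\,\tfrac{da}{a}=\int_0^\infty\int_0^\infty|A_K(s,r)|^2\,\tfrac{ds}{s}\,\tfrac{dr}{r}=\|A\|_{\mathcal{S}_2}^2<\infty$, so $g$ lies in the range of $\mathcal{F}_1$. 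Put $f\coloneqq\mathcal{F}_1^{-1}g\in L^2_r(\aff)$; then $\mathcal{F}_1 f=g$ is supported in $\mathbb{R}_+\times\mathbb{R}_+$, so $f\in\mathcal{Q}_{1}$, and by \eqref{eq:integral_kernel_fourier_wigner} the kernel of $\mathcal{F}_W^{-1}(f)$ is $K_f(s,r)=\sqrt{r}\,g(r,s/r)=\sqrt{r}\cdot r^{-1/2}A_K(s,r)=A_K(s,r)$, whence $\mathcal{F}_W^{-1}(f)=A$. A surjective linear isometry between Hilbert spaces is unitary, completing the proof.

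The main technical point to be careful about is that the surjectivity argument uses the kernel formula \eqref{eq:integral_kernel_fourier_wigner} for general $f\in L^2_r(\aff)$, whereas it was derived for $f\in L^1_r(\aff)\cap L^2_r(\aff)$; one must check that both sides of \eqref{eq:integral_kernel_fourier_wigner} depend continuously on $f\in L^2_r(\aff)$ (the left side into $\mathcal{S}_2$, the right side into the $L^2$-space of kernels) so that the identity persists by density. Beyond that, the proof is essentially bookkeeping: tracking the Haar measure $r^{-1}\,dr$ in the Hilbert-Schmidt normalization and keeping the $\mathbb{R}$ versus $\mathbb{R}_+$ distinction in the first slot of $\mathcal{F}_1 f$ straight --- and it is exactly this asymmetry (the kernel only ``sees'' $(\mathcal{F}_1 f)(r,\cdot)$ for $r>0$) that forces the domain of the unitary to be $\mathcal{Q}_{1}$ rather than all of $L^2_r(\aff)$.
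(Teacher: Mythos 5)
Your proof is correct and follows essentially the same route as the paper's: identify Hilbert--Schmidt operators on $L^{2}(\mathbb{R}_{+})$ with their integral kernels, use the kernel formula $K_f(s,r)=\sqrt{r}\,(\mathcal{F}_1 f)(r,s/r)$, and reduce both the isometry and the bijectivity onto $\mathcal{Q}_{1}$ to Plancherel's theorem in the first variable --- you merely spell out the density/extension steps that the paper leaves implicit. The one step you assert without argument, density of $\mathcal{Q}_{1}\cap L^1_r(\aff)$ in $\mathcal{Q}_{1}$, does hold and is quick to justify: approximate $\mathcal{F}_1 f$ in $L^2\left(\mathbb{R}\times\mathbb{R}_+, dr\,\tfrac{da}{a}\right)$ by finite sums of tensor products $u(r)v(a)$ with $u\in C_c^\infty(0,\infty)$ and $v$ compactly supported in $(0,\infty)$, whose preimages under $\mathcal{F}_1$ lie in $\mathcal{Q}_{1}\cap L^1_r(\aff)$.
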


\begin{proof}
Any function $K \in L^{2}(\mathbb{R}_{+} \times \mathbb{R}_{+})$ can be written uniquely on the form $K_{f}$ in \eqref{eq:integral_kernel_fourier_wigner} for some $f \in \mathcal{Q}_{1}$. Moreover, we have \[\|K_{f}\|_{L^{2}(\mathbb{R}_{+} \times \mathbb{R}_{+})} = \sqrt{\int_{0}^{\infty}\int_{0}^{\infty}|\mathcal{F}_{1}f(r, s/r)|^2 \, dr \, \frac{ds}{s}} = \|f\|_{L_{r}^{2}(\aff)}.\]
Since there is a norm-preserving correspondence between integral kernels in $L^{2}(\mathbb{R}_{+} \times \mathbb{R}_{+})$ and Hilbert-Schmidt operators on $L^{2}(\mathbb{R}_{+}),$ the claim follows.
\end{proof}

It is straightforward to check that the inverse affine Fourier-Wigner transform $\mathcal{F}_W^{-1}$ satisfies for $f,g \in \mathcal{Q}_{1}$ the properties
    \begin{itemize}
        \item $\mathcal{F}_W^{-1}(f)^* = \mathcal{F}_W^{-1}(\Delta^{1/2}f^*), \qquad f^{*}(x,a) \coloneqq \overline{f((x,a)^{-1})}$;
        \item $\mathcal{F}_W^{-1}(f*_{\aff} g) = \mathcal{F}_W^{-1}(f) \circ \mathcal{D}^{-1} \circ \mathcal{F}_W^{-1}(g) = U(\check{f}) \circ \mathcal{F}_W^{-1}(g)$;
        \item$U(x,a) \circ \mathcal{F}_W^{-1}(f)  = \mathcal{F}_W^{-1}(R_{(x,a)}(f))$;
        \item $\mathcal{F}_W^{-1}(f)\circ U(x,a)   = \mathcal{F}_W^{-1}\left(\sqrt{a}L_{(x,a)^{-1}}(f)\right)$.
    \end{itemize} 
    
\begin{definition}
The \textit{affine Fourier-Wigner transform} $\mathcal{F}_{W}:\mathcal{S}_{2} \to \mathcal{Q}_{1}$ is defined to be the inverse of $\mathcal{F}_{W}^{-1}|_{\mathcal{Q}_1}$.
\end{definition}

\begin{remark}
\hfill
\begin{itemize}
    \item To avoid overly cluttered notation, we have used the symbol $\mathcal{F}_{W}$ for both the classical Fourier-Wigner transform in Section \ref{sec: Operator Convolution}, and the affine Fourier-Wigner transform. It should be clear from the context which operator we are referring to.
    \item Recall that the right multiplication $R$ acts on elements in $L_{r}^{2}(\aff)$ by \[R_{(y,b)}f(x,a) = f((x,a) (y, b))\] for $(x,a), (y,b) \in \aff$. For a closed subspace $\mathcal{H} \subset L_{r}^{2}(\aff)$ invariant under $R$, we write $R\vert_\mathcal{H}\cong U$ if there exists a unitary map $T:\mathcal{H} \to L^{2}(\mathbb{R}_{+})$ satisfying \[T \circ R(x,a)f = U(x,a) \circ Tf,\] 
for all $f \in \mathcal{H}$ and $(x,a) \in \aff$.
Define 
	\begin{equation*}
	L^2_U(\aff)\coloneqq\overline{\mathrm{span}} \{\mathcal{H}\subset L^2_r(\aff) : R\vert_\mathcal{H}\cong U\}. 
	\end{equation*} 
	From \cite[Lem.~3]{duflo1976} we deduce that 
	\begin{equation*}
		L^2_U(\aff) = \mathcal{Q}_{1},
	\end{equation*}
as both spaces are the image of the Hilbert-Schmidt operators under the Fourier-Wigner transform. Note that \cite{duflo1976} uses left Haar measure, but translating to right Haar measure is an easy exercise using that $f\mapsto \check{f}$ is a unitary equivalence from the left regular representation on $L^2_l(\aff)$ to the right regular representation on $L^2_r(\aff)$.
\end{itemize}
\end{remark}

\begin{example}\label{ex:Fourier_Wigner_of_rank_one}
Let $\phi, \psi \in L^2(\mathbb{R}_+)$ with $\psi \in \mathrm{dom}(\mathcal{D})$. If $f(x,a)=\langle \phi,U(x,a)^*\mathcal{D}\psi\rangle_{L^2(\mathbb{R}_+)}$, one finds using Proposition \ref{prop:admissible} that $f\in L^2_r(\aff)$ and
\[\langle \mathcal{F}_W^{-1}(f)\xi,\eta\rangle_{L^2(\mathbb{R}_+)}=\langle (\phi \otimes \psi)\xi,\eta \rangle_{L^2(\mathbb{R}_+)}\] for $\eta \in L^2(\mathbb{R}_+)$ and $\xi \in \mathrm{dom}(\mathcal{D}).$ This implies that $\mathcal{F}_W^{-1}(f)=\phi \otimes \psi$, in other words for $(x,a) \in \aff$ that
\[\mathcal{F}_W(\phi\otimes\psi)(x,a) 
=\langle \phi, U(x,a)^*\mathcal{D}\psi\rangle_{L^2(\mathbb{R}_+)}.\]
\end{example}

For the Heisenberg group, the Fourier-Wigner transform has a very convenient expression for trace-class operators, see \eqref{eq:fwheisenberg}. The corresponding expression on the affine group is $\mathcal{F}_W(A)(x,a) =\tr(A\mathcal{D}U(x,a))$, and the next result shows that it holds as long as the objects in the formula are well-defined.  The result is due to F\"uhr in this generality \cite[Thm.~4.15]{fuhr2004}, and builds on an earlier result due to Duflo and Moore \cite[Cor.~2]{duflo1976}.

\begin{proposition}[F\"uhr, Duflo, and Moore]
\label{prop:fourier_wigner_explicit_formula}
Let $A \in \mathcal{S}_1$ be such that $A\mathcal{D}^{-1}$ extends to a Hilbert-Schmidt operator. Then
\begin{equation}  \label{eq:Fourier_Wigner_via_trace}
    \mathcal{F}_W(A\mathcal{D}^{-1})(x,a)=\tr(A U(x,a)).
\end{equation}
\end{proposition}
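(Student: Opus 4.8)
The plan is to verify \eqref{eq:Fourier_Wigner_via_trace} first on a convenient dense class of operators, then extend by continuity. The natural dense class is finite-rank operators built from admissible functions, since then all the objects in sight are literally defined and we can compute directly. So first I would take $A = \phi \otimes \psi$ with $\psi \in \mathrm{dom}(\mathcal{D}^{-1})$ admissible and $\phi \in L^2(\mathbb{R}_+)$ arbitrary; note $A\mathcal{D}^{-1} = \phi \otimes (\mathcal{D}^{-1}\psi)$, which is honestly Hilbert--Schmidt (indeed rank one) on $L^2(\mathbb{R}_+)$. Next I would invoke Example \ref{ex:Fourier_Wigner_of_rank_one}: writing $A\mathcal{D}^{-1} = \phi \otimes \eta$ with $\eta = \mathcal{D}^{-1}\psi$, and observing that $\mathcal{D}\eta = \psi$ lies in $L^2(\mathbb{R}_+)$ so $\eta \in \mathrm{dom}(\mathcal{D})$, the example gives $\mathcal{F}_W(A\mathcal{D}^{-1})(x,a) = \mathcal{F}_W(\phi \otimes \eta)(x,a) = \langle \phi, U(x,a)^*\mathcal{D}\eta \rangle_{L^2(\mathbb{R}_+)} = \langle \phi, U(x,a)^*\psi \rangle_{L^2(\mathbb{R}_+)}$. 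On the other hand $\tr(A U(x,a)) = \tr((\phi \otimes \psi)U(x,a)) = \langle U(x,a)\phi, \psi\rangle$... wait, I must be careful with the rank-one trace convention: with $(\phi\otimes\psi)(\xi) = \langle \xi,\psi\rangle \phi$, one has $\tr((\phi\otimes\psi)B) = \langle B\phi,\psi\rangle$ for bounded $B$, so $\tr(AU(x,a)) = \langle U(x,a)\phi, \psi\rangle = \langle \phi, U(x,a)^*\psi\rangle$. These agree, so \eqref{eq:Fourier_Wigner_via_trace} holds for such rank-one $A$, and by linearity for all finite linear combinations, i.e.\ for all finite-rank $A$ whose range and cokernel lie in $\mathrm{dom}(\mathcal{D}^{-1})$.

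The extension step is where the real work lies. Given a general $A \in \mathcal{S}_1$ with $A\mathcal{D}^{-1}$ extending to a Hilbert--Schmidt operator, I would approximate $A$ by finite-rank operators $A_n$ of the admissible type above, in such a way that simultaneously $A_n \to A$ in $\mathcal{S}_1$ and $A_n\mathcal{D}^{-1} \to A\mathcal{D}^{-1}$ in $\mathcal{S}_2$. Concretely, fix an orthonormal basis $\{e_k\}$ of $\mathrm{dom}(\mathcal{D}^{-1})$ consisting of admissible functions (such a basis exists since admissible functions are dense), let $Q_n$ be the orthogonal projection onto $\mathrm{span}\{e_1,\dots,e_n\}$, and set $A_n \coloneqq Q_n A Q_n$. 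Then $A_n \to A$ in $\mathcal{S}_1$ by the standard fact that $Q_n \to I$ strongly. For the second convergence, write $A_n \mathcal{D}^{-1} = Q_n A Q_n \mathcal{D}^{-1}$; one has $Q_n \mathcal{D}^{-1} \subseteq \mathcal{D}^{-1} Q_n$ as operators on $\mathrm{dom}(\mathcal{D}^{-1})$ since each $e_k$ is in $\mathrm{dom}(\mathcal{D}^{-1})$ and $\mathcal{D}^{-1}$ is diagonalizable-compatible with this basis... actually $Q_n$ need not commute with $\mathcal{D}^{-1}$ unless the $e_k$ are eigenvectors, so instead I would argue $A_n\mathcal{D}^{-1} = Q_n (A\mathcal{D}^{-1}) (\mathcal{D} Q_n \mathcal{D}^{-1})$ and control the bounded factor $\mathcal{D}Q_n\mathcal{D}^{-1}$; this is the technical crux. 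An alternative, cleaner route: choose $\{e_k\}$ to be the Laguerre basis $\{\mathcal{L}_k^{(\alpha)}\}$, for which $\|\mathcal{D}^{-1}\mathcal{L}_k^{(\alpha)}\|^2 = 1/\alpha$ is uniformly bounded by the Proposition of Section \ref{sec: Laguerre Connection}; then $\mathcal{D}^{-1}Q_n$ is bounded uniformly in $n$ on $L^2(\mathbb{R}_+)$, and one gets both convergences cleanly.

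Finally, having the two convergences, I would conclude: $\mathcal{F}_W$ is an isometry $\mathcal{S}_2 \to \mathcal{Q}_1 \subset L^2_r(\aff)$, so $\mathcal{F}_W(A_n\mathcal{D}^{-1}) \to \mathcal{F}_W(A\mathcal{D}^{-1})$ in $L^2_r(\aff)$, hence (along a subsequence) pointwise a.e.; while on the right side $\tr(A_n U(x,a)) \to \tr(A U(x,a))$ for \emph{every} $(x,a)$, uniformly, since $|\tr(A_n U(x,a)) - \tr(AU(x,a))| \le \|A_n - A\|_{\mathcal{S}_1}\|U(x,a)\|_{\mathcal{L}} = \|A_n-A\|_{\mathcal{S}_1} \to 0$. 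Matching the two limits gives \eqref{eq:Fourier_Wigner_via_trace} a.e., and since both sides are (equal to) continuous functions of $(x,a)$ — the right side manifestly, the left side because $A\mathcal{D}^{-1}$ Hilbert--Schmidt puts $\mathcal{F}_W(A\mathcal{D}^{-1})$ in $\mathcal{Q}_1$ with the explicit kernel representation — it holds everywhere. The main obstacle, as indicated, is arranging the approximating sequence so that $A_n \to A$ in $\mathcal{S}_1$ \emph{and} $A_n\mathcal{D}^{-1} \to A\mathcal{D}^{-1}$ in $\mathcal{S}_2$ at the same time, which forces the choice of a basis well-adapted to $\mathcal{D}^{-1}$ (the Laguerre basis being the natural candidate already in hand).
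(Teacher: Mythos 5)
Your rank-one computation is correct and matches the paper's conventions: for $A=\phi\otimes\psi$ with $\psi\in\mathrm{dom}(\mathcal{D}^{-1})$ one has $A\mathcal{D}^{-1}=\phi\otimes(\mathcal{D}^{-1}\psi)$, Example \ref{ex:Fourier_Wigner_of_rank_one} applies with $\eta=\mathcal{D}^{-1}\psi\in\mathrm{dom}(\mathcal{D})$, and both sides of \eqref{eq:Fourier_Wigner_via_trace} equal $\langle \phi,U(x,a)^*\psi\rangle_{L^2(\mathbb{R}_+)}$. The final limiting argument (isometry of $\mathcal{F}_W$ on $\mathcal{S}_2$, a.e.\ convergence along a subsequence, uniform convergence of $\tr(A_nU(x,a))$) is also fine \emph{provided} you have the two convergences $A_n\to A$ in $\mathcal{S}_1$ and $A_n\mathcal{D}^{-1}\to A\mathcal{D}^{-1}$ in $\mathcal{S}_2$. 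The gap is exactly at the point you yourself flag as the crux, and your proposed fix does not work: from $\|\mathcal{D}^{-1}\mathcal{L}_n^{(\alpha)}\|_{L^2(\mathbb{R}_+)}^2=1/\alpha$ you can only conclude $\|\mathcal{D}^{-1}Q_n\|_{\mathcal{L}(L^2(\mathbb{R}_+))}\leq \sqrt{n/\alpha}$, not a uniform bound; the Laguerre functions are not eigenvectors of $\mathcal{D}^{-1}$, so nothing prevents unit vectors in $\mathrm{span}\{\mathcal{L}_0^{(\alpha)},\dots,\mathcal{L}_n^{(\alpha)}\}$ from having large $\mathcal{D}^{-1}$-norm. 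In fact no choice of orthonormal basis inside $\mathrm{dom}(\mathcal{D}^{-1})$ can make $\mathcal{D}^{-1}Q_n$ uniformly bounded: if $\|\mathcal{D}^{-1}Q_n\|\leq C$ for all $n$, then for arbitrary $\xi$ the bounded sequence $\mathcal{D}^{-1}Q_n\xi$ has a weak limit point, and weak closedness of the graph of the closed operator $\mathcal{D}^{-1}$ together with $Q_n\xi\to\xi$ forces $\xi\in\mathrm{dom}(\mathcal{D}^{-1})$ with $\|\mathcal{D}^{-1}\xi\|\leq C\|\xi\|$, i.e.\ $\mathcal{D}^{-1}$ bounded, which is false. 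Consequently the needed convergence $Q_nAQ_n\mathcal{D}^{-1}\to A\mathcal{D}^{-1}$ in $\mathcal{S}_2$ is unproven; already for rank-one $A=\phi\otimes\psi$ it amounts to $\mathcal{D}^{-1}Q_n\psi\to\mathcal{D}^{-1}\psi$ in $L^2(\mathbb{R}_+)$, which does not follow from $Q_n\psi\to\psi$ since $\mathcal{D}^{-1}$ is unbounded, and neither your identity $A_n\mathcal{D}^{-1}=Q_n(A\mathcal{D}^{-1})(\mathcal{D}Q_n\mathcal{D}^{-1})$ helps, because $\mathcal{D}Q_n\mathcal{D}^{-1}$ is likewise not uniformly bounded and does not converge strongly to the identity in any evident sense.

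For comparison, the paper does not attempt this density argument at all: it deduces the proposition from F\"uhr's abstract Plancherel-theoretic result \cite[Thm.~4.15]{fuhr2004} (building on Duflo--Moore), by realizing $A\mathcal{D}^{-1}$ as a measurable field of operators supported on the single representation $\pi_1=U$ in the direct integral over $\hat{G}$; the passage from nice operators to general $A\in\mathcal{S}_1$ with $A\mathcal{D}^{-1}$ Hilbert--Schmidt is precisely what that machinery handles, which is why the statement carries those names. If you want a self-contained proof along your lines, you must either construct an approximating sequence adapted to $\mathcal{D}^{-1}$ (e.g.\ using spectral projections of $\mathcal{D}^{-1}$ itself, which do commute with it, combined with the singular value decomposition of the Hilbert--Schmidt extension of $A\mathcal{D}^{-1}$) or argue weakly against a dense set in $\mathrm{dom}(\mathcal{D})$ using the definition $\mathcal{F}_W^{-1}(f)=U(\check f)\circ\mathcal{D}$; as written, the proof is incomplete.
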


\begin{proof}
To see how the result follows from \cite[Thm.~4.15]{fuhr2004}, we need some terminology regarding direct integrals, see \cite[Section 3.3]{fuhr2004}. Recall that the Plancherel theorem \cite[Thm.~3.48]{fuhr2004} supplies a measurable field of Hilbert spaces indexed by the dual group $\{\mathcal{H}_\pi\}_{[\pi]\in \hat{G}}$. For the affine group $G=\aff$, the Plancherel measure is counting measure supported on the two irreducible representations $\pi_1(x,a)=U(x,a)$ on $L^2(\mathbb{R}_+)$ and $\pi_2(x,a)=U(x,a)$ on $L^2(\mathbb{R}_-)\coloneqq L^2(\mathbb{R}_-, r^{-1} \, dr)$. So we can construct an element $\{A_{[\pi]}\}_{[\pi]\in \hat{G}}$ of the direct integral 
\begin{equation*}
 \int_{\hat{G}}
    ^\oplus HS(\mathcal{H}_\pi) d\hat{\mu}([\pi])
\end{equation*}
by choosing $A_{[\pi_{1}]} = A\mathcal{D}^{-1}$ and $A_{[\pi]}=0$ for $[\pi]\neq [\pi_{1}]$. Inserting this measurable field of trace-class operators into \cite[Thm.~4.15]{fuhr2004} then gives the conclusion. 
\end{proof}

For $f,g \in L^{2}(\mathbb{R})$ we denote by $\mathrm{SCAL}_{g}f$ the \textit{scalogram} of $f$ with respect to $g$ given by $\mathrm{SCAL}_{g}f(x,a) \coloneqq|\mathcal{W}_{g}f(x,a)|^2$ where $\mathcal{W}_{g}f$ is the continuous wavelet transform 
\[\mathcal{W}_{g}f(x,a) \coloneqq \frac{1}{\sqrt{a}}\int_{\mathbb{R}}f(t)\overline{g\left(\frac{t - x}{a}\right)} \, dt.\]
The following result, which follows from Lemma \ref{lem: convolution_of_rank_one} and Example \ref{ex:Fourier_Wigner_of_rank_one}, gives a connection between the affine Fourier-Wigner transform, affine convolutions, and the scalogram.

\begin{corollary}\label{cor:equallity_of_scalogram_and_other_stuff}
Let $f,g \in L^{2}(\mathbb{R})$ such that $\psi \coloneqq \hat{f}$ and $\phi\coloneqq \hat{g}$ are supported in $\mathbb{R}_{+}$ and are in $L^{2}(\mathbb{R}_{+})$. If $\psi$ is admissible then
\begin{equation}
\label{eq:equality_of_different_stuff}
    |\mathcal{F}_W(\phi\otimes \mathcal{D}^{-1}\psi)(x,a)|^2 = (\phi \otimes \phi)\star_\aff (\psi \otimes \psi)(-x,a) = \frac{1}{a}\mathrm{SCAL}_{g}f(x,a).
\end{equation}
\end{corollary}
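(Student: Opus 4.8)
The plan is to establish the two equalities in \eqref{eq:equality_of_different_stuff} separately, both following quickly from results already available in the excerpt. For the first equality, I would start from Example \ref{ex:Fourier_Wigner_of_rank_one}, which tells us that for $\phi \in L^{2}(\mathbb{R}_{+})$ and $\psi \in \mathrm{dom}(\mathcal{D})$ one has $\mathcal{F}_W(\phi \otimes \psi)(x,a) = \langle \phi, U(x,a)^*\mathcal{D}\psi \rangle_{L^2(\mathbb{R}_+)}$. Applying this with $\psi$ replaced by $\mathcal{D}^{-1}\psi$ (which is legitimate since $\psi$ is assumed admissible, i.e.\ $\psi \in \mathrm{dom}(\mathcal{D}^{-1})$, and $\mathcal{D}\mathcal{D}^{-1}\psi = \psi$) gives
\[
\mathcal{F}_W(\phi \otimes \mathcal{D}^{-1}\psi)(x,a) = \langle \phi, U(x,a)^*\psi \rangle_{L^2(\mathbb{R}_+)}.
\]
Taking modulus squared and comparing with Lemma \ref{lem: convolution_of_rank_one} — specifically the identity $(\phi \otimes \phi)\star_\aff(\psi \otimes \psi)(x,a) = |\langle \phi, U(-x,a)^*\psi \rangle_{L^2(\mathbb{R}_+)}|^2$ — yields
\[
|\mathcal{F}_W(\phi \otimes \mathcal{D}^{-1}\psi)(x,a)|^2 = |\langle \phi, U(x,a)^*\psi \rangle_{L^2(\mathbb{R}_+)}|^2 = (\phi \otimes \phi)\star_\aff(\psi \otimes \psi)(-x,a),
\]
which is the first equality. (One should note the complex conjugate in Lemma \ref{lem: convolution_of_rank_one} disappears under the modulus, so the order of $\phi$ and $\psi$ is immaterial here.)

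For the second equality, I would unwind the inner product $\langle \phi, U(x,a)^*\psi \rangle_{L^2(\mathbb{R}_+)} = \langle U(x,a)\phi, \psi \rangle_{L^2(\mathbb{R}_+)}$ using the definition $U(x,a)\phi(r) = e^{2\pi i x r}\phi(ar)$ and the inner product $\langle \cdot,\cdot\rangle_{L^2(\mathbb{R}_+)}$ with respect to $r^{-1}\,dr$. This turns the pairing into an integral over $\mathbb{R}_+$ which, after substituting so that the integration variable ranges over all of $\mathbb{R}$ (recall $\psi = \hat f$ and $\phi = \hat g$ are supported in $\mathbb{R}_+$, so extending to $\mathbb{R}$ is harmless), becomes a Fourier-side expression for the wavelet transform. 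Using Parseval/Plancherel and the relation between $\mathcal{W}_g f$ and the Fourier transforms $\hat f, \hat g$ — namely that $\mathcal{W}_g f(x,a) = \sqrt{a}\int_{\mathbb{R}} \hat f(\xi)\overline{\hat g(a\xi)}e^{2\pi i x \xi}\,d\xi$, which follows by a direct Fourier computation from the definition of $\mathcal{W}_g$ — one identifies $\langle U(x,a)\phi, \psi\rangle_{L^2(\mathbb{R}_+)}$ with $a^{-1/2}\mathcal{W}_g f(x,a)$ up to the dilation/normalisation bookkeeping. Squaring the modulus then gives $\frac{1}{a}\mathrm{SCAL}_g f(x,a)$, completing the chain.

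The main obstacle is the second equality: one has to be careful with the several different normalisations in play — the measure $r^{-1}\,dr$ defining $L^2(\mathbb{R}_+)$, the $1/\sqrt{a}$ in the definition of $\mathcal{W}_g f$, and the factor $\sqrt{a}$ hidden in the relation $U(x,a) = a^{-1/2}M_x D_{1/a}$ noted in \eqref{Fourier_side_action} — and check that they conspire to produce exactly the factor $1/a$ rather than, say, $1$ or $1/a^2$. The cleanest route is to compute $\langle U(x,a)\phi,\psi\rangle_{L^2(\mathbb{R}_+)} = \int_0^\infty e^{2\pi i x r}\phi(ar)\overline{\psi(r)}\,\frac{dr}{r}$ directly, change variables $r \mapsto r$ on the Fourier side, and match term by term against the Fourier representation of $\mathcal{W}_g f$; the $r^{-1}$ weight is precisely what is needed because $\phi(ar)\overline{\psi(r)}r^{-1}$ pairs correctly after writing $\phi(ar) = \hat g(ar)$ and recognising the scaling. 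I would carry out this bookkeeping explicitly in one or two lines, citing only Example \ref{ex:Fourier_Wigner_of_rank_one}, Lemma \ref{lem: convolution_of_rank_one}, and \eqref{Fourier_side_action}, and leaving the remaining elementary Fourier manipulation to the reader as the corollary statement already signals.
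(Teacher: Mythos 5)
Your handling of the first equality is correct and is exactly the route the paper intends: since $\psi$ is admissible, $\mathcal{D}^{-1}\psi\in\mathrm{dom}(\mathcal{D})$ with $\mathcal{D}\mathcal{D}^{-1}\psi=\psi$, so Example \ref{ex:Fourier_Wigner_of_rank_one} gives $\mathcal{F}_W(\phi\otimes\mathcal{D}^{-1}\psi)(x,a)=\langle\phi,U(x,a)^*\psi\rangle_{L^2(\mathbb{R}_+)}$, and Lemma \ref{lem: convolution_of_rank_one} evaluated at $(-x,a)$ gives the same modulus squared.

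The second equality is where your argument has a genuine gap, and the step you leave "to the reader" is precisely the step that does not close as you describe it. Writing both sides out, one has
\begin{equation*}
\langle U(x,a)\phi,\psi\rangle_{L^2(\mathbb{R}_+)}=\int_0^\infty \hat{g}(ar)\,\overline{\hat{f}(r)}\,e^{2\pi i xr}\,\frac{dr}{r},
\qquad
\frac{1}{a}\mathrm{SCAL}_gf(x,a)=\Bigl|\int_0^\infty \hat{f}(r)\,\overline{\hat{g}(ar)}\,e^{2\pi i xr}\,dr\Bigr|^2,
\end{equation*}
the latter by Plancherel applied to $\mathcal{W}_gf(x,a)=\langle f,T_xD_ag\rangle_{L^2(\mathbb{R})}$. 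The two integrands differ by a full factor of $r$ (a function of the integration variable), not merely by powers of $a$, so no rearrangement of the $\sqrt{a}$'s in \eqref{Fourier_side_action} and in the definition of $\mathcal{W}_g$ can make them "pair correctly"; your claim that the $r^{-1}$ weight "is precisely what is needed" is exactly where a written-out proof would fail (for instance, at $a=1$, $x=0$, $\hat f=\hat g=\chi_{[1,2]}$ the two integrals are $\log 2$ and $1$). The missing mechanism is the Duflo--Moore operator: the unitary identification of the Hardy-space picture with $L^2(\mathbb{R}_+)$ is $\mathcal{D}\mathcal{F}$, not $\mathcal{F}$ (see Section \ref{sec: Representing Localization Operators Through Affine Convolution}), and it intertwines $T_xD_a$ with $U(-x,a)$, so that $\mathcal{W}_gf(x,a)=\langle \mathcal{D}\hat f,\,U(-x,a)\mathcal{D}\hat g\rangle_{L^2(\mathbb{R}_+)}$; relating this expression, which involves $\mathcal{D}\hat f$ and $\mathcal{D}\hat g$, back to the pairing of $\hat f$ and $\hat g$ requires the commutation relations \eqref{eq:comrelD}, and this is exactly where the factor $1/a$ and the sign of $x$ in \eqref{eq:equality_of_different_stuff} must be tracked. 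Since this bookkeeping is the entire content of the second equality, you need to carry it out explicitly; as sketched, your normalisation check is asserted rather than performed, and performing it in the way you propose leads to a genuine mismatch rather than the stated identity.
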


\begin{remark}
The condition that $\psi$ is admissible in Corollary \ref{cor:equallity_of_scalogram_and_other_stuff} is only necessary for the first equality in \eqref{eq:equality_of_different_stuff}.
Recall that the affine Wigner distribution $W_{\aff}^{\psi}$ is the affine Weyl symbol of the rank-one operator $\psi \otimes \psi$. If we use Proposition \ref{prop:convolutionasweyl} together with Corollary \ref{cor:equallity_of_scalogram_and_other_stuff}, then we recover \cite[Thm.~5.1]{berge2019affine}.
\end{remark}

Corollary \ref{cor:equallity_of_scalogram_and_other_stuff} shows that we have the simple relation 
\begin{equation} \label{eq:simpleadmissiblerelation}
    |\mathcal{F}_W(A\mathcal{D}^{-1})(x,a)|^2=A\star_\aff A(-x,a)
\end{equation}
for positive rank-one operators $A$. By Corollary \ref{corr:admissibility_check}, admissibility therefore means that ${\mathcal{F}_W(A\mathcal{D}^{-1})\in L^2_r(\aff)}$ in this case. For more general operators, \eqref{eq:simpleadmissiblerelation} will no longer hold. However, we still obtain a result relating admissibility to the Fourier-Wigner transform.
Note that in the first statement in Proposition \ref{prop:equvalence_right} if $A\in \mathcal{S}_1$ we interpret $\mathcal{F}_W(A\mathcal{D}^{-1})\coloneqq\tr(AU(x,a))$ if we do not know that $A\mathcal{D}^{-1}$ extends to a Hilbert-Schmidt operator.
\begin{proposition}\label{prop:equvalence_right}
    Let $A$ be a trace-class operator on $L^{2}(\mathbb{R}_{+})$. Then the following are equivalent:
    \begin{enumerate}[1)]
        \item $\mathcal{F}_W(A\mathcal{D}^{-1})\in L^2_r(\aff)$.\label{1}
        \item $A\mathcal{D}^{-1}$ extends from $\mathrm{dom}(\mathcal{D}^{-1})$ to a Hilbert-Schmidt operator on $L^2(\mathbb{R}_+)$.\label{2}
        \item $A^*A$ is admissible.\label{3}
    \end{enumerate}
\end{proposition}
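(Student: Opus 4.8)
The plan is to establish the cycle of implications $\ref{2}\Rightarrow\ref{1}\Rightarrow\ref{3}\Rightarrow\ref{2}$, leaning on the unitarity of the affine Fourier-Wigner transform $\mathcal{F}_W:\mathcal{S}_2\to \mathcal{Q}_1$ (Proposition \ref{proposition_correspondence_fourier_wigner}) together with the trace formula of Proposition \ref{prop:fourier_wigner_explicit_formula} and the admissibility characterization in Corollary \ref{cor:admissible_condition}. The implication $\ref{2}\Rightarrow\ref{1}$ is essentially by definition: if $A\mathcal{D}^{-1}$ extends to a Hilbert-Schmidt operator, then $\mathcal{F}_W(A\mathcal{D}^{-1})$ is defined as an element of $\mathcal{Q}_1\subset L^2_r(\aff)$ by Proposition \ref{proposition_correspondence_fourier_wigner}, and by Proposition \ref{prop:fourier_wigner_explicit_formula} it agrees with the function $(x,a)\mapsto \tr(AU(x,a))$, which is the object referred to in statement \ref{1}.

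For $\ref{1}\Rightarrow\ref{3}$, the key observation is that $A^*A$ being admissible is, by Corollary \ref{corr:admissibility_check} (since $A^*A$ is a positive trace-class operator), equivalent to $(A^*A)\star_\aff (A^*A)\in L^1_r(\aff)$, but more directly I would use that admissibility of $A^*A$ amounts to $\mathcal{D}^{-1}A^*A\mathcal{D}^{-1}$ extending to a trace-class operator. Now $\mathcal{D}^{-1}A^*A\mathcal{D}^{-1} = (A\mathcal{D}^{-1})^*(A\mathcal{D}^{-1})$ on $\mathrm{dom}(\mathcal{D}^{-1})$, so $\mathcal{D}^{-1}A^*A\mathcal{D}^{-1}$ extends to trace-class precisely when $A\mathcal{D}^{-1}$ extends to Hilbert-Schmidt. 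Thus $\ref{3}$ is literally equivalent to $\ref{2}$, and this equivalence is the cleanest route: I would prove $\ref{2}\Leftrightarrow\ref{3}$ by this factorization argument, checking that the densely-defined product $(A\mathcal{D}^{-1})^*(A\mathcal{D}^{-1})$ being bounded-and-trace-class forces $A\mathcal{D}^{-1}$ itself to be bounded (hence Hilbert-Schmidt, as $\sqrt{\mathcal{D}^{-1}A^*A\mathcal{D}^{-1}}$ has the same singular values), and conversely. Then $\ref{1}\Rightarrow\ref{2}$ closes the loop: if the function $\tr(AU(x,a))$ lies in $L^2_r(\aff)=\mathcal{Q}_1$ (after checking it lies in $\mathcal{Q}_1$, i.e.\ that its partial Fourier transform is supported in $\mathbb{R}_+\times\mathbb{R}_+$, which follows from the structure of $U$), then by Proposition \ref{proposition_correspondence_fourier_wigner} it equals $\mathcal{F}_W^{-1}$ applied to some Hilbert-Schmidt operator, and one identifies that operator with the extension of $A\mathcal{D}^{-1}$ using Example \ref{ex:Fourier_Wigner_of_rank_one} on rank-one pieces of the singular value decomposition of $A$, then summing.

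The main obstacle I anticipate is the careful bookkeeping around densely-defined unbounded operators: $A\mathcal{D}^{-1}$ is a priori only defined on $\mathrm{dom}(\mathcal{D}^{-1})$, and one must argue rigorously that control of the ``formal'' product $\mathcal{D}^{-1}A^*A\mathcal{D}^{-1}$ (or of the function $\tr(AU(x,a))$ in $L^2$) upgrades to the genuine existence of a Hilbert-Schmidt extension of $A\mathcal{D}^{-1}$. I would handle this by working with the singular value decomposition $A=\sum_n s_n \xi_n\otimes\eta_n$, writing $A\mathcal{D}^{-1}=\sum_n s_n \xi_n\otimes (\mathcal{D}^{-1}\eta_n)$ at least formally, applying Example \ref{ex:Fourier_Wigner_of_rank_one} termwise to get $\mathcal{F}_W((\xi_n)\otimes\eta_n\text{-type terms})$, and showing that $\|\mathcal{F}_W(A\mathcal{D}^{-1})\|_{L^2_r(\aff)}^2 = \sum_n s_n^2\|\mathcal{D}^{-1}\eta_n\|^2_{L^2(\mathbb{R}_+)}$, which is finite exactly under each of the three conditions; the convergence of this series is then the common hinge. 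A secondary point requiring care is verifying that $(x,a)\mapsto\tr(AU(x,a))$ genuinely lands in the subspace $\mathcal{Q}_1$ and not merely in $L^2_r(\aff)$, but this is forced by the form of $U$ and can be dispatched quickly.
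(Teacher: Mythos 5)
Your handling of 2) $\Leftrightarrow$ 3) is essentially the paper's argument: factor $\mathcal{D}^{-1}A^*A\mathcal{D}^{-1}=(A\mathcal{D}^{-1})^*A\mathcal{D}^{-1}$ and pass back and forth between trace-class of the product and Hilbert--Schmidt of the factor. (The paper settles the domain bookkeeping you flag by invoking $(A\mathcal{D}^{-1})^*=\mathcal{D}^{-1}A^*$ with equality of domains, which is what shows $A^*A$ maps $\mathrm{dom}(\mathcal{D})$ into $\mathrm{dom}(\mathcal{D}^{-1})$ as Definition \ref{def:admissibility} requires; your sketch should make that explicit, but it is a fixable detail.) The direction 2) $\Rightarrow$ 1) via Proposition \ref{proposition_correspondence_fourier_wigner} and Proposition \ref{prop:fourier_wigner_explicit_formula} is also fine.

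The genuine gap is 1) $\Rightarrow$ 2). The paper does not attempt an elementary proof here: it obtains the equivalence of 1) and 2) by applying F\"uhr's theorem \cite[Thm.~4.15]{fuhr2004} to the measurable field with $A_{[\pi_1]}=A$ and $A_{[\pi]}=0$ otherwise. Your proposed replacement is circular. Example \ref{ex:Fourier_Wigner_of_rank_one} applies to a rank-one piece $\xi_n\otimes\mathcal{D}^{-1}\eta_n$ only when $\eta_n\in\mathrm{dom}(\mathcal{D}^{-1})$, i.e.\ only when the right singular vectors of $A$ are already admissible --- which, under hypothesis 1) alone, is exactly what has to be proved. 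For the same reason the ``hinge'' identity $\|\mathcal{F}_W(A\mathcal{D}^{-1})\|_{L^2_r(\aff)}^2=\sum_n s_n^2\|\mathcal{D}^{-1}\eta_n\|^2_{L^2(\mathbb{R}_+)}$ cannot be established from 1): knowing only that $\tr(AU(x,a))\in L^2_r(\aff)$, the individual terms $s_n\langle U(\cdot)\xi_n,\eta_n\rangle$ are not known to lie in $L^2_r(\aff)$, so you cannot expand the norm of the sum and discard cross terms using orthogonality of the $\xi_n$; that computation is legitimate only after admissibility of each $\eta_n$ is known, i.e.\ after 2) or 3). (For rank-one $A=\xi\otimes\eta$ a direct Tonelli computation does give $\|\tr(AU(\cdot))\|^2_{L^2_r(\aff)}=\|\xi\|^2\|\mathcal{D}^{-1}\eta\|^2$ with both sides possibly infinite, so that special case is genuinely elementary; ruling out cancellations among infinitely many non-admissible pieces in the general trace-class case is precisely the nontrivial content of the Duflo--Moore/F\"uhr theorem.) So either cite \cite[Thm.~4.15]{fuhr2004} for 1) $\Leftrightarrow$ 2) as the paper does, or supply a substantially new argument; the sketch as written does not close 1) $\Rightarrow$ 2).
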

\begin{proof}
The equivalence of \ref{1} and \ref{2} follows from \cite[Thm.~4.15]{fuhr2004}, by applying that theorem to the element $\{A_{[\pi]}\}_{[\pi]\in \hat{G}}$ of the direct integral (see proof of Proposition \ref{prop:fourier_wigner_explicit_formula})
\begin{equation*}
 \int_{\hat{G}}
    ^\oplus HS(\mathcal{H}_\pi) d\hat{\mu}([\pi])
\end{equation*}
given by choosing $A_{[\pi_{1}]} = A$ and $A_{[\pi]}=0$ for $[\pi]\neq [\pi_{1}]$. 

The equivalence of \ref{2} and \ref{3}  is clear apart from technicalities resulting from the unboundedness of $\mathcal{D}^{-1}$. If we assume \ref{2}, then \cite[Thm.~13.2]{rudin1991functionalanalysis} gives that $(A\mathcal{D}^{-1})^*=\mathcal{D}^{-1}A^*$, where the equality includes equality of domains. As the domain of the left term is all of $L^2(\mathbb{R}_+)$ by assumption, this means that the range of $A^*$ is contained in $\mathrm{dom}(\mathcal{D}^{-1}).$ In particular, $A^*A$ maps $\mathrm{dom}(\mathcal{D})$ into $\mathrm{dom}(\mathcal{D}^{-1})$, and as we also have $\mathcal{D}^{-1}A^*A\mathcal{D}^{-1}=(A\mathcal{D}^{-1})^*A\mathcal{D}^{-1}$ where $A\mathcal{D}^{-1}$ is Hilbert-Schmidt, $A^*A$ satisfies all requirements for being admissible.

Conversely, if $A^*A$ is admissible, then we have for $\psi \in \mathrm{dom}(\mathcal{D}^{-1})$
\begin{equation*}
  \|A\mathcal{D}^{-1}\psi\|^2_{L^2(\mathbb{R}_+)}= \langle \mathcal{D}^{-1}A^*A\mathcal{D}^{-1}\psi,\psi \rangle_{L^2(\mathbb{R}_+)}\leq \|\mathcal{D}^{-1}A^*A\mathcal{D}^{-1}\|_{\mathcal{L}(L^2(\mathbb{R}_+))} \|\psi\|_{L^2(\mathbb{R}_+)}^2.
\end{equation*}
So $A\mathcal{D}^{-1}$ extends to a bounded operator, and as this operator satisfies that \[(A\mathcal{D}^{-1})^*A\mathcal{D}^{-1}=\mathcal{D}^{-1}A^*A\mathcal{D}^{-1}\] is trace-class, $A\mathcal{D}^{-1}$ is a Hilbert-Schmidt operator.
\end{proof}

\begin{remark}
Recall that we consider $\mathcal{F}_W$ a Fourier transform of operators. The inequality $\|\mathcal{F}_W(A\mathcal{D}^{-1})\|_{L^\infty(\aff)}\leq \|A\|_{\mathcal{S}_{1}}$ and the equality $\|A\|_{\mathcal{S}_{2}}=\|\mathcal{F}_W(A)\|_{L^2_r(\aff)}$ might therefore be interpreted as the endpoints $p=\infty$ and $p=2$ of a Hausdorff-Young inequality, where the appearance of $\mathcal{D}^{-1}$ suggests that the definition of the Fourier-Wigner transform must depend on $p$. In fact, a Hausdorff-Young inequality of this kind---formulated in the other direction, i.e.\ for maps from functions on $\aff$ to operators---was shown in \cite[Thm.~1.41]{Eymard:1979} for $1\leq p \leq 2$.
\end{remark}

There is a second Fourier transform related to the affine group that comes from representation theory. We define the \textit{affine Fourier-Kirillov transform} as the map $\mathcal{F}_{\mathrm{KO}}: \mathcal{Q}_{1} \to L_{r}^{2}(\aff)$ given by
\begin{align*}
(\mathcal{F}_{\mathrm{KO}}f)(x,a)&=\sqrt{a}\int_{\mathbb{R}^{2}}f\left(\frac{v}{\lambda(-u)},e^u\right)e^{-2\pi i (xu+av)} \, \frac{du \, dv}{\sqrt{\lambda(-u)}}, \qquad (x,a) \in \aff.
\end{align*}

More information about the Fourier-Kirillov transform can be found in \cite{kirillov2004lectures}. The following result, which is motivated by \eqref{eq: heisenbergweylFW} and is a slight generalization of \cite[Section VIII.6]{ali2000}, shows that the affine Weyl quantization is intrinsically linked with the Fourier transforms on the affine group. 

\begin{proposition}
\label{commutative_diagram_result}
    Let $A_f$ be a Hilbert-Schmidt operator on $L^{2}(\mathbb{R}_{+})$ with affine symbol $f \in L_{r}^{2}(\aff)$. Then the following diagram commutes:
    \begin{equation*}
\begin{tikzcd}[column sep=1.5em]
 & \mathcal{S}_{2} \arrow[dl,"\mathcal{F}_W"'] \\
\mathcal{Q}_{1}  \arrow[rr,"\mathcal{F}_{\mathrm{KO}}"'] && L_{r}^{2}(\aff)\arrow[ul,"f\longmapsto A_f"']
    \end{tikzcd}
    \end{equation*}
\end{proposition}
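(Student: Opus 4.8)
The plan is to verify that $\mathcal{F}_{\mathrm{KO}} = \mathcal{F}_W \circ (f \mapsto A_f)^{-1}$, i.e.\ that for a Hilbert–Schmidt operator $A$ with affine symbol $f \in L^2_r(\aff)$ one has $\mathcal{F}_{\mathrm{KO}}(f) = \mathcal{F}_W(A)$. Since all three maps in the triangle are unitary (Proposition \ref{quantization_isometry_proposition}, Proposition \ref{proposition_correspondence_fourier_wigner}, and the fact that $\mathcal{F}_{\mathrm{KO}}$ is an explicit Fourier-type integral operator between $\mathcal{Q}_1$ and $L^2_r(\aff)$), it suffices to check the identity on a convenient dense subspace — for instance on rank-one operators $\phi \otimes \psi$ with $\phi, \psi$ sufficiently nice (Schwartz on $\mathbb{R}_+$, with Fourier supports behaving well), and then extend by continuity. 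On such operators we have explicit formulas for all three objects: the affine Weyl symbol $W_\aff^{\psi,\phi}$ from the Definition in Section \ref{sec: Affine_Quantization}, the affine Fourier-Wigner transform $\mathcal{F}_W(\phi \otimes \psi)(x,a) = \langle \phi, U(x,a)^*\mathcal{D}\psi\rangle_{L^2(\mathbb{R}_+)}$ from Example \ref{ex:Fourier_Wigner_of_rank_one}, and the defining integral for $\mathcal{F}_{\mathrm{KO}}$.

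First I would compute $\mathcal{F}_{\mathrm{KO}}(W_\aff^{\psi,\phi})$ directly. Starting from
\[
W_\aff^{\psi,\phi}(y,b) = \int_{-\infty}^{\infty}\psi(b\lambda(s))\overline{\phi(b\lambda(-s))}e^{-2\pi i y s}\, ds,
\]
substitute into the definition of $\mathcal{F}_{\mathrm{KO}}$ with the arguments $y = v/\lambda(-u)$, $b = e^u$: the inner $s$-variable Fourier transform and the outer $(u,v)$-variable Fourier transform partially collapse. The $v$-integral produces (up to Jacobian factors involving $\lambda(-u)$) a delta forcing $s = u$, which kills the $s$-integral and leaves a single integral in $u$ against $e^{-2\pi i a v}$, i.e.\ a one-dimensional Fourier transform in $v$. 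Tracking the weight $\sqrt{a}$, the factor $1/\sqrt{\lambda(-u)}$, and the change of variables $b = e^u$ carefully, the outcome should be exactly $\langle \phi, U(x,a)^*\mathcal{D}\psi\rangle_{L^2(\mathbb{R}_+)}$ written as an integral over $\mathbb{R}_+$ — matching $\mathcal{F}_W(\phi\otimes\psi)(x,a)$ by Example \ref{ex:Fourier_Wigner_of_rank_one}. One may alternatively route the computation through the Mellin-transform correspondence used in Lemma \ref{lemma_swartz}, reducing the affine identity to the known Heisenberg formula \eqref{eq: heisenbergweylFW}, but the direct computation is cleaner here.

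Once the identity $\mathcal{F}_{\mathrm{KO}}(W_\aff^{\psi,\phi}) = \mathcal{F}_W(\phi\otimes\psi)$ is established on rank-one operators built from Schwartz data, density of the linear span of such $W_\aff^{\psi,\phi}$ in $L^2_r(\aff)$ (which follows from the orthogonality relation \eqref{affine_orthogonality_relation_new} together with an appropriate basis argument, as in Lemma \ref{lemma_swartz}) and boundedness of all maps involved give the commutativity of the diagram on all of $\mathcal{S}_2$. The main obstacle I anticipate is purely computational: keeping the half-dozen Jacobian and normalization factors — the $\sqrt{a}$ prefactor, the $\lambda(-u)^{-1/2}$ weight, the Jacobian of $b = e^u$ and of $y = v/\lambda(-u)$, and the $\sqrt{r}$ built into $\mathcal{D}$ — consistent so that they collapse to exactly the factor appearing in $\mathcal{F}_W$. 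A secondary technical point is justifying the interchange of the order of integration when the delta arises; this is harmless for Schwartz inputs (one can regularize or integrate against a test function), which is why restricting to a dense nice subspace at the outset is the right move.
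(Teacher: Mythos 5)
You have composed the triangle in the wrong direction, and the identity you set out to prove is not the one the proposition asserts. The arrows are $\mathcal{S}_2\xrightarrow{\mathcal{F}_W}\mathcal{Q}_1\xrightarrow{\mathcal{F}_{\mathrm{KO}}}L^2_r(\aff)\xrightarrow{f\mapsto A_f}\mathcal{S}_2$, so commutativity means the loop is the identity, i.e.\ $\mathcal{F}_{\mathrm{KO}}(\mathcal{F}_W(A))=f_A$ for $A\in\mathcal{S}_2$ (equivalently, the affine Weyl symbol of $\mathcal{F}_W^{-1}(g)$ is $\mathcal{F}_{\mathrm{KO}}(g)$ for $g\in\mathcal{Q}_1$). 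What you propose to prove is the reversed composition $\mathcal{F}_{\mathrm{KO}}(f_A)=\mathcal{F}_W(A)$. These are not equivalent here: they would coincide only if $\mathcal{F}_W\circ(f\mapsto A_f)$ were an involution of $L^2_r(\aff)$, which is impossible since its range is the proper subspace $\mathcal{Q}_1$ (and, unlike the Heisenberg case where $\mathcal{F}_\sigma^2=\mathrm{id}$ makes both readings of \eqref{eq: heisenbergweylFW} true, no such symmetry survives in the non-unimodular setting). There is also a domain problem: $\mathcal{F}_{\mathrm{KO}}$ is defined on $\mathcal{Q}_1=\mathrm{Im}(\mathcal{F}_W)$, while the symbol $f_A$ (e.g.\ $W_{\aff}^{\psi,\phi}$) is a general element of $L^2_r(\aff)$ and need not lie in $\mathcal{Q}_1$.

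Your anticipated computation also does not go through as described: inserting $W_{\aff}^{\psi,\phi}\bigl(\tfrac{v}{\lambda(-u)},e^u\bigr)=\int_{\mathbb{R}}\psi(e^u\lambda(s))\overline{\phi(e^u\lambda(-s))}e^{-2\pi i vs/\lambda(-u)}\,ds$ into $\mathcal{F}_{\mathrm{KO}}$, the $v$-integral produces $\delta\bigl(a+s/\lambda(-u)\bigr)$, forcing $s=-a\lambda(-u)$ rather than $s=u$; the variables do not decouple and the result is a $u$-integral with $\psi,\phi$ evaluated at $e^u\lambda(\mp a\lambda(-u))$, which is not $\langle\phi,U(x,a)^*\mathcal{D}\psi\rangle_{L^2(\mathbb{R}_+)}$. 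The correct (and shorter) argument runs in the other direction and needs no density step: for $g\in\mathcal{Q}_1$ the kernel of $\mathcal{F}_W^{-1}(g)$ is $K_g(s,r)=\sqrt{r}(\mathcal{F}_1 g)(r,s/r)$ by \eqref{eq:integral_kernel_fourier_wigner}, and feeding this into the dequantization formula \eqref{eq: dequantization_formula}, using $\lambda(u)/\lambda(-u)=e^u$ and one change of variables, yields exactly $(\mathcal{F}_{\mathrm{KO}}g)(x,a)$, i.e.\ $A_{\mathcal{F}_{\mathrm{KO}}(g)}=\mathcal{F}_W^{-1}(g)$, which is the asserted commutativity.
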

\begin{proof}
Recall from \eqref{eq:integral_kernel_fourier_wigner} that the integral kernel of $\mathcal{F}_W^{-1}(g)$ for $g \in \mathcal{Q}_{1}$ is given by \[K_{g}(s,r)=\sqrt{r}(\mathcal{F}_1g)(r,s/r), \qquad s,r \in \mathbb{R}_{+}.\] Hence by using \eqref{eq: dequantization_formula} and a change of variables, we see that the affine Weyl symbol of $\mathcal{F}_W^{-1}(g)$ is given at the point $(x,a) \in \aff$ by
\begin{align*}
    \int_{-\infty}^{\infty}\sqrt{a\lambda(-u)}\mathcal{F}_1(g)(a\lambda(-u),e^{u}) e^{-2\pi ixu} \, du
    &=\int_{\mathbb{R}^{2}}\sqrt{a\lambda(-u)}g(v,e^{u}) e^{-2\pi i(xu +av\lambda(-u))} \, du \, dv\\
    &=\sqrt{a}\int_{\mathbb{R}^{2}}g\left(\frac{v}{\lambda(-u)},e^{u}\right) e^{-2\pi i(xu +av)} \, \frac{du \, dv}{\sqrt{\lambda(-u)}} 
    \\ & = (\mathcal{F}_{\mathrm{KO}}g)(x,a). \qedhere
\end{align*}
\end{proof}

\begin{remark}
\hfill
\begin{itemize}
    \item In \cite{Mantoiu2019} the authors define an alternative quantization scheme on general type 1 groups. Their quantization scheme together with the affine Weyl quantization is used in \cite{Mantoiu2019} to define a quantization scheme on the cotangent bundle $T^*\aff$.
    \item Consider $A_f$ for some $f\in L^2_r(\aff).$ Inserting $f=\mathcal{F}_{KO}\mathcal{F}_W(A_f)$ into Proposition \ref{prop:admissibilityandweylsymbol} allows us to obtain a formal expression for $\tr(\mathcal{D}^{-1}A_f\mathcal{D}^{-1})$ in terms of $\mathcal{F}_W(A_f)$: a formal calculation gives that for sufficiently nice operators $A_f$ we have
\begin{equation} \label{eq:admissbilityfw}
	\tr(\mathcal{D}^{-1}A_f\mathcal{D}^{-1}) = \int_0^\infty [\mathcal{F}_1\mathcal{F}_W(A_f)](a,1)\, \frac{da}{a^{3/2}},
\end{equation}	
where $\mathcal{F}_1$ is the Fourier transform in the first coordinate. This is similar to a condition in \cite[Cor.~5.2]{gazeau2016covariant}, where finiteness of \eqref{eq:admissbilityfw} is used as a necessary condition for $1\star_\aff A_f=I_{L^2(\mathbb{R}_+)}$ to hold, where $1(x,a)=1$ for all $(x,a)\in \aff$. We will see in Section \ref{sec: Other Covariant Integral Quantizations} that this is closely related to admissibility of $A_f$. Unfortunately, the formal calculation leading to \eqref{eq:admissbilityfw} does not give clear conditions on $A_f$ for the equality to hold. 
\end{itemize}
\end{remark}

\subsection{Affine Quantum Bochner Theorem}
\label{sec: Plancherel Measure and Affine KLM conditions}
On the Heisenberg group, the Fourier-Wigner transform behaves in many ways like the Fourier transform on functions. In particular, for $f \in L^{1}(\mathbb{R}^{2n})$ and $S,T \in \mathcal{S}_{1}(\mathbb{R}^n)$ we get the \textit{decoupling equations}
\begin{equation}\label{eq: conv_fourier}
\mathcal{F}_W(f\star S)=\mathcal{F}_\sigma(f)\mathcal{F}_W(S), \qquad \mathcal{F}_\sigma(S\star T)=\mathcal{F}_W(S)\mathcal{F}_W(T),
\end{equation}
where $\mathcal{F}_{\sigma}$ denotes the symplectic Fourier transform and $\mathcal{F}_{W}$ denotes the classical Fourier-Wigner transform introduced in Section \ref{sec: Operator Convolution}. Although the affine version of \eqref{eq: conv_fourier} does not hold, one can develop as a special case of \cite[Thm.~4.12]{fuhr2004} a version of Bochner's theorem for the affine Fourier-Wigner transform. This is analogous to the \textit{quantum Bochner theorem} \cite[Prop.~3.2]{werner1984} for the Heisenberg group. \par 
Bochner's classical theorem \cite[Thm.~4.19]{folland2016course} characterizes functions that are Fourier transforms of positive measures. The Bochner theorem for the affine Fourier-Wigner transform answers the following question: Which functions on $\aff$ are of the form $\mathcal{F}_W(S)$, where $S$ is a positive trace-class operator? As in Bochner's classical theorem, it turns out that the correct notion to consider is functions of positive type.
Recall that a function $f:\aff \to \mathbb{C}$ is a \textit{function of positive type} if for any finite selection of points
$\Omega \coloneqq \{(x_{1}, a_{1}), \dots ,(x_{n}, a_{n})\} \subset \aff$ the 
matrix $A_\Omega$ with entries
\[(A_{\Omega})_{i,j} \coloneqq f((x_{i}, a_{i})^{-1}(x_{j}, a_{j}))\] is positive semi-definite. 
Before stating the general result we consider an illuminating special case.

\begin{example}
Assume that $A = \phi \otimes \psi$ is a rank-one operator where $\phi, \psi \in L^{2}(\mathbb{R}_{+})$. We will show that 
\begin{equation}
\label{eq:computation_motivation_positive}
    \mathcal{F}_{W}(A\mathcal{D}^{-1})(x,a) = \langle U(x,a)\phi,\psi\rangle_{L^{2}(\mathbb{R}_{+})}
\end{equation}
is a function of positive type on $\aff$ if and only if $A$ is a positive operator.
If $A$ is positive, then a standard fact \cite[Prop.~3.15]{folland2016course} shows that \eqref{eq:computation_motivation_positive} is a function of positive type.
Conversely, we have from \cite[Cor.~3.22]{folland2016course} that \[\mathcal{F}_{W}(\phi \otimes \psi\mathcal{D}^{-1})((x,a)^{-1}) = \overline{\mathcal{F}_{W}(\psi \otimes \phi\mathcal{D}^{-1})(x,a)} =  \overline{\mathcal{F}_{W}(\phi \otimes \psi\mathcal{D}^{-1})(x,a)}.\]  Hence $\langle U(x,a)\phi, \psi \rangle_{L^2(\mathbb{R}_+)} = \langle U(x,a)\psi, \phi \rangle_{L^2(\mathbb{R}_+)}$ and it follows from \cite[Thm.~4.2]{cont_wavelet_transform} that $\phi = c \cdot \psi$ for some $c \in \mathbb{C}$. We can conclude from \cite[Cor.~3.22]{folland2016course} that $c \geq 0$ since \[\mathcal{F}_{W}(c\psi \otimes \psi\mathcal{D}^{-1})(0,1) = c \cdot \|\psi\|_{L^{2}(\mathbb{R}_{+})} \geq 0.\]
\end{example}

We are now ready to state the main result regarding positivity. This result is actually, when interpreted correctly, a special case of the general result \cite[Thm.~4.12]{fuhr2004}.

\begin{theorem}
    \label{quantum_bochner_theorem}
    Let $A$ be a trace-class operator on $L^2(\mathbb{R}_+)$. Then $A$ is a positive operator if and only if the function
    \begin{equation*}
        \mathcal{F}_W(A\mathcal{D}^{-1})(x,a)=\tr(AU(x,a))
    \end{equation*}
    is of positive type on $\aff$.
\end{theorem}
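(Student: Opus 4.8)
\emph{Forward direction.} Suppose $A\in\mathcal{S}_1$ is positive, with spectral decomposition $A=\sum_n\lambda_n\,\xi_n\otimes\xi_n$, where $\lambda_n\geq 0$ and $\sum_n\lambda_n=\|A\|_{\mathcal{S}_1}<\infty$. Then $\tr(AU(x,a))=\sum_n\lambda_n\langle U(x,a)\xi_n,\xi_n\rangle_{L^2(\mathbb{R}_+)}$, and since $|\langle U(x,a)\xi_n,\xi_n\rangle_{L^2(\mathbb{R}_+)}|\leq 1$ the series converges uniformly on $\aff$. Each diagonal matrix coefficient $(x,a)\mapsto\langle U(x,a)\xi_n,\xi_n\rangle_{L^2(\mathbb{R}_+)}$ of the unitary representation $U$ is of positive type by the standard fact \cite[Prop.~3.15]{folland2016course} already used in the example preceding the theorem (concretely, $\sum_{i,j}\overline{c_i}c_j\langle U(g_i^{-1}g_j)\xi_n,\xi_n\rangle_{L^2(\mathbb{R}_+)}=\big\|\sum_j c_j U(g_j)\xi_n\big\|_{L^2(\mathbb{R}_+)}^2\geq 0$). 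A finite non-negative linear combination of functions of positive type is of positive type, and the cone of functions of positive type is closed under pointwise limits; hence $\tr(AU(\cdot))$ is of positive type.

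\emph{Converse direction.} Suppose $F(x,a):=\tr(AU(x,a))$ is of positive type. Since $A\in\mathcal{S}_1$ and $U(x,a)$ is unitary, $F$ is bounded by $\|A\|_{\mathcal{S}_1}$ and continuous by strong continuity of $U$. Approximating the defining quadratic forms by Riemann sums and using continuity of $F$ (equivalently, the standard integrated characterization of continuous functions of positive type, see \cite{folland2016course}), and using $F\in L^\infty(\aff)$ to pass from $C_c(\aff)$ to $L^1_l(\aff)$, one gets
\begin{equation*}
	\iint_{\aff\times\aff}F\big((x,a)^{-1}(y,b)\big)\,\overline{f(x,a)}\,f(y,b)\,\frac{dx\,da}{a^2}\,\frac{dy\,db}{b^2}\ \geq\ 0
\end{equation*}
for all $f\in L^1_l(\aff)$. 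Expanding $F\big((x,a)^{-1}(y,b)\big)=\tr\!\big(AU(x,a)^*U(y,b)\big)$ and moving the integrals inside the trace, the left-hand side equals $\tr\!\big(A\,U(f)^*U(f)\big)$, where $U(f)$ is the integrated representation. Thus $\tr\!\big(A\,U(f)^*U(f)\big)\geq 0$ for every $f\in L^1_l(\aff)$.

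Now I would use the identity $\mathcal{F}_W^{-1}(h)=U(\check{h})\circ\mathcal{D}$ from Section~\ref{sec: Affine Fourier Transforms}: it gives $U(k)=\mathcal{F}_W^{-1}(\check{k})\circ\mathcal{D}^{-1}$ whenever $\check{k}\in\mathcal{Q}_1$, and by Proposition~\ref{proposition_correspondence_fourier_wigner} together with Example~\ref{ex:Fourier_Wigner_of_rank_one} one can choose $k\in L^1_l(\aff)$ (a routine density argument keeps $k$ inside the admissible class) so that $U(k)$ is the bounded extension of $(\psi\otimes\eta)\circ\mathcal{D}^{-1}=\psi\otimes\mathcal{D}^{-1}\eta$, where $\eta$ is an admissible function and $\psi\in L^2(\mathbb{R}_+)$ is arbitrary. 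For such $k$,
\begin{equation*}
	U(k)^*U(k)=\big(\mathcal{D}^{-1}\eta\otimes\psi\big)\big(\psi\otimes\mathcal{D}^{-1}\eta\big)=\|\psi\|_{L^2(\mathbb{R}_+)}^2\;(\mathcal{D}^{-1}\eta)\otimes(\mathcal{D}^{-1}\eta),
\end{equation*}
so $0\leq\tr\!\big(A\,U(k)^*U(k)\big)=\|\psi\|^2_{L^2(\mathbb{R}_+)}\,\langle A\,\mathcal{D}^{-1}\eta,\mathcal{D}^{-1}\eta\rangle_{L^2(\mathbb{R}_+)}$. Hence $\langle A\zeta,\zeta\rangle_{L^2(\mathbb{R}_+)}\geq 0$ for every $\zeta$ of the form $\mathcal{D}^{-1}\eta$ with $\eta$ admissible; since $\mathcal{D}^{-1}$ carries the admissible functions onto $\mathrm{dom}(\mathcal{D})$, a dense subspace of $L^2(\mathbb{R}_+)$, we conclude $A\geq 0$.

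\emph{Main obstacle.} The delicate part of the converse is the bookkeeping around the unbounded Duflo--Moore operator $\mathcal{D}$: one has to justify that $U(k)$ really is the bounded extension of $(\psi\otimes\eta)\mathcal{D}^{-1}$, that $U(k)^*U(k)=\mathcal{D}^{-1}(\psi\otimes\eta)^*(\psi\otimes\eta)\mathcal{D}^{-1}$ on the nose (cf.\ the use of \cite[Thm.~13.2]{rudin1991functionalanalysis} in the proof of Proposition~\ref{prop:equvalence_right}), and that the functions $k$ producing these rank-one operators stay within the class of $f$ for which the positive-type integral inequality is available, all while keeping $\{\mathcal{D}^{-1}\eta\}$ dense. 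A conceptually cleaner route, and the one indicated in the text, is to regard $A$ as the measurable field over $\widehat{\aff}$ supported at the representation $\pi_1=U$ (exactly as in the proofs of Proposition~\ref{prop:fourier_wigner_explicit_formula} and Proposition~\ref{prop:equvalence_right}) and to invoke the general Bochner-type theorem \cite[Thm.~4.12]{fuhr2004}, whose content is precisely that a function is of positive type if and only if its operator-valued Fourier transform is a positive operator field; positivity of a field concentrated at $[\pi_1]$ is exactly positivity of $A$.
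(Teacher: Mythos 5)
Your forward direction is correct and is actually more elementary than anything in the paper: the paper does not argue the two directions separately at all, but proves Theorem~\ref{quantum_bochner_theorem} in one stroke by forming the measurable field over $\widehat{\aff}$ that equals $A$ at $[\pi_1]=[U]$ and $0$ elsewhere and citing the general Bochner theorem \cite[Thm.~4.12]{fuhr2004} --- which is precisely the ``conceptually cleaner route'' you name at the end. So in its fallback form your proposal coincides with the paper's proof, while your spectral-decomposition argument for ``positive $\Rightarrow$ positive type'' is a nice self-contained substitute for half of the abstract citation.

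The direct argument you sketch for the converse, however, has a genuine gap exactly where you flag it, and ``a routine density argument'' does not close it. To write the bounded extension of $\psi\otimes\mathcal{D}^{-1}\eta$ as $U(k)$ you need $k\in L^1_l(\aff)$, equivalently $\check{k}\in L^1_r(\aff)$; but Example~\ref{ex:Fourier_Wigner_of_rank_one} and Proposition~\ref{prop:admissible} only give that the coefficient function $(x,a)\mapsto\langle\psi,U(x,a)^*\mathcal{D}\eta\rangle_{L^2(\mathbb{R}_+)}$ lies in $L^2_r(\aff)$, and admissibility of $\eta$ does not imply integrability of wavelet coefficients. To repair this you would either have to exhibit a family of windows $\eta$ (e.g.\ with $\eta(e^x)$ Schwartz, or Laguerre-type functions) and vectors $\psi$ for which the coefficient is genuinely in $L^1_r(\aff)$, with $\{\mathcal{D}^{-1}\eta\}$ still dense, or approximate $k$ by $k_n\in L^1_l(\aff)$ and justify the passage to the limit in $\tr\bigl(AU(k_n)^*U(k_n)\bigr)$ --- which works, since $A\in\mathcal{S}_1$ and trace pairing is continuous along uniformly bounded nets converging in the weak operator topology, but this needs to be said, together with the domain bookkeeping for $\mathcal{D}^{-1}$ that you already mention (the analogue of the \cite[Thm.~13.2]{rudin1991functionalanalysis} step in Proposition~\ref{prop:equvalence_right}). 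Also note that the integrated positivity inequality you start from requires identifying the paper's pointwise definition of positive type with the integrated one, which uses the continuity of $(x,a)\mapsto\tr(AU(x,a))$; this is true (uniform convergence of $\sum_n\lambda_n\langle U(x,a)\xi_n,\eta_n\rangle$) but should be stated. As submitted, the converse is therefore a plausible but incomplete sketch whose rigorous completion is either the extra work above or the reduction to \cite[Thm.~4.12]{fuhr2004} that the paper itself uses.
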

\begin{proof}
We use the same notation as in the proof of Proposition \ref{prop:fourier_wigner_explicit_formula}. For $G=\aff$, the abstract result in \cite{fuhr2004} says that if \begin{equation*}
    \{A_{[\pi]}\}_{[\pi]\in \hat{G}}\in \int_{\hat{G}}
    ^\oplus HS(\mathcal{H}_\pi) d\hat{\mu}([\pi])
\end{equation*} consists of trace-class operators, then $A_{[\pi]}$ is positive a.e.\ with respect to $\hat{\mu}$ if and only if the function $\int_{\hat{G}} \tr(A_{[\pi]} \pi(g)^*) d\hat{\mu}([\pi])$ is of positive type.

As in the proof of Proposition \ref{prop:equvalence_right},  we pick $A_{[\pi_{1}]} = A$ and $A_{[\pi]}=0$ for $[\pi]\neq [\pi_{1}]$. The resulting section consists of positive operators for a.e.\ $[\pi]$ if and only if $A$ is positive.  By the abstract result in \cite{fuhr2004}, this happens if and only if \[\int_{\hat{G}} \tr(A_{[\pi]} \pi(g)^*) d\hat{\mu}([\pi])=\tr(AU(x,a)^*)\] is a function of positive type. The definition of functions of positive type gives that this is equivalent to $\tr(AU(x,a))$ being of positive type.
\end{proof}

\section{Examples}
\label{sec: Examples}

In this section, we show how the theory developed in this paper provides a common framework for various operators and functions studied by other authors. We also introduce an analogue of the Cohen class of time-frequency distributions for the affine group, and deduce its relation to the previously studied \textit{affine quadratic time-frequency representations}.

\subsection{Affine Localization Operators}
\label{sec: Representing Localization Operators Through Affine Convolution}

There is no general consensus of a localization operator in the affine setting. We will use the following definition based on the convolution framework.

\begin{definition}
Let $f \in L_{r}^{1}(\aff)$ and $\varphi \in L^{2}(\mathbb{R}_{+})$. We say that
\[A = f \star_{\aff} (\varphi \otimes \varphi)\]
is an \textit{affine localization operator} on $L^{2}(\mathbb{R}_{+})$.
\end{definition}
Inequality \eqref{eq:conv_op_fun_bochner} shows that an affine localization operator $A$ is a trace-class operator on $L^{2}(\mathbb{R}_{+})$ with \[\|A\|_{\mathcal{S}_{1}} \leq  \|f\|_{L_{r}^{1}(\aff)}\|\varphi\|_{L^{2}(\mathbb{R}_{+})}^{2}.\] Moreover, Proposition \ref{prop:convolution_with_admissible} implies that $A$ is admissible whenever $\varphi$ is admissible and $f\in L^1_l(\aff)\cap L^1_r(\aff).$

We will now see that the affine localization operators are naturally unitarily equivalent to the more commonly defined localization operators on the Hardy space $H^2_+(\mathbb{R})$.
Recall that the space $H^2_+(\mathbb{R})$ is the subspace of $L^2(\mathbb{R})$ consisting of elements $\psi$ whose Fourier transform $\mathcal{F}\psi$ is supported on $\mathbb{R}_+$. Note that the composition $\mathcal{D}\mathcal{F}$ is a unitary map from $H_+^2(\mathbb{R})$ to $L^2(\mathbb{R}_+)$. An \textit{admissible wavelet} $\xi \in H^2_+(\mathbb{R})$ satisfies by definition that \[c_{\xi} \coloneqq \int_{0}^{\infty}\frac{|\mathcal{F}(\xi)(\omega)|^{2}}{\omega} \, d\omega < \infty.\] In other words, $\mathcal{D}\mathcal{F}\xi\in L^2(\mathbb{R}_+)$ is an admissible function in the sense of Definition \ref{def:admissiblefunction}. In \cite[Thm.~18.13]{wong2002localization} the localization operator $A_{f}^{\xi}$ on $H^2_+(\mathbb{R})$, given an admissible wavelet $\xi \in H^2_+(\mathbb{R})$ and $f \in L_{l}^{1}(\aff)$, is defined by 
\begin{equation*}
	A_{f}^{\xi} \psi=c_\xi \int_\aff f(x,a) \langle \xi, \pi(x,a) \xi \rangle_{H_{+}^{2}(\mathbb{R})} \pi(x,a)\xi \, \frac{dx \, da}{a^2}, \quad \xi \in H^2_+(\mathbb{R}),
\end{equation*}
where $\pi$ acts on $H^2_+(\mathbb{R})$ by
\begin{equation}
\label{eq:wavelet_representation}
\pi(x,a)\xi(t)=\frac{1}{\sqrt{a}} \xi\left(\frac{t-x}{a}\right), \qquad \psi \in H^2_+(\mathbb{R}).
\end{equation} 
The next proposition is straightforward and relates operators on the form $A_{f}^{\xi}$ with affine localization operators. 
\begin{proposition} \label{prop:wong}
Consider $f \in L_{l}^{1}(\aff)$ and an admissible wavelet $\xi \in H^2_+(\mathbb{R})$. Then
\begin{equation*}
(\mathcal{D}\mathcal{F})A^\xi_{f} (\mathcal{D}\mathcal{F})^* = c_\xi \cdot \check{f}\star_\aff (\mathcal{D}\mathcal{F}\xi \otimes \mathcal{D}\mathcal{F}\xi).
\end{equation*}
\end{proposition}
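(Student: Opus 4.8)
The plan is to unwind all the definitions and verify that the two operators agree on a dense subspace. First I would spell out the left-hand side: conjugating $A_f^\xi$ by the unitary $\mathcal{D}\mathcal{F}:H^2_+(\mathbb{R})\to L^2(\mathbb{R}_+)$ turns the wavelet representation $\pi$ on $H^2_+(\mathbb{R})$ into a representation on $L^2(\mathbb{R}_+)$. The key computation is to show that $(\mathcal{D}\mathcal{F})\pi(x,a)(\mathcal{D}\mathcal{F})^*$ equals $U(-x,a)^*$ (possibly up to the inversion $(x,a)\mapsto(x,a)^{-1}$, which is exactly what produces the $\check f$ in the statement). This is a routine but essential Fourier-side calculation: $\pi(x,a)$ from \eqref{eq:wavelet_representation} is a shift-by-$x$ followed by an $L^2$-normalized dilation-by-$a$, so on the Fourier side it becomes a modulation and dilation, and after applying $\mathcal{D}$ one lands on the operator $U$ from \eqref{Fourier_side_action}; keeping track of which argument gets inverted is the one place care is needed.

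Next I would substitute $\eta \coloneqq \mathcal{D}\mathcal{F}\xi \in L^2(\mathbb{R}_+)$, which is admissible precisely because $\xi$ is an admissible wavelet (as noted in the paragraph before the proposition, $c_\xi<\infty$ is the same as $\eta\in\mathrm{dom}(\mathcal{D}^{-1})$, i.e.\ $\|\mathcal{D}^{-1}\eta\|^2_{L^2(\mathbb{R}_+)}=c_\xi$ after the change of variables). Rewriting the defining integral for $A_f^\xi$, the inner product $\langle \xi,\pi(x,a)\xi\rangle_{H^2_+(\mathbb{R})}$ becomes $\langle \eta, U(-x,a)^*\eta\rangle_{L^2(\mathbb{R}_+)}$ (or its conjugate/inverse variant), and the rank-one operator $\psi\mapsto \langle \xi,\pi(x,a)\xi\rangle\,\pi(x,a)\xi$ becomes $U(-x,a)^*(\eta\otimes\eta)U(-x,a)$ after conjugation. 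Comparing the Haar-measure weight $a^{-2}$ appearing in $A_f^\xi$ with the weight $a^{-1}$ in Definition \ref{def:fun_ope_convolution} accounts for the appearance of $\check f$ rather than $f$, since $(x,a)\mapsto(x,a)^{-1}$ interchanges $\mu_L$ and $\mu_R$. Assembling these pieces gives exactly $c_\xi\cdot \check f \star_\aff(\eta\otimes\eta)$.

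The main obstacle is purely bookkeeping: getting all the signs, inverses, and measure normalizations to line up so that the factor is genuinely $\check f$ and not $f$, and confirming that the constant comes out as $c_\xi$ with no stray powers of $a$. One should be slightly careful that the Bochner integral defining $f\star_\aff(\eta\otimes\eta)$ converges — but this is immediate from $f\in L^1_l(\aff)$ (equivalently $\check f\in L^1_r(\aff)$) together with \eqref{eq:conv_op_fun_bochner} — and that conjugation by the unitary $\mathcal{D}\mathcal{F}$ commutes with the (convergent) integral, which is standard since bounded operators pass through Bochner integrals. No deep input is required; the statement is essentially a change of model from the time-side wavelet representation to the Fourier-side representation $U$, and the proposition is really a dictionary entry translating the classical localization operators of \cite{wong2002localization} into the convolution language of this paper.
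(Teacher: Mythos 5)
Your plan is correct and is exactly the routine verification the paper has in mind (the paper states this proposition without proof, calling it straightforward): conjugating by the unitary $\mathcal{D}\mathcal{F}$ turns $\pi(x,a)$ into $U(-x,a)$, so $(\mathcal{D}\mathcal{F})A_f^\xi(\mathcal{D}\mathcal{F})^*=c_\xi\int_\aff f(x,a)\,U(-x,a)(\eta\otimes\eta)U(-x,a)^*\,\frac{dx\,da}{a^2}$ with $\eta=\mathcal{D}\mathcal{F}\xi$, and the substitution $(x,a)\mapsto(x,a)^{-1}$ (swapping left and right Haar measure, and using $U(-x,a)^*=U(x/a,1/a)$) converts this into $c_\xi\cdot\check f\star_\aff(\eta\otimes\eta)$. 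The only imprecision is your hedge that the conjugation might give $U(-x,a)^*$ — it gives $U(-x,a)$ — but that is precisely the sign/inverse bookkeeping you flagged, and it is absorbed by the same change of variables that produces $\check f$.
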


\begin{remark}
\hfill
\begin{enumerate}
    \item From Proposition \ref{prop:wong} it follows that Proposition \ref{prop:interpolation} is a generalization of the result \cite[Thm.~18.13]{wong2002localization}.
    \item In \cite{daubechies1988localization}, Daubechies and Paul define localization operators in the same way as in \cite{wong2002localization}, except that they use $\pi(-x,a)$ instead of $\pi(x,a)$ in \eqref{eq:wavelet_representation} and consider symbols $f$ on the full affine group $\aff_{F} = \mathbb{R} \times \mathbb{R}^{*}$. The eigenfunctions and eigenvalues of the resulting localization operators acting on $L^2(\mathbb{R})$ are studied in detail in \cite{daubechies1988localization} when the window is related to the first Laguerre function, and $f=\chi_{\Omega_{C}}$ where
\[\Omega_{C} \coloneqq \{(x,a) \in \aff \, : \, |(x,a)-(0,C)|^2\leq (C^2-1) \}.\] The corresponding inverse problem, i.e.\ conditions on the eigenfunctions of the localization operator that imply that $\Omega=\Omega_C$, is studied in \cite{abreu2012dorfler}.
    \item Localization operators with windows related to Laguerre functions have also been extensively studied by Hutn\'{\i}k, see for instance \cite{Hutnik2009toeplitzrelatedwavelets,Hutnik2011calderonboundedness,Hutnik2011waveletslaguerretoeplitz}, with particular emphasis on symbols $f$ depending only on either $x$ or $a$. When $f(x,a)=f(a)$, it is shown that the resulting localization operator is unitarily equivalent to multiplication with some function $\gamma_f$. This correspondence allows properties of the localization operator to be deduced from properties of $\gamma_f$.
\end{enumerate}
\end{remark}

\subsection{Covariant Integral Quantizations}
\label{sec: Other Covariant Integral Quantizations}

Operators of the form $f\star_\aff S$ form the basis of the study of covariant integral quantizations by Gazeau and his collaborators in \cite{ali2014book,bergeron2014intquant,bergeron2018variations,gazeau2020signal,gazeau20192d,gazeau2016covariant}. Apart from differing conventions that we clarify at the end of this section, \textit{covariant integral quantizations} on $\aff$ are maps $\Gamma_S$ sending functions on $\aff$ to operators given by 
\begin{equation*}
	\Gamma_S(f) = f\star_\aff S,
\end{equation*}
for some fixed operator $S$. By varying $S$ we obtain several quantization maps $\Gamma$ with properties depending on the properties of $S$. Examples of such quantization procedures with a different parametrization of $\aff$ are studied in \cite{gazeau2016covariant,bergeron2018variations}. Their approach is to define $S$ either by $\mathcal{F}_W(S)$ or by its kernel as an integral operator, and deduce conditions on this function that ensures the condition  \[1\star_\aff S=I_{L^2(\mathbb{R}_+)}.\]

\begin{example}
The affine Weyl quantization is an example of a covariant integral quantization $\Gamma_{S}$, where $S$ is not a bounded operator. It corresponds to choosing $S=P_{\aff}$ by Theorem \ref{thm:quantization_through_convolution}.
\end{example}

\begin{remark}
The example above leads to a natural question: could there be other operators $P$ such that $f\star_\aff P$ behaves as an affine analogue of Weyl quantization? Since Weyl quantization on $\mathbb{R}^{2n}$ is given by convolving with the parity operator, a natural guess is \[P\psi(r)=\psi(1/r), \qquad \psi \in L^{2}(\mathbb{R}_{+}).\] The resulting quantization $\Gamma_P(f)=f\star_\aff P$ has been studied by Gazeau and Murenzi in \cite[Sec.~7]{gazeau2016covariant}. It has the advantage that $P$ is a bounded operator, but unfortunately by \cite[Prop.~7.5]{gazeau2016covariant} it does not satisfy the natural dequantization rule 
\begin{equation*}
    f=\Gamma_P(f)\star_\aff P.
\end{equation*}
We also mention that Gazeau and Bergeron have shown that this choice of $P$ is merely a special case corresponding to $\nu=-1/2$ of a class $P_\nu$ of operators defining possible affine versions of the Weyl quantization \cite[Sec.~4.5]{bergeron2018variations}.
\end{remark}

In quantization theory one typically wishes that the domain of $\Gamma_S$ contains $L^\infty(\aff)$. This, by Lemma \ref{lem:boundedquantization}, leads us to chose $S=\mathcal{D}T\mathcal{D}$ for some trace-class operator $T$. In particular, one requires that $\Gamma_S(1)=I_{L^2(\mathbb{R}_+)}$, which can be easily satisfied as the following proposition shows.

\begin{proposition}
    Let $T$ be a trace-class operator on $L^{2}(\mathbb{R}_{+})$. Then
    \begin{equation*}
        1\star_\aff \mathcal{D}T\mathcal{D}=\tr(T)I_{L^2(\mathbb{R}_+)}.
    \end{equation*}
\end{proposition}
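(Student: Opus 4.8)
The plan is to observe that this proposition is essentially a direct corollary of the machinery already in place, specifically Theorem \ref{thm:operator_orthogonality_relation} combined with the covariance relation and the fact that the identity operator is the unique (up to scalar) operator fixed by conjugation by $U(-x,a)$. First I would use the covariance property of the representation: for the constant function $1$ on $\aff$ and a trace-class operator $T$, the operator $1 \star_\aff \mathcal{D}T\mathcal{D}$ is, at least formally, $\int_\aff U(-x,a)^* \mathcal{D}T\mathcal{D} U(-x,a) \, \frac{dx\,da}{a}$; this integral is manifestly invariant under conjugation by any $U(-y,b)$ after a change of variables, so by Schur-type irreducibility of $U$ it must be a multiple of the identity (provided it converges in a suitable weak sense). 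The value of the scalar is then pinned down by testing against a vector.

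More carefully, and to avoid convergence issues with $\mathcal{D}T\mathcal{D}$ not being trace-class, I would proceed via duality/weak definition exactly as in Lemma \ref{lem:boundedquantization}: for $\psi,\phi \in \mathrm{dom}(\mathcal{D})$, write
\begin{align*}
    \langle (1\star_\aff \mathcal{D}T\mathcal{D})\psi, \phi\rangle_{L^2(\mathbb{R}_+)} &= \int_\aff \langle T\mathcal{D}U(-x,a)\psi, \mathcal{D}U(-x,a)\phi\rangle_{L^2(\mathbb{R}_+)} \, \frac{dx\,da}{a} \\
    &= \int_\aff (\mathcal{D}\psi \otimes \mathcal{D}\phi) \star_\aff T (x,a) \, \frac{dx\,da}{a},
\end{align*}
using the commutation relations \eqref{eq:comrelD} and Lemma \ref{lem: convolution_of_rank_one} to rewrite things in terms of the convolution of the rank-one operator $\mathcal{D}\psi \otimes \mathcal{D}\phi$ with $T$ (being careful that $\mathcal{D}\psi \otimes \mathcal{D}\phi = \mathcal{D}(\psi\otimes\phi)\mathcal{D}$). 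Then Theorem \ref{thm:operator_orthogonality_relation} (with the roles chosen so that $\psi \otimes \phi$ plays the part of the "$S$-type" operator and $T$ the generic trace-class operator, i.e.\ applying it with $S = \psi\otimes\phi$, which satisfies $\mathcal{D}S\mathcal{D}$ bounded since $\psi,\phi\in\mathrm{dom}(\mathcal{D})$, and noting the integrand $T \star_\aff \mathcal{D}S\mathcal{D}$ is symmetric under swapping) gives directly that this integral equals $\tr(T)\tr(\psi\otimes\phi) = \tr(T)\langle\psi,\phi\rangle_{L^2(\mathbb{R}_+)}$.

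Concluding, this shows $\langle (1\star_\aff \mathcal{D}T\mathcal{D})\psi,\phi\rangle = \tr(T)\langle \psi,\phi\rangle$ for all $\psi,\phi\in\mathrm{dom}(\mathcal{D})$, and since $\mathrm{dom}(\mathcal{D})$ is dense and $1\star_\aff\mathcal{D}T\mathcal{D}$ is a bounded operator by Lemma \ref{lem:boundedquantization}, it follows that $1\star_\aff\mathcal{D}T\mathcal{D} = \tr(T)I_{L^2(\mathbb{R}_+)}$. I do not expect any serious obstacle here — the only point requiring mild care is bookkeeping the order of arguments in Theorem \ref{thm:operator_orthogonality_relation}: that theorem is stated for $T \star_\aff \mathcal{D}S\mathcal{D}$ where $S$ is the operator with $\mathcal{D}S\mathcal{D}$ bounded, so I need to match $\mathcal{D}T\mathcal{D}$ here against $\mathcal{D}S\mathcal{D}$ there by taking "$S$" $=\psi\otimes\phi$ and "$T$" $=T$, and then use that $\int_\aff T\star_\aff \mathcal{D}(\psi\otimes\phi)\mathcal{D} = \tr(T)\tr(\psi\otimes\phi)$, identifying the left side with $\int_\aff (\psi\otimes\phi)\star_\aff \mathcal{D}T\mathcal{D}$ up to the inversion $(x,a)\mapsto(x,a)^{-1}$ which swaps left and right Haar measure but leaves $\int_\aff(\cdot)\frac{dx\,da}{a}$ matched appropriately via $\mathcal{D}\mathcal{D} = $ multiplication by $r$. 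The cleanest route is probably to invoke Corollary \ref{cor:admissible_condition} phrased for the admissible operator $\mathcal{D}(\psi\otimes\phi)\mathcal{D}$ is not admissible in general, so sticking with Theorem \ref{thm:operator_orthogonality_relation} directly is safest.
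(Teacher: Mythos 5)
Your proposal follows essentially the same route as the paper's proof: start from the weak definition \eqref{eq:weakboundeddef}, use \eqref{eq:comrelD} and Lemma \ref{lem: convolution_of_rank_one} to turn the integrand into an operator convolution with the rank-one operator, apply Theorem \ref{thm:operator_orthogonality_relation} with $S=\psi\otimes\phi$ (so that $\mathcal{D}S\mathcal{D}=\mathcal{D}\psi\otimes\mathcal{D}\phi$ is bounded) to obtain $\tr(T)\langle\psi,\phi\rangle_{L^2(\mathbb{R}_+)}$, and conclude by density of $\mathrm{dom}(\mathcal{D})$ together with the boundedness from Lemma \ref{lem:boundedquantization}. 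The only real flaw is a bookkeeping slip in your displayed computation: since $\mathcal{D}U(-x,a)=a^{-1/2}U(-x,a)\mathcal{D}$, the integrand picks up a factor $1/a$, so the second line should be $\int_{\aff}(\mathcal{D}\psi\otimes\mathcal{D}\phi)\star_{\aff}T(x,a)\,\frac{dx\,da}{a^2}$ (left Haar measure), and only after the inversion $(x,a)\mapsto(x,a)^{-1}$, which swaps left and right Haar measure and uses $S'\star_{\aff}T(x,a)=T\star_{\aff}S'((x,a)^{-1})$, does it become $\int_{\aff}T\star_{\aff}\mathcal{D}(\psi\otimes\phi)\mathcal{D}(x,a)\,\frac{dx\,da}{a}$, which is exactly the quantity \eqref{eq:orthogonalityfinal} evaluates; in particular the integrand is not ``symmetric under swapping'' pointwise, and the explanation via ``$\mathcal{D}\mathcal{D}=$ multiplication by $r$'' is not the correct accounting. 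As a minor aside, your closing remark is off: $\mathcal{D}(\psi\otimes\phi)\mathcal{D}=\mathcal{D}\psi\otimes\mathcal{D}\phi$ \emph{is} admissible for nonzero $\psi,\phi\in\mathrm{dom}(\mathcal{D})$, since $\mathcal{D}^{-1}(\mathcal{D}\psi\otimes\mathcal{D}\phi)\mathcal{D}^{-1}=\psi\otimes\phi$ is trace-class, so Corollary \ref{cor:admissible_condition} would also work; the direct appeal to Theorem \ref{thm:operator_orthogonality_relation}, as you and the paper do, is nonetheless the cleanest.
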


\begin{proof}
Let $\psi,\phi \in \mathrm{dom}(D)$. We have by \eqref{eq:weakboundeddef} that
\begin{align*}
     \langle 1\star_\aff \mathcal{D}T\mathcal{D}\psi,\phi \rangle_{L^2(\mathbb{R}_+)} & = \int_\aff \langle U(-x,a)^* \mathcal{D}T\mathcal{D} U(-x,a) \psi, \phi \rangle_{L^2(\mathbb{R}_+)} \, \frac{dx\, da}{a}\\ 
     &= \int_\aff  T\star_\aff (\mathcal{D}\psi\otimes \mathcal{D}\phi) \, \frac{dx \, da}{a} \\
     &= \tr(T)  \langle \psi,\phi \rangle_{L^2(\mathbb{R}_+)},
\end{align*}
where the last equality uses Theorem \ref{thm:operator_orthogonality_relation}.
\end{proof}

Following the terminology used by Gazeau et al., we have a \textit{resolution of the identity operator} of the form
\begin{equation*}
    I_{L^2(\mathbb{R}_+)}=\Gamma_{\mathcal{D}T\mathcal{D}}(1)=\int_\aff U(-x,a)^*\mathcal{D}T\mathcal{D}U(-x,a) \, \frac{dx \, da}{a},
\end{equation*}
where $\tr(T)=1$ and the integral has the usual weak interpretation.

Given a positive trace-class operator $T$ with $\tr(T)=1$, we know that \[\Gamma_{\mathcal{D}T\mathcal{D}}(f)=f\star_\aff \mathcal{D}T\mathcal{D}\] defines a bounded map $\Gamma_{\mathcal{D}T\mathcal{D}}:L^\infty(\aff)\to \mathcal{L}(L^2(\mathbb{R}_+))$ with $\Gamma_{\mathcal{D}T\mathcal{D}}(1)=I_{L^2(\mathbb{R}_+)}$. Moreover, $\Gamma_{\mathcal{D}T\mathcal{D}}$ maps positive functions to positive operators and by a variation of Lemma \ref{L: Right mul of symbol} satisfies the covariance property 
\begin{equation*}
    U(-x,a)^*\Gamma_{\mathcal{D}T\mathcal{D}}(f)U(-x,a)=\Gamma(R_{(x,a)^{-1}}f).
\end{equation*}
The following result, which is a modification of the remark given at the end of \cite{kiukas2006}, shows a remarkable converse to these observations. 

\begin{theorem} \label{thm:kiukas}
    Let $\Gamma:L^\infty(\aff)\to \mathcal{L}(L^2(\mathbb{R}_+))$ be a linear map satisfying
    \begin{enumerate}
        \item $\Gamma$ sends positive functions to positive operators,
        \item $\Gamma(1)=I_{L^2(\mathbb{R}_+)}$,
        \item $\Gamma$ is continuous from the weak* topology on $L^\infty(\aff)$ (as the dual space of $L^1_r(\aff)$) to the weak* topology on $\mathcal{L}(L^2(\mathbb{R}_+))$,
        \item  $U(-x,a)^*\Gamma(f)U(-x,a)=\Gamma(R_{(x,a)^{-1}}f)$.
    \end{enumerate}
    Then there exists a unique positive trace-class operator $T$ with $\tr(T)=1$ such that \begin{equation*} 
        \Gamma(f)=f\star_\aff \mathcal{D}T\mathcal{D}.
    \end{equation*}
\end{theorem}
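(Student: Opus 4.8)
The plan is to reduce the statement to the known classification of covariant positive operator valued measures attached to the square-integrable representation $U$, and then to bootstrap that classification back up to all of $L^\infty(\aff)$. First I would observe that hypotheses $(1)$ and $(2)$ already force $\Gamma$ to be norm bounded: $L^\infty(\aff)$ is a commutative $C^*$-algebra, so the positive linear map $\Gamma$ is automatically completely positive, and being unital it satisfies $\|\Gamma\|=\|\Gamma(1)\|=1$. Next, set $\tilde U(x,a)\coloneqq U(-x,a)$; a short computation with the group law shows $\tilde U$ is again a strongly continuous unitary representation of $\aff$, irreducible and square-integrable with the same Duflo--Moore operator $\mathcal{D}^{-1}$ as $U$. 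For a Borel set $E\subseteq\aff$ define $M(E)\coloneqq \Gamma(\chi_E)$. Then $M(E)\ge 0$ and $M(\aff)=\Gamma(1)=I$ by $(1)$ and $(2)$; $M$ is finitely additive by linearity; it is $\sigma$-additive in the weak operator topology because $\chi_{E_n}\to\chi_E$ in the weak$^*$ topology of $L^\infty(\aff)=(L^1_r(\aff))^*$ whenever $E_n\uparrow E$, so $(3)$ applies; and since $R_{(x,a)^{-1}}\chi_E=\chi_{E\cdot(x,a)}$, hypothesis $(4)$ becomes $\tilde U(x,a)^*M(E)\tilde U(x,a)=M(E\cdot(x,a))$. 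Thus $M$ is a normalized covariant POVM on $\aff$ for the right action of $\aff$ on itself and the representation $\tilde U$.

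The core step is to invoke the classification of such POVMs for a square-integrable irreducible representation \cite{kiukas2006,cassinelli2003}, applied to $(\aff,\tilde U)$: there is a unique positive operator $S$ on $L^2(\mathbb{R}_+)$ for which $T\coloneqq\mathcal{D}^{-1}S\mathcal{D}^{-1}$ extends to a positive trace-class operator and
\[
    M(E)=\int_E \tilde U(x,a)^* S\,\tilde U(x,a)\,\frac{dx\,da}{a}
\]
for every Borel $E\subseteq\aff$, the integral being weakly convergent. By Lemma \ref{lem:boundedquantization} this says precisely $M(E)=\chi_E\star_\aff \mathcal{D}T\mathcal{D}$. Taking $E=\aff$ and combining $M(\aff)=I_{L^2(\mathbb{R}_+)}$ with the identity $1\star_\aff\mathcal{D}T\mathcal{D}=\tr(T)\,I_{L^2(\mathbb{R}_+)}$ forces $\tr(T)=1$, so $T$ is a positive trace-class operator of trace one.

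It then remains to upgrade $\Gamma(\chi_E)=\chi_E\star_\aff\mathcal{D}T\mathcal{D}$ from indicators to all of $L^\infty(\aff)$ and to settle uniqueness. By linearity the identity $\Gamma(f)=f\star_\aff\mathcal{D}T\mathcal{D}$ holds for every simple function $f$; since simple functions are norm dense in $L^\infty(\aff)$ and both $\Gamma$ and $f\mapsto f\star_\aff\mathcal{D}T\mathcal{D}$ are norm continuous (the latter by Lemma \ref{lem:boundedquantization}), the identity extends to all $f\in L^\infty(\aff)$. Uniqueness of $T$ follows because $\Gamma$ determines $M$ and the classification attaches a unique operator to $M$. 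I expect the only genuinely delicate point to be the middle step: one must check that the POVM $M$ produced above meets all the regularity and covariance hypotheses of the cited classification theorem, and that the conclusion of that theorem, stated in whatever normalization the references use for the non-unimodular group $\aff$, transports to exactly the $\mathcal{D}T\mathcal{D}$-form with a trace-one density operator — reconciling those conventions (in particular the role of the Duflo--Moore operator and the right Haar measure) is where the real work lies, while the remaining steps are routine.
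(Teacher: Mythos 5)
Your proposal is correct in substance and rests on the same external input as the paper: the classification of covariant objects attached to the square-integrable representation $(x,a)\mapsto U(-x,a)$ from \cite{kiukas2006,cassinelli2003}. The packaging differs slightly. The paper never passes through a POVM: it sets $\Gamma_l(f)\coloneqq\Gamma(\check f)$, notes that $\Gamma_l$ then satisfies the left-covariant, weak*-continuous hypotheses of the remark at the end of \cite{kiukas2006} (which is stated directly for maps on $L^\infty$), and converts the resulting integral formula into $\check f\star_\aff\mathcal{D}T\mathcal{D}$ using the commutation relation \eqref{eq:comrelD}. Your route---extract $M(E)=\Gamma(\chi_E)$, verify weak $\sigma$-additivity and right covariance, invoke the POVM classification, and re-extend to $L^\infty$ via simple-function density together with the norm bounds (positivity and unitality for $\Gamma$, Lemma \ref{lem:boundedquantization} for $f\mapsto f\star_\aff\mathcal{D}T\mathcal{D}$)---is workable, but it adds steps the paper's shortcut avoids, and it still requires exactly the conversion you defer as ``reconciling conventions'': the cited classifications are formulated for left covariance and left Haar measure, so you must pass to $E\mapsto M(E^{-1})$ (equivalently $f\mapsto\check f$), use $d\mu_L(g^{-1})=d\mu_R(g)$, and track the Duflo--Moore normalization via \eqref{eq:comrelD} to land on a trace-one $T$ with density precisely $\mathcal{D}T\mathcal{D}$. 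That deferred reconciliation is essentially the entire written content of the paper's proof; it is a short computation and does go through, so your plan has no genuine gap---only the one step you flag remains to be carried out.
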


\begin{proof}
	The map $\Gamma\mapsto \Gamma_l$ where $\Gamma_l(f)=\Gamma(\check{f})$ is a bijection from maps $\Gamma$ satisfying the four assumptions to maps $\Gamma_l$ satisfying 
	\begin{enumerate}[label=\roman*)]
			\item $\Gamma_l$ sends positive functions to positive operators,
			\item $\Gamma_l(1)=I_{L^2(\mathbb{R}_+)}$,
			\item $\Gamma_l$ is continuous from the weak* topology on $L^\infty(\aff)$ (as the dual space of $L^1_l(\aff)$) to the weak* topology on $\mathcal{L}(L^2(\mathbb{R}_+))$,
			\item $U(-x,a)^*\Gamma_l(f)U(-x,a)=\Gamma_l(L_{(x,a)^{-1}}f)$.
		\end{enumerate}
	The remark in \cite{kiukas2006} applied to $G=\aff$ and $U(-x,a)$ says that if a map $\Gamma_l$ satisfies i)-iv)
	then it must be given for $\psi,\phi \in \mathrm{dom}(\mathcal{D})$ by 
	\begin{equation*}
	  \langle \Gamma_l(f)\psi,\phi \rangle_{L^2(\mathbb{R}_+)}  =\int_\aff f(x,a) \langle U(-x,a)TU(-x,a)^* \mathcal{D}\psi,\mathcal{D}\phi \rangle_{L^2(\mathbb{R}_+)} \, \frac{dx\, da}{a},
	\end{equation*}
	for some trace-class operator $T$ as in the theorem.
	The relation \eqref{eq:comrelD} gives that
	\begin{align*}
	  \langle \Gamma_l(f)\psi,\phi \rangle_{L^2(\mathbb{R}_+)}  &=\int_\aff f(x,a) \langle U(-x,a)\mathcal{D}T\mathcal{D}U(-x,a)^* \psi,\phi \rangle_{L^2(\mathbb{R}_+)} \, \frac{dx\, da}{a^2} \\
	  &= \int_\aff \check{f}(x,a) \langle U(-x,a)^*\mathcal{D}T\mathcal{D}U(-x,a) \psi,\phi \rangle_{L^2(\mathbb{R}_+)}\, \frac{dx\,da}{a}.
	\end{align*}
	Hence $\Gamma_l(f)=\check{f}\star_\aff \mathcal{D}T\mathcal{D}$ and the result follows.
	\end{proof}

\subsubsection*{Quantization using admissible trace-class operators}
As we have mentioned, the properties of the quantization map $\Gamma(f)=f\star_\aff S$ depend on the properties of $S$. From Lemma \ref{lem:boundedquantization} we know that if $S$ is admissible, i.e.\ we can write $S=\mathcal{D}T\mathcal{D}$ for some trace-class operator $T$, then $\Gamma_{S}:L^\infty(\aff)\to \mathcal{L}(L^2(\mathbb{R}_+))$ is bounded. If we further assume that $S$ is a trace-class operator, then Proposition \ref{prop:interpolation} shows that $\Gamma_{S}$ is bounded from $L^p_r(\aff)$ to $\mathcal{S}_p$ for all $1\leq p \leq \infty$. In this sense, the ideal class of covariant integral quantizations $\Gamma_S$ are  those given by \textit{admissible trace-class operators.}  

\begin{example}
If $\varphi\in L^2(\mathbb{R}_+)$ is an admissible function, then $\varphi \otimes \varphi$ is an admissible operator. 
 The resulting quantization $\Gamma_{\varphi \otimes \varphi}$ is then a special case of the quantization procedures introduced by Berezin \cite{Berezin:1972}; Berezin calls $f$ the \textit{contravariant symbol} of $\Gamma_{\varphi \otimes \varphi}(f)$. In this sense, the quantization procedures $\Gamma_S$ for admissible $S$ generalize Berezin's contravariant symbols.
\end{example}

\subsubsection*{Relation to the Conventions of Gazeau and Murenzi}

Gazeau and Murenzi \cite{gazeau2016covariant} work with another parametrization of the affine group, namely $\Pi_{+} \coloneqq \mathbb{R}_+\times \mathbb{R}$ where the group operation between $(q_{1}, p_{1}), (q_{2}, p_{2}) \in \Pi_{+}$ is given by \[(q_1,p_1)\cdot (q_2,p_2) \coloneqq (q_{1}q_{2}, p_{2}/q_{1} + p_{1}).\] There is a unitary representation $U_{G}:\Pi_{+} \to \mathcal{U}(L^{2}(\mathbb{R}_{+}, dr))$ given by 
\[
	U_G(q,p)\psi(r)=\sqrt{\frac{1}{q}}e^{ipr}\psi(r/q)=\sqrt{\frac{1}{q}}U(p/2\pi,1/q)\psi(r).
\]

Given a function $\tilde{f}$ on $\Pi_{+}$ and an operator $S$ on $L^2(\mathbb{R}_+, dr)$, Gazeau and Murenzi define (note that the adjoint is now with respect to $L^2(\mathbb{R}_+,dr)$, not $L^2(\mathbb{R}_+)$)
\begin{equation*}
A^S_{\tilde{f}}\coloneqq\frac{1}{C_S}\int_{-\infty}^\infty \int_0^\infty \tilde{f}(q,p)U_G(q,p)S U_G(q,p)^* \, dq \, dp,
\end{equation*}
where we assume that $S$ satisfies
\begin{equation*}
\int_{-\infty}^\infty \int_0^\infty U_G(q,p)S U_G(q,p)^* \, dq \, dp = C_S \cdot I_{L^2(\mathbb{R}_+,dr)}.
\end{equation*}
The next proposition is straightforward and shows that Gazeau and Murenzi's framework is easily related to our affine operator convolutions.

\begin{proposition}
Let $S$ be an operator on $L^2(\mathbb{R}_+,dr)$. Then $\mathcal{D}^{-1}S\mathcal{D}$ is an operator on $L^2(\mathbb{R}_+,r^{-1}dr)$ and 
\[
	\mathcal{D} A^S_{\tilde{f}} \mathcal{D}^{-1}=\frac{2\pi}{C_S} f\star_\aff (\mathcal{D}S\mathcal{D}^{-1}),
\]
where $f(x,a)=\tilde{f}(a,\frac{2\pi x}{a})$ for $(x,a) \in \mathrm{Aff}$.
\end{proposition}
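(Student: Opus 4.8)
The plan is to unwind the definition of $A^S_{\tilde f}$, perform the change of variables matching the two parametrizations of the affine group, and then move the factors $\mathcal{D}^{\pm 1}$ through the representation using the commutation relations \eqref{eq:comrelD}; nothing beyond bookkeeping is involved, the only care being that $U_G$ and $U$ act unitarily on the \emph{different} spaces $L^2(\mathbb{R}_+,dr)$ and $L^2(\mathbb{R}_+)=L^2(\mathbb{R}_+,r^{-1}\,dr)$. Throughout I read $\mathcal{D}$ as the unitary $L^2(\mathbb{R}_+,dr)\to L^2(\mathbb{R}_+)$, $\psi\mapsto r^{1/2}\psi$; this is the same pointwise formula as the Duflo--Moore operator, and since it is unitary it preserves the trace class. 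This already yields the first assertion and guarantees that $\mathcal{D}S\mathcal{D}^{-1}$ is a trace-class operator on $L^2(\mathbb{R}_+)$ whenever $S$ is trace class on $L^2(\mathbb{R}_+,dr)$, so that the right-hand side of the identity is well-defined.

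The first step is to substitute $q=a$, $p=2\pi x/a$ in
\[
A^S_{\tilde f}=\frac{1}{C_S}\int_0^\infty\int_{-\infty}^\infty \tilde f(q,p)\,U_G(q,p)SU_G(q,p)^*\,dq\,dp .
\]
By definition $\tilde f(q,p)=f(x,a)$, the map $(x,a)\mapsto(q,p)=(a,2\pi x/a)$ is a bijection $\aff\to\mathbb{R}_+\times\mathbb{R}$, and a one-line Jacobian computation gives $dq\,dp=\tfrac{2\pi}{a}\,dx\,da$. From the stated relation $U_G(q,p)=\tfrac{1}{\sqrt q}U(p/2\pi,1/q)$ and the identity $(-x,a)^{-1}=(x/a,1/a)$ one gets, after substitution, $U_G(a,2\pi x/a)=\tfrac{1}{\sqrt a}U(-x,a)^*$; computing $U_G(q,p)^*$ directly from its integral formula (namely $U_G(q,p)^*\psi(r)=\sqrt q\,e^{-ipqr}\psi(qr)$) and substituting gives $U_G(a,2\pi x/a)^*=\sqrt a\,U(-x,a)$. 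Hence $U_G(a,2\pi x/a)SU_G(a,2\pi x/a)^*=U(-x,a)^*SU(-x,a)$, the two powers of $a$ cancelling, so
\[
A^S_{\tilde f}=\frac{2\pi}{C_S}\int_\aff f(x,a)\,U(-x,a)^*SU(-x,a)\,\frac{dx\,da}{a}.
\]

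Next I would conjugate by $\mathcal{D}$ and $\mathcal{D}^{-1}$ and push them inwards. The commutation relations \eqref{eq:comrelD}, applied to the group elements $(-x,a)$ and $(-x,a)^{-1}$, give $\mathcal{D}\,U(-x,a)^*=a^{1/2}U(-x,a)^*\mathcal{D}$ and $U(-x,a)\,\mathcal{D}^{-1}=a^{-1/2}\mathcal{D}^{-1}U(-x,a)$, so once more the powers of $a$ cancel and
\[
\mathcal{D}\,U(-x,a)^*SU(-x,a)\,\mathcal{D}^{-1}=U(-x,a)^*\bigl(\mathcal{D}S\mathcal{D}^{-1}\bigr)U(-x,a).
\]
Inserting this into the previous display and comparing with Definition \ref{def:fun_ope_convolution} gives
\[
\mathcal{D}A^S_{\tilde f}\mathcal{D}^{-1}=\frac{2\pi}{C_S}\int_\aff f(x,a)\,U(-x,a)^*(\mathcal{D}S\mathcal{D}^{-1})U(-x,a)\,\frac{dx\,da}{a}=\frac{2\pi}{C_S}\,f\star_\aff(\mathcal{D}S\mathcal{D}^{-1}),
\]
which is the assertion.

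The one place that needs attention — and the likely source of scaling errors — is the passage $U_G(a,2\pi x/a)^*=\sqrt a\,U(-x,a)$: the adjoint of $U_G$ is taken in $L^2(\mathbb{R}_+,dr)$, so one \emph{cannot} simply conjugate-transpose the formula $U_G=\tfrac{1}{\sqrt q}U(\cdot)$ (whose adjoint would be taken in $L^2(\mathbb{R}_+)$), and it must be read off from the integral kernel. If one prefers to sidestep this subtlety entirely, the identity can instead be verified at the level of kernels: writing $S_K$ for the kernel of $S$ with respect to $dr$, the kernel of $U_G(q,p)SU_G(q,p)^*$ is $q^{-1}e^{ip(r-s)}S_K(r/q,s/q)$, conjugation by $\mathcal{D}$ multiplies a kernel by $(rs)^{1/2}$ (passing from $dr$ to $r^{-1}\,dr$), and the kernel of $U(-x,a)^*RU(-x,a)$ with respect to $r^{-1}\,dr$ is $e^{2\pi ix(r-s)/a}R_K(r/a,s/a)$; the claimed equality then reduces to the substitution $q=a$, $p=2\pi x/a$ together with $dq\,dp=\tfrac{2\pi}{a}\,dx\,da$, with all ambiguity about Hilbert spaces removed.
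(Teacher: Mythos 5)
Your computation is correct and is exactly the ``straightforward'' verification the paper leaves out: substitute $q=a$, $p=2\pi x/a$ with $dq\,dp=\tfrac{2\pi}{a}\,dx\,da$, identify $U_G(a,2\pi x/a)=a^{-1/2}U(-x,a)^*$ and $U_G(a,2\pi x/a)^*=a^{1/2}U(-x,a)$ (adjoint in $L^2(\mathbb{R}_+,dr)$, correctly read off from the kernel), and then cancel the powers of $a$ again when pushing $\mathcal{D},\mathcal{D}^{-1}$ through via \eqref{eq:comrelD}. Your reading of $\mathcal{D}$ as the unitary $L^2(\mathbb{R}_+,dr)\to L^2(\mathbb{R}_+,r^{-1}dr)$, so that $\mathcal{D}S\mathcal{D}^{-1}$ is the operator appearing on the right-hand side, is the consistent interpretation of the statement, and the kernel-level check you sketch is a sound alternative.
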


\subsection{Affine Cohen Class Distributions}
\label{sec: Affine Cohen Class Distributions}

The cross-Wigner distribution $W(\psi,\phi)$ of $\psi,\phi \in L^2(\mathbb{R}^n)$ is known to have certain undesirable properties. A typical example is that one would like to interpret $W(\psi,\phi)$ as a probability distribution, but $W(\psi,\phi)$ is seldom a non-negative function as shown by Hudson in \cite{hudson1974wigner}. To remedy this, Cohen introduced in \cite{Cohen:1966} a new class of time-frequency distributions $Q_f$ given by
\begin{equation} \label{eq:cohenheis}
	Q_f(\psi,\phi) \coloneqq W(\psi,\phi) \ast f,
\end{equation}
where $f$ is a tempered distribution on $\mathbb{R}^{2n}$. In light of our setup, it is natural to investigate the affine analogue of the Cohen class.

\begin{definition} \label{def:cohen}
We say that a bilinear map $Q:L^2(\mathbb{R}_+)\times L^2(\mathbb{R}_+)\to L^\infty(\aff)$ belongs to the \textit{affine Cohen class} if $Q = Q_S$ for some $S\in \mathcal{L}(L^2(\mathbb{R}_+))$, where
\begin{equation*}
	Q_S(\psi,\phi)(x,a) \coloneqq (\psi \otimes \phi) \star_\aff S(x,a)
	= \langle SU(-x,a)\psi,U(-x,a)\phi \rangle_{L^2(\mathbb{R}_+)}.
\end{equation*}
We will write $Q_S(\psi)\coloneqq Q_S(\psi,\psi).$
\end{definition}
By Proposition \ref{prop:convolutionasweyl} we get for $S = A_{f}$ that
\begin{equation} \label{eq:cohenaff_opp}
	Q_{S}(\psi, \phi) = W_{\textrm{Aff}}^{\psi, \phi} \ast_\aff \check{f},
\end{equation}
which shows that our definition of the affine Cohen class is a natural analogue of \eqref{eq:cohenheis}.
It is straightforward to verify that $Q_{S}(\psi, \phi)$ is a continuous function on $\aff$ for all $\psi, \phi \in L^{2}(\mathbb{R}_{+})$ and $S\in \mathcal{L}(L^2(\mathbb{R}_+))$.
Since the affine Cohen class is defined in terms of the operator convolutions, we get some simple properties: The statements 1 and 2 in Proposition \ref{prop:basic_properties_affine_cohen} follow from Proposition \ref{prop:interpolation} and Corollary \ref{cor:admissible_condition}. Statement 3 is a simple calculation and the last statement follows from a short polarization argument.

\begin{proposition}
\label{prop:basic_properties_affine_cohen}
	Let $S\in \mathcal{L}(L^2(\mathbb{R}_+))$. Then for $\psi, \phi \in L^2(\mathbb{R}_+)$ we have the following properties:
	\begin{enumerate}
		\item The function $Q_S(\psi,\phi)$ satisfies \[\|Q_S(\psi,\phi)\|_{L^{\infty}(\aff)}\leq \|S\|_{\mathcal{L}(L^{2}(\mathbb{R}_{+}))} \|\psi\|_{L^{2}(\mathbb{R}_{+})} \|\phi\|_{L^{2}(\mathbb{R}_{+})}.\]
		\item If $S$ is admissible, then $Q_S(\psi,\phi)\in L^1_r(\aff)$ and \[\int_\aff Q_S(\psi,\phi)(x,a) \, \frac{dx\,da}{a}=\langle \psi,\phi \rangle_{L^{2}(\mathbb{R}_{+})} \tr(\mathcal{D}^{-1}S\mathcal{D}^{-1}).\]
		\item We have the covariance property 
		\begin{equation}
		\label{eq:covariance_affine_cohen}
		Q_S(U(-x,a)\psi,U(-x,a)\phi)(y,b)=Q_S(\psi,\phi)((y,b)\cdot (x,a))  
		\end{equation}
		for all $(x,a), (y,b)\in \aff$.
		\item The function $Q_S(\psi,\psi)$ is (real-valued) positive for all $\psi \in L^2(\mathbb{R}_+)$ if and only if $S$ is (self-adjoint) positive. 
	\end{enumerate}
\end{proposition}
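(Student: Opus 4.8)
The plan is to establish statements 1--3 quickly from results already proven, then to spend the bulk of the argument on statement 4, which is the only part with genuine content. For statement 1, I would simply invoke Proposition \ref{prop:interpolation} (or even the more elementary estimate \eqref{eq:conv_op_op_duality}) applied to the rank-one operator $\psi \otimes \phi$, noting that $\|\psi \otimes \phi\|_{\mathcal{S}_1} = \|\psi\|_{L^2(\mathbb{R}_+)}\|\phi\|_{L^2(\mathbb{R}_+)}$. For statement 2, since $S$ is admissible and $\psi \otimes \phi \in \mathcal{S}_1$, Corollary \ref{cor:admissible_condition} gives $Q_S(\psi,\phi) = (\psi\otimes\phi)\star_\aff S \in L^1_r(\aff)$ together with the identity $\int_\aff (\psi\otimes\phi)\star_\aff S \, \frac{dx\,da}{a} = \tr(\psi\otimes\phi)\tr(\mathcal{D}^{-1}S\mathcal{D}^{-1})$, and one only has to recall $\tr(\psi\otimes\phi) = \langle\psi,\phi\rangle_{L^2(\mathbb{R}_+)}$. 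Statement 3 follows from the covariance property \eqref{E: right-inv} of the convolution: using Lemma \ref{lem: convolution_of_rank_one} and replacing $\psi,\phi$ by $U(-x,a)\psi, U(-x,a)\phi$, the group law $U(-y,b)U(-x,a) = U((-x,a)\cdot(-y,b))$-type manipulation (together with $(-x,a)\cdot(-y,b)$ corresponding on the affine group to the point $(y,b)\cdot(x,a)$ up to the sign conventions already fixed in the excerpt) yields exactly \eqref{eq:covariance_affine_cohen}; this is the ``simple calculation'' promised.

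For statement 4, I would argue both implications directly from the formula
\[
Q_S(\psi,\psi)(x,a) = \langle S\,U(-x,a)\psi,\, U(-x,a)\psi\rangle_{L^2(\mathbb{R}_+)}.
\]
If $S$ is positive (resp.\ self-adjoint), then for every fixed $(x,a)$ the vector $\eta := U(-x,a)\psi$ ranges over $L^2(\mathbb{R}_+)$ and $\langle S\eta,\eta\rangle_{L^2(\mathbb{R}_+)} \geq 0$ (resp.\ is real), so $Q_S(\psi,\psi)$ is pointwise nonnegative (resp.\ real-valued) for all $\psi$. For the converse, suppose $Q_S(\psi,\psi)$ is real-valued for all $\psi \in L^2(\mathbb{R}_+)$. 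Evaluating at the identity $(0,1)$ gives $\langle S\psi,\psi\rangle_{L^2(\mathbb{R}_+)} \in \mathbb{R}$ for all $\psi$, and the standard polarization identity then forces $\langle S\psi,\phi\rangle = \overline{\langle S\phi,\psi\rangle}$ for all $\psi,\phi$, i.e.\ $S = S^*$. If moreover $Q_S(\psi,\psi) \geq 0$ for all $\psi$, then in particular $\langle S\psi,\psi\rangle_{L^2(\mathbb{R}_+)} = Q_S(\psi,\psi)(0,1) \geq 0$ for all $\psi$, which is precisely the statement that $S$ is a positive operator.

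I expect the main (and really the only) subtlety to be the bookkeeping in statement 3: one must track the sign conventions in the definition of $U$ and in $f \star_\aff S$ carefully enough to land on $(y,b)\cdot(x,a)$ rather than its inverse or $(x,a)\cdot(y,b)$, and it is worth double-checking this against the stated covariance relation \eqref{E: right-inv} for $W_\aff^{\psi,\phi}$, since that relation was explicitly engineered (see the remark following Definition \ref{def:fun_ope_convolution}) to make such identities come out cleanly. Everything else is either a citation of an earlier result or a one-line Hilbert-space argument; in particular the polarization step in statement 4 is entirely standard and requires only that the scalar field be $\mathbb{C}$, which it is.
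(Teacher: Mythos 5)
Your proposal is correct and matches the paper's (very brief) proof: statements 1 and 2 are exactly the citations of the convolution estimate \eqref{eq:conv_op_op_duality}/Proposition \ref{prop:interpolation} and Corollary \ref{cor:admissible_condition} together with $\tr(\psi\otimes\phi)=\langle\psi,\phi\rangle_{L^2(\mathbb{R}_+)}$, statement 3 is the same direct group-law computation, and statement 4 is the same evaluation at $(0,1)$ plus polarization. One bookkeeping remark on statement 3: since $U$ is a homomorphism, the identity needed is $U(-y,b)U(-x,a)=U((-y,b)\cdot(-x,a))=U(-(y+bx),ab)$, which means $Q_S(U(-x,a)\psi,U(-x,a)\phi)(y,b)$ equals $Q_S(\psi,\phi)$ evaluated at $(y+bx,ab)=(y,b)\cdot(x,a)$; your sketch reverses both the order of the factors and the subsequent identification, and these two slips cancel, so the conclusion you land on is still the correct one.
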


\begin{example}
\hfill
\begin{enumerate}
    \item For $\psi, \phi \in L^2(\mathbb{R}_+)$ we have \[Q_{\phi\otimes \phi}(\psi)(x,a)=|\langle \psi, U(-x,a)^* \phi \rangle_{L^2(\mathbb{R}_+)}|^2,\] which by Corollary \ref{cor:equallity_of_scalogram_and_other_stuff} is simply a Fourier transform away from being a scalogram. 
    \item If we relax the requirement that $S$ is bounded in Definition \ref{def:cohen}, then it follows from Theorem \ref{thm:quantization_through_convolution} that \[Q_{P_\aff}(\psi)=W_\aff^{\psi}\] for $\psi \in \mathscr{S}(\mathbb{R}_+).$ Hence the affine Wigner distribution can be represented as a (generalized) affine Cohen class operator. If we define an alternative affine Weyl quantization using an operator $P$ as in Section \ref{sec: Other Covariant Integral Quantizations}, then it is clear that $Q_P$ gives an alternative Wigner function. See \cite[Sec.~7.2]{gazeau2016covariant} for the case of $P\psi(r)=\psi(1/r)$.
\end{enumerate}
\end{example}

 The covariance property \eqref{eq:covariance_affine_cohen} and some rather weak continuity conditions completely characterize the affine Cohen class, as is shown in the following result.

\begin{proposition}
	Let $Q:L^2(\mathbb{R}_+)\times L^2(\mathbb{R}_+)\to L^\infty(\aff)$ be a bilinear map. Assume that for all $\psi,\phi \in L^2(\mathbb{R}_+)$ we know that $Q(\psi,\phi)$ is a continuous function on $\aff$ that satisfies \eqref{eq:covariance_affine_cohen} and the estimate \[|Q(\psi,\phi)(0,1)|\lesssim \|\psi\|_{L^{2}(\mathbb{R}_{+})} \|\phi\|_{L^{2}(\mathbb{R}_{+})}.\]
	Then there exists a unique $S\in \mathcal{L}(L^2(\mathbb{R}_+))$ such that $Q=Q_S$.
\end{proposition}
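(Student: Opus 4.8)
The plan is to reconstruct the operator $S$ from the putative Cohen-class map $Q$ by evaluating $Q$ at the identity element $(0,1)\in\aff$, exactly as one recovers $S$ from a Cohen-class distribution on $\mathbb{R}^{2n}$. First I would define a sesquilinear form $B$ on $L^2(\mathbb{R}_+)\times L^2(\mathbb{R}_+)$ by $B(\psi,\phi)\coloneqq \overline{Q(\phi,\psi)(0,1)}$; the estimate $|Q(\psi,\phi)(0,1)|\lesssim \|\psi\|_{L^2(\mathbb{R}_+)}\|\phi\|_{L^2(\mathbb{R}_+)}$ together with bilinearity of $Q$ shows that $B$ is a bounded sesquilinear form, so by the Riesz representation theorem there is a unique $S\in\mathcal{L}(L^2(\mathbb{R}_+))$ with $B(\psi,\phi)=\langle S\psi,\phi\rangle_{L^2(\mathbb{R}_+)}$, i.e.\ $Q(\psi,\phi)(0,1)=\langle S\psi,\phi\rangle_{L^2(\mathbb{R}_+)}$. (One must be slightly careful with which argument is conjugate-linear so that the convention for $\otimes$ in Lemma \ref{lem: convolution_of_rank_one} matches; this is a bookkeeping issue.)

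Next I would use the covariance hypothesis \eqref{eq:covariance_affine_cohen} to propagate this identity from the point $(0,1)$ to all of $\aff$. Applying \eqref{eq:covariance_affine_cohen} with $(y,b)=(0,1)$ gives $Q(U(-x,a)\psi,U(-x,a)\phi)(0,1)=Q(\psi,\phi)((0,1)\cdot(x,a))=Q(\psi,\phi)(x,a)$. Combining this with the definition of $S$ yields
\begin{align*}
    Q(\psi,\phi)(x,a) &= \langle S\,U(-x,a)\psi,\,U(-x,a)\phi\rangle_{L^2(\mathbb{R}_+)}.
\end{align*}
By Lemma \ref{lem: convolution_of_rank_one} the right-hand side is exactly $(\psi\otimes\phi)\star_\aff S(x,a) = Q_S(\psi,\phi)(x,a)$, so $Q=Q_S$ as maps into functions on $\aff$. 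Here the continuity hypothesis on $Q(\psi,\phi)$ ensures that the pointwise identity on $\aff$ is genuinely an identity of functions (no almost-everywhere ambiguity), and it also guarantees that $Q_S(\psi,\phi)\in L^\infty(\aff)$ matches $Q(\psi,\phi)\in L^\infty(\aff)$ as required by the codomain.

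Finally I would address uniqueness: if $Q=Q_{S_1}=Q_{S_2}$, then evaluating at $(0,1)$ and using Lemma \ref{lem: convolution_of_rank_one} gives $\langle S_1\psi,\phi\rangle_{L^2(\mathbb{R}_+)}=\langle S_2\psi,\phi\rangle_{L^2(\mathbb{R}_+)}$ for all $\psi,\phi$, hence $S_1=S_2$. The main obstacle I anticipate is not conceptual but a matter of verifying carefully that the boundedness of the single functional $\psi,\phi\mapsto Q(\psi,\phi)(0,1)$ is enough: bilinearity of $Q$ in its two Hilbert-space arguments is what upgrades the scalar estimate at one point into a genuine bounded operator, and one should check that no separate continuity-in-$(\psi,\phi)$ assumption is secretly needed. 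A secondary point to get right is the conjugate-linearity convention in the rank-one operator $\psi\otimes\phi$ and in the inner products $\langle\cdot,\cdot\rangle_{L^2(\mathbb{R}_+)}$, so that the Riesz representation produces $S$ on the correct side; once the conventions are pinned down this is routine.
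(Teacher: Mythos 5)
Your proposal is correct and follows essentially the same route as the paper's proof: recover $S$ from the bounded (sesqui)linear form $(\psi,\phi)\mapsto Q(\psi,\phi)(0,1)$ via Riesz representation, then use the covariance property at $(y,b)=(0,1)$ to identify $Q(\psi,\phi)(x,a)$ with $\langle SU(-x,a)\psi,U(-x,a)\phi\rangle_{L^2(\mathbb{R}_+)}=Q_S(\psi,\phi)(x,a)$, with uniqueness read off at the identity. The extra care you take with the conjugate-linearity convention is a harmless bookkeeping point the paper passes over silently.
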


\begin{proof}
	By assumption, the map $(\psi,\phi)\mapsto Q(\psi,\phi)(0,1)$ is a bounded bilinear form. Hence there exists a unique bounded operator $S$ such that 
	\begin{equation*}
		\langle S\psi,\phi \rangle_{L^2(\mathbb{R}_+)}=Q(\psi,\phi)(0,1).
	\end{equation*}
	To see that $Q=Q_S$, note that we have
	\begin{align*}
		Q(\psi,\phi)(x,a)&=Q(U(-x,a)\psi,U(-x,a)\phi)(0,1) \\
		&= \langle SU(-x,a)\psi,U(-x,a)\phi \rangle_{L^2(\mathbb{R}_+)} \\
		&= Q_S(\psi,\phi)(x,a). \qedhere
	\end{align*}
\end{proof}

At this point we have seen that operators $S$ define a quantization procedure $\Gamma_S(f)= f\star_\aff S$ as in Section \ref{sec: Other Covariant Integral Quantizations}, and an affine Cohen class distribution $Q_S$. The connection between these concepts is provided by the next proposition.

\begin{proposition}
\label{prop:eigenvalue_maximization_statement}
	Let $S$ be a positive, compact operator on $L^2(\mathbb{R}_+)$ and let $f\in L^1_r(\aff)$ be a positive function. Then $f\star_\aff S$ is a positive, compact operator. Denote by $\{\lambda_n\}_{n=1}^\infty$ its eigenvalues in non-increasing order with associated orthogonal eigenvectors $\{\phi_n\}_{n=1}^\infty$. Then 
	\begin{equation*}
		\lambda_n = \max_{ \|\psi\| = 1} \left\{\int_\aff f(x,a) Q_S(\psi, \psi)(x,a) \, \frac{dx\, da}{a} \, : \,  \psi\perp \phi_k \text{ for } k=1,\dots , n-1 \right\}.
	\end{equation*} 
\end{proposition}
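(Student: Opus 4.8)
The plan is to recognize this as a standard variational (min-max) characterization of eigenvalues of a positive compact operator, combined with the already-established integral identities for operator convolutions. First I would establish the claimed basic facts about $f \star_\aff S$: positivity is immediate from the remark following Definition \ref{def:fun_ope_convolution} (a positive function convolved with a positive operator gives a positive operator), and that $f\star_\aff S \in \mathcal{S}_1$ follows from the estimate \eqref{eq:conv_op_fun_bochner}, since $S$ positive compact with $f\in L^1_r(\aff)$ only guarantees trace-class here if $S$ is trace-class. Actually, since $S$ is merely positive and compact, I would instead argue directly that $f\star_\aff S$ is compact: approximate $S$ in operator norm by finite-rank positive operators $S_N = \sum_{n=1}^N s_n \xi_n\otimes\xi_n$; then $f\star_\aff S_N$ is trace-class (hence compact) and converges to $f\star_\aff S$ in operator norm by \eqref{eq:conv_op_fun_bochner}-type bound with the $\mathcal{S}_1$ norm replaced by operator norm (or by Proposition \ref{prop:interpolationbasic} with $p=\infty$), so $f\star_\aff S$ is compact. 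Being positive and compact, it has a spectral decomposition with eigenvalues $\lambda_n \searrow 0$ and orthonormal eigenvectors $\phi_n$.

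The key computation is to rewrite the quadratic form of $f\star_\aff S$ in terms of $Q_S$. For unit vector $\psi$, using the weak definition of the convolution (as in the proof of Proposition \ref{prop:interpolationbasic}, case $p=\infty$) and Lemma \ref{lem: convolution_of_rank_one},
\begin{equation*}
\langle (f\star_\aff S)\psi,\psi\rangle_{L^2(\mathbb{R}_+)} = \int_\aff f(x,a)\,\langle SU(-x,a)\psi,U(-x,a)\psi\rangle_{L^2(\mathbb{R}_+)}\,\frac{dx\,da}{a} = \int_\aff f(x,a)\,Q_S(\psi,\psi)(x,a)\,\frac{dx\,da}{a}.
\end{equation*}
This is legitimate: $Q_S(\psi,\psi)$ is bounded by Proposition \ref{prop:basic_properties_affine_cohen}(1) and $f\in L^1_r(\aff)$, so the integral converges absolutely. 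Thus the quantity being maximized in the statement is precisely $\langle (f\star_\aff S)\psi,\psi\rangle$.

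With that identification in hand, the result is exactly the Courant–Fischer min-max theorem applied to the positive compact self-adjoint operator $T\coloneqq f\star_\aff S$: for such an operator, $\lambda_n = \max\{\langle T\psi,\psi\rangle : \|\psi\|=1,\ \psi\perp\phi_1,\dots,\phi_{n-1}\}$, and the maximum is attained at $\psi=\phi_n$. I would either cite this (e.g.\ \cite[Thm.~XIII.1]{Reed:1980}-type reference, or Reed–Simon) or sketch the two inequalities: restricting to $\mathrm{span}\{\phi_n,\phi_{n+1},\dots\}$ shows $\langle T\psi,\psi\rangle = \sum_{k\ge n}\lambda_k|\langle\psi,\phi_k\rangle|^2 \le \lambda_n$, while $\psi=\phi_n$ achieves equality. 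The main (and essentially only) obstacle is the justification that $f\star_\aff S$ is compact when $S$ is only assumed positive compact rather than trace-class, and that the weak integral formula for its quadratic form is valid; both are handled by the operator-norm approximation argument together with Proposition \ref{prop:interpolationbasic}(1) at $p=\infty$, after which everything reduces to the classical spectral theorem. No genuinely new idea beyond these bookkeeping points is required.
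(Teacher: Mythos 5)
Your proposal is correct and follows essentially the same route as the paper: identify $\langle (f\star_\aff S)\psi,\psi\rangle_{L^2(\mathbb{R}_+)}$ with $\int_\aff f(x,a)\,Q_S(\psi,\psi)(x,a)\,\frac{dx\,da}{a}$ and invoke Courant's minimax theorem for the positive compact operator $f\star_\aff S$. The only (cosmetic) difference is that you obtain compactness by operator-norm approximation of $S$ by finite-rank operators, whereas the paper notes directly that the defining Bochner integral of compact operators converges in operator norm and hence yields a compact operator.
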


\begin{proof}
	The integral defining $f\star_\aff S$ is a Bochner integral of compact operators converging in the operator norm, hence it defines a compact operator. It is straightforward to check that $f\star_\aff S$ is also a positive operator. Furthermore, for $\psi \in L^{2}(\mathbb{R}_{+})$ we have
	\begin{align*}
		\langle f\star_\aff S \psi,\psi \rangle_{L^2(\mathbb{R}_+)}&=\int_\aff f(x,a) \langle SU(-x,a)\psi,U(-x,a)\psi \rangle_{L^2(\mathbb{R}_+)} \, \frac{dx\,da}{a} \\
		&= \int_\aff f(x,a) Q_S(\psi, \psi)(x,a) \, \frac{dx\,da}{a}. 
	\end{align*}
	The result therefore follows from Courant's minimax theorem \cite[Thm.~28.4]{lax2002book}.
\end{proof}
\begin{example} \label{first_localization_example_mod}
Let us consider a localization operator $\chi_\Omega \star_\aff \varphi \otimes \varphi$  for $\varphi \in L^{2}(\mathbb{R}_{+})$ and a compact subset $\Omega\subset \aff$. The first eigenfunction $\phi_0$ of this operator maximizes the quantity
\begin{equation*}
     \langle \chi_\Omega \star_\aff (\varphi\otimes \varphi) \phi_0,\phi_0 \rangle_{L^{2}(\mathbb{R}_{+})}=
      \int_\Omega |\langle \varphi_0,U(-x,a)^* \varphi \rangle_{L^{2}(\mathbb{R}_{+})}|^2 \, \frac{dx \, da}{a}.
\end{equation*}
Hence in this sense, the eigenfunctions are the best localized functions in $\Omega$, which explains the terminology of localization operators.
\end{example}

\subsubsection{Relation to the Affine Quadratic Time-Frequency Representations}

The signal processing literature contains a wealth of two-dimensional representations of signals. Among them we find the \textit{affine class of quadratic time-frequency representations}, see \cite{papandreou1998quadratic}. A member of the affine class of quadratic time-frequency representations is a map sending functions $\psi$ on $\mathbb{R}$ to a function $Q^A_\Phi(\psi)$ on $\mathbb{R}^2$ given by 
\begin{equation*}
    Q^A_\Phi(\psi)(x,a) = \frac{1}{a}\int_{-\infty}^\infty \int_{-\infty}^\infty \Phi(t/a,s/a) e^{2\pi i x(t-s)} \psi(t)\overline{\psi(s)} \, dt\,ds
\end{equation*}
for some kernel function $\Phi$ on $\mathbb{R}^2$. There are clearly a few differences between our setup and the affine class of quadratic time-frequency representations. The domain of the affine class consists of  functions on $\mathbb{R}$, whereas the affine Cohen class acts on functions on $\mathbb{R}_+$. For a function $\psi$ on $\mathbb{R}_+$ we therefore define
\begin{equation*}
    \psi_0(t)=\begin{cases} \psi(t)\quad  t>0 \\ 0 \quad \text{ otherwise.} \end{cases}
\end{equation*}
Finally, we recall that a function $K_S$ defined on $\mathbb{R}_+\times \mathbb{R}_+$ defines an integral operator $S$ with respect to the measure $\frac{dt}{t}$ by 
\begin{equation*}
    S\psi(s)=\int_0^\infty K_S(s,t)\psi(t) \, \frac{dt}{t}.
\end{equation*}
The following formal result is straightforward to verify. 
\begin{proposition} \label{prop:affineclass}
    Let $S$ be an integral operator with kernel $K_S$ and define 
    \begin{equation*}
        \Phi_S(s,t)=\begin{cases} \frac{K_S(t,s)}{\sqrt{st}} \quad & \text{ if } s,t > 0, \\ 0 \quad & \text{ otherwise.} \end{cases}
    \end{equation*}
    For $x>0$ and $\psi$ defined on $\mathbb{R}_+$, we have
    \begin{equation*}
        Q_S(\mathcal{D}\psi,\mathcal{D}\psi)(x,a)=Q^A_{\Phi_S}(\psi_0)(-x/a,a).
    \end{equation*}
\end{proposition}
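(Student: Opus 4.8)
The plan is to verify the identity by a direct unwinding of both sides into double integrals, matched after a single change of variables; no conceptual input is needed beyond Lemma~\ref{lem: convolution_of_rank_one}, and since the statement is explicitly a formal identity, questions of absolute convergence will be set aside. The only genuine care required is the bookkeeping of the two competing measure conventions — the invariant measure $r^{-1}\,dr$ on $\mathbb{R}_+$ governing the affine side versus Lebesgue measure on the affine-class side — together with the transposition of the kernel arguments and the reflection $x \mapsto -x/a$ that are built into the definition of $\Phi_S$.

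First I would use Lemma~\ref{lem: convolution_of_rank_one} to write
\[
Q_S(\mathcal{D}\psi,\mathcal{D}\psi)(x,a) = \langle S\,U(-x,a)\mathcal{D}\psi,\; U(-x,a)\mathcal{D}\psi \rangle_{L^2(\mathbb{R}_+)},
\]
and substitute $U(-x,a)\mathcal{D}\psi(r) = e^{-2\pi i x r}\sqrt{ar}\,\psi(ar)$, which follows from \eqref{Fourier_side_action} and $\mathcal{D}\psi(r) = \sqrt{r}\,\psi(r)$. Expanding $S$ as the integral operator $S\phi(s) = \int_0^\infty K_S(s,t)\phi(t)\,\tfrac{dt}{t}$ and writing the inner product of $L^2(\mathbb{R}_+, r^{-1}\,dr)$ explicitly, the factors $\sqrt{at}$, $\sqrt{as}$ and the three copies of the invariant measure collapse to a single factor $1/\sqrt{st}$, leaving
\[
Q_S(\mathcal{D}\psi,\mathcal{D}\psi)(x,a) = a\int_0^\infty\!\!\int_0^\infty \frac{K_S(s,t)}{\sqrt{st}}\,\psi(at)\,\overline{\psi(as)}\,e^{2\pi i x(s-t)}\,dt\,ds.
\]

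Next I would apply the change of variables $u = at$, $v = as$. Since $dt\,ds = a^{-2}\,du\,dv$, $1/\sqrt{st} = a/\sqrt{uv}$, and $s - t = (v-u)/a$, all powers of $a$ cancel and one obtains
\[
Q_S(\mathcal{D}\psi,\mathcal{D}\psi)(x,a) = \int_0^\infty\!\!\int_0^\infty \frac{K_S(v/a,u/a)}{\sqrt{uv}}\,\psi(u)\,\overline{\psi(v)}\,e^{-2\pi i (x/a)(u-v)}\,du\,dv.
\]
On the other side, the definition of the affine-class representation evaluated at $(-x/a,a)$ reads
\[
Q^A_{\Phi_S}(\psi_0)(-x/a,a) = \frac{1}{a}\int_{-\infty}^\infty\!\!\int_{-\infty}^\infty \Phi_S(t/a,s/a)\,e^{-2\pi i (x/a)(t-s)}\,\psi_0(t)\,\overline{\psi_0(s)}\,dt\,ds;
\]
because $\psi_0$, and hence $\Phi_S$ on the relevant arguments, vanishes outside $\mathbb{R}_+$, only the quadrant $t,s > 0$ contributes, where $\Phi_S(t/a,s/a) = a\,K_S(s/a,t/a)/\sqrt{ts}$. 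Substituting this in cancels the prefactor $1/a$, and after relabelling $(t,s)\to(u,v)$ the expression coincides term by term with the one obtained above for $Q_S(\mathcal{D}\psi,\mathcal{D}\psi)(x,a)$, which finishes the verification.

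I do not expect a real obstacle here — the content is entirely the change of variables — but the one step I would double-check is precisely the matching of kernel arguments: the transpose $K_S(s,t)\leftrightarrow K_S(t,s)$ encoded in $\Phi_S$ must be kept consistent with the reflection $x\mapsto -x/a$, and it is exactly this consistency requirement that pins down the stated form of $\Phi_S$. (The hypothesis $x>0$ plays no role in the computation and could be dropped; I would mention this in passing.)
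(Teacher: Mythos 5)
Your computation is correct: unwinding $Q_S(\mathcal{D}\psi,\mathcal{D}\psi)$ via Lemma \ref{lem: convolution_of_rank_one}, the kernel representation of $S$, and the substitution $u=at$, $v=as$ matches the right-hand side exactly, including the kernel transposition and the reflection $x\mapsto -x/a$. The paper omits the proof as ``straightforward to verify,'' and your direct verification is precisely the intended argument (your side remark that the hypothesis $x>0$ is not actually used is also accurate).
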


\bibliographystyle{abbrv}
\bibliography{main}

\Addresses

\end{document}